\newtheorem{thm}{Theorem}[section]
\newtheorem{lem}[thm]{Lemma}
\newtheorem{cor}[thm]{Corollary}  \theoremstyle{definition}
\newtheorem{df}[thm]{Definition}   \theoremstyle{definition}
\newtheorem{rem}[thm]{Remark}                \theoremstyle{plain}
 \theoremstyle{definition}
\newtheorem{ex}[thm]{Example}   
\def\CC{\Bbb{C}}
\def\RR{\Bbb{R}}  \def\QQ{\Bbb{Q}}
\def\ZZ{\Bbb{Z}}
\def\CCI{\hat{\CC}}        \def\NN{\Bbb{N}} 
\def\B1{{\rm\kern.32em\vrule    width.12em       height1.4ex
depth-.05ex\kern-.28em 1}}
\def\G{\Gamma}
\def\g{\gamma }
\def\l{\lambda }
\def\GN{\Gamma ^{\NN }}
\def\Rat{\text{Rat}}
\def\emRat{\text{{\em Rat}}}
\def\Ratp{\text{Rat}_{+}}
\def\emRatp{\text{{\em Rat}}_{+}}
\def\suppt{\text{supp}\, \tau}
\def\Cpt{\text{Cpt}}
\def\emCpt{\text{\em Cpt}}
\def\Min{\text{Min}}
\def\emMin{\text{\em Min}}
\def\LSfc{\text{LS}({\cal U}_{f,\tau }(\hat{\Bbb{C}}))}
\def\emLSfc{\text{{\em LS}}({\cal U}_{f,\tau }(\hat{\Bbb{C}}))}
\begin{document}
\title{Cooperation principle, stability and bifurcation in \\ random complex dynamics
\footnote{Published in Adv. Math. 245 (2013) 137--181. Date: July 12, 2013. 
2010 Mathematics Subject Classification. 
37F10, 37H10. Keywords: Random dynamical systems; Random complex dynamics;  
%random iteration, iterated function systems, Markov process, 
Rational semigroups;  
%polynomial semigroups,   
%Julia sets, 
Fractal geometry; Cooperation principle; Noise-induced order; Randomness-induced phenomena 
%stability, bifurcation
}}

\author{Hiroki Sumi\\  
Department of Mathematics, 
Graduate School of Science, Osaka University\\ 
1-1, Machikaneyama, Toyonaka, Osaka, 560-0043, Japan \\ 
{\bf E-mail: sumi@math.sci.osaka-u.ac.jp}\\ 
http://www.math.sci.osaka-u.ac.jp/\textasciitilde sumi/welcomeou-e.html
\date{}
}
\maketitle
\begin{abstract}
We investigate the 
%i.i.d. 
random dynamics of rational maps and the dynamics 
of semigroups of rational maps on the Riemann sphere $\CCI $. We show that regarding random complex dynamics of polynomials, generically, the chaos of the averaged system disappears at any point in $\CCI $,   
due to the automatic cooperation of the generators. 
We investigate the iteration and spectral properties of transition operators acting on the space of 
(H\"older) continuous functions on $\CCI.$  
We also investigate the stability and bifurcation of random complex dynamics. 
We show that the set of stable systems is open and dense in the space of random dynamical systems of polynomials.
Moreover, we prove that for a stable system, there exist only finitely many minimal sets, each minimal set is attracting, 
and the orbit of a H\"older continuous function on $\CCI $ under the transition operator 
tends exponentially fast to the finite-dimensional space $U$ of finite linear combinations of unitary eigenvectors of the  
transition operator. 
%Thus the spectrum of the transition operator acting on the space of H\"older continuous functions 
%has a gap between the set of unitary eigenvalues and the rest. 
Combining this with the perturbation theory for linear operators, 
we obtain that for a stable system constructed by a finite family of rational maps, 
the projection to the space $U$ depends real-analytically on the probability 
parameters. By taking  a partial derivative of the function of probability of tending to a minimal set with respect 
to a probability parameter, we introduce  a complex analogue of the Takagi function, which is a new concept.   
\end{abstract}
\section{Introduction}
\label{Intro}
In this paper, we investigate the independent and identically-distributed (i.i.d.) random dynamics of rational maps on the Riemann sphere $\CCI $ and the dynamics 
of rational semigroups (i.e., semigroups of non-constant rational maps 
where the semigroup operation is functional composition) on $\CCI .$ 

One motivation for research in complex dynamical systems is to describe some
 mathematical models on ethology. For 
 example, the behavior of the population 
 of a certain species can be described by the 
 dynamical system associated with iteration of a polynomial 
 $f(z)= az(1-z)$ 
% such that $f$ preserves the unit interval and 
% the postcritical set in the plane is bounded 
 (cf. \cite{D}). However, when there is a change in the natural environment,  
some species have 
 several strategies to survive in nature. 
From this point of view, 
 it is very natural and important not only to consider the dynamics 
of iteration, where the same survival strategy (i.e., function) is repeatedly applied, but also 
to consider random 
 dynamics, where a new strategy might be applied at each time step.  
 Another motivation for research in complex dynamics is Newton's method to 
 find a root of a complex polynomial,   
which often is expressed as the dynamics of a rational map $g$ on $\CCI $ with 
$\deg (g)\geq 2$, where $\deg (g)$ denotes the degree of $g.$  
 We sometimes use  computers to analyze such dynamics, and since 
 we have some errors at each step of the calculation in the computers, it is quite natural to investigate the 
 random dynamics of rational maps. 
In various fields, we have many mathematical models which are described by 
the dynamical systems associated with polynomial or rational maps. For each 
model, it is natural and important to consider a randomized model, since we always have 
some kind of noise or random terms in nature.  
The first study of random complex dynamics was given by J. E. Fornaess and  N. Sibony (\cite{FS}). 
They mainly investigated random dynamics generated by small perturbations of a single rational map.  
For research on random complex dynamics of quadratic polynomials, 
see \cite{Br1, Br2, BBR, Bu1, Bu2, GQL}.  
For research on random dynamics of polynomials (of general degrees), 
see the author's works \cite{SdpbpI, SdpbpIII, Ssugexp, Splms10, Srcddc}. 
  
  In order to investigate random complex dynamics, it is very natural to study the dynamics of 
  associated rational semigroups. 
In fact, it is a very powerful tool to investigate random complex dynamics, 
since random complex dynamics and the dynamics of rational semigroups are related to each other very deeply.   
The first study of dynamics of rational semigroups was 
conducted by
A. Hinkkanen and G. J. Martin (\cite{HM}),
who were interested in the role of the
dynamics of polynomial semigroups (i.e., semigroups of non-constant polynomial maps) while studying
various one-complex-dimensional
moduli spaces for discrete groups,
and
by F. Ren's group (\cite{GR}), 
 who studied 
such semigroups from the perspective of random dynamical systems.
Since the Julia set $J(G)$ of a finitely generated rational semigroup 
$G=\langle h_{1},\ldots, h_{m}\rangle $ has 
``backward self-similarity,'' i.e.,  
$J(G)=\bigcup _{j=1}^{m}h_{j}^{-1}(J(G))$ (see \cite[Lemma 0.2]{S4}),  
the study of the dynamics of rational semigroups can be regarded as the study of  
``backward iterated function systems,'' and also as a generalization of the study of 
self-similar sets in fractal geometry.  
For recent work on the dynamics of rational semigroups, 
see the author's papers \cite{S3}--\cite{Srcddc}, and 
\cite{St3, SS, SU1, SU2}. 

In this paper, by combining several results from \cite{Splms10} and many new ideas, 
we investigate the random complex dynamics 
and the dynamics of rational semigroups. 
 In the usual iteration dynamics of a single rational map $g$ with $\deg (g)\geq 2$, 
 we always have a non-empty chaotic part, i.e., in the Julia set $J(g)$ of $g$, which is a perfect set,  
 we have sensitive initial values and dense orbits. Moreover, 
 for any ball $B$ with $B\cap J(g)\neq \emptyset $, $g^{n}(B)$ expands as $n\rightarrow \infty.$ 
Regarding random complex dynamics, it is natural to ask the following question. 
Do we have a kind of ``chaos'' in the averaged system? Or do we have no chaos? 
How do many kinds of maps in the system interact? 
What can we say about stability and bifurcation? 
Since the chaotic phenomena hold even for a single rational map, 
one may expect that in random dynamics of rational maps, 
%mostly the system has a heavy chaos. 
most systems would exhibit a great amount of chaos. 
However, it turns out that this is not true.      
One of the main purposes of this paper is to prove that for a generic system of random complex 
dynamics of polynomials, many kinds of maps in the system ``automatically'' cooperate so that  
they make the chaos of the averaged system disappear at any point in the phase space, even though the dynamics of each map in the system 
 have a chaotic part (Theorems~\ref{t:pmsodint}, \ref{t:pmsod}). We call this phenomenon the ``{\bf cooperation principle}''. 
Moreover, we prove that for a generic system, we have a kind of stability 
(see Theorems~\ref{t:msmtaustint}, \ref{t:msmtaust}). 
 We remark that the chaos disappears in the $C^{0}$ ``sense'', but 
 under certain conditions, the chaos remains in the $C^{\beta }$ ``sense'', where 
 $C^{\beta }$ denotes the space of $\beta $-H\"older continuous functions with 
 exponent $\beta \in (0,1)$ (see Remark~\ref{r:holb}). 

To introduce the main idea of this paper,  
we let $G$ be a rational semigroup and denote by $F(G)$ the Fatou set of $G$, which is defined to be  
the maximal open subset of $\CCI $ where $G$ is equicontinuous with respect to the spherical distance on $\CCI $.    
We call $J(G):=\CCI \setminus F(G)$ the Julia set of $G.$  
The Julia set is backward invariant under each element $h\in G$, but 
might not be forward invariant. This is a difficulty of the theory of rational semigroups. 
Nevertheless, we utilize this as follows.  
The key to investigating random complex dynamics is to consider the 
following {\bf kernel Julia set} of $G$, which is defined by 
$J_{\ker }(G)=\bigcap _{g\in G}g^{-1}(J(G)).$ This is the largest forward 
invariant subset of $J(G)$ under the action of $G.$ Note that 
if $G$ is a group or if $G$ is a commutative semigroup, 
then $J_{\ker }(G)=J(G).$ 
However, for a general rational semigroup $G$ generated by a family of 
rational maps $h$ with $\deg (h)\geq 2$, it may happen that 
$\emptyset =J_{\ker }(G)\neq J(G) $.   

Let Rat be the space of all non-constant rational maps on the Riemann sphere $\CCI $, 
endowed with the distance $\kappa $ which is defined by 
$\kappa (f,g):=\sup _{z\in \CCI }d(f(z),g(z))$, where $d$ denotes the spherical distance on $\CCI .$  
Let Rat$_{+}$ be the space of all rational maps $g$ with $\deg (g)\geq 2.$ Let 
${\cal P}$ be the space of all polynomial maps $g$ with $\deg (g)\geq 2.$   
Let $\tau $ be a Borel probability measure on Rat with compact support. 
We consider the i.i.d. random dynamics on $\CCI $ such that 
at every step we choose a map $h\in \mbox{Rat}$ according to $\tau .$ 
Thus this determines a time-discrete Markov process with time-homogeneous transition probabilities 
on the phase space 
$\CCI $ such that for each $x\in \CCI $ and 
each Borel measurable subset $A$ of $\CCI $, 
the transition probability 
$p(x,A)$ from $x$ to $A$ is defined as $p(x,A)=\tau (\{ g\in \Rat \mid g(x)\in A\} ).$ 
Let $G_{\tau }$ be the 
rational semigroup generated by the support of $\tau .$ 
Let $C(\CCI )$ be the space of all complex-valued continuous functions on $\CCI $ endowed with 
the supremum norm $\| \cdot \| _{\infty }.$  
Let $M_{\tau }$ be the operator on $C(\CCI )$ 
defined by $M_{\tau }(\varphi )(z)=\int \varphi (g(z)) d\tau (g).$ 
This $M_{\tau }$ is called the transition operator of the Markov process induced by $\tau .$ 
For a metric space $X$, let ${\frak M}_{1}(X)$ be the space of all 
Borel probability measures on $X$ endowed with the topology 
induced by weak convergence (thus $\mu _{n}\rightarrow \mu $ in ${\frak M}_{1}(X)$ if and only if 
$\int \varphi d\mu _{n}\rightarrow \int \varphi d\mu $ for each bounded continuous function $\varphi :X\rightarrow \RR $). 
Note that if $X$ is a compact metric space, then ${\frak M}_{1}(X)$ is compact and metrizable. 
For each $\tau \in {\frak M}_{1}(X)$, we denote by supp$\, \tau $ the topological support of $\tau .$  
Let ${\frak M}_{1,c}(X)$ be the space of all Borel probability measures $\tau $ on $X$ such that supp$\,\tau $ is 
compact.     
Let $M_{\tau }^{\ast }:{\frak M}_{1}(\CCI )\rightarrow {\frak M}_{1}(\CCI )$ 
be the dual of $M_{\tau }$.  
%where ${\frak M}_{1}(\CCI )$ denotes the space of all Borel probability measures on 
%$\CCI  $ endowed with the weak topology. 
This $M_{\tau }^{\ast }$ can be regarded as the ``averaged map'' 
on the extension ${\frak M}_{1}(\CCI )$ of $\CCI $ (see Remark~\ref{r:Phi}).  
We define the ``Julia set'' $J_{meas}(\tau )$ of 
the dynamics of $M_{\tau }^{\ast }$ as the set of all elements $\mu \in {\frak M}_{1}(\CCI )$ 
satisfying that for each neighborhood $B$ of $\mu $, $\{ (M_{\tau }^{\ast })^{n}|_{B}:B\rightarrow {\frak M}_{1}(\CCI )\} _{n\in \NN }$ 
is not equicontinuous on $B$ (see Definition~\ref{d:ytau}). 
For each sequence $\gamma =(\gamma _{1}, \gamma _{2},\ldots )\in (\Rat )^{\NN }$, 
we denote by $J_{\gamma }$ the set of non-equicontinuity of the sequence 
$\{ \gamma _{n}\circ \cdots \circ \gamma _{1}\} _{n\in \NN }$ with respect to the spherical distance on $\CCI .$  This $J_{\gamma }$ is called the Julia set of 
$\gamma .$  Let $\tilde{\tau }:=\otimes _{j=1}^{\infty }\tau \in {\frak M}_{1}((\Rat)^{\NN }).$ 
For a $\tau \in {\frak M}_{1,c}(\Rat)$, we denote by $U_{\tau }$ 
the space of all finite linear combinations of unitary eigenvectors of $M_{\tau }:C(\CCI )\rightarrow 
C(\CCI )$, where an eigenvector  is said to be unitary if the absolute value of 
the corresponding eigenvalue is equal to one. Moreover, 
we set ${\cal B}_{0,\tau }:= \{ \varphi \in C(\CCI )\mid M_{\tau }^{n}(\varphi )\rightarrow 0 \mbox{ as }n\rightarrow \infty \} .$ 
%(Theorem~\ref{t:mtauspec}). 
For a metric space $X$, we denote by $\Cpt(X)$ the space of all 
non-empty compact subsets of $X$ endowed with the Hausdorff metric.  
 For a rational semigroup $G$, we say that a non-empty compact subset $L$ of $\CCI $ is a minimal set for $(G,\CCI )$ 
if $L$ is minimal in 
$\{ C\in \Cpt(\CCI ) \mid  \forall g\in G, g(C)\subset C\} $ 
with respect to inclusion.  
Moreover, we set $\Min (G,\CCI ):= \{ L \in \Cpt(\CCI )\mid L \mbox{ is a minimal set for } (G,\CCI )\} .$ 
For a $\tau \in {\frak M}_{1}(\Rat)$, let $S_{\tau }:=\bigcup _{L\in \Min(G_{\tau },\CCI)}L.$  
For a $\tau \in {\frak M}_{1}(\Rat)$, let $\G _{\tau }:=\mbox{supp}\, \tau (\subset \Rat).$ 
 In \cite{Splms10}, the following two theorems were obtained. 
\begin{thm}[Cooperation Principle I, see Theorem 3.14 in \cite{Splms10}]
\label{t:thmA}
Let $\tau \in {\frak M}_{1,c}({\emRat}).$ Suppose   
 that $J_{\ker }(G_{\tau })=\emptyset .$ Then  
 $J_{meas}(\tau )=\emptyset .$ Moreover, for $\tilde{\tau }$-a.e. $\gamma \in (\emRat) ^{\NN }$, 
 the $2$-dimensional Lebesgue measure of $J_{\gamma }$ is equal to zero. 
\end{thm}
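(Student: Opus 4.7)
My plan is to derive both conclusions from a common mechanism: the hypothesis $J_{\ker }(G_{\tau }) = \emptyset $ forces a finite ``covering by good words'' of $\CCI $, and a Borel--Cantelli type argument on blocks of random maps then funnels generic trajectories into the Fatou set $F(G_{\tau })$, where normality of $G_{\tau }$ delivers the required equicontinuity and measure statements.

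I would begin by reformulating $J_{meas}(\tau ) = \emptyset $ analytically: by the duality between $M_{\tau }:C(\CCI ) \to C(\CCI )$ and $M_{\tau }^{\ast }:{\frak M}_{1}(\CCI ) \to {\frak M}_{1}(\CCI )$, the emptiness of $J_{meas}(\tau )$ is equivalent to the family $\{ M_{\tau }^{n}\varphi \} _{n \geq 0}$ being equicontinuous on $\CCI $ for every $\varphi \in C(\CCI )$. I then exploit the hypothesis. For each $z \in \CCI $, $J_{\ker }(G_{\tau }) = \emptyset $ produces some $g_{z} = h_{z,n_{z}} \circ \cdots \circ h_{z,1} \in G_{\tau }$ with $h_{z,j} \in \Gamma _{\tau }$ and $g_{z}(z) \in F(G_{\tau })$. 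Continuity provides a neighborhood $W_{z} \ni z$ and a compact $K_{z} \subset F(G_{\tau })$ with $g_{z}(\overline{W_{z}}) \subset K_{z}$, and compactness of $\CCI $ extracts a finite subcover $W_{1},\ldots ,W_{k}$ with associated words $g_{i}$, lengths $n_{i}$, and compact targets $K_{i} \subset F(G_{\tau })$. Put $K := \bigcup _{i} K_{i}$ and $N := \max _{i} n_{i}$.

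Two facts now combine. (i) Since $K \subset F(G_{\tau })$ is compact and $F(G_{\tau })$ is forward-invariant under every $g \in G_{\tau }$, the family $G_{\tau } \cup \{ \text{id}\}$ is equicontinuous on a neighborhood of $K$ with values in $F(G_{\tau })$; in particular, any random composition $\gamma _{n} \circ \cdots \circ \gamma _{m+1}$ starting in $K$ preserves a uniform modulus of continuity valid for all $n > m$. (ii) Since each $h_{i,j} \in \Gamma _{\tau } = \text{supp}\, \tau $, every product-neighborhood of $(h_{i,1},\ldots ,h_{i,n_{i}})$ has positive $\tau ^{n_{i}}$-measure, so there is a single $p > 0$ with the uniform property that in any window of $N$ consecutive random maps the probability is at least $p$ that the composition uniformly approximates some $g_{i}$ closely enough to carry the current trajectory position (lying in some $W_{i}$) into a small neighborhood of $K_{i}$ inside $F(G_{\tau })$. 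Given $\varphi $ and $\epsilon > 0$, I pick $\ell $ with $(1-p)^{\ell }\cdot 2\| \varphi \| _{\infty } < \epsilon /2$. On the ``success'' event of at least one good window within the first $\ell $ blocks --- of probability $\geq 1-(1-p)^{\ell }$ --- nearby orbits of $z$ and $z'$ both enter a compact subset of $F(G_{\tau })$ and remain uniformly close for all further iterates by (i), so the $\varphi $-values differ by at most $\epsilon /2$ once $d(z,z')$ is small (use the Lebesgue number of the cover to place $z,z'$ in a common $W_{i}$, and compactness of $\Gamma _{\tau }$ to bound pre-success oscillations over the at most $\ell N$ initial maps). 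The no-success event contributes at most $(1-p)^{\ell }\cdot 2\| \varphi \| _{\infty } < \epsilon /2$. Averaging over $\tilde{\tau }$ gives $\sup _{n} |M_{\tau }^{n}\varphi (z) - M_{\tau }^{n}\varphi (z')| < \epsilon $, hence $J_{meas}(\tau ) = \emptyset $.

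For the Lebesgue-measure statement I upgrade this to a sample-path claim via Fubini. For each fixed $z$, the same block reasoning gives $\tilde{\tau }(\{ \gamma : z \in J_{\gamma }\}) = 0$: with probability $1$, some random window maps a fixed neighborhood of $z$ into $F(G_{\tau })$, after which (i) forces the tail $\{ \gamma _{n} \circ \cdots \circ \gamma _{1}\} _{n}$ to be equicontinuous at $z$. Fubini then yields
\[ \int _{(\Rat )^{\NN }} \text{Leb}_{2}(J_{\gamma })\, d\tilde{\tau }(\gamma ) = \int _{\CCI } \tilde{\tau }(\{ \gamma : z \in J_{\gamma }\})\, d\text{Leb}_{2}(z) = 0, \]
so $\text{Leb}_{2}(J_{\gamma }) = 0$ for $\tilde{\tau }$-a.e.\ $\gamma $. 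The main obstacle throughout is the \emph{uniformity in the starting point} needed to iterate failed blocks: the success probability $p$ must be bounded below independently of the current random position, which is exactly what the finite cover (together with the compactness of $\CCI $ and $\Gamma _{\tau }$) delivers. Threading this uniformity through the block iteration, while simultaneously using normality of $G_{\tau }$ on $F(G_{\tau })$ to control the post-success evolution, is the core technical point.
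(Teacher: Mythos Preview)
The paper does not contain its own proof of this theorem; it is quoted from \cite{Splms10} (Theorem~3.14 there), so there is no in-paper proof to compare against directly. That said, your approach is correct and is precisely the mechanism underlying the cited reference: the finite cover $\{W_i\}$ extracted from $J_{\ker}(G_\tau)=\emptyset$, the uniform positive probability $p>0$ per block coming from $\mbox{supp}\,\tau=\Gamma_\tau$, the geometric tail $(1-p)^\ell$, and normality of $G_\tau$ on compact subsets of $F(G_\tau)$ for post-success control. The same skeleton (finite words $\beta^j$, neighborhoods $V_j\subset\Gamma_\tau^r$, $a=\max_j\tau^r(\Gamma_\tau^r\setminus V_j)<1$, and the decomposition $\Gamma_\tau^{\NN}=\amalg A(j)\amalg B(n-1)$) appears explicitly in this paper's proof of Theorem~\ref{t:utauca}, confirming that your argument is aligned with the author's methods.

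One point worth tightening in your write-up: the reduction ``$J_{meas}(\tau)=\emptyset \Longleftrightarrow \{M_\tau^n\varphi\}_n$ equicontinuous for every $\varphi$'' uses that each equicontinuous, uniformly bounded family $\{M_\tau^n\varphi\}_n$ is relatively compact in $C(\CCI)$, so that $\mu\mapsto\sup_n|\mu(M_\tau^n\varphi)-\mu_0(M_\tau^n\varphi)|$ is continuous in the weak topology; you should state this explicitly. Also, when iterating failed blocks you need $\gamma_{kN,1}(z)$ and $\gamma_{kN,1}(z')$ to lie in a \emph{common} $W_i$ at each block start (not merely each in some $W_i$); your Lebesgue-number remark handles this, but the dependence of the required closeness on $\ell$ (hence on $\varepsilon$) should be made transparent. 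The Fubini argument for the Lebesgue-measure statement is standard and correct.
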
   
\begin{thm}[Cooperation Principle II: Disappearance of Chaos, see Theorem 3.15 in \cite{Splms10}]
\label{t:thmB}\ \\ 
Let $\tau \in {\frak M}_{1,c}(\emRat).$ Suppose that  
$J_{\ker }(G_{\tau })=\emptyset $ and $J(G_{\tau })\neq \emptyset $.  
Then the following {\em (1)(2)(3)} hold. 
\begin{itemize}
\item[{\em (1)}]
There exists a direct sum decomposition 
$C(\CCI )= U_{\tau }\oplus {\cal B}_{0,\tau }$.  
Moreover, $1\leq \dim _{\CC } U_{\tau }<\infty $ and ${\cal B}_{0,\tau }$ is a closed subspace 
of $C(\CCI ).$   
%we completely investigate the structure of ${\cal U}_{\tau }$ and the 
%set of unitary eigenvalues of $M_{\tau }$ (Theorem~\ref{t:mtauspec}). 
Furthermore, each element of $U_{\tau }$ is locally constant on 
$F(G_{\tau })$. 
%(Theorem~\ref{t:mtauspec}-\ref{t:mtauspec2}). 
Therefore each element of $U_{\tau }$ is a continuous function 
on $\CCI $ which varies only on the Julia set $J(G_{\tau }).$  
\item[{\em (2)}]
For each $z\in \CCI $, there exists a Borel subset ${\cal A}_{z}$ 
of $(\emRat)^{\NN }$ with $\tilde{\tau }({\cal A}_{z})=1$ with the following properties 
{\em (a)} and {\em (b)}.
%\begin{itemize}
{\em (a)} For each $\gamma =(\gamma _{1},\gamma _{2},\ldots )\in {\cal A}_{z}$, 
there exists a number $\delta =\delta (z,\gamma )>0$ such that 
$\mbox{diam}(\gamma _{n}\circ \cdots \circ \gamma _{1}(B(z,\delta )))\rightarrow 0$ as 
$n\rightarrow \infty $, where diam denotes the diameter with respect to the 
spherical distance on $\CCI $, and $B(z,\delta )$ denotes the ball with center $z$ and radius 
$\delta .$ {\em (b)} For each $\gamma =(\gamma _{1},\gamma _{2},\ldots )\in {\cal A}_{z}$, 
$d(\gamma _{n}\circ \cdots \circ \gamma _{1}(z),S_{\tau })\rightarrow 0$ as $n\rightarrow \infty .$  
%\end{itemize}
\item[{\em (3)}] 
%There exists at least one and at most finitely many minimal sets for $(G_{\tau },\CCI )$, 
We have $1\leq \sharp \emMin(G_{\tau },\CCI )<\infty .$  
%\item[{\em (4)}] 
%Let $S_{\tau }$ be the union of minimal sets for $(G_{\tau },\CCI )$. Then 
%For each $z\in \CCI $ there exists a Borel subset ${\cal C}_{z}$ of $(\emRat)^{\NN }$ with 
%$\tilde{\tau }({\cal C}_{z})=1$ such that for each $\gamma =(\gamma _{1},\gamma _{2},\ldots )\in {\cal C}_{z}$, 
%$d(\gamma _{n}\cdots \gamma _{1}(z),S_{\tau })\rightarrow 0$ as $n\rightarrow \infty .$ 
\end{itemize}
\end{thm}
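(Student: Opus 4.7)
The plan is to take Theorem~\ref{t:thmA} as input and bootstrap its two conclusions---that $J_{meas}(\tau )=\emptyset $ (equivalently, $\{ M_{\tau }^{n}\varphi \} _{n}$ is equicontinuous on $\CCI $ for every $\varphi \in C(\CCI )$) and that $J_{\g }$ has zero $2$-dimensional Lebesgue measure for $\tilde{\tau }$-a.e.\ $\g $---into the three conclusions of Theorem~\ref{t:thmB}. Logically I would obtain the fibrewise almost-sure statement (2) first, combine it with the equicontinuity to produce the spectral decomposition (1), and finally extract the finiteness of $\Min (G_{\tau },\CCI )$ in (3).

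For (2), fix $z\in \CCI $. The hypothesis $J_{\ker }(G_{\tau })=\emptyset $ means that for every $w\in J(G_{\tau })$ there is some $h_{w}\in G_{\tau }$ with $h_{w}(w)\in F(G_{\tau })$; expressing $h_{w}$ as a word in $\G _{\tau }=\mbox{supp}\,\tau $, this corresponds to a cylinder in $(\Rat )^{\NN }$ of positive $\tilde{\tau }$-measure, and by compactness of $J(G_{\tau })$ the length of such words and the corresponding cylinder probabilities can be bounded uniformly in $w$. A second Borel--Cantelli argument applied to disjoint blocks of the i.i.d.\ sequence then shows that $\tilde{\tau }$-a.e.\ $\g $ has $\g _{n}\cdots \g _{1}(z)\in F(G_{\tau })$ for some finite $n$. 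Forward invariance of $F(G_{\tau })$ under each element of $G_{\tau }$ keeps the orbit in $F(G_{\tau })$ thereafter, and equicontinuity of $G_{\tau }$ on $F(G_{\tau })$ makes $\{ \g _{k}\cdots \g _{n+1}\} _{k>n}$ a normal family near $\g _{n}\cdots \g _{1}(z)$; combined with the vanishing of the $2$-dimensional Lebesgue measure of $J_{\g }$ (which forces every subsequential limit of this non-autonomous iteration to be locally constant), this produces $\delta (z,\g )>0$ with $\mbox{diam}(\g _{k}\cdots \g _{1}(B(z,\delta )))\to 0$, proving (a). For (b), any accumulation point $w$ of the orbit lies in a nonempty $G_{\tau }$-forward invariant compact set, and Zorn's lemma inside $\{ C\in \Cpt (\CCI )\mid g(C)\subset C\ \forall g\in G_{\tau }\} $ extracts a minimal set containing $w$; the contraction from (a) then forces the whole orbit to converge to a single such minimal set.

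For (1), equicontinuity of $\{ M_{\tau }^{n}\} $ together with Arzel\`a--Ascoli makes every orbit $\{ M_{\tau }^{n}\varphi \} $ relatively compact in $C(\CCI )$, placing $M_{\tau }$ in the Jacobs--de Leeuw--Glicksberg framework and producing a topological direct sum $C(\CCI )=\overline{U_{\tau }}\oplus {\cal B}_{0,\tau }$ with ${\cal B}_{0,\tau }$ closed. Local constancy of each unitary eigenvector $\varphi $ (with $M_{\tau }\varphi =\l \varphi $, $|\l |=1$) on $F(G_{\tau })$ follows from the pointwise triangle inequality $|\varphi (z)|=|M_{\tau }\varphi (z)|\leq M_{\tau }(|\varphi |)(z)$: iterating and using $\| M_{\tau }^{n}(|\varphi |)\| _{\infty }\leq \| \varphi \| _{\infty }$ saturates the inequality and forces the points $\{ \varphi (g(z))\} _{g\in \mbox{supp}\,\tau }$ to be collinear with $\l \varphi (z)$ in $\CC $; propagating this identity along $G_{\tau }$-trajectories and invoking equicontinuity on each Fatou component then forces $\varphi $ to be constant on each such component. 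Finite-dimensionality of $U_{\tau }$ and part (3) are extracted in parallel: by (2)(b), the restriction map $U_{\tau }\to C(S_{\tau })$ is injective, while $M_{\tau }^{\ast }$-invariant probability measures on distinct minimal sets are linearly independent in $U_{\tau }^{\ast }$, giving $\sharp \Min (G_{\tau },\CCI )\leq \dim _{\CC }U_{\tau }<\infty $; the finiteness of $\dim _{\CC }U_{\tau }$ itself follows from compactness of orbit closures in $C(\CCI )$ combined with a peripheral-spectrum analysis of $M_{\tau }$ restricted to a neighborhood of $S_{\tau }$.

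The main obstacle I foresee is the apparent circularity between (1) and (3): the cleanest route to $\dim _{\CC }U_{\tau }<\infty $ passes through finitely many minimal sets, while the cleanest proof of $\sharp \Min (G_{\tau },\CCI )<\infty $ counts linearly independent $M_{\tau }^{\ast }$-invariant measures inside $U_{\tau }^{\ast }$. Breaking this loop requires proving local constancy of every unitary eigenvector on $F(G_{\tau })$ \emph{before} any finiteness is claimed, using (2)(b) to obtain the injection $U_{\tau }\hookrightarrow C(S_{\tau })$, and then extracting both finiteness statements simultaneously via a careful analysis of the peripheral spectrum of $M_{\tau }$ in a neighborhood of $S_{\tau }$.
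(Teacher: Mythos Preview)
The paper does not contain a proof of this theorem: it is stated in the introduction (and restated in the preliminaries as Theorem~\ref{t:mtauspec}) purely as a quotation of \cite[Theorem~3.15]{Splms10}, and no argument is reproduced here. So there is no ``paper's own proof'' against which to compare your attempt.

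That said, your outline has the right overall architecture (Borel--Cantelli to enter $F(G_{\tau })$, Jacobs--de Leeuw--Glicksberg for the splitting, counting $M_{\tau }^{\ast }$-invariant measures for $\sharp \Min $), but two concrete steps would not go through as written. In~(2)(b) you assert that any accumulation point of the orbit $\{ \gamma _{n,1}(z)\} $ lies in a $G_{\tau }$-forward-invariant compact set and then apply Zorn's lemma; but the $\omega $-limit set along a \emph{single} sequence $\gamma $ is invariant only under the shifted sequence $\sigma ^{n}\gamma $, not under every element of $G_{\tau }$, so the minimality argument does not apply to it. In~(2)(a) the implication ``$\mbox{Leb}(J_{\gamma })=0$ forces every subsequential limit of $\{ \gamma _{k}\cdots \gamma _{n+1}\} $ to be locally constant'' is false in general: already for a single polynomial with a Siegel disk of Diophantine rotation number the Julia set has zero area while the limit functions on the disk are non-constant rotations, and nothing in your argument excludes this along a particular~$\gamma $.

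The route actually taken in \cite{Splms10}, as reflected in Theorem~\ref{t:mtauspec}-\ref{t:mtauspec4} and reused repeatedly in the present paper (e.g.\ in the proof of Theorem~\ref{t:msmtaust}), passes through the set $W=\bigcup \{ A\in \mbox{Con}(F(G_{\tau })): A\cap S_{\tau }\neq \emptyset \} $: one shows that the orbit almost surely reaches $W$, and then analyses $M_{\tau }$ on the \emph{finite-dimensional} space $C_{W}(W)$ of locally constant functions on~$W$. This simultaneously yields the convergence to $S_{\tau }$, the local constancy of unitary eigenvectors, and $\dim _{\CC }U_{\tau }<\infty $, and is precisely what dissolves the circularity between (1) and (3) that you correctly flagged at the end of your proposal.
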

\begin{rem}
If $\tau \in {\frak M}_{1}(\Rat)$ and $\G _{\tau } \cap \Ratp \neq \emptyset $, then 
$\sharp J(G_{\tau })=\infty .$  
\end{rem}
Theorems~\ref{t:thmA} and \ref{t:thmB} mean that if all the maps in the support of $\tau $ cooperate, the chaos of the averaged system 
disappears, even though the dynamics of each map of the system has a chaotic part.  
Moreover, Theorems~\ref{t:thmA} and \ref{t:thmB} describe new phenomena which can hold in random complex dynamics but 
cannot hold in the usual iteration dynamics of a single $h\in \mbox{Rat}_{+}.$ For example, 
for any $h\in \mbox{Rat}_{+}$, if we take a point $z\in J(h)$, where $J(h)$ denotes the Julia set of the semigroup generated by 
$h$, then the Dirac measure $\delta _{z}$ at $z$ belongs to $J_{meas}(\delta _{h})$, and 
for any ball $B$ with 
$B\cap J(h)\neq \emptyset $, $h^{n}(B)$ expands as $n\rightarrow \infty $. Moreover, for any 
$h\in \Ratp$,  
we have infinitely many minimal sets (periodic cycles) of $h.$ 

Considering these results, we have the following natural question: ``When is the kernel Julia set empty?'' 
In order to give several answers to this question, 
we say that a family $\{ g_{\lambda }\} _{\lambda \in \Lambda }$ of rational (resp. polynomial) maps 
 is a holomorphic family of 
rational (resp. polynomial) maps if $\Lambda $ is a finite dimensional complex manifold and 
the map $(z,\lambda )\mapsto g_{\lambda }(z)\in \CCI $ is holomorphic on $\CCI \times \Lambda .$  
In \cite{Splms10}, the following result was proved. 
(Remark. In \cite[Lemma 5.34, Definition 3.54-1]{Splms10}, 
$\Lambda $ should be connected.) 
\begin{thm}[Cooperation Principle III, see Theorem 1.7 and Lemma 5.34 from \cite{Splms10}]  
\label{t:cpIIIint}
Let $\tau \in {\frak M}_{1,c}({\cal P})$. 
Suppose that 
for each $z\in \CC $, there exists a holomorphic family $\{ g_{\lambda }\} _{\lambda \in \Lambda }$ 
of polynomial maps with $\bigcup _{\lambda \in \Lambda }\{ g_{\lambda }\} \subset \G _{\tau }$ 
such that $\Lambda $ is connected and $\lambda \mapsto g_{\lambda }(z)$ is non-constant on $\Lambda $. Then 
$J_{\ker }(G_{\tau })=\emptyset $, $J(G_{\tau })\neq \emptyset $ and all statements in 
Theorems~\ref{t:thmA} and  \ref{t:thmB} hold. 
\end{thm}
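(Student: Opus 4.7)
The strategy is to reduce the statement to Theorems~\ref{t:thmA} and \ref{t:thmB} by verifying their two hypotheses $J(G_{\tau })\neq \emptyset $ and $J_{\ker }(G_{\tau })=\emptyset $; the conclusion then follows at once. Non-emptiness of $J(G_{\tau })$ is immediate: for any $h\in \G _{\tau }\subset {\cal P}$ one has $\deg (h)\geq 2$, so the classical Julia set $J(h)$ is a non-empty perfect subset of $\CC $, and $J(h)\subset J(G_{\tau })$ since $\langle h\rangle \subset G_{\tau }$. In particular $\sharp J(G_{\tau })\geq 3$.

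The heart of the argument is to prove $J_{\ker }(G_{\tau })=\emptyset $ by contradiction. Suppose $z_{0}\in J_{\ker }(G_{\tau })$. Since $\G _{\tau }$ is a compact family of polynomials of degree at least $2$, the leading coefficients are bounded away from zero and the degrees are uniformly bounded, so a standard escape-at-infinity estimate produces $R>0$ with $\{z\in \CCI \mid |z|\geq R\}\subset F(G_{\tau })$; in particular $\infty \in F(G_{\tau })$ and $z_{0}\in \CC $. Apply the hypothesis to this $z_{0}$: there is a holomorphic family $\{g_{\lambda }\}_{\lambda \in \Lambda }\subset \G _{\tau }$ such that $\lambda \mapsto g_{\lambda }(z_{0})$ is non-constant on $\Lambda $. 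From $z_{0}\in J_{\ker }(G_{\tau })\subset g_{\lambda }^{-1}(J(G_{\tau }))$ for every $\lambda \in \Lambda $, the holomorphic map $\Lambda \ni \lambda \mapsto g_{\lambda }(z_{0})\in \CCI $ takes all its values in $J(G_{\tau })$; restricting to a one-dimensional holomorphic disk along which it remains non-constant and applying the classical open mapping theorem, I conclude that $J(G_{\tau })$ has non-empty interior in $\CCI $.

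The final step, which I expect to be the main obstacle, is to convert ``$J(G_{\tau })$ has non-empty interior'' into a contradiction with ``$\infty \in F(G_{\tau })$''. For this I would invoke the classical dichotomy for Julia sets of rational semigroups established in the author's earlier work on rational semigroups (in the range \cite{S3}--\cite{Srcddc}): for any rational semigroup $G$ with $\sharp J(G)\geq 3$, either $J(G)$ is nowhere dense in $\CCI $ or $J(G)=\CCI $. Since $\sharp J(G_{\tau })\geq 3$, the non-empty interior of $J(G_{\tau })$ forces $J(G_{\tau })=\CCI $, contradicting $\infty \in F(G_{\tau })$. Hence $J_{\ker }(G_{\tau })=\emptyset $, and since also $J(G_{\tau })\neq \emptyset $, both Theorem~\ref{t:thmA} and Theorem~\ref{t:thmB} apply to $\tau $, which completes the proof.
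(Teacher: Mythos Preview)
Your overall strategy is the right one and matches how the result is obtained in \cite{Splms10}: show $\infty \in F(G_{\tau })$ from compactness of $\G _{\tau }\subset {\cal P}$, then use the holomorphic family at a hypothetical point $z_{0}\in J_{\ker }(G_{\tau })$ together with the open mapping theorem to produce a non-empty open set, and derive a contradiction.

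However, your final step is wrong. The dichotomy ``$J(G)$ is nowhere dense or $J(G)=\CCI $'' is valid for iteration of a single rational map (where $J$ is completely invariant), but it is \emph{false} for general rational semigroups: there exist finitely generated rational semigroups $G$ with $\mbox{int}(J(G))\neq \emptyset $ and $F(G)\neq \emptyset $ simultaneously (see e.g.\ the discussion of backward self-similarity in \cite{HM,S3}). The point is that $J(G)$ is only backward invariant under each $g\in G$, not forward invariant, so an open set inside $J(G)$ cannot be propagated forward to fill $\CCI $.

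The fix is short and already implicit in your argument. You showed $g_{\lambda }(z_{0})\in J(G_{\tau })$ for all $\lambda $; but in fact, since $J_{\ker }(G_{\tau })$ is \emph{forward} invariant under every $g\in G_{\tau }$ (Remark~\ref{r:kjulia}(2)) and each $g_{\lambda }\in \G _{\tau }\subset G_{\tau }$, you actually have $g_{\lambda }(z_{0})\in J_{\ker }(G_{\tau })$ for all $\lambda $. The open mapping theorem then gives $\mbox{int}(J_{\ker }(G_{\tau }))\neq \emptyset $. Now Remark~\ref{r:kjulia}(3) states that $F(G_{\tau })\neq \emptyset $ forces $\mbox{int}(J_{\ker }(G_{\tau }))=\emptyset $ (this \emph{does} follow from Montel, because $J_{\ker }$ is forward invariant: every $g\in G_{\tau }$ maps an open $U\subset J_{\ker }(G_{\tau })$ into $J(G_{\tau })$, omitting the open set $F(G_{\tau })$, whence $U\subset F(G_{\tau })$, a contradiction). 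This yields the desired contradiction with $\infty \in F(G_{\tau })$, and the rest of your proof stands.
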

In this paper, regarding the previous question, we prove the following very strong results.  
To state the results,  
%let ${\cal Y}$ be a subset of Rat.  
%For a subset $\G $ of Rat, we denote by $\langle \G \rangle $ the rational semigroup generated by 
%$\G .$ 
%Let $\G \in \Cpt (\Rat )$ and  
%let $G=\langle \Gamma \rangle .$ 
we say that a $\tau \in {\frak M}_{1,c}(\Rat )$ is {\bf mean stable} if 
there exist non-empty open subsets $U,V$ of $F(G_{\tau })$ and a number $n\in \NN $ 
such that all of the following (I)(II)(III) hold: 
(I) $\overline{V}\subset U$ and $\overline{U}\subset F(G_{\tau }).$ 
(II) For each $\gamma =(\g _{1},\g _{2},\ldots )\in \Gamma _{\tau }^{\NN }$, 
$\gamma _{n}\circ \cdots \circ \g _{1}(\overline{U})\subset V.$ 
(III) For each point $z\in \CCI $, there exists an element 
$g\in G_{\tau }$ such that  
$g(z)\in U.$ 
% \end{itemize} 
%Note that this definition does not depend on the choice of a compact set $\Gamma $ which generates 
%$G.$  Moreover, for a $\Gamma \in \Cpt(\Rat)$, 
%we say that $\Gamma $ is mean stable if $\langle \Gamma \rangle $ is mean stable. 
%Furthermore, for a $\tau \in {\frak M}_{1,c}(\Rat)$, we say that $\tau $ is mean stable if $G_{\tau }$ is mean stable. 
We remark that if $\tau \in {\frak M}_{1,c}(\Rat )$ is mean stable, then $J_{\ker }(G_{\tau })=\emptyset .$ 
Thus if $\tau \in {\frak M}_{1,c}(\Rat )$ is mean stable and $J(G_{\tau })\neq \emptyset $, then 
$J_{\ker }(G_{\tau })=\emptyset $ and all statements in Theorems~\ref{t:thmA} and \ref{t:thmB} hold. 
Note also that by using \cite[Theorem 2.11]{Mi}, it is not so difficult to see that $\tau $ is mean stable if and only if 
%the cardinality of the set of all minimal sets for $(G_{\tau },\CCI )$ is finite 
$\sharp (\Min (G_{\tau },\CCI ))<\infty $  
and each $L\in \Min (G_{\tau },\CCI )$ is ``attracting'', i.e., 
there exists an open subset $W_{L}$ of $F(G_{\tau }) $ with $L\subset W_{L}$ and an $\epsilon >0$ such that for each $z\in W_{L}$ and for each 
$\g =(\g _{1},\g _{2},\ldots )\in \G _{\tau }^{\NN }$,  
$d(\g _{n}\circ \cdots \circ \g _{1}(z), L)\rightarrow 0$ 
and $\mbox{diam}(\g _{n}\circ \cdots \circ \g _{1}(B(z,\epsilon )))\rightarrow 0$ as $n\rightarrow \infty $ (see Remark~\ref{r:msallatt}).   
Therefore, if $\tau \in {\frak M}_{1,c}(\Ratp )$ is mean stable, 
then (1) $J_{meas }(\tau )=\emptyset $, 
(2) for $\tilde{\tau }$-a.e. $\g \in (\Ratp )^{\NN }$, 
the $2$-dimensional Lebesgue measure of $J_{\g }$ is zero, 
(3) for each $z\in \CCI $ there exists a Borel subset ${\cal C}_{z}$ of $(\Ratp )^{\NN }$ with 
$\tilde{\tau }({\cal C}_{z})=1$ such that 
 for each $\g \in {\cal C}_{z}$, $d(\gamma _{n}\circ \cdots \circ \gamma _{1}(z), S_{\tau })\rightarrow 0$ as 
 $n\rightarrow \infty $, 
(4) for $\tilde{\tau }$-a.e. $\gamma =(\g _{1}, \g _{2},\ldots )\in (\Ratp)^{\NN }$, for each $z\in \CCI \setminus J_{\gamma }$, 
we have $d(\gamma _{n}\circ \cdots \circ \g _{1}(z),S_{\tau })\rightarrow 0$ as $n\rightarrow \infty , $ 
(5) $S_{\tau }$ is a finite union of ``attracting minimal sets'', 
(6)(negativity of Lyapunov exponents) there exists a constant $c<0$ such that 
for each $z\in \CCI $ there exists a Borel subset ${\cal D}_{z}$ of $(\Ratp)^{\NN }$ with 
$\tilde{\tau }({\cal D}_{z})=1$ satisfying that for each $\gamma =(\gamma _{1},\gamma _{2},\ldots )\in {\cal D}_{z}$, 
we have $\limsup _{n\rightarrow \infty }\frac{1}{n}\log \| D(\gamma _{n}\circ \cdots \circ \gamma _{1})_{z}\| _{s}\leq c$, 
where $\| D(\gamma _{n}\circ \cdots \circ \gamma _{1})_{z}\| _{s}$ denotes the 
norm of the derivative of $\gamma _{n}\circ \cdots \circ \gamma _{1}$ at $z$ with respect to the spherical metric, 
 and (7) for the system generated by $\tau $, there exists a stability (Theorem~\ref{t:msmtaustint}).
Thus, in terms of averaged systems, 
the notion ``mean stability'' of random complex dynamics can be regarded as an analogy of ``hyperbolicity'' of the usual iteration dynamics of a single rational map.  
For a metric space $(X,d)$,  
%${\cal Y}$ be a closed subset of $\Rat .$ 
let ${\cal O}$ be the topology of ${\frak M}_{1,c}(X)$ 
such that 
$\mu _{n}\rightarrow \mu $ in $({\frak M}_{1,c}(X),{\cal O})$ 
as $n\rightarrow \infty $ if and only if 
(i) $\int \varphi d\mu _{n}\rightarrow \int \varphi d\mu $ for each 
bounded continuous function $\varphi :X \rightarrow \CC $, 
and (ii) $\G _{\mu _{n}}\rightarrow \G_{\mu }$ 
with respect to the Hausdorff metric in the space $\Cpt(X).$   
We say that a subset 
${\cal Y}$ of Rat satisfies condition $(\ast )$ if ${\cal Y}$ is closed in Rat and at least one of the following (1) and (2) holds: 
(1) for each $(z_{0},h_{0})\in \CCI \times {\cal Y}$, 
there exists a holomorphic family $\{ g_{\lambda }\} _{\lambda \in \Lambda }$ of 
rational maps with $\bigcup _{\lambda \in \Lambda }\{ g_{\lambda }\} \subset {\cal Y}$
and an element $\lambda _{0}\in \Lambda $, such that,  
$g_{\lambda _{0}}=h_{0}$ and $\lambda \mapsto g_{\lambda }(z_{0})$ is non-constant in any neighborhood of 
$\lambda _{0}.$ (2) ${\cal Y}\subset {\cal P}$ and for each $(z_{0},h_{0})\in \CC \times {\cal Y}$, 
there exists a holomorphic family $\{ g_{\lambda }\} _{\lambda \in \Lambda }$ of 
polynomial maps with $\bigcup _{\lambda \in \Lambda }\{ g_{\lambda }\} \subset {\cal Y}$  
and an element $\lambda _{0}\in \Lambda $ such that 
$g_{\lambda _{0}}=h_{0}$ and $\lambda \mapsto g_{\lambda }(z_{0})$ is non-constant in any neighborhood of 
$\lambda _{0}.$ For example, Rat, $\Ratp$, ${\cal P}$, and $\{ z^{d}+c\mid c\in \CC \} \ (d\in \NN , d\geq 2)$ 
satisfy condition $(\ast ).$  
%For a closed subset ${\cal Y}$ of Rat and 
%for an $\tau \in {\frak M}_{1,c}({\cal Y})$, let 
%$\G _{\tau }$ be the  topological support of $\tau .$ 
Under these notations, we prove the following theorem. 
\begin{thm}[Cooperation Principle IV, Density of Mean Stable Systems, see Theorem~\ref{t:pmsod}]
\label{t:pmsodint} 
Let ${\cal Y}$ be a subset of ${\cal P}$ satisfying condition $(\ast ).$ 
Then, we have the following. 
\begin{itemize}
\item[{\em (1)}]
The set $\{ \tau \in {\frak M}_{1,c}({\cal Y})\mid \tau \mbox{ is mean stable}\} $ 
is open and dense in $({\frak M}_{1,c}({\cal Y}),{\cal O}).$  
Moreover, 
the set 
$\{ \tau \in {\frak M}_{1,c}({\cal Y})\mid J_{\ker }(G_{\tau })=\emptyset , J(G_{\tau })\neq \emptyset \} $ 
contains 
$\{ \tau \in {\frak M}_{1,c}({\cal Y})\mid \tau \mbox{ is mean stable}\} $.  
\item[{\em (2)}] 
The set 
$\{ \tau \in {\frak M}_{1,c}({\cal Y})\mid \tau \mbox{ is mean stable},\ \sharp \G _{\tau }<\infty \} $ 
is dense in 
$({\frak M}_{1,c}({\cal Y}), {\cal O}).$ 
\end{itemize}
\end{thm}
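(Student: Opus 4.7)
The plan is to handle openness and density separately.

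\textbf{Openness.} Fix mean stable $\tau $ with witnessing open sets $U,V\subset F(G_{\tau })$ and $n\in \NN $. I claim the same triple $(U,V,n)$ witnesses mean stability of every $\tau '\in {\frak M}_{1,c}({\cal Y})$ sufficiently close to $\tau $ in ${\cal O}$. Condition (II) is the heart of the matter: the composition map $\Gamma _{\tau }^{n}\times \overline{U}\to \CCI $, $(\gamma _{1},\ldots ,\gamma _{n},z)\mapsto \gamma _{n}\cdots \gamma _{1}(z)$, is uniformly continuous on its compact domain, with image contained in the open set $V$ at positive distance from $\partial V$; hence any Hausdorff-small perturbation of $\Gamma _{\tau }$ preserves the inclusion $\gamma _{n}\cdots \gamma _{1}(\overline{U})\subset V$. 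Condition (III) follows by covering the compact $\CCI $ by finitely many open sets $g_{j}^{-1}(U)$ with $g_{j}\in G_{\tau }$ a finite composition from $\Gamma _{\tau }$; small perturbations of the factors yield compositions in $G_{\tau '}$ with preimages still covering $\CCI $. Condition (I) for $\tau '$ then follows since condition (II) for $\tau '$ forces all $G_{\tau '}$-orbits starting in $\overline{U}$ to remain in a compact subset of $\CCI $ (iterating (II) using $V\subset \overline{U}$), yielding equicontinuity and hence $\overline{U}\subset F(G_{\tau '})$.

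\textbf{Density.} Given $\tau \in {\frak M}_{1,c}({\cal Y})$ and a neighborhood $N$ of $\tau $ in ${\cal O}$, I plan to construct a mean stable $\tau '\in N$ with $\sharp \Gamma _{\tau '}<\infty $, simultaneously proving both density statements. Step 1: approximate $\tau $ by $\tau _{0}=\sum _{i=1}^{m}p_{i}\delta _{h_{i}}$ with $\{h_{1},\ldots ,h_{m}\}$ a finite net in $\Gamma _{\tau }$ and $p_{i}=\tau (B_{i})$ for a Borel partition $\{B_{i}\}$ of $\Gamma _{\tau }$ satisfying $h_i\in B_i$ and small diameter. Step 2: use condition $(\ast )$ to replace $\delta _{h_{1}}$ by a probability measure $\mu _{1}$ supported on a small piece of a holomorphic family $\{g_{\lambda }\}\subset {\cal Y}$ through $h_{1}$, chosen so that $\lambda \mapsto g_{\lambda }(z)$ is non-constant for every $z\in \CC $ (existence of such families is guaranteed by $(\ast )$ applied at each $z$, followed by generic patching); set $\tilde{\tau }=p_{1}\mu _{1}+\sum _{i\geq 2}p_{i}\delta _{h_{i}}$. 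By Theorem~\ref{t:cpIIIint}, $J_{\ker }(G_{\tilde{\tau }})=\emptyset $ and $J(G_{\tilde{\tau }})\neq \emptyset $, so Theorem~\ref{t:thmB}(3) gives $1\leq \sharp \Min (G_{\tilde{\tau }},\CCI )<\infty $.

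Step 3: perturb once more via $(\ast )$ so that each $L\in \Min (G_{\tilde{\tau }},\CCI )$ becomes attracting in the sense of the remark preceding the theorem. Since each $L\subset F(G_{\tilde{\tau }})$, the semigroup $G_{\tilde{\tau }}$ is equicontinuous on a neighborhood of $L$; adjusting a single generator within a holomorphic family in ${\cal Y}$ allows one to make some word in $G_{\tilde{\tau }}$ strictly contract a neighborhood of $L$ into $L$, and the finiteness of $\Min (G_{\tilde{\tau }},\CCI )$ allows this to be carried out for all minimal sets simultaneously with a small total perturbation. Step 4: discretize the arc supporting $\mu _{1}$ to obtain $\tau '$ with finite support, close to $\tilde{\tau }$ in ${\cal O}$; by the openness proved in Part (1), mean stability persists under this finite approximation, and $\tau '\in N$.

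\textbf{Main obstacle.} The principal obstacle is Step 3: promoting ``$J_{\ker }=\emptyset $ together with finitely many minimal sets'' to ``every minimal set is attracting''. While $J_{\ker }=\emptyset $ follows cleanly from Theorem~\ref{t:cpIIIint} applied to the enriched support, attraction requires local perturbation near each minimal set $L$ to force strict contraction along $L$. Equicontinuity of the semigroup on $F(G_{\tilde{\tau }})\supset L$ supplies uniform derivative bounds, and condition $(\ast )$ provides enough freedom to tilt these bounds into strict contraction with an arbitrarily small perturbation of a single generator, after which the openness of Part (1) delivers mean stability also in the finitely-supported approximation of Step 4.
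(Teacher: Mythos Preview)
Your openness argument is essentially correct and parallels the paper's Lemma~\ref{l:msmincons}, though the paper works with hyperbolic metrics on the Fatou components rather than directly reusing the triple $(U,V,n)$.

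Your density argument, however, has a genuine gap in Step~3, and in fact overlooks the structural feature that makes the polynomial case tractable. You assert that ``each $L\subset F(G_{\tilde{\tau}})$'', but $J_{\ker}(G_{\tilde{\tau}})=\emptyset$ does \emph{not} imply that every minimal set lies in the Fatou set: minimal sets can be $J$-touching (Lemma~\ref{l:Lclassify} and Example~\ref{ex:Jt}). Even for minimal sets that do lie in $F(G_{\tilde{\tau}})$, your mechanism for forcing attraction---``tilt equicontinuity bounds into strict contraction by perturbing a single generator''---is not an argument; equicontinuity does not supply derivative bounds, and sub-rotative minimal sets (Definition~\ref{d:Lclassify}) sit inside Siegel disks or Hermann rings where no small perturbation of a single generator produces contraction along the whole minimal set.

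The paper's route is quite different and avoids this obstacle entirely. The key observation is that since ${\cal Y}\subset {\cal P}$, the point $\{\infty\}$ is \emph{always} an attracting minimal set for $(G_{\tau},\CCI)$, for every $\tau$. One then invokes Theorem~\ref{t:msminrme} (built on Theorem~\ref{t:msminr} and Lemma~\ref{l:bebg}): starting from any $\tau$ with at least one attracting minimal set, one enlarges the support so that $\Gamma_{\tau}\subset\mathrm{int}(\Gamma')$. Any non-attracting minimal set for $\langle\Gamma'\rangle$ would have a bifurcation element, and Lemma~\ref{l:bebg} forces every bifurcation element to lie on $\partial\Gamma'$; but the bifurcation element inherited from the original non-attracting minimal set lies in $\Gamma_{\tau}\subset\mathrm{int}(\Gamma')$, a contradiction. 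Thus enlarging the support \emph{destroys} non-attracting minimal sets rather than perturbing them into attracting ones, and a finite-support approximation (using openness) completes the proof. Your Step~3 attempts to achieve attraction by local perturbation, whereas the actual mechanism is global: enlarging the support forces non-attracting minimal sets to be absorbed.
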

We remark that in the study of iteration of a single rational map, we have a very famous conjecture 
(HD conjecture, see \cite[Conjecture 1.1]{Mc}) 
which states that hyperbolic rational maps are dense in the space of rational maps. 
Theorem~\ref{t:pmsodint} solves this kind of problem (in terms of averaged systems) in the study of random dynamics of complex polynomials. 
We also prove the following result. 
\begin{thm}[see Corollary~\ref{c:msminfull}]
\label{t:msminfullint}
Let ${\cal Y}$ be a subset of $\emRatp$ satisfying condition $(\ast ).$ Then, 
the set 
$$\{ \tau \in {\frak M}_{1,c}({\cal Y})\mid \tau \mbox{ is mean stable}\} 
\cup \{ \rho \in {\frak M}_{1,c}({\cal Y})\mid \emMin(G_{\rho },\CCI )=\{ \CCI \} ,J(G_{\rho })=\CCI \} $$  
is dense in $({\frak M}_{1,c}({\cal Y}),{\cal O}).$ 
\end{thm}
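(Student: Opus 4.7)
The plan is to prove density by fixing an arbitrary $\tau \in {\frak M}_{1,c}({\cal Y})$ and approximating it in $({\frak M}_{1,c}({\cal Y}),{\cal O})$ by a measure lying in one of the two sets, dichotomizing on whether $J(G_{\tau}) = \CCI$. If $J(G_{\tau}) \neq \CCI$, so that $F(G_{\tau}) \neq \emptyset$, I would follow the argument behind Theorem~\ref{t:pmsodint}: its mechanism---condition $(\ast)$ combined with a non-empty Fatou set---adapts once we replace ${\cal P}$ by ${\cal Y}\subset\Ratp$, because the holomorphic families supplied by condition $(\ast)$ let one first perturb a generator of $G_{\tau}$ to obtain an element $g_{0}$ with an attracting periodic point inside $F(G_{\rho})$, and then enlarge the perturbation (again by condition $(\ast)$ and compactness of $\CCI$) so that for every $z\in\CCI$ some $g\in G_{\rho}$ sends $z$ into the immediate basin of that attractor. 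Open sets $U,V$ witnessing the mean stability of $\rho$ then follow from the definition.

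If instead $J(G_{\tau})=\CCI$, I would take $\rho=(1-\epsilon)\tau+\epsilon\sigma$ with $\sigma$ built from condition $(\ast)$ as follows. For each $(z_{0},h_{0})\in\CCI\times\G_{\tau}$, condition $(\ast)$ supplies a holomorphic family $\{g_{\lambda}\}_{\lambda\in\Lambda}\subset{\cal Y}$ with $g_{\lambda_{0}}=h_{0}$ and $\lambda\mapsto g_{\lambda}(z_{0})$ non-constant near $\lambda_{0}$. By holomorphy, non-constancy at $z_{0}$ propagates to a neighborhood of $z_{0}$, and compactness of $\CCI$ then yields finitely many families $\{g^{(i)}_{\lambda}\}_{i=1}^{k}$ such that for every $z\in\CCI$ some index $i$ has $\lambda\mapsto g^{(i)}_{\lambda}(z)$ non-constant near the base point $\lambda^{(i)}_{0}$. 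Let $\sigma\in{\frak M}_{1,c}({\cal Y})$ be supported on the union of small open $\lambda$-neighborhoods of the $\lambda^{(i)}_{0}$ in each family; shrinking them keeps $\rho$ arbitrarily close to $\tau$ in ${\cal O}$. Since $\G_{\tau}\subset\G_{\rho}$, we retain $J(G_{\rho})\supset J(G_{\tau})=\CCI$. For the minimal-set claim, suppose $C\subsetneq\CCI$ is closed and $G_{\rho}$-forward-invariant and pick $z\in C$ together with an index $i$ for which $g^{(i)}_{\lambda}(z)$ is non-constant in $\lambda$. The open mapping theorem forces $\{g^{(i)}_{\lambda}(z):\lambda\text{ near }\lambda^{(i)}_{0}\}$ to contain a non-empty open subset of $\CCI$, which cannot be contained in the proper closed set $C$, contradicting forward-invariance. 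Hence no proper closed forward-invariant set exists, i.e., $\Min(G_{\rho},\CCI)=\{\CCI\}$.

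The main obstacle is the first case: producing an attracting periodic orbit inside $F(G_{\tau})$ by a perturbation that remains inside ${\cal Y}$ and close to $\tau$ in $({\frak M}_{1,c}({\cal Y}),{\cal O})$. This is exactly what Theorem~\ref{t:pmsodint} delivers in the polynomial setting, and the rational extension should go through because condition $(\ast)$ supplies the same holomorphic deformations and because ${\cal Y}$ is closed under them. The second case is mostly a compactness-plus-open-mapping packaging argument; the delicate point is ensuring $\sigma$ can be chosen within ${\frak M}_{1,c}({\cal Y})$, which is immediate from the closure condition on ${\cal Y}$ in $(\ast)$.
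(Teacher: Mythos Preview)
Your dichotomy on whether $J(G_{\tau})=\CCI$ is not the right split, and your first case contains a genuine gap. The paper divides instead according to whether $(G_{\tau},\CCI)$ possesses an attracting minimal set (Theorem~\ref{t:msminrme}) or not (Corollary~\ref{c:noattminme}), and these cases do not align with yours: there exist $\tau\in{\frak M}_{1,c}(\Ratp)$ with $F(G_{\tau})\neq\emptyset$ but with \emph{no} attracting minimal set---for instance $\tau=\delta_{h}$ for a rational $h$ whose Fatou components are all Siegel disks or Hermann rings. Such $\tau$ falls under your Case~1, where you propose to ``perturb a generator to obtain an element $g_{0}$ with an attracting periodic point inside $F(G_{\rho})$''. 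But condition~$(\ast)$ only lets you move the \emph{value} $g_{\lambda}(z_{0})$; it gives no mechanism for manufacturing an attracting cycle. Your appeal to Theorem~\ref{t:pmsodint} does not help either: the proof of that result (see Theorem~\ref{t:pmsod}) uses crucially that for ${\cal Y}\subset{\cal P}$ the point $\infty$ is \emph{already} an attracting minimal set for $(G_{\tau},\CCI)$, after which Theorem~\ref{t:msminrme} applies. There is no substitute for this step when ${\cal Y}\subset\Ratp$, and this is precisely why the paper does not assert density of mean stable systems in ${\frak M}_{1,c}(\Ratp)$ but only density of the union in the statement.

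The paper's resolution for $\tau$ with no attracting minimal set is Theorem~\ref{t:noattmin} and Corollary~\ref{c:noattminme}: one enlarges $\G_{\tau}$ so that every bifurcation element for the non-attracting minimal sets (which exist by Lemmas~\ref{l:Lclassify} and~\ref{l:bebg}) becomes an interior point of the new support, and then Lemma~\ref{l:bebg} forces the only minimal set of the enlarged system to be $\CCI$ itself. Thus the approximating $\rho$ lands in the \emph{second} set of the union, not the first. Your Case~2 argument is essentially this mechanism and is fine as far as it goes, but you need it to cover the entire class ``no attracting minimal set'', not merely the subclass ``$J(G_{\tau})=\CCI$''.
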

For the proofs of Theorems~\ref{t:pmsodint} and \ref{t:msminfullint}, 
we need to investigate and classify the minimal sets for $(\langle \G \rangle ,\CCI )$, 
where $\G \in \Cpt(\Rat )$, and $\langle \G \rangle $ denotes the 
rational semigroup generated by $\G $ (thus $\langle \G \rangle =\{ g_{i_{1}}\circ \cdots \circ g_{i_{n}}\mid 
n\in \NN , \forall g_{i_{j}}\in \G \} $) (Lemmas~\ref{l:Lclassify},\ref{l:bebg}).  
In particular, it is important to analyze the reason of instability for a non-attracting minimal set.  

%For each rational semigroup $G$, let $\Min (G,\CCI )$ be the set of 
%all minimal sets for $(G,\CCI ).$ 
For each $\tau \in {\frak M}_{1,c}(\Rat)$ and  
for each $L\in \Min(G_{\tau },\CCI )$, 
let $T_{L,\tau }$ be the function of probability of tending to $L.$ 
Namely, for each $z\in \CCI $, 
we set $T_{L,\tau }(z):=\tilde{\tau }(\{ \gamma =(\gamma _{1},\gamma _{2},\ldots )\in (\Rat)^{\NN } 
\mid d(\gamma _{n}\circ \cdots \circ \gamma _{1}(z), L)\rightarrow 0\ (n\rightarrow \infty )\} )$.  
We set $C(\CCI )^{\ast }:=\{ \rho :C(\CCI )\rightarrow \CC \mid \rho \mbox{ is linear and continuous}\} $ endowed 
with the weak$^{\ast }$-topology.  
We prove the following stability result.  
\begin{thm}[Cooperation Principle V, ${\cal O}$-Stability for Mean Stable Systems, see Theorem~\ref{t:msmtaust}] 
\label{t:msmtaustint}
%\ \\ 
Let $\tau \in {\frak M}_{1,c}(\emRat)$ be mean stable. 
Suppose $J(G_{\tau })\neq \emptyset .$ 
Then there exists a neighborhood $\Omega $ of $\tau $ in 
$({\frak M}_{1,c}(\emRat),{\cal O})$ such that 
all of the following hold. 
\begin{itemize}
\item[{\em (1)}]
For each $\nu \in \Omega $, $\nu $ is mean stable, $\sharp (J(G_{\nu }))\geq 3$, 
and $\sharp (\emMin(G_{\nu },\CCI ))=\sharp (\emMin(G_{\tau },\CCI )).$
\item[{\em (2)}]
For each $\nu \in \Omega $, $\dim _{\CC }(U_{\nu })=\dim _{\CC }(U_{\tau })\geq 1.$
\item[{\em (3)}]
The map $\nu \mapsto \pi _{\nu }$ and $\nu \mapsto U_{\nu }$ are continuous on $\Omega $, 
where $\pi _{\nu }:C(\CCI )\rightarrow U_{\nu }$ denotes the canonical projection (see Theorem~\ref{t:thmB}).   
More precisely, for each $\nu \in \Omega $, there exists a family $\{ \varphi _{j,\nu }\} _{j=1}^{q}$ 
of unitary eigenvectors of $M_{\nu }:C(\CCI )\rightarrow C(\CCI )$, 
where $q= \dim _{\CC }(U_{\tau })$, 
and a finite family $\{ \rho _{j,\nu }\} _{j=1}^{q}$ in $C(\CCI )^{\ast }$ such that 
all of the following hold.
\begin{itemize}
\item[{\em (a)}] 
$\{ \varphi _{j,\nu }\} _{j=1}^{q}$ is a basis of $U_{\nu }.$ 
\item[{\em (b)}] 
For each $j$, $\nu \mapsto \varphi _{j,\nu }\in C(\CCI )$ is continuous on $\Omega .$ 
\item[{\em (c)}]
For each $j$, $\nu \mapsto \rho _{j,\nu }\in C(\CCI )^{\ast }$ is continuous on $\Omega .$ 
\item[{\em (d)}] 
For each $(i,j)$ and each $\nu \in \Omega $, $\rho _{i,\nu }(\varphi _{j,\nu })=\delta _{ij}.$ 
\item[{\em (e)}] 
For each $\nu \in \Omega $ and each $\varphi \in C(\CCI )$, 
$\pi _{\nu }(\varphi )=\sum _{j=1}^{q}\rho _{j,\nu }(\varphi )\cdot \varphi _{j,\nu }.$  
\end{itemize}
\item[{\em (4)}] 
For each $L\in \emMin(G_{\tau },\CCI )$, 
there exists a continuous map $\nu \mapsto L_{\nu }\in \emMin(G_{\nu },\CCI )
\subset \emCpt(\CCI )$ on $\Omega $ 
with respect to the Hausdorff metric   
such that $L_{\tau }=L.$ 
Moreover, for each $\nu \in \Omega $, 
$\{ L_{\nu }\} _{L\in \emMin(G_{\tau },\CCI )}=\emMin (G_{\nu },\CCI ).$ 
Moreover, for each $\nu \in \Omega $ and for each 
$L,L'\in \emMin (G_{\tau },\CCI )$ with $L\neq L'$, 
we have $L_{\nu }\cap L'_{\nu }=\emptyset .$ 
Furthermore, for each $L\in \emMin (G_{\tau },\CCI )$, 
the map $\nu \mapsto T_{L_{\nu },\nu }\in (C(\CCI ),\| \cdot \| _{\infty })$ 
is continuous on $\Omega .$   
\end{itemize}
\end{thm}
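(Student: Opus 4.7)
The plan is to first establish openness of mean stability in $({\frak M}_{1,c}(\Rat),{\cal O})$, then deform the minimal sets continuously, and finally transport the spectral decomposition from Theorem~\ref{t:thmB} via perturbation theory for the transition operator.

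For part~(1), I would fix witnesses $U,V,n_{0}$ of the mean stability of $\tau $. Condition (II) together with the compactness of $\Gamma _{\tau }^{n_{0}}$ in the $\kappa $-metric yields a $\delta >0$ such that any $n_{0}$-fold composition of maps within $\kappa $-distance $\delta $ of elements of $\Gamma _{\tau }$ still sends $\overline{U}$ into $V$. Since $\nu \to \tau $ in ${\cal O}$ implies $\Gamma _{\nu }\to \Gamma _{\tau }$ in the Hausdorff $\kappa $-metric, (II) transfers to $\nu $. For (III), use compactness of $\CCI $ and the open cover $\{g^{-1}(U)\}_{g\in G_{\tau }}$ to pick finitely many $g_{1},\ldots ,g_{k}\in G_{\tau }$ covering $\CCI $; writing each as a word in $\Gamma _{\tau }$ and approximating by the corresponding words in $\Gamma _{\nu }$ shows $\bigcup _{i}(g'_{i})^{-1}(U)=\CCI $ for $\nu $ close, so $\nu $ is mean stable. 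The bound $\sharp J(G_{\nu })\geq 3$ follows because $J(G_{\tau })\neq \emptyset $ together with mean stability forces $\Gamma _{\tau }\cap \Ratp \neq \emptyset $ (otherwise $G_{\tau }$ would be Möbius and could not produce $J_{\ker }(G_{\tau })=\emptyset $ with $J(G_{\tau })\neq \emptyset $), hence $\Gamma _{\nu }\cap \Ratp \neq \emptyset $ for $\nu $ close and $\sharp J(G_{\nu })=\infty $.

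For part~(4), each $L\in \Min (G_{\tau },\CCI )$ has an attracting basin $W_{L}$, and by uniform attraction I can find an open neighborhood $\tilde{L}$ of $L$ with $\overline{\tilde{L}}\subset W_{L}$ and an integer $m$ such that $\gamma _{m}\cdots \gamma _{1}(\overline{\tilde{L}})\subset \tilde{L}$ for every $\gamma \in \Gamma _{\tau }^{\NN }$; by compactness this persists for $\nu $ close. Then $K_{\nu }:=\bigcap _{N\geq 1}\bigcup _{\gamma \in \Gamma _{\nu }^{\NN }}\gamma _{N}\cdots \gamma _{1}(\overline{\tilde{L}})$ is a nonempty $G_{\nu }$-forward invariant compact subset of $\tilde{L}$, containing a $G_{\nu }$-minimal set $L_{\nu }$; attraction gives uniqueness of $L_{\nu }$ inside $W_{L}$, and since $S_{\nu }\subset \bigcup _{L}W_{L}$ for $\nu $ close to $\tau $ every $\nu $-minimal set lies in some $W_{L}$, producing a bijection $L\mapsto L_{\nu }$. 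The Hausdorff continuity of $\nu \mapsto L_{\nu }$ is an immediate consequence of this attracting basin structure, as is disjointness $L_{\nu }\cap L'_{\nu }=\emptyset $ for $L\neq L'$.

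For parts~(2) and (3), apply Theorem~\ref{t:thmB} to each $\nu \in \Omega $ to obtain $C(\CCI )=U_{\nu }\oplus {\cal B}_{0,\nu }$ with $\dim _{\CC }U_{\nu }<\infty $. The unitary eigenvalues of $M_{\tau }$ form a finite subset of the unit circle (by finite-dimensionality of $U_{\tau }$ and $M_{\tau }$-invariance of $U_{\tau }$), isolated from the rest of the spectrum, so I can enclose them by a small rectifiable contour $\Sigma $ in an annulus $\{1-\varepsilon <|z|<1+\varepsilon \}$ disjoint from $\sigma (M_{\tau })\setminus \{\mbox{unitary eigenvalues}\}$. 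The Riesz projection
\[
\pi _{\nu }\;:=\;\frac{1}{2\pi i}\oint _{\Sigma }(z-M_{\nu })^{-1}\,dz
\]
then satisfies $\pi _{\nu }(C(\CCI ))=U_{\nu }$, and a continuity argument for $\nu \mapsto M_{\nu }$ (together with quasi-compactness of $M_{\nu }$ on an auxiliary Hölder space $C^{\beta }(\CCI )$, see Remark~\ref{r:holb}) shows $\pi _{\nu }$ depends continuously on $\nu $. This yields $\dim _{\CC }U_{\nu }=\dim _{\CC }U_{\tau }=:q$; choosing a basis $\{\varphi _{j,\tau }\}_{j=1}^{q}$ of $U_{\tau }$, the images $\varphi _{j,\nu }:=\pi _{\nu }(\varphi _{j,\tau })$ form a continuous family of bases of $U_{\nu }$, with biorthogonal continuous functionals $\rho _{j,\nu }\in C(\CCI )^{\ast }$ obtained from the dual projection. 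The continuity of $T_{L_{\nu },\nu }$ in the sup norm then follows because $T_{L_{\nu },\nu }\in U_{\nu }$ is characterized as the unique $M_{\nu }$-fixed function equal to $1$ on $L_{\nu }$ and $0$ on each other minimal set, and both conditions pass to the limit by the continuity already established.

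The main obstacle is the perturbation-theoretic step for (3). Strong operator convergence $M_{\nu }\to M_{\tau }$ on $C(\CCI )$ alone is insufficient to conclude convergence of Riesz projections around an isolated part of the spectrum; genuine resolvent continuity is needed. This is forced to pass through the Hölder-space framework where $M_{\tau }$ is quasi-compact (Ionescu-Tulcea--Marinescu type estimates), and one must check both that the essential spectral radius stays below $1$ uniformly in a neighborhood of $\tau $ and that $\nu \mapsto M_{\nu }$ is norm continuous on $C^{\beta }(\CCI )$; this is where the quantitative version of mean stability (uniform attraction to the $L_{\nu }$) is genuinely used.
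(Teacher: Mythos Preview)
Your arguments for parts (1) and (4) are essentially the same as the paper's (Lemma~\ref{l:msmincons} and Lemmas~\ref{l:Lattnear}, \ref{l:Lattnh}), although your justification of $\sharp J(G_{\nu })\geq 3$ via ``otherwise $G_{\tau }$ would be M\"obius'' is not quite right: a loxodromic M\"obius semigroup can have $J_{\ker }=\emptyset $ with $J\neq \emptyset $. The paper instead invokes \cite[Theorem 3.15-3]{Splms10} to get $\sharp J(G_{\tau })\geq 3$, and then persistence of repelling cycles via the implicit function theorem.

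The real problem is your treatment of (2)--(3). You propose Riesz projections on $C^{\beta }(\CCI )$ and correctly flag norm continuity of $\nu \mapsto M_{\nu }$ as the obstacle---but this obstacle is \emph{fatal}, not merely technical. The paper states explicitly (just before Theorem~\ref{t:kjemfspptint}) that even if $g_{n}\to g$ in $\Rat$, one does \emph{not} have $\|\varphi \circ g_{n}-\varphi \circ g\|_{\alpha }\to 0$ in general, so $\nu \mapsto M_{\nu }\in L(C^{\alpha }(\CCI ))$ fails to be norm continuous when the support $\Gamma _{\nu }$ moves. This is precisely why perturbation theory is applied in the paper only for fixed generators and varying probability weights (Theorem~\ref{t:kjemfsppt}), never for varying $\Gamma _{\nu }$. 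Your Riesz-projection route therefore cannot be completed as stated. A secondary issue: $\varphi _{j,\nu }:=\pi _{\nu }(\varphi _{j,\tau })$ would be a basis of $U_{\nu }$ but not unitary eigenvectors, which the statement requires.

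The paper avoids all of this by an explicit combinatorial construction. For each $L\in \Min (G_{\tau },\CCI )$ with $r_{L}=\dim _{\CC }\mathrm{LS}({\cal U}_{f,\tau }(L))$, it tracks the $r_{L}$ cyclically permuted sub-minimal sets $L_{j,\nu }\in \Min (G_{\nu }^{r_{L}},Q_{L,\nu })$ (continuous in $\nu $ by the attracting-basin argument), and builds the eigenvectors directly as $\varphi _{L,i,\nu }$ with $\varphi _{L,i,\nu }|_{Q_{L,\nu }}=\sum _{j}a_{L}^{ij}1_{L_{j,\nu }}$, extended to $\CCI $ via \cite[Theorem 3.15-9]{Splms10}. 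Continuity of $\nu \mapsto \varphi _{L,i,\nu }$ in $\|\cdot \|_{\infty }$ is proved by a direct $\epsilon $-argument using only weak convergence $\nu ^{l}\to \tau ^{l}$ on cylinder sets and the fact that these functions are locally constant on $F(G_{\nu })$. The dual functionals $\rho _{L,i,\nu }$ are built from ergodic averages $\omega _{L,j,\nu }$ on the $L_{j,\nu }$, with continuity again shown by hand. No spectral gap, no resolvent, no H\"older norm is used for this theorem.
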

By applying these results, we give a characterization of mean stability (Theorem~\ref{t:ABCD}). 

We remark that if $\tau \in {\frak M}_{1,c}(\Ratp)$ is mean stable and 
$\sharp (\Min (G_{\tau },\CCI ))>1$, 
then the averaged system of $\tau $ is stable (Theorem~\ref{t:msmtaustint}) and 
the system also has a kind of variety. 
Thus such a $\tau $ can describe a stable system which does not lose variety. 
This fact (with Theorems~\ref{t:pmsodint}, \ref{t:thmA}, \ref{t:thmB}) might be useful 
when we consider mathematical modeling in various fields. 

Let ${\cal Y}$ be a subset of $\Rat $ satisfying $(\ast )$. 
Let $\{ \mu _{t}\} _{t\in [0,1]}$ be a continuous family in $({\frak M}_{1,c}({\cal Y}),{\cal O}).$ 
%Suppose that if $t_{1}<t_{2}$ then $\G _{\mu _{t_{1}}}\subset \mbox{int}(\G _{\mu _{t_{2}}})$ and 
We consider the bifurcation of $\{ M_{\mu _{t }}\} _{t\in [0,1]}$ and 
$\{ G_{\mu _{t}}\} _{t\in [0,1]}.$ We prove the following result.
\begin{thm}[Bifurcation: see Theorem~\ref{t:bifur} and Lemmas~\ref{l:Lclassify}, \ref{l:bebg}] 
\label{t:bifurint}
Let ${\cal Y}$ be a subset of $\emRatp $ satisfying condition $(\ast ).$  
For each $t\in [0,1]$, let 
$\mu _{t}$ be an element of ${\frak M}_{1,c}({\cal Y}).$ 
Suppose that all of the following conditions {\em (1)--(4)} hold.
\begin{itemize}
\item[{\em (1)}] 
$t\mapsto \mu _{t}\in ({\frak M}_{1,c}({\cal Y}),{\cal O})$ is continuous on 
$[0,1].$ 
%\item[{\em (2)}]
%For each $t\in [0,1]$, there exists an open subset $V_{t}$ of ${\cal Y}$ such that 
%$\G _{\mu _{t}}=\overline{V_{t}}$ where the closure is taken in the space ${\cal Y}.$ 
\item[{\em (2)}] If $t_{1},t_{2}\in [0,1]$ and $t_{1}<t_{2}$, then 
$\G _{\mu _{t_{1}}}\subset \mbox{{\em int}}(\G _{\mu _{t_{2}}})$ with respect to 
the topology of ${\cal Y}.$ 
\item[{\em (3)}] 
$\mbox{{\em int}}(\G _{\mu _{0}})\neq \emptyset $ and $F(G _{\mu _{1}} )\neq \emptyset $. 
\item[{\em (4)}] 
$\sharp (\emMin (G _{\mu _{0}}, \CCI ))\neq 
\sharp (\emMin (G _{\mu _{1}}, \CCI )).$ 
\end{itemize}
Let $B:=\{ t\in [0,1)\mid \mu _{t} \mbox{ is not mean stable} \}.$ 
Then, we have the following.
\begin{itemize}
\item[{\em (a)}]
For each $t\in [0,1]$, $J_{\ker }(G _{\mu _{t}} )=\emptyset $ and 
$\sharp J(G _{\mu _{t}} )\geq 3$, and 
all statements in {\em \cite[Theorem 3.15]{Splms10}} and Theorems~\ref{t:thmA},\ref{t:thmB} (with $\tau =\mu _{t}$) hold. 
\item[{\em (b)}] We have    
$1\leq \sharp B \leq \sharp (\emMin(G _{\mu _{0}}, \CCI ))-\sharp (\emMin(G _{\mu _{1}}, \CCI )) <\infty .$
Moreover, for each $t\in B$, 
either {\em (i)}  
there exists an element $L\in \emMin (G_{\mu _{t}},\CCI )$, 
a point $z\in L$, and an element $g\in \partial \G_{\mu _{t}}(\subset {\cal Y})$ such that 
 $z\in L\cap J(G_{\mu _{t}})$ and $g(z)\in L\cap J(G_{\mu _{t}})$, or   
{\em (ii)} there exist an element $L\in \emMin (G_{\mu _{t}},\CCI )$, 
a point $z\in L$, and finitely many elements $g_{1},\ldots ,g_{r}\in \partial \G _{\mu _{t}}$ 
such that  
$L\subset F(G_{\mu _{t}})$ and $z$ belongs to a Siegel disk or a Hermann ring of 
$g_{r}\circ \cdots \circ g_{1}.$  
\item[{\em (c)}] 
For each $s\in (0,1]$ there exists a number $t_{s}\in (0,s)$ such that 
for each $t\in [t_{s},s]$, \\ 
$\sharp (\emMin (G_{\mu _{t}},\CCI ))$ $ =\sharp (\emMin (G_{\mu _{s}},\CCI )).$ 
\end{itemize} 
\end{thm}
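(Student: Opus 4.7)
My plan is to deduce (a) from condition $(\ast)$ and Theorem~\ref{t:cpIIIint}, to establish monotonicity of $N(t):=\sharp\emMin(G_{\mu_t},\CCI)$ on $[0,1]$, to derive (c) directly from this monotonicity, and finally to combine monotonicity with the classification Lemmas~\ref{l:Lclassify},~\ref{l:bebg} and the stability Theorem~\ref{t:msmtaustint} to prove (b).

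For (a), fix $t\in[0,1]$ and $z_{0}\in\CCI$. Since $\mbox{int}(\G_{\mu_{0}})$ is non-empty and contained in $\G_{\mu_{t}}$, pick $h_{0}\in\mbox{int}(\G_{\mu_{0}})$. Condition $(\ast)$ applied to $(z_{0},h_{0})$ yields a holomorphic family $\{g_{\lambda}\}_{\lambda\in\Lambda}\subset{\cal Y}$ with $g_{\lambda_{0}}=h_{0}$ and $\lambda\mapsto g_{\lambda}(z_{0})$ non-constant near $\lambda_{0}$; continuity of $\lambda\mapsto g_{\lambda}$ allows us to shrink $\Lambda$ to a neighborhood of $\lambda_{0}$ contained in $\G_{\mu_{0}}\subset\G_{\mu_{t}}$, so Theorem~\ref{t:cpIIIint} (or its analogue for rational subfamilies) gives $J_{\ker}(G_{\mu_{t}})=\emptyset$ and $J(G_{\mu_{t}})\neq\emptyset$; since ${\cal Y}\subset\Ratp$, this Julia set is in fact infinite, which is (a). For monotonicity of $N$, if $t_{1}<t_{2}$ then $G_{\mu_{t_{1}}}\subset G_{\mu_{t_{2}}}$, so every $L\in\emMin(G_{\mu_{t_{2}}},\CCI)$ is $G_{\mu_{t_{1}}}$-forward-invariant and, by Zorn's lemma, contains a $G_{\mu_{t_{1}}}$-minimal subset; disjointness of distinct elements of $\emMin(G_{\mu_{t_{2}}},\CCI)$ makes this assignment an injection $\emMin(G_{\mu_{t_{2}}},\CCI)\hookrightarrow\emMin(G_{\mu_{t_{1}}},\CCI)$, whence $N(t_{2})\leq N(t_{1})$.

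Part (c) is then immediate: $N$ is non-increasing and integer-valued, hence takes only finitely many values on $[0,1]$, and the level set $\{t\in[0,s]:N(t)=N(s)\}$ is a sub-interval ending at $s$; were it only $\{s\}$, $N$ would take infinitely many values on $[0,s)$. Thus some $t_{s}\in(0,s)$ has $N$ constant on $[t_{s},s]$. For the lower bound of (b), if $B=\emptyset$ then Theorem~\ref{t:msmtaustint}(1) together with continuity of $t\mapsto\mu_{t}$ in ${\cal O}$ makes $N$ locally constant on the connected interval $[0,1]$, hence constant, contradicting (4); so $\sharp B\geq 1$. The same local-constancy shows that every jump of the non-increasing integer-valued function $N$ lies in $B$, and because the total drop is exactly $N(0)-N(1)$, the number of jumps is at most $N(0)-N(1)$. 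The upper bound $\sharp B\leq N(0)-N(1)$ therefore reduces to the claim that every $t\in B$ is a genuine right-jump of $N$.

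This reduction is the main obstacle. At $t\in B$, $\mu_{t}$ is not mean stable, so by the characterization recalled below Theorem~\ref{t:pmsodint} some $L\in\emMin(G_{\mu_{t}},\CCI)$ fails to be attracting. Lemmas~\ref{l:Lclassify} and~\ref{l:bebg} classify each non-attracting $L$ as having precisely the marginal configuration (i) (a point $z\in L\cap J(G_{\mu_{t}})$ with $g(z)\in L\cap J(G_{\mu_{t}})$ for some $g\in\partial\G_{\mu_{t}}$) or (ii) (a Siegel disk or Herman ring of some composition, with $L\subset F(G_{\mu_{t}})$), which already supplies the dichotomy asserted in~(b). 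To upgrade this into a strict drop of $N$ I use the strict inclusion $\G_{\mu_{t}}\subset\mbox{int}(\G_{\mu_{t'}})$ for every $t'>t$ together with condition~$(\ast)$ applied at a witness point of (i) or (ii): one produces arbitrarily small perturbations inside $\G_{\mu_{t'}}$ that destroy the marginal invariance of $L$ under the enlarged semigroup. The key observation is that by minimality of $L$ for $G_{\mu_{t}}$, any $K\subset L$ which is $G_{\mu_{t'}}$-invariant is automatically $G_{\mu_{t}}$-invariant, hence equals $L$; so if the perturbation shows $L$ is not $G_{\mu_{t'}}$-invariant, no $G_{\mu_{t'}}$-minimal set can sit inside $L$, and the injection $\emMin(G_{\mu_{t'}},\CCI)\hookrightarrow\emMin(G_{\mu_{t}},\CCI)$ strictly loses $L$ from its image. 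The delicate step is realizing such a perturbation in each case; this requires standard perturbation theory for indifferent periodic cycles in~(i) and for Siegel disks/Herman rings in~(ii), and it is precisely here that Lemmas~\ref{l:Lclassify} and~\ref{l:bebg} do the essential case-by-case work.
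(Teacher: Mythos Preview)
Your argument for (c) fails: the claim ``if the level set $\{t\in[0,s]:N(t)=N(s)\}$ reduces to $\{s\}$ then $N$ takes infinitely many values on $[0,s)$'' is simply false (take $N\equiv N(s)+1$ on $[0,s)$). Monotonicity and integer-valuedness bound the number of jump points but say nothing about left-continuity of $N$, which is exactly what (c) asserts. The paper proves this separately (Lemma~\ref{l:bifmin}): since $J_{\ker}(G_{\mu_s})=\emptyset$, each $L_j\in\Min(G_{\mu_s},\CCI)$ meets $F(G_{\mu_s})$; a finite-cover argument then shows that for $t$ close enough to $s$ from below, every point of $\CCI$ can be mapped by an element of $G_{\mu_t}$ into a fixed $F(G_{\mu_s})$-neighborhood of $\bigcup_j L_j\subset F(G_{\mu_t})$, forcing $N(t)\leq N(s)$ and hence equality.

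Your argument for the upper bound in (b) reverses the relevant containment. You establish that no $G_{\mu_{t'}}$-minimal set $L'$ can be \emph{contained in} the non-attracting $L$. But the injection $\Min(G_{\mu_{t'}},\CCI)\hookrightarrow\Min(G_{\mu_t},\CCI)$ sends each $L'$ to some $K\subset L'$, so $L$ fails to lie in the image precisely when $L\not\subset L'$ for every $L'$, the opposite inclusion; your conclusion does not follow. The paper's argument (compare the proof of Theorem~\ref{t:msminr}) goes by contradiction: if $N(t')=N(t)$ the injection is a bijection and $L\subset L'$ for some $L'$. One then checks that the bifurcation element $g\in\G_{\mu_t}$ for $(\G_{\mu_t},L)$ is also a bifurcation element for $(\G_{\mu_{t'}},L')$, using $J(G_{\mu_t})\subset J(G_{\mu_{t'}})$ in the $J$-touching case and direct persistence of the Siegel/Herman configuration in the sub-rotative case. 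Since $g\in\G_{\mu_t}\subset\mbox{int}(\G_{\mu_{t'}})$ and $L'\neq\CCI$ (because $F(G_{\mu_{t'}})\supset F(G_{\mu_1})\neq\emptyset$), this contradicts Lemma~\ref{l:bebg}. Thus the decisive role of Lemma~\ref{l:bebg} is the single assertion that bifurcation elements lie on $\partial\G$, not some unspecified ``standard perturbation theory'' for rotation domains.
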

In Example~\ref{e:bifur}, an example to which we can apply the above theorem is given. 

We also investigate the spectral properties of $M_{\tau }$ acting on H\"older continuous functions on $\CCI $ and 
stability (see subsection~\ref{Spectral}). 
 For each $\alpha \in (0,1)$,  
let \\ $C^{\alpha }(\CCI ):= \{ \varphi \in C(\CCI )\mid \sup _{x,y\in \CCI , x\neq y} 
|\varphi (x)-\varphi (y)|/d(x,y)^{\alpha }<\infty \} $ 
 be the Banach space of all complex-valued $\alpha $-H\"{o}lder continuous functions on $\CCI $
endowed with the $\alpha $-H\"{o}lder norm $\| \cdot \| _{\alpha },$ 
where $\| \varphi \| _{\alpha }:= \sup _{z\in \CCI }| \varphi (z)| 
+\sup _{x,y\in \CCI ,x\neq y}|\varphi (x)-\varphi (y)|/d(x,y)^{\alpha }$ 
for each $\varphi \in C^{\alpha }(\CCI ).$ 

Regarding the space $U_{\tau }$, we prove the following. 
\begin{thm}
\label{t:utaucaint}
Let $\tau \in {\frak M}_{1,c}(\emRat)$. Suppose that 
$J_{\ker }(G_{\tau })=\emptyset $ and $J(G_{\tau })\neq \emptyset .$ 
Then, there exists an $\alpha \in (0,1)$ such that  
$U_{\tau }\subset C^{\alpha }(\CCI ).$ 
Moreover, for each $L\in \emMin(G_{\tau },\CCI )$, the function 
$T_{L,\tau }:\CCI \rightarrow [0,1]$ of probability of tending to $L$ belongs to $C^{\alpha }(\CCI ).$   
\end{thm}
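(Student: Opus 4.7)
The strategy is to establish a uniform exponential-in-time estimate for the probability that random orbits miss a well-chosen neighborhood of $S_{\tau }$ on which every element of $U_{\tau }$ is constant, and then derive the H\"older bound using the eigenvalue equation for $M_{\tau }$ combined with the uniform Lipschitz constant of iterated compositions.

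\textbf{Setup of the neighborhood.} First observe that $J_{\ker }(G_{\tau })=\emptyset $ forces every $L\in \Min(G_{\tau },\CCI )$ to lie in $F(G_{\tau })$: any such $L$ is forward-invariant under every $g\in G_{\tau }$, so $L\subset J(G_{\tau })$ would give $L\subset \bigcap _{g\in G_{\tau }}g^{-1}(J(G_{\tau }))=J_{\ker }(G_{\tau })=\emptyset $. By Theorem~\ref{t:thmB}(1) every $\varphi \in U_{\tau }$ is locally constant on $F(G_{\tau })$, and $U_{\tau }$ is finite-dimensional; fixing a basis and using compactness of $S_{\tau }\subset F(G_{\tau })$, I select an open neighborhood $W$ of $S_{\tau }$ with $\overline{W}\subset F(G_{\tau })$ having finitely many connected components $W_{1},\ldots ,W_{k}$ with pairwise disjoint closures, on each of which every element of $U_{\tau }$ is constant. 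Set $\delta _{0}:=\min _{i\neq j}d(\overline{W_{i}},\overline{W_{j}})>0$.

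\textbf{Exponential hitting estimate.} The technical heart of the argument is to show that there exist $C>0$ and $\lambda \in (0,1)$ such that for all $z\in \CCI $ and $n\in \NN $,
\begin{equation*}
(M_{\tau }^{n}\mathbf{1}_{\CCI \setminus W})(z) = \tilde{\tau }\bigl\{\gamma \in \G _{\tau }^{\NN }: \gamma _{n}\circ \cdots \circ \gamma _{1}(z)\notin W\bigr\} \leq C\lambda ^{n}.
\end{equation*}
To obtain a uniform one-step hit, I use $J_{\ker }(G_{\tau })=\emptyset $ together with compactness: for each $z\in \CCI $ there is $g\in G_{\tau }$ with $g(z)\in W$; writing $g$ as a product of finitely many generators and exploiting joint continuity in the spatial and parameter variables yields an open $U_{z}\ni z$ and an open set of generator-tuples of positive $\tilde{\tau }^{n_{z}}$-measure whose compositions send $U_{z}$ into $W$, and a finite subcover produces $N\in \NN $, $p>0$ with $\tilde{\tau }\{\gamma : \gamma _{N}\cdots \gamma _{1}(z)\in W\}\geq p$ for all $z\in \CCI $. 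To upgrade this to exponential decay, choose $\tilde{f}\in C(\CCI )$ with $0\leq \tilde{f}\leq 1$, $\tilde{f}=0$ on a smaller neighborhood $W'\subset \overline{W'}\subset W$ of $S_{\tau }$, and $\tilde{f}=1$ off $W$. By Theorem~\ref{t:thmB}(2)(b) and the dominated convergence theorem, $(M_{\tau }^{n}\tilde{f})(z)\to 0$ pointwise, so $\tilde{f}\in {\cal B}_{0,\tau }$ (the $U_{\tau }$-component vanishes by the decomposition in Theorem~\ref{t:thmB}(1)), and $\| M_{\tau }^{n}\tilde{f}\| _{\infty }\to 0$. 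Combining the uniform $o(1)$-decay of $\| M_{\tau }^{n}\tilde{f}\| _{\infty }$ with the uniform one-step hitting via a standard Markov-renewal bootstrap (once some $\| M_{\tau }^{n_{0}}\tilde{f}\| _{\infty }<1/2$, the Markov property iterates this) produces the geometric rate.

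\textbf{H\"older estimate.} Every $\varphi \in U_{\tau }$ is a finite linear combination of unitary eigenvectors, so it suffices to treat a single eigenvector with $M_{\tau }\varphi =e^{i\theta }\varphi $ and $\| \varphi \| _{\infty }\leq 1$. The identity $\varphi (z)=e^{-in\theta }\int \varphi (\gamma _{n}\cdots \gamma _{1}(z))\, d\tilde{\tau }(\gamma )$ gives
\begin{equation*}
|\varphi (z)-\varphi (w)|\leq \int \bigl|\varphi (\gamma _{n}\cdots \gamma _{1}(z))-\varphi (\gamma _{n}\cdots \gamma _{1}(w))\bigr|\, d\tilde{\tau }(\gamma ).
\end{equation*}
By compactness of $\G _{\tau }$, every generator is Lipschitz on $(\CCI ,d)$ with constant bounded by some uniform $\Lambda <\infty $, so every composition $\gamma _{n}\cdots \gamma _{1}$ has Lipschitz constant $\leq \Lambda ^{n}$. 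Hence whenever $d(z,w)<\delta _{0}\Lambda ^{-n}$, the two images differ by less than $\delta _{0}$ and therefore cannot lie in distinct components of $W$; the integrand is nonzero only when at least one image lies outside $W$, an event of $\tilde{\tau }$-measure $\leq 2C\lambda ^{n}$. Thus $|\varphi (z)-\varphi (w)|\leq 4C\lambda ^{n}$, and choosing $n=\lfloor \log (\delta _{0}/d(z,w))/\log \Lambda \rfloor $ yields $|\varphi (z)-\varphi (w)|\leq C'\, d(z,w)^{\alpha }$ with $\alpha =\min \{\log (1/\lambda )/\log \Lambda ,\, 1/2\}\in (0,1)$. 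Finally, $T_{L,\tau }$ satisfies $M_{\tau }T_{L,\tau }=T_{L,\tau }$ (the probability of eventual absorption at $L$ is invariant under one step of the Markov chain), so $T_{L,\tau }\in U_{\tau }$ and inherits the H\"older bound. The main obstacle is the exponential hitting estimate: without mean stability one cannot produce a forward-invariant neighborhood of $S_{\tau }$, so the upgrade from uniform $o(1)$-decay of $M_{\tau }^{n}\tilde{f}$ to a geometric rate must carefully combine the spectral decomposition of Theorem~\ref{t:thmB}(1) with a Markov-renewal bootstrap driven by the uniform one-step hitting probability.
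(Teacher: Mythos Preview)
Your opening claim that $J_{\ker }(G_{\tau })=\emptyset $ forces every $L\in \Min (G_{\tau },\CCI )$ to lie in $F(G_{\tau })$ is not correct: your argument only shows that $L$ cannot be \emph{entirely} contained in $J(G_{\tau })$, not that $L\cap J(G_{\tau })=\emptyset $. The paper explicitly allows for ``$J$-touching'' minimal sets (Lemma~\ref{l:Lclassify}) and gives an example with $J_{\ker }(G_{\tau })=\emptyset $ and a $J$-touching minimal set (Example~\ref{ex:Jt}). So in general there is no neighborhood $W$ of $S_{\tau }$ with $\overline{W}\subset F(G_{\tau })$, and your setup collapses. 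This also breaks the exponential hitting estimate: even for a set $W$ on which every element of $U_{\tau }$ is constant, your bootstrap ``once $\| M_{\tau }^{n_{0}}\tilde{f}\| _{\infty }<1/2$, iterate via the Markov property'' does not yield geometric decay, because $W$ is not forward-invariant --- orbits can enter and then exit $W$, so $M_{\tau }^{kn_{0}}\mathbf{1}_{\CCI \setminus W}\leq 1/2$ for every $k$ but need not tend to zero geometrically. The exponential rate you are implicitly appealing to is essentially Theorem~\ref{t:kjemfhf}, which itself requires the extra hypothesis $S_{\tau }\subset F(G_{\tau })$.

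The paper's proof avoids both problems by targeting $F(G_{\tau })$ rather than a neighborhood of $S_{\tau }$. Since $g(F(G_{\tau }))\subset F(G_{\tau })$ for every $g\in G_{\tau }$, once an orbit enters $F(G_{\tau })$ it never leaves; one covers $\CCI $ by finitely many connected compact disks $U_{z_{j}}$ each of which some fixed-length composition sends into $F(G_{\tau })$ with probability $\geq 1-a>0$, and forward-invariance gives the genuine submultiplicativity $\tilde{\tau }(\text{not yet hit by step }n)\leq a^{n}$. On the ``hit'' event the difference $\varphi (\gamma _{rn,1}(z))-\varphi (\gamma _{rn,1}(z_{0}))$ vanishes because the connected disk $U_{z_{j}}$ maps into a single component of $F(G_{\tau })$, on which $\varphi $ is constant by Theorem~\ref{t:thmB}(1). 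The H\"older exponent then falls out of $aC_{1}^{\alpha }<1$ exactly along the lines of your final paragraph. Your overall architecture (eigenvalue identity, uniform Lipschitz constant, exponential hitting) is the right one; the missing idea is to exploit forward-invariance of $F(G_{\tau })$ in place of a neighborhood of $S_{\tau }$.
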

Thus each element of $U_{\tau }$ has a kind of regularity. 
For the proof of Theorem~\ref{t:utaucaint}, 
the result ``each element of $U_{\tau }$ is locally constant on $F(G_{\tau })$''  
(Theorem~\ref{t:thmB} (1)) is used.  

If $\tau \in {\frak M}_{1,c}(\Rat )$ is mean stable and $J(G_{\tau })\neq \emptyset $, 
then by \cite[Proposition 3.65]{Splms10}, we have 
%$\bigcup _{L\in \Min(G_{\tau },\CCI )}L\subset F(G_{\tau }).$ 
$S_{\tau }\subset F(G_{\tau }).$ 
From this point of view, we consider the situation that 
$\tau \in {\frak M}_{1,c}(\Rat )$ satisfies $J_{\ker }(G_{\tau })=\emptyset $, 
$J(G_{\tau })\neq \emptyset $, and 
%$\bigcup _{L\in \Min(G_{\tau },\CCI )}L\subset F(G_{\tau }).$ 
$S_{\tau }\subset F(G_{\tau }).$ 
Under this situation, we have several very strong results. 
Note that there exists an example of $\tau \in {\frak M}_{1,c}({\cal P})$ with $\sharp \G _{\tau }<\infty $ 
such that $J_{\ker }(G_{\tau })=\emptyset $, $J(G_{\tau })\neq \emptyset $, $S_{\tau }\subset F(G_{\tau })$, 
and $\tau $ is not mean stable (see Example~\ref{ex:stfnms}). 
\begin{thm}[Cooperation Principle VI, Exponential Rate of Convergence: see Theorem~\ref{t:kjemfhf}]
\label{t:kjemfhfint}
%\ \\ 
Let $\tau \in {\frak M}_{1,c}(\emRat)$. Suppose that $J_{\ker }(G_{\tau })=\emptyset $, 
$J(G_{\tau })\neq \emptyset $, and $S_{\tau }\subset F(G_{\tau }).$ 
%$\bigcup _{L\in \emMin (G_{\tau },\CCI )}L\subset F(G_{\tau }).$ 
%Let $r:= \prod _{L\in \emMin(G_{\tau },\CCI )}\dim _{\CC }(\mbox{{\em LS}}({\cal U}_{f,\tau }(L)))$.  
Then, there exists a constant $\alpha \in (0,1)$, a constant $\lambda \in (0,1)$, and a constant $C>0$ such that 
for each $\varphi \in C^{\alpha }(\CCI )$, 
$\| M_{\tau }^{n}(\varphi -\pi _{\tau }(\varphi ))\| _{\alpha }\leq C\lambda ^{n}\| \varphi \| _{\alpha }$
 for each $n\in \NN .$ 
\end{thm}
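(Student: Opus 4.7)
The plan is to establish the exponential decay via quasi-compactness of $M_{\tau }$ on $C^{\alpha }(\CCI )$, followed by a spectral-gap argument. Concretely, I aim to prove a Lasota--Yorke / Doeblin--Fortet inequality of the form
\[
\| M_{\tau }^{n}\varphi \| _{\alpha }\le Ar^{n}\| \varphi \| _{\alpha }+B\| \varphi \| _{\infty }\quad \text{for all } n\in \NN \text{ and } \varphi \in C^{\alpha }(\CCI ),
\]
with some $\alpha \in (0,1)$, $r\in (0,1)$, and $A,B>0$. Combined with the compactness of the embedding $C^{\alpha }(\CCI )\hookrightarrow C(\CCI )$ (Arzel\`a--Ascoli), the Ionescu-Tulcea--Marinescu / Hennion theorem then yields that $M_{\tau }$ is quasi-compact on $C^{\alpha }(\CCI )$ with essential spectral radius at most $r<1$.

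The heart of the argument is the derivation of the Lasota--Yorke inequality. To bound $|M_{\tau }^{n}\varphi (x)-M_{\tau }^{n}\varphi (y)|$ for $x,y\in \CCI $, I would split the integral over $\tilde{\tau }$ according to whether both trajectories $\gamma _{n}\cdots \gamma _{1}(x)$ and $\gamma _{n}\cdots \gamma _{1}(y)$ have entered a fixed trapping neighborhood of $S_{\tau }$ by time $n$. Using the hypothesis $S_{\tau }\subset F(G_{\tau })$ together with compactness of $\G _{\tau }$, I would construct an open set $W\supset S_{\tau }$ with $\overline{W}\subset F(G_{\tau })$ and an integer $n_{0}\in \NN $ such that every length-$n_{0}$ composition from $\G _{\tau }$ maps $\overline{W}$ into $W$; a normal-family argument, together with compactness of the set of length-$n_{0}$ compositions, then produces a uniform contraction factor $\rho <1$ for the spherical distance on $\overline{W}$ under any such composition. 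Using $J_{\ker }(G_{\tau })=\emptyset $ and compactness of $\CCI $, one obtains $n_{1}\in \NN $ and $p>0$ such that $\tilde{\tau }(\{ \gamma \mid \gamma _{n_{1}}\cdots \gamma _{1}(z)\in W\} )\ge p$ for every $z\in \CCI $; iterating this Markov estimate shows that the probability of a trajectory not having entered $W$ by time $kn_{1}$ decays as $(1-p)^{k}$. Choosing $\alpha \in (0,1)$ small enough, one combines the exponential contraction on the ``good'' event with the $2\| \varphi \| _{\infty }$ bound on the ``bad'' event to obtain the Lasota--Yorke inequality with $r<1$.

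With quasi-compactness in hand, the next step is to identify the peripheral spectrum of $M_{\tau }|_{C^{\alpha }(\CCI )}$. Since $M_{\tau }\mathbf{1}=\mathbf{1}$ and $\| M_{\tau }\varphi \| _{\infty }\le \| \varphi \| _{\infty }$, the spectral radius on $C^{\alpha }(\CCI )$ equals $1$. Any eigenvalue $\lambda $ of $M_{\tau }|_{C^{\alpha }}$ with $|\lambda |=1$ has an eigenvector in $C^{\alpha }\subset C(\CCI )$, so by Theorem~\ref{t:thmB}~(1) it lies in $U_{\tau }$. Conversely, $U_{\tau }\subset C^{\alpha }(\CCI )$ by Theorem~\ref{t:utaucaint}, so the unit-modulus eigenvalues and eigenvectors of $M_{\tau }|_{C^{\alpha }}$ coincide with those on $C(\CCI )$. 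The canonical projection $\pi _{\tau }$ onto the finite-dimensional space $U_{\tau }$ is bounded on $C(\CCI )$, and since all norms on $U_{\tau }\subset C^{\alpha }$ are equivalent, it restricts to a bounded projection on $C^{\alpha }(\CCI )$. This yields a direct-sum decomposition $C^{\alpha }(\CCI )=U_{\tau }\oplus ({\cal B}_{0,\tau }\cap C^{\alpha }(\CCI ))$; on the second summand, quasi-compactness together with the absence of unit-modulus spectrum forces the spectral radius of $M_{\tau }$ to be strictly less than $1$, and Gelfand's formula yields constants $C>0$ and $\lambda \in (0,1)$ with $\| M_{\tau }^{n}\psi \| _{\alpha }\le C\lambda ^{n}\| \psi \| _{\alpha }$ for all $\psi \in {\cal B}_{0,\tau }\cap C^{\alpha }(\CCI )$. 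Applying this to $\psi =\varphi -\pi _{\tau }(\varphi )$, which lies in ${\cal B}_{0,\tau }\cap C^{\alpha }$, yields the theorem.

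The main technical obstacle is proving the uniform contraction inside the trapping neighborhood $W$. Since $\tau $ is not assumed to be mean stable, Example~\ref{ex:stfnms} warns that minimal sets in $S_{\tau }$ need not be attracting in the strong sense of Theorem~\ref{t:msmtaustint}, so no direct derivative estimate is available. The required contraction should instead be extracted from the forward invariance of each $L\in \Min(G_{\tau },\CCI )$ under every $g\in \G _{\tau }$, combined with $L\subset F(G_{\tau })$ and the equicontinuity of $G_{\tau }$ on compact subsets of $F(G_{\tau })$: every subsequential limit of $\{ g_{n}\cdots g_{1}|_{\overline{W}}\} $ (with $g_{i}\in \G _{\tau }$) should take values in $S_{\tau }$, and coupling this with compactness of the space of length-$n_{0}$ compositions yields the required uniform contraction. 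This classification of limiting dynamics near $S_{\tau }$ is the technical heart of the argument.
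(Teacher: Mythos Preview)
Your Lasota--Yorke/Hennion framework is a reasonable route, and the splitting into a ``good'' event (trajectory has reached a trapping neighborhood $W$ of $S_{\tau}$) and a ``bad'' event (it has not) mirrors what the paper does. The gap is exactly where you flag it: the contraction step inside $W$. Your proposed argument---uniform spherical contraction $\rho<1$ for \emph{every} length-$n_{0}$ composition, justified by the claim that all subsequential limits of $g_{n}\cdots g_{1}|_{\overline{W}}$ take values in $S_{\tau}$---fails in Example~\ref{ex:stfnms}. There $h_{1}$ has a Siegel disk $S$ and the minimal set $L_{0}\subset S$; the sequence $h_{1}^{n}|_{\overline{W}}$ consists of (restrictions of) irrational rotations of $S$, so its subsequential limits are again rotations, not maps into $L_{0}$. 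No length-$n_{0}$ power of $h_{1}$ contracts the spherical distance on $\overline{W}$, so no uniform $\rho<1$ exists. Equicontinuity on $F(G_{\tau})$ gives you normality but not contraction to $S_{\tau}$.

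The paper's remedy is to work with the \emph{hyperbolic} metric on each connected component $A_{i}$ of $F(G_{\tau})$ meeting $S_{\tau}$. Schwarz--Pick gives $\|D(\gamma_{m,1})_{z}\|_{h}\le 1$ for every $\gamma$ and every $z\in A_{i}$ automatically; what one then proves (Lemma~\ref{l:supai}) is the \emph{averaged} bound
\[
\sup_{i}\int_{\Gamma_{\tau}^{\NN}}\sup_{z\in A_{i}}\|D(\gamma_{m,1})_{z}\|_{h}^{\alpha}\,d\tilde{\tau}(\gamma)<1,
\]
using that for each component there is a positive-$\tau$-probability word whose hyperbolic derivative is strictly below $1$ on the relevant compact piece (this comes from \cite[Theorem~3.15-4]{Splms10}). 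In the Siegel example the rotation $h_{1}^{m}$ contributes $1$ to the integrand, but it carries mass $p_{1}^{m}<1$, while words involving $h_{2}$ contribute strictly less than $1$. A comparison between the spherical and hyperbolic metrics (Lemma~\ref{l:sphypm}, via uniform perfectness of $J(G_{\tau})$) transfers this to a H\"older estimate in $d$. From there the paper does a direct stopping-time computation rather than invoking Hennion, but once you replace your uniform $\rho<1$ by this hyperbolic-metric averaged contraction, your Lasota--Yorke inequality goes through and your spectral argument would also close.
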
 
For the proof of Theorem~\ref{t:kjemfhfint}, we need some careful arguments on the 
hyperbolic metric on each connected component of $F(G_{\tau }).$

We remark that in 1983, by numerical experiments, K. Matsumoto and I. Tsuda (\cite{MT}) observed that 
if we add some uniform noise to the dynamical system associated with iteration of a chaotic map on the unit interval $[0,1]$, 
then under certain conditions, 
the quantities which represent chaos (e.g.,  entropy, Lyapunov exponent, etc.) decrease. More precisely, 
they observed that the entropy decreases and the Lyapunov exponent turns negative. They called this phenomenon 
``noise-induced order'', and many physicists have investigated it by numerical experiments, although there has been 
only a few mathematical supports for it. In this paper, we deal with not only (i.i.d.) random dynamical systems which are constructed by 
adding relatively small noises to usual dynamical systems but also more general (i.i.d.) random dynamical systems.   
In this paper, we study ``randomness-induced phenomena'' (the phenomena in general random dynamical systems 
which cannot hold in the usual iteration dynamics) which include ``noise-induced phenomena''.

\begin{rem}
\label{r:holb}
Let $\tau \in {\frak M}_{1,c}(\Rat)$ be mean stable and suppose 
$J(G_{\tau })\neq \emptyset .$ Then 
by \cite[Theorem 3.15]{Splms10}, the chaos of the averaged system of $\tau $ disappears (Cooperation Principle II),  
and by Theorem~\ref{t:kjemfhfint}, there exists an $\alpha _{0}\in (0,1)$ such that for each 
$\alpha \in (0,\alpha _{0})$ the action of $\{ M_{\tau }^{n}\} _{n\in \NN }$ on $C^{\alpha }(\CCI )$ is well-behaved.  
However, \cite[Theorem 3.82]{Splms10} tells us that 
under certain conditions on a mean stable $\tau $, there exists a $\beta \in (0,1)$ such that 
any non-constant element $\varphi \in U_{\tau }$ does not belong to $C^{\beta }(\CCI )$ (note: for the proof of this result, 
we use the Birkhoff ergodic theorem and potential theory). Hence, there exists an element  
$\psi  \in C^{\beta }(\CCI )$ such that  
$\| M_{\tau }^{n}(\psi ) \| _{\beta }\rightarrow \infty $ as $n\rightarrow \infty .$ 
Therefore, the action of $\{ M_{\tau }^{n}\} _{n\in \NN }$ on $C^{\beta }(\CCI )$ is not well behaved. 
In other words, regarding the dynamics of the averaged system of $\tau $, 
there still exists a kind of chaos (or complexity) in the space $(C^{\beta }(\CCI ),\| \cdot \| _{\beta })$ 
even though there exists no chaos in the space $(C(\CCI ),\| \cdot \| _{\infty }).$ 
From this point of view, 
in the field of random dynamics, 
we have a kind of gradation or stratification between chaos and non-chaos. 
It may be nice to investigate and reconsider the chaos theory and mathematical modeling 
from this point of view.  

\end{rem}

Let $\tau \in {\frak M}_{1,c}(\Rat).$ 
We now consider the spectrum Spec$_{\alpha }(M_{\tau })$ of $M_{\tau }:C^{\alpha }(\CCI )\rightarrow C^{\alpha }(\CCI ).$ 
Note that since the family $\mbox{supp}\, \tau $ in $\Rat $ is uniformly Lipschitz continuous on $\CCI $, 
we have $M_{\tau }(C^{\alpha }(\CCI ))\subset C^{\alpha }(\CCI )$ for each $\alpha \in (0,1).$ 
If $\tau $ satisfies the assumptions of Theorem~\ref{t:kjemfhfint}, then from Theorem~\ref{t:kjemfhfint}, 
denoting by ${\cal U}_{v,\tau}(\CCI )$ the set of unitary eigenvalues 
of $M_{\tau }:C(\CCI )\rightarrow C(\CCI )$ (note: by Theorem~\ref{t:utaucaint}, 
${\cal U}_{v,\tau }(\CCI )\subset \mbox{Spec}_{\alpha }(M_{\tau })$ for some $\alpha \in (0,1)$), 
we can show that 
the distance between ${\cal U}_{v,\tau }(\CCI )$ and 
$\mbox{Spec}_{\alpha }(M_{\tau })\setminus {\cal U}_{v,\tau }(\CCI )$ is positive. 
\begin{thm}[see Theorem~\ref{t:kjemfsp}]
\label{t:kjemfspint}
Under the assumptions of Theorem~\ref{t:kjemfhfint}, 
$\mbox{{\em Spec}}_{\alpha }(M_{\tau })\subset \{ z\in \CC \mid |z|\leq \lambda \} 
\cup {\cal U}_{v,\tau }(\CCI )$, 
where $\alpha \in (0,1)$ and $\lambda \in (0,1)$ are the constants in Theorem~\ref{t:kjemfhfint}. 
\end{thm}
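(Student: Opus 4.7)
The plan is to exploit the invariant direct-sum decomposition furnished by Theorem~\ref{t:thmB} and Theorem~\ref{t:kjemfhfint}, and to compute $\mbox{Spec}_{\alpha }(M_{\tau })$ as a union of the spectra of the two diagonal blocks. More precisely, I intend to show that $C^{\alpha }(\CCI )$ admits a closed $M_{\tau }$-invariant decomposition $C^{\alpha }(\CCI )=U_{\tau }\oplus V$, where $V:=\mathcal{B}_{0,\tau }\cap C^{\alpha }(\CCI )$, with spectrum at most $\mathcal{U}_{v,\tau }(\CCI )$ on the finite-dimensional piece and spectral radius at most $\lambda $ on the complementary piece.

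First I would verify that $\pi _{\tau }$ restricts to a bounded projection on $C^{\alpha }(\CCI )$. By Theorem~\ref{t:utaucaint}, $U_{\tau }\subset C^{\alpha }(\CCI )$, and since $U_{\tau }$ is finite-dimensional, the $\| \cdot \| _{\infty }$- and $\| \cdot \| _{\alpha }$-topologies coincide on it. Combined with the continuity of $\pi _{\tau }:(C(\CCI ),\| \cdot \| _{\infty })\to U_{\tau }$ from Theorem~\ref{t:thmB}~(1), this yields that $\pi _{\tau }:(C^{\alpha }(\CCI ),\| \cdot \| _{\alpha })\to (C^{\alpha }(\CCI ),\| \cdot \| _{\alpha })$ is bounded, hence $V=\ker \pi _{\tau }\cap C^{\alpha }(\CCI )$ is closed in $C^{\alpha }(\CCI )$. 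Since $M_{\tau }$ commutes with $\pi _{\tau }$ (both $U_{\tau }$ and $\mathcal{B}_{0,\tau }$ being $M_{\tau }$-invariant by construction), the decomposition $C^{\alpha }(\CCI )=U_{\tau }\oplus V$ is $M_{\tau }$-invariant. A standard fact in spectral theory then gives
\begin{equation*}
\mbox{Spec}_{\alpha }(M_{\tau })=\mbox{Spec}(M_{\tau }|_{U_{\tau }})\cup \mbox{Spec}(M_{\tau }|_{V}).
\end{equation*}

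Next I would handle each block. Since $U_{\tau }$ is finite-dimensional and is spanned by unitary eigenvectors of $M_{\tau }$ on $C(\CCI )$, the operator $M_{\tau }|_{U_{\tau }}$ has all its eigenvalues in $\mathcal{U}_{v,\tau }(\CCI )$, so $\mbox{Spec}(M_{\tau }|_{U_{\tau }})\subset \mathcal{U}_{v,\tau }(\CCI )$. For the block on $V$, I would apply Theorem~\ref{t:kjemfhfint} directly: every $\varphi \in V$ satisfies $\pi _{\tau }(\varphi )=0$, so $\| M_{\tau }^{n}(\varphi )\| _{\alpha }\leq C\lambda ^{n}\| \varphi \| _{\alpha }$ for all $n$. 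Hence the operator norm of $M_{\tau }^{n}|_{V}$ on $(V,\| \cdot \| _{\alpha })$ is at most $C\lambda ^{n}$, and Gelfand's spectral-radius formula yields $r(M_{\tau }|_{V})\leq \lambda $, so $\mbox{Spec}(M_{\tau }|_{V})\subset \{ z\in \CC \mid |z|\leq \lambda \} $. Combining the two inclusions gives the desired bound on $\mbox{Spec}_{\alpha }(M_{\tau })$.

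The main obstacle, which is really the only non-routine point, is establishing that $\pi _{\tau }$ truly is $\| \cdot \| _{\alpha }$-bounded and that the resulting decomposition is a topological direct sum in $C^{\alpha }(\CCI )$; once this is in hand, the rest is the standard block-diagonal spectral computation combined with the exponential decay already proved in Theorem~\ref{t:kjemfhfint}. The entire argument therefore reduces to packaging the $C^{\alpha }$-regularity of $U_{\tau }$ (Theorem~\ref{t:utaucaint}) and the geometric convergence on $V$ (Theorem~\ref{t:kjemfhfint}) into a spectral statement.
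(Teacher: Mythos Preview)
Your proof is correct and takes essentially the same approach as the paper: both use the $M_{\tau }$-invariant splitting $C^{\alpha }(\CCI )=U_{\tau }\oplus (\mathcal{B}_{0,\tau }\cap C^{\alpha }(\CCI ))$ together with the exponential decay on the second summand. The only cosmetic difference is that the paper, instead of invoking Gelfand's spectral-radius formula, writes down the resolvent explicitly as $(\zeta I-M_{\tau })|_{U_{\tau }}^{-1}\circ \pi _{\tau }+\sum _{n\geq 0}\zeta ^{-n-1}M_{\tau }^{n}(I-\pi _{\tau })$ and verifies by direct computation that it inverts $\zeta I-M_{\tau }$, thereby also obtaining the explicit resolvent formula recorded as part~(2) of Theorem~\ref{t:kjemfsp}.
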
 
Combining Theorem~\ref{t:kjemfspint} and perturbation theory for linear operators (\cite{K}), 
we obtain the following theorem. 
We remark that even if $g_{n}\rightarrow g$ in Rat, for a $\varphi \in C^{\alpha }(\CCI )$, 
$\| \varphi \circ g_{n}-\varphi \circ g\| _{\alpha }$ does not tend to zero in general. 
Thus when we perturb generators $\{ h_{j}\} $ of $\G _{\tau }$,  
we cannot apply perturbation theory for $M_{\tau }$ on $C^{\alpha }(\CCI )$. However, 
by using the method in the proofs of \cite[Lemmas 5.1, 5.2]{SU1}, it is easy to see that 
for each $\alpha \in (0,1)$, for a fixed generator system 
$(h_{1},\ldots ,h_{m})\in (\Rat )^{m}$, the map  
$(p_{1},\ldots ,p_{m})\in {\cal W}_{m}:=\{ (a_{1},\ldots ,a_{m})\in (0,1)^{m}\mid 
\sum _{j=1}^{m}a_{j}=1\} \mapsto M_{\sum _{j=1}^{m}p_{j}\delta _{h_{j}}}\in 
L(C^{\alpha }(\CCI ))$ is real-analytic, where $L(C^{\alpha }(\CCI )) $ denotes the 
Banach space of bounded linear operators on $C^{\alpha }(\CCI )$ endowed with 
the operator norm. Thus we can apply perturbation theory for the above 
real-analytic family of operators. 
  
\begin{thm}[see Theorem~\ref{t:kjemfsppt}]
\label{t:kjemfspptint}
Let $m\in \NN $ with $m\geq 2.$ 
Let $h_{1},\ldots ,h_{m}\in \emRat$. 
Let $G=\langle h_{1},\ldots ,h_{m}\rangle .$ 
Suppose that 
$J_{\ker }(G)=\emptyset, J(G)\neq \emptyset $ and 
$\cup _{L\in \emMin(G,\CCI )}L\subset F(G).$ 
Let ${\cal W}_{m}:= \{ (a_{1},\ldots ,a_{m})\in (0,1)^{m}\mid \sum _{j=1}^{m}a_{j}=1 \} 
\cong \{ (a_{1},\ldots ,a_{m-1})\in (0,1)^{m-1}\mid \sum _{j=1}^{m-1}a_{j}<1 \}.$ 
For each $a=(a_{1},\ldots ,a_{m})\in {\cal W}_{m}$, 
let $\tau _{a}:= \sum _{j=1}^{m}a_{j}\delta _{h_{j}}\in {\frak M}_{1,c}(\emRat).$ 
Then we have all of the following. 
%by Theorem~\ref{t:kjemfsp} and \cite{K}, 
%it follows that 
\begin{itemize}
\item[{\em (1)}] 
For each $b\in {\cal W}_{m}$,  
there exists an $\alpha \in (0,1)$ and an open neighborhood of $V_{b}$ of $b$ in ${\cal W}_{m}$ 
such that for each $a\in V_{b}$, we have 
$\mbox{LS}({\cal U}_{f,\tau _{a}}(\CCI ))\subset C^{\alpha }(\CCI )$, 
$\pi _{\tau _{a}}(C^{\alpha }(\CCI ))\subset C^{\alpha }(\CCI )$ and 
$(\pi _{\tau _{a}}:C^{\alpha }(\CCI )\rightarrow C^{\alpha }(\CCI ))\in L(C^{\alpha }(\CCI ))$, 
and such that the map   
$a\mapsto (\pi _{\tau _{a}}:C^{\alpha }(\CCI )\rightarrow C^{\alpha }(\CCI ))\in 
L(C^{\alpha }(\CCI ))$  
%where $L(C^{\alpha }(\CCI ))$ denotes the Banach space of bounded linear operators on 
%$C^{\alpha }(\CCI )$ with the operator norm, 
 is real-analytic in $V_{b}.$ 
%an open neighborhood of $b$ in ${\cal W}_{m}.$  
\item[{\em (2)}] 
Let $L\in \emMin( G,\CCI ).$ 
Then, for each $b\in {\cal W}_{m}$, there exists an $\alpha \in (0,1)$ such that 
the map $a\mapsto T_{L,\tau _{a}}\in (C^{\alpha }(\CCI ),\| \cdot \| _{\alpha })$ is real-analytic 
in an open neighborhood of $b$ in ${\cal W}_{m}.$ Moreover, 
the map $a\mapsto T_{L,\tau _{a}}\in (C(\CCI ),\| \cdot \| _{\infty })$ is real-analytic in 
${\cal W}_{m}.$ In particular, for each 
$z\in \CCI $, the map $a\mapsto T_{L,\tau _{a}}(z)$ is real-analytic in ${\cal W}_{m}.$ 
Furthermore, for any multi-index $n=(n_{1},\ldots ,n_{m-1})\in (\NN \cup \{ 0\})^{m-1}$ and for any 
$b\in {\cal W}_{m},$  
 the function 
 $z\mapsto [(\frac{\partial }{\partial a_{1}})^{n_{1}}\cdots (\frac{\partial }{\partial a_{m-1}})^{n_{m-1}}
(T_{L,\tau _{a}}(z))]|_{a=b}$ is H\"{o}lder continuous on $\CCI $ and is locally constant on $F(G).$ 
%belongs to $C_{F(G)}(\CCI ).$ 
\item[{\em (3)}] 
Let $L\in \emMin(G,\CCI )$ and let $b\in {\cal W}_{m}.$ 
For each $i=1,\ldots ,m-1$ and for each $z\in \CCI $, let 
$\psi _{i,b}(z):=[\frac{\partial }{\partial a_{i}}(T_{L,\tau _{a}}(z))]|_{a=b}$  
and let $\zeta _{i,b}(z):= T_{L,\tau _{b}}(h_{i}(z))-T_{L,\tau _{b}}(h_{m}(z)).$  
Then, $\psi _{i,b}$ is the unique solution of 
the functional equation $(I-M_{\tau _{b}})(\psi )=\zeta _{i,b}, \psi |_{S_{\tau _{b}}}=0, \psi \in C(\CCI )$,  
where $I$ denotes the identity map. Moreover, there exists a number $\alpha \in (0,1)$ such that 
$\psi _{i,b}=\sum _{n=0}^{\infty }M_{\tau _{b}}^{n}(\zeta _{i,b})$ in $(C^{\alpha }(\CCI ),\| \cdot \| _{\alpha }).$

\end{itemize}  
\end{thm}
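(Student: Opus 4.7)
The structural observation underlying everything is that with the generators $h_1,\ldots,h_m$ fixed, the map $a\mapsto M_{\tau_a}$ from ${\cal W}_m$ into $L(C^{\alpha}(\CCI))$ is affine linear (hence real-analytic) for every $\alpha\in (0,1)$, since $M_{\tau_a}\varphi=\sum_{j=1}^{m}a_j\,\varphi\circ h_j$ and each composition operator $L_{h_j}(\varphi):=\varphi\circ h_j$ is bounded on $C^{\alpha}(\CCI)$. Moreover $G_{\tau_a}=\langle h_1,\ldots,h_m\rangle=G$ for every $a\in{\cal W}_m$, so the hypotheses $J_{\ker}(G_{\tau_a})=\emptyset$, $J(G_{\tau_a})\neq\emptyset$, $S_{\tau_a}\subset F(G_{\tau_a})$ hold uniformly in $a$, and $\Min(G_{\tau_a},\CCI)=\Min(G,\CCI)=:\{L^{(1)},\ldots,L^{(q)}\}$ is independent of $a$. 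Given $b\in{\cal W}_m$, Theorem~\ref{t:kjemfspint} supplies $\alpha\in(0,1)$ and $\lambda\in(0,1)$ with $\mbox{Spec}_{\alpha}(M_{\tau_b})\subset\{|z|\leq\lambda\}\cup{\cal U}_{v,\tau_b}(\CCI)$, where the unitary part is a finite subset of the unit circle. Fixing a smooth contour $\Sigma$ in the annulus $\{\lambda<|z|\leq 1\}$ enclosing ${\cal U}_{v,\tau_b}(\CCI)$, upper semicontinuity of the spectrum for the analytic family $\{M_{\tau_a}\}$, combined with Theorem~\ref{t:kjemfspint} applied pointwise, shows that $\Sigma$ still separates $\mbox{Spec}_{\alpha}(M_{\tau_a})$ for $a$ in a neighborhood of $b$. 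Kato's analytic perturbation theory then yields a real-analytic family of spectral projections $a\mapsto P_a\in L(C^{\alpha}(\CCI))$, and Theorems~\ref{t:thmB} and \ref{t:kjemfhfint} identify $P_a$ with $\pi_{\tau_a}$, proving (1).

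For (2), using $\cup_k L^{(k)}\subset F(G)$ together with disjointness and compactness of the $L^{(k)}$, I choose a Hölder continuous function $\tilde{\chi}_L\in C^{\alpha}(\CCI)$ equal to $1$ on a neighborhood of $L$ and to $0$ on neighborhoods of the remaining minimal sets. A further application of Kato's theorem, now with a small contour encircling only the eigenvalue $1$, produces a real-analytic family $a\mapsto\pi^{(1)}_{\tau_a}$ of projections onto $\ker(I-M_{\tau_a})=\mbox{span}\{T_{L^{(k)},\tau_a}\}_k$. I claim $\pi^{(1)}_{\tau_a}(\tilde{\chi}_L)=T_{L,\tau_a}$: since each $L^{(j)}$ is forward invariant under every $h_k$, $M_{\tau_a}$ restricts to a Markov operator on $C(L^{(j)})$, the restriction map $C(\CCI)\to C(L^{(j)})$ intertwines the two actions, and minimality of $(G,L^{(j)})$ forces $1$ to be a simple eigenvalue on $C(L^{(j)})$ with projection $\varphi\mapsto\int\varphi\,d\mu_{L^{(j)}}$ for the unique $G$-invariant probability measure $\mu_{L^{(j)}}$; hence $\pi^{(1)}_{\tau_a}(\tilde{\chi}_L)|_{L^{(j)}}=\int\tilde{\chi}_L\,d\mu_{L^{(j)}}=\delta_{L,L^{(j)}}$, and by the defining properties of $T_{L,\tau_a}$ the coefficient expansion in the basis $\{T_{L^{(k)},\tau_a}\}$ forces $\pi^{(1)}_{\tau_a}(\tilde{\chi}_L)=T_{L,\tau_a}$. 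Thus $a\mapsto T_{L,\tau_a}\in C^{\alpha}(\CCI)$ is real-analytic near $b$; global $C(\CCI)$-analyticity on ${\cal W}_m$ follows from the continuous embedding $C^{\alpha}(\CCI)\hookrightarrow C(\CCI)$ together with analytic continuation. Hölder continuity of any mixed partial derivative is inherited from local $C^{\alpha}$-analyticity, and local constancy on $F(G)$ follows because each $T_{L,\tau_a}$ is locally constant on $F(G)$ by Theorem~\ref{t:thmB}\,(1), so on each connected component of $F(G)$ the dependence on $a$ is an analytic function of $a$ alone.

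For (3), differentiating $T_{L,\tau_a}=M_{\tau_a}(T_{L,\tau_a})$ with respect to $a_i$ at $a=b$, and using $\partial M_{\tau_a}/\partial a_i=L_{h_i}-L_{h_m}$ (which reflects the constraint $a_m=1-\sum_{j<m}a_j$), one gets $(I-M_{\tau_b})\psi_{i,b}=L_{h_i}(T_{L,\tau_b})-L_{h_m}(T_{L,\tau_b})=\zeta_{i,b}$. Since $T_{L,\tau_a}|_{L^{(k)}}=\delta_{L,L^{(k)}}$ independently of $a$, $\psi_{i,b}|_{S_{\tau_b}}=0$; and any element of $\ker(I-M_{\tau_b})$ that vanishes on $S_{\tau_b}$ is a combination of the $T_{L^{(k)},\tau_b}$ with all coefficients zero, giving uniqueness. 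For the series expansion I must verify $\zeta_{i,b}\in{\cal B}_{0,\tau_b}$. First, for $z\in L^{(k)}$ forward invariance gives $h_i(z),h_m(z)\in L^{(k)}$, whence $T_{L,\tau_b}(h_i(z))=T_{L,\tau_b}(h_m(z))=\delta_{L,L^{(k)}}$ and therefore $\zeta_{i,b}|_{S_{\tau_b}}=0$; second, the restriction argument above shows $\pi_{\tau_b}(\varphi)|_{L^{(j)}}$ depends only on $\varphi|_{L^{(j)}}$ through the spectral decomposition of $M_{\tau_b}|_{C(L^{(j)})}$, so $\pi_{\tau_b}(\zeta_{i,b})|_{S_{\tau_b}}=0$, and the injectivity of the restriction $U_{\tau_b}\to C(S_{\tau_b})$ (a consequence of Theorem~\ref{t:thmB} together with results in \cite{Splms10}) forces $\pi_{\tau_b}(\zeta_{i,b})=0$. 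Theorem~\ref{t:kjemfhfint} then gives $\|M_{\tau_b}^n(\zeta_{i,b})\|_{\alpha}\leq C\lambda^n\|\zeta_{i,b}\|_{\alpha}$, so the Neumann series $\sum_{n\geq 0}M_{\tau_b}^n(\zeta_{i,b})$ converges in $C^{\alpha}(\CCI)$ to the unique solution $\psi_{i,b}$. The main technical hurdle is the validity of Kato's perturbation machinery: this would genuinely break if we perturbed the $h_j$, since $\varphi\circ g_n\not\to\varphi\circ g$ in $C^{\alpha}$, but because only the weights $a_j$ move (affinely) that difficulty does not arise, and the remaining delicacy is exactly the uniform spectral gap secured by combining the pointwise gap of Theorem~\ref{t:kjemfhfint} with upper semicontinuity of the spectrum.
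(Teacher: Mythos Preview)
Your proof is correct and follows essentially the same strategy as the paper: Kato's analytic perturbation theory applied to the affine family $a\mapsto M_{\tau_a}$ together with the spectral gap of Theorem~\ref{t:kjemfspint} for (1); identification of $T_{L,\tau_a}$ as the image of a fixed test function under an analytically varying projection for (2); and differentiation of the fixed-point equation plus the Neumann series coming from $\pi_{\tau_b}(\zeta_{i,b})=0$ for (3). The only cosmetic difference is that in (2) the paper works with the full projection $\pi_{\tau_a}$ and the limit formula $T_{L,\tau_a}=\lim_n M_{\tau_a}^{n}(\varphi_L)=\pi_{\tau_a}(\varphi_L)$ rather than isolating a separate eigenvalue-$1$ projection, but this is a minor variation on the same idea.
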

\begin{rem}[see also Example~\ref{ex:dc1}] 
\label{r:takagi}
(1) By Theorem~\ref{t:pmsodint}-(2), 
the set of all finite subsets $\G $ of ${\cal P}$ satisfying 
the assumption 
``$J_{\ker }(\langle \G\rangle )=\emptyset, J(\langle \G\rangle )\neq \emptyset $ and 
$\cup _{L\in \Min(\langle \G \rangle ,\CCI )}L\subset F(\langle \G \rangle )$'' of Theorem~\ref{t:kjemfspptint} is dense in 
Cpt$({\cal P})$ with respect to the Hausdorff metric.   
(2) The function $T_{L,\tau _{a}}$ defined on $\CCI $ can be regarded as a complex analogue of 
Lebesgue singular functions or the devil's staircase, and 
the function $z\mapsto \psi _{i,b}(z)=[\frac{\partial }{\partial a_{i}}(T_{L,\tau _{a}}(z))]|_{a=b}$ defined on 
$\CCI $ can be regarded as a {\bf complex analogue of the Takagi function} 
${\cal T}(x):=\sum _{n=0}^{\infty}\frac{1}{2^{n}}\min _{m\in \ZZ }|2^{n}x-m|$ where $x\in \RR $. 
(The Takagi function ${\cal T}$ has many interesting properties. For example, 
it is continuous but nowhere differentiable on $\RR .$ There are many studies on the Takagi function. See \cite{YHK, HY, SeSh, AK}.)   
In order to explain the details, let $g_{1}(x):= 2x, g_{2}(x):= 2(x-1)+1\ (x\in \RR )$ and let 
 $0<a<1$ be a constant. We consider the random dynamical system on $\RR $ such that 
 at every step we choose the map $g_{1}$ with probability $a$ and the map $g_{2}$ with 
probability $1-a.$ Let $T_{+\infty ,a}(x)$ be the probability of tending to 
$+\infty $ starting with the initial value $x\in \RR .$ 
Then, as the author of this paper pointed out in \cite{Splms10}, 
we can see that for each $a\in (0,1)$ with $a\neq 1/2$, 
the function $T_{+\infty ,a}|_{[0,1]}: [0,1]\rightarrow [0,1]$ is 
equal to {\bf Lebesgue's singular function} $L_{a}$ with respect to the parameter $a$. (For the definition of $L_{a}$, see 
\cite{YHK}. See  Figure~\ref{fig:lebfcn}, \cite{Splms10}.) 
The author found that, in a similar way, many singular functions on $\RR $ (including {\bf the devil's staircase}) 
can be regarded as the functions of probability of tending to $+\infty $ with respect to 
some random dynamical systems on $\RR $ (\cite{Splms10,Ssugexp}).   
It is well-known (see \cite{YHK,SeSh}) that for each $x\in [0,1]$, 
$a\mapsto L_{a}(x)$ is real-analytic in $(0,1)$, and that 
$x\mapsto (1/2)[\frac{\partial }{\partial a}(L_{a}(x))]|_{a=1/2}$ is equal to the Takagi function 
restricted to $[0,1]$ (Figure~\ref{fig:lebfcn}).  From this point of view, 
the function $z\mapsto \psi _{i,b}(z)$ defined on 
$\CCI $ can be regarded as a complex analogue of the Takagi function. This is a new concept introduced in this paper. 
In fact, the author found that by using random dynamical systems and the methods in this paper, 
 we can find many analogues of the devil's staircase, Lebesgue's singular functions and the Takagi function 
(on $[-\infty ,\infty ], \CCI $ etc.).  
For the figure of the graph of $\psi _{i,b}$, see Example~\ref{ex:dc1} and Figure~\ref{fig:ctgraphgrey1}. 
Some results on the (non-)differentiability of $T_{L,\tau _{a}}$ were obtained in \cite{Splms10}.  
\begin{figure}[htbp]
\caption{From left to right, the graphs of the devil's staircase, Lebesgue's singular function and the Takagi function. 
The Takagi function is continuous but nowhere differentiable on $\RR .$}
%\ \ \ \ \ \ \ \ \ \ \ \ \ \ \ \ \ \ \ \ \ \ \ \ \ \ \ \ \ \ \ 
%\ \ \ \ \ \ \ \ \ \ \ \ \ 
%\includegraphics[width=2.1cm,height=2.1cm, origin =c, angle =-90]{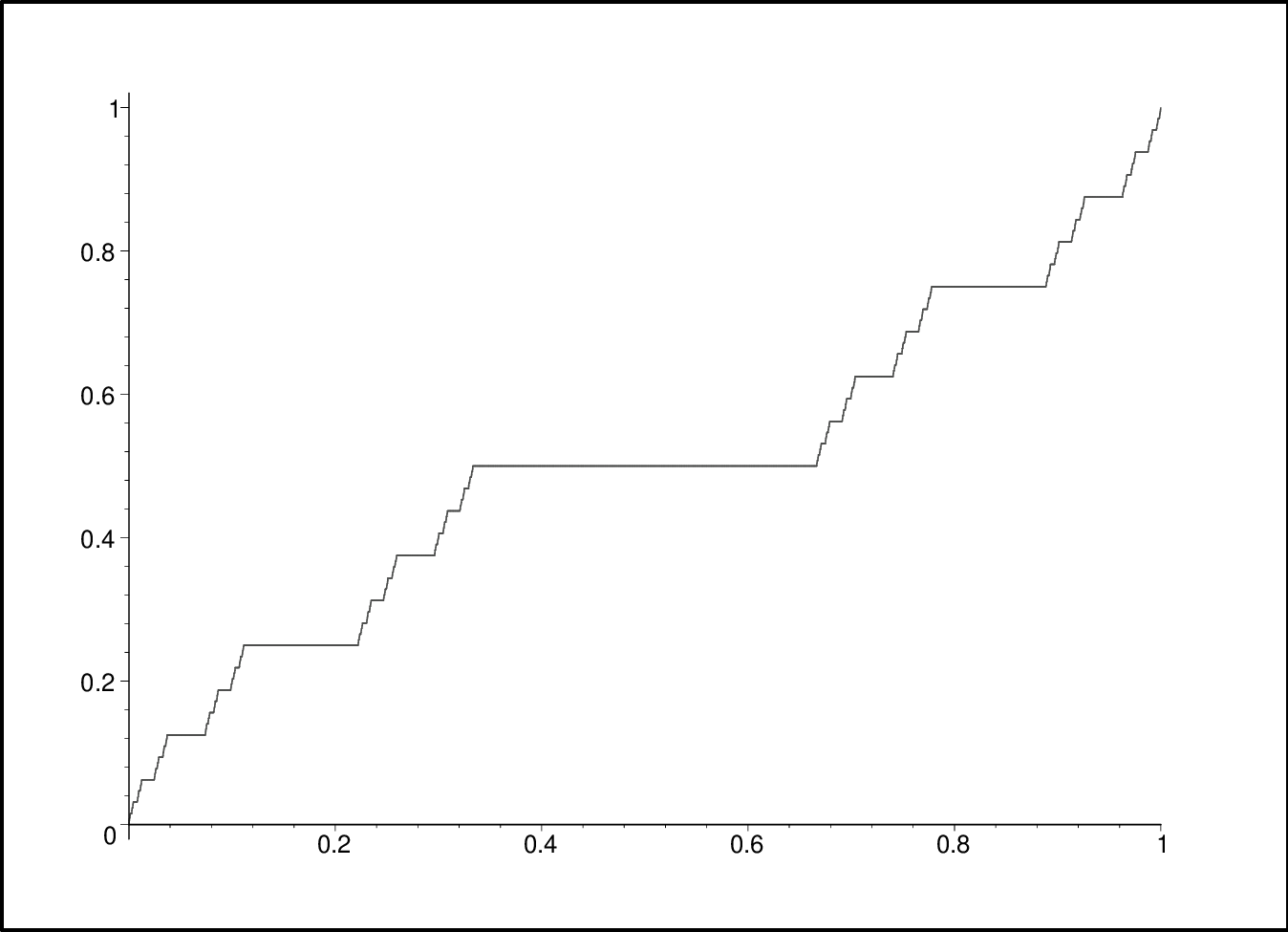}
%\label{fig:dsgraph1-1}
\begin{center}
\includegraphics[width=2.3cm,height=2.3cm, origin =c, angle =-90]{dsgraph1-1.eps}
%\label{fig:dsgraph1-1}
\includegraphics[width=2.3cm,height=2.3cm, origin =c, angle =-90]{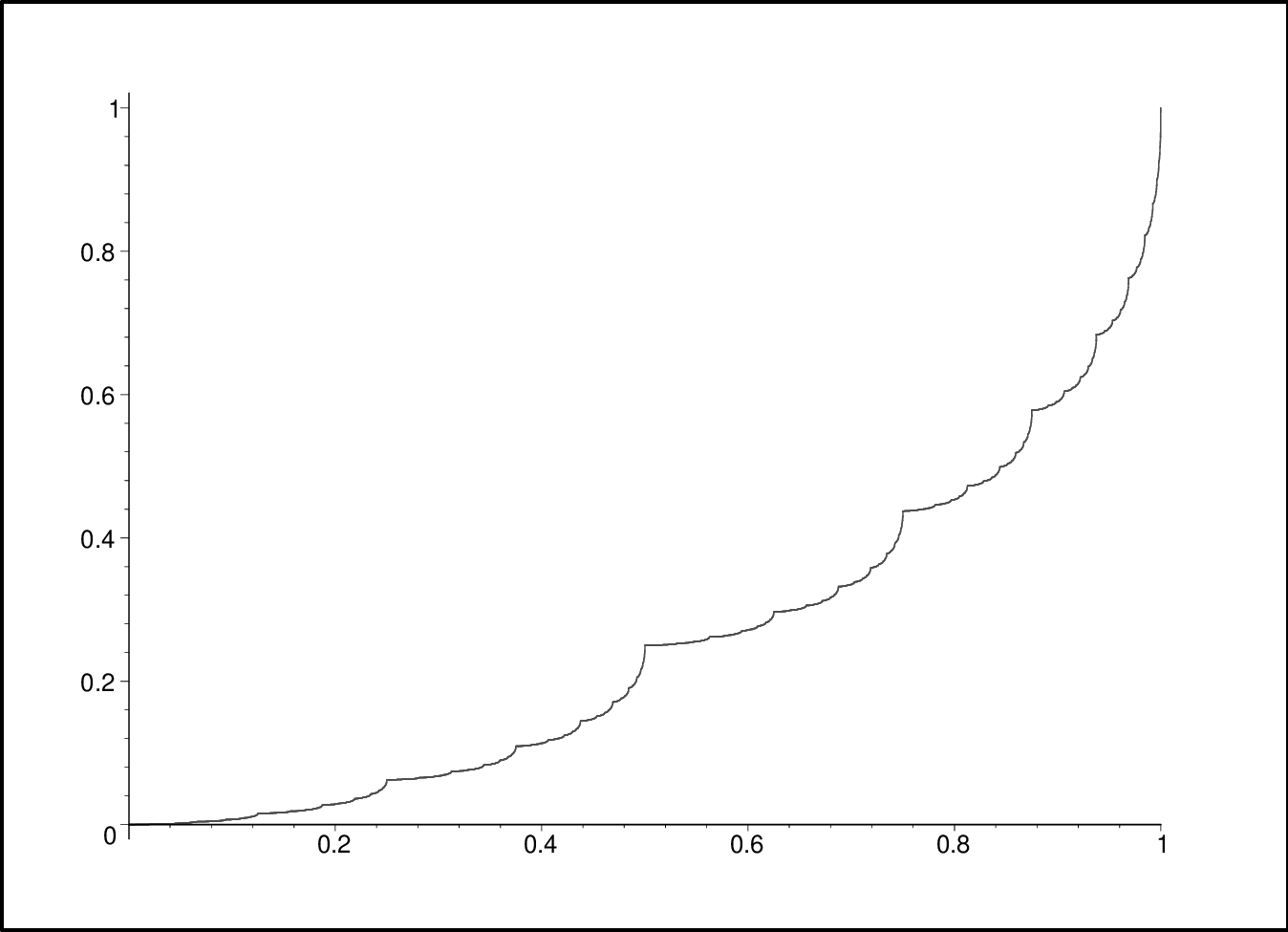}\label{fig:lebfcn}
\includegraphics[width=2.3cm,height=2.3cm, origin =c, angle =-90]{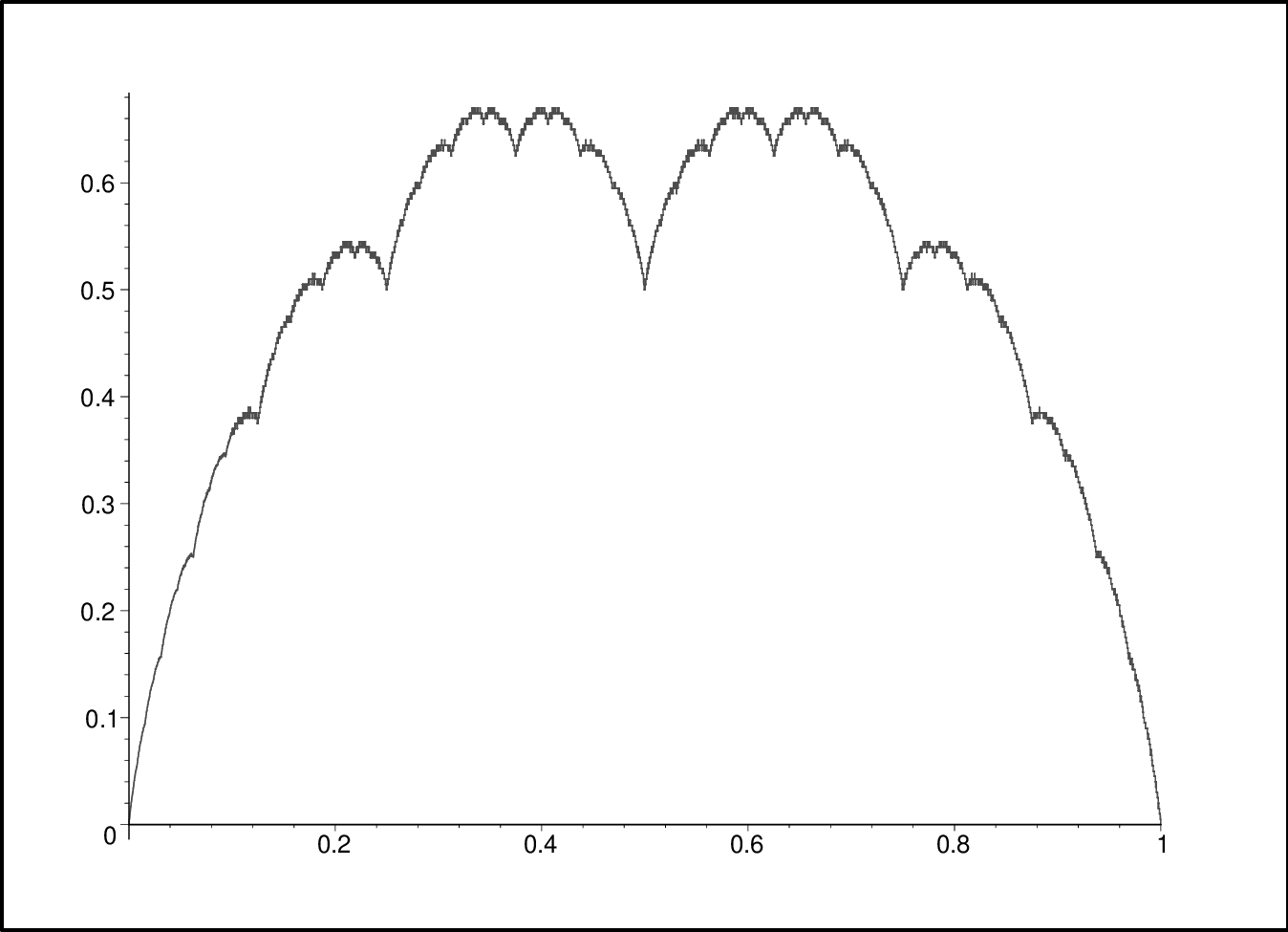}
\end{center}
\end{figure}
\begin{figure}[htbp]
%\begin{minipage}{.50\linewidth}
\caption{The Julia set of $G=\langle h_{1}, h_{2}\rangle $, where   
$g_{1}(z):=z^{2}-1, g_{2}(z):=z^{2}/4, h_{1}:=g_{1}^{2}, h_{2}:=
g_{2}^{2}.$ 
%$P(G)\setminus \{ \infty \} $ 
The planar postcritical set of $G$ is bounded in $\CC $, 
$J(G)$ is not connected and  
%and 
%$G\in {\cal G}_{dis},\ 
%$\sharp (\mbox{Con}(J(G)))>\aleph _{0}.$ 
$G$ is hyperbolic (\cite{SdpbpIII}).  
$(h_{1},h_{2})$ satisfies the open set condition and $\dim _{H}(J(G))<2$ (\cite{SU1}). Moreover,  
 for each connected component $J$ of $J(G)$, $\exists !\gamma \in 
\{ h_{1},h_{2}\} ^{\NN }$ 
%\Sigma _{2}$ 
s.t. $J=J_{\gamma }.$ For almost every $\gamma \in \{ h_{1},h_{2}\} ^{\NN }$ with respect to 
a Bernoulli measure, 
$J_{\gamma }$ is a simple closed curve but not a quasicircle, and the basin $A_{\gamma }$ of infinity for the sequence 
$\g $ is a John domain 
(\cite{SdpbpIII}).}
%\ \ \ \ \ \ \ \ \ \ \ \ \ \ \ \ \ \ \ \ \ \ \ \ \ \ \ \ \ \ \ 
%\ \ \ \ \ \ \ \ \ \ \ \ \ \ \  \  \ \   
\begin{center}
\includegraphics[width=2.5cm,width=2.5cm]{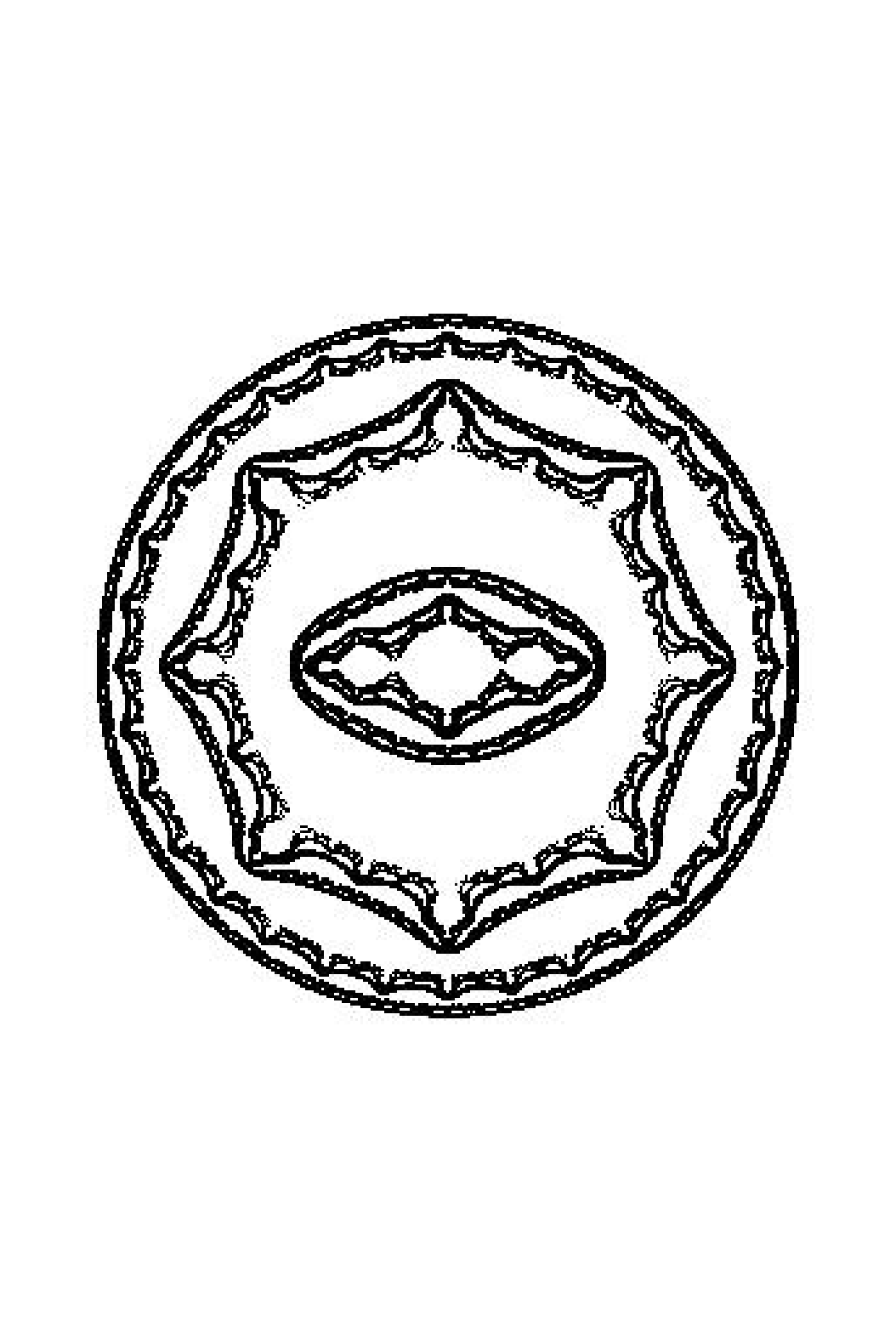}
\label{fig:dcjulia}
\end{center} 
%\end{minipage}
%\end{figure}
%\begin{figure}[htbp]
%\begin{minipage}{.50\linewidth}
\end{figure} 
\begin{figure}[htbp]
%\begin{minipage}{.55\linewidth}
\caption{The graph of $z\mapsto T_{\infty ,\tau _{(1/2,1/2)}}(z)$, where, 
letting $(h_{1},h_{2})$ be the element in Figure~\ref{fig:dcjulia},  
we set $\tau _{a}:=\sum _{j=1}^{2}a_{j}\delta _{h_{j}}$ for each $a\in {\cal W}_{2}.$  
$\tau _{a}$ is mean stable.   
A devil's coliseum (a complex analogue of the devil's staircase or Lebesgue's singular functions).  
%or Lebesgue's singular functions).
%$(h_{1},h_{2})\in {\cal D}\cap {\cal B}\cap {\cal H}.$ 
This function is continuous on $\CC $ and the set of varying points is equal to $J(G)$ in Figure~\ref{fig:dcjulia}.}
%\ \ \ \ \ \ \ \ \ \ \ \ \ \ \ \ \ \ \ \ \ \ \ \ \ \ \ \ \ \ \ 
%\ \ \ \ \ \ \ \ \ \ \ \ \ \ \ \       
%\ \ \ \ 
\begin{center}
\includegraphics[width=3.7cm,width=3.7cm]{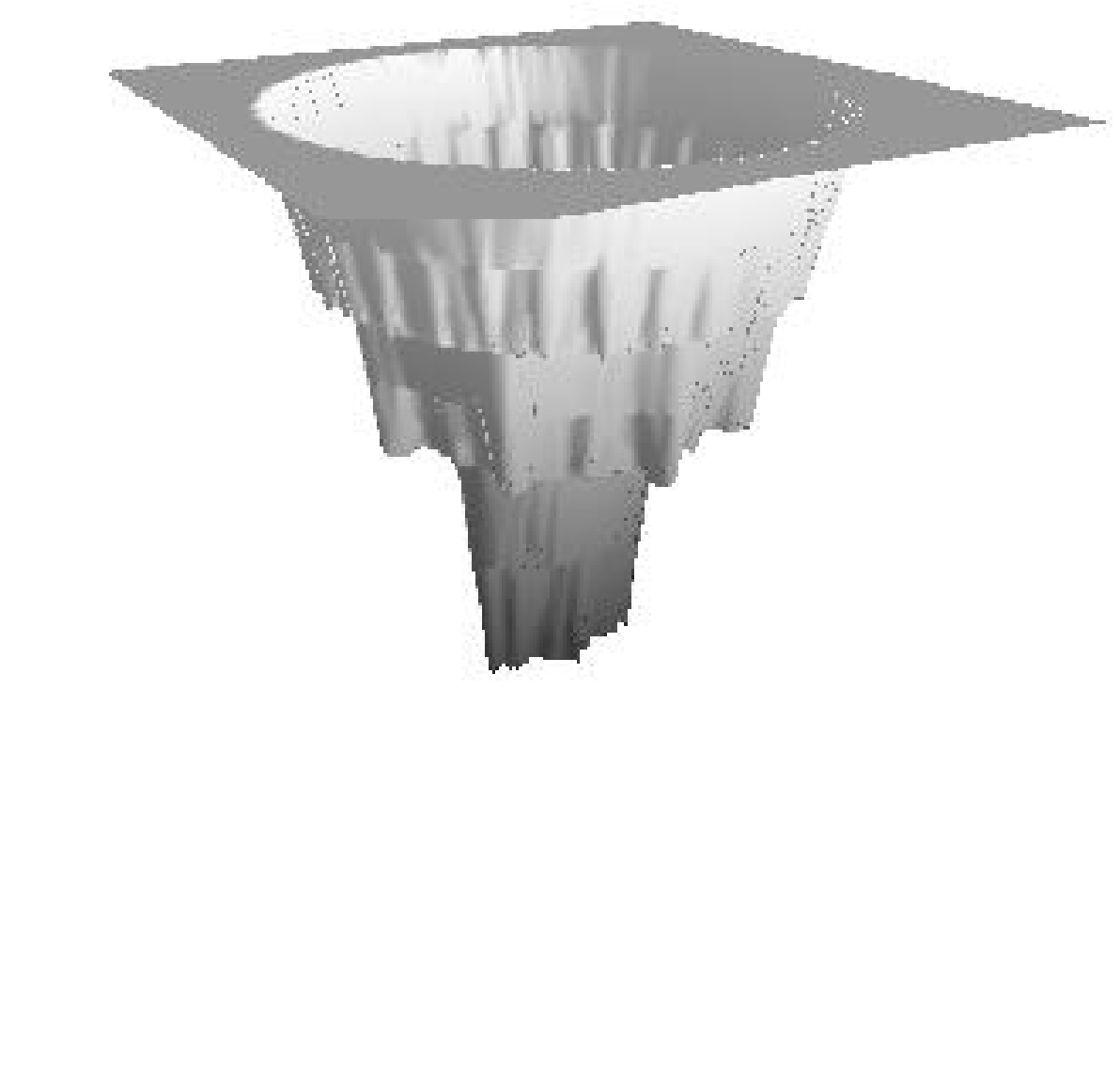}
\label{fig:dcgraphgrey2}
\end{center}
\end{figure}
\begin{figure}[htbp]  
\caption{The graph of $z\mapsto [(\partial T_{\infty ,\tau _{a}}(z)/\partial a_{1})]|_{a_{1}=1/2}$,  
where,  $\tau _{a}$ is the element in Figure~\ref{fig:dcgraphgrey2}. 
%$(h_{1},h_{2})$ is the element in Figure~\ref{fig:dcjulia}. 
A {\bf complex analogue of the Takagi function}. This function is continuous on $\CC $ and the set of varying points is included in $J(G)$ in Figure~\ref{fig:dcjulia}.} 
%\ \ \ \ \ \ \ \ \ \ \ \ \ \ \ \ \ \ \ \ \ \ \ \ \ \ \ \ \ \ \ 
%\ \ \ \ \ \ \ \ \ \ \ \ \ \ \      
\begin{center}
\includegraphics[width=3.7cm,width=3.7cm]{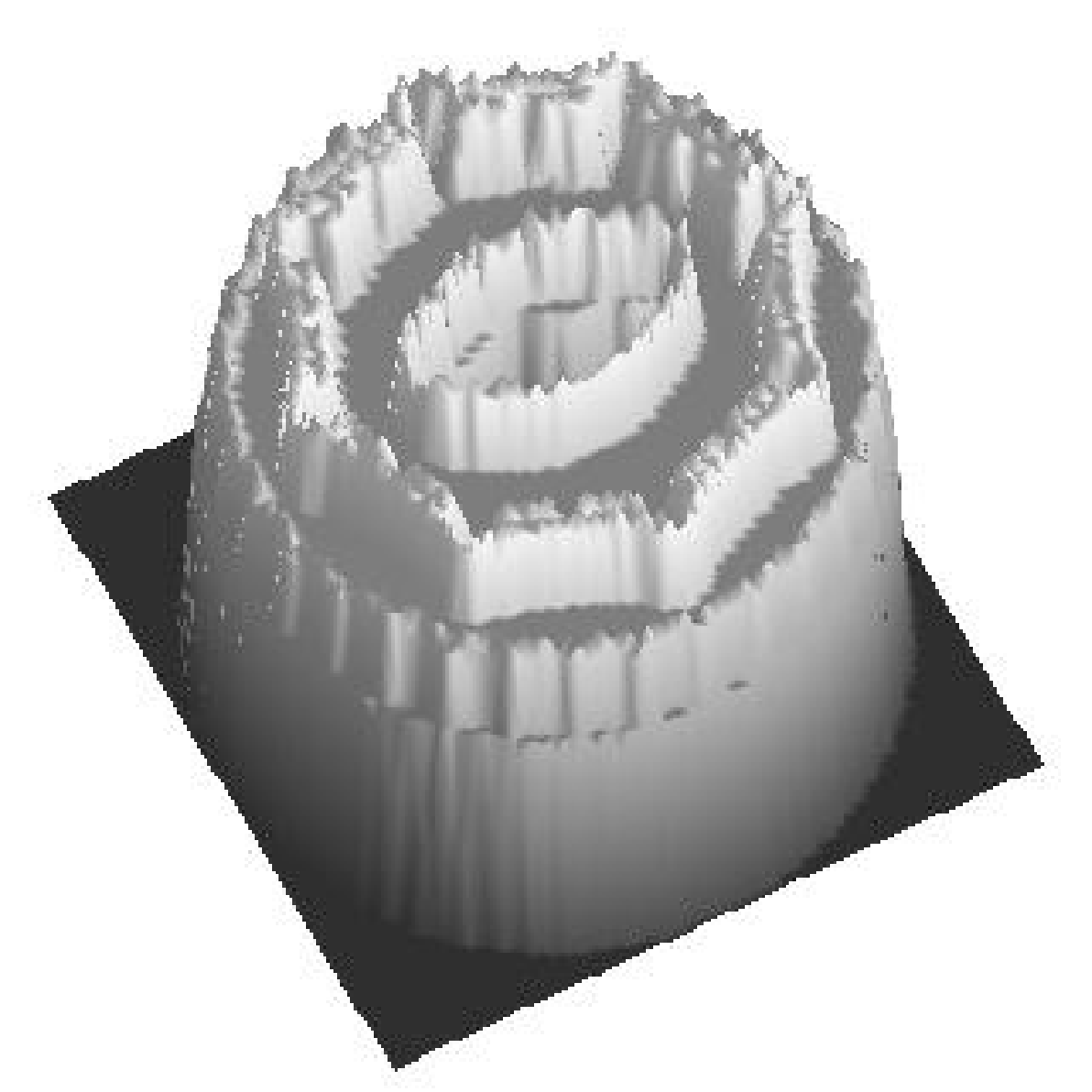}
\label{fig:ctgraphgrey1}
\end{center} 
\end{figure}

\end{rem} 

In this paper, we present a result on the non-differentiability of the function 
$\psi _{i,b}(z)$ of Theorem~\ref{t:kjemfspptint} at points in 
$J(G_{\tau })$ (Theorem~\ref{t:psinondiff}), which is obtained by the application 
of the Birkhoff ergodic theorem, potential theory and some results from \cite{Splms10}.  

Combining these results, we can say that 
for a generic $\tau \in {\frak M}_{1,c}({\cal P})$, 
the chaos of the averaged system associated with $\tau $ disappears, 
the Lyapunov exponents are negative, 
$\sharp (\Min (G_{\tau },\CCI ))<\infty $, each $L\in \Min(G_{\tau },\CCI )$ is attracting, 
there exists a stability on $U_{\tau }$ and $\Min (G_{\tau },\CCI )$ 
in a neighborhood of $\tau $ in $({\frak M}_{1,c}({\cal P}),{\cal O})$, 
and there exists an $\alpha \in (0,1)$ such that 
for each $\varphi \in C^{\alpha }(\CCI )$, 
$M_{\tau }^{n}(\varphi )$ tends to the space $U_{\tau }$ exponentially fast. 
Note that these phenomena can hold in random complex dynamics but cannot hold in the 
usual iteration dynamics of a single rational map $h$ with $\deg (h)\geq 2.$ 
We systematically investigate these phenomena and their mechanisms. 
As the author mentioned in Remark~\ref{r:holb}, these results 
will stimulate the chaos theory and the mathematical modeling in various fields, 
and will lead us to a new interesting field. Moreover, these results are related to fractal geometry very deeply. 

 In section~\ref{Pre}, we give some basic notations and definitions. 
 In section~\ref{Results}, we present the main results of this paper. 
 In section~\ref{Tools}, we give some basic tools to prove the main results. 
 In section~\ref{Proofs}, we give the proofs of the main results. 
 In section~\ref{Examples}, we present several examples which describe the main results.  

\noindent {\bf Acknowledgment}. The author thanks Rich Stankewitz for valuable comments. 
This work was partially supported by JSPS Grant-in-Aid for Scientific Research (C) 21540216.  
\section{Preliminaries}
\label{Pre} 
In this section, we give some fundamental notations and definitions. 

\noindent {\bf Notation:} 
Let $(X,d)$ be a metric space, $A$ a subset of $X$, and $r>0$. We set 
$B(A,r):= \{ z\in X\mid d(z,A)<r\} .$ Moreover, 
for a subset $C$ of $\CC $, we set 
$D(C,r):= \{ z\in \CC \mid \inf _{a\in C}|z-a|<r\} .$ 
Moreover, for any topological space $Y$ and for any subset $A$ of $Y$, we denote by int$(A)$ the set of all interior points of $A.$ 
We denote by Con$(A)$ the set of all connected components of $A.$  
\begin{df}
Let $Y$ be a metric space. 
We set 
$C(Y):= \{ \varphi :Y\rightarrow \CC \mid \varphi \mbox{ is continuous }\} .$ 
When $Y$ is compact, we endow $C(Y)$ with the supremum norm $\| \cdot \| _{\infty }.$ 
Moreover, for a subset ${\cal F}$ of $C(Y)$, we set 
${\cal F}_{nc}:=\{ \varphi \in {\cal F}\mid \varphi \mbox{ is not constant}\} .$ 
\end{df}
\begin{df} A {\bf rational semigroup} is a semigroup  
generated by a family of non-constant rational maps on 
the Riemann sphere $\CCI $ with the semigroup operation being 
functional composition(\cite{HM,GR}). A 
{\bf polynomial semigroup } is a 
semigroup generated by a family of non-constant 
polynomial maps. 
%\begin{df} 
We set 
Rat : $=\{ h:\CCI \rightarrow \CCI \mid 
h \mbox { is a non-constant rational map}\} $
endowed with the distance $\kappa $ which is defined 
by $\kappa (f,g):=\sup _{z\in \CCI }d(f(z),g(z))$, where $d$ denotes the 
spherical distance on $\CCI .$   
%We set 
%Poly :$=\{ h:\CCI \rightarrow \CCI 
%\mid h \mbox{ is a non-constant polynomial }\} $ endowed with 
%the relative topology from Rat.   
Moreover, we set 
$\Ratp:=\{ h\in \mbox{Rat}\mid \deg (h)\geq 2\} $ endowed with the 
relative topology from Rat. 
Furthermore, we set 
%Poly$_{\deg \geq 2}
%:= \{ g\in \mbox{Poly}\mid \deg (g)\geq 2\} $ 
${\cal P}:= \{ g:\CCI \rightarrow \CCI \mid 
g \mbox{ is a polynomial}, \deg (g)\geq 2\} $
endowed with the relative topology from 
Rat.  
%\end{df}

\end{df}
\begin{rem}[\cite{Be}] 
For each $d\in \NN $, let Rat$_{d}:=\{ g\in \Rat\mid \deg (g)=d\}$ and for each 
$d\in \NN $ with $d\geq 2$, let ${\cal P}_{d}:=\{ g\in {\cal P}\mid \deg (g)=d\} .$ 
Then for each $d$, $\Rat_{d}$ (resp. ${\cal P}_{d}$) is a connected component of 
$\Rat $ (resp. ${\cal P}$). Moreover, 
$\Rat_{d}$ (resp. ${\cal P}_{d}$) is open and closed in $\Rat $ (resp. ${\cal P}$) and 
is a finite dimensional complex manifold. 
Furthermore, $h_{n}\rightarrow h$ in ${\cal P}$ if and only if $\deg (h_{n})=\deg (h)$ for each large $n$ and 
the coefficients of $h_{n}$ tend to the coefficients of $h$ appropriately as $n\rightarrow \infty .$  
\end{rem}
\begin{df}
Let $G$ be a rational semigroup. 
%\begin{itemize}
%\item 
The {\bf Fatou set }of $G$ is defined to be  
$F(G):=\\ \{ z\in \CCI  \mid \exists \mbox{ neighborhood } U \mbox{ of }z$  
\mbox{s.t.} $\{ g|_{U}:U\rightarrow \CCI \} _{g\in G}$ is equicontinuous 
on  $U \} .$ (For the definition of equicontinuity, see \cite{Be}.) 
%\item 
The {\bf Julia set }of $G$ is defined to be 
 $J(G):= \CCI  \setminus F(G).$ 
%\item 
If $G$ is generated by $\{ g_{i}\} _{i}$, then 
we write $G=\langle g_{1},g_{2},\ldots \rangle .$
If $G$ is generated by a subset $\G $ of $\Rat$, then we write
 $G=\langle \G \rangle .$  
%More generally, 
%if $G$ is generated by $\{ h_{\lambda } : \lambda \in \Lambda \} $, 
%then we write $G=\langle h_{\lambda }: \lambda \in \Lambda \rangle .$ 
%\item 
For finitely many elements $g_{1},\ldots, g_{m}\in \Rat$, 
we set  $F(g_{1},\ldots ,g_{m}):=F(\langle g_{1},\ldots ,g_{m}\rangle )$ 
and $J(g_{1},\ldots ,g_{m}):=J(\langle g_{1},\ldots ,g_{m}\rangle )$.  
%\item 
For a subset $A$ of $\CCI $, we set 
$G(A):= \bigcup _{g\in G}g(A)$ and 
$G^{-1}(A):= \bigcup _{g\in G}g^{-1}(A).$ 
%\item 
We set $G^{\ast }:= G\cup \{ \mbox{Id}\} $, where 
Id denotes the identity map. 
%\end{itemize}
\end{df}
%By using the method in \cite{HM,GR}, it is easy to see that the following 
%lemma holds. 
\begin{lem}[\cite{HM, GR}] 
\label{ocminvlem}
%Let $Y$ be a compact metric space and 
Let $G$ be a rational semigroup. 
%subsemigroup of  $\emOCMX. $ Then 
Then, for each $h\in G$, $h(F(G))\subset F(G)$ and $h^{-1}(J(G))\subset J(G).$ 
Note that the equality does not hold in general. 
\end{lem}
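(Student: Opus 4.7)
The plan is to prove $h(F(G)) \subset F(G)$ directly, since the second inclusion $h^{-1}(J(G)) \subset J(G)$ is logically equivalent to it (both say $F(G) \subset h^{-1}(F(G))$, and follow from the former by contraposition). So I fix $z \in F(G)$ and $h \in G$, pick an open neighborhood $U$ of $z$ on which $\{g|_U\}_{g \in G}$ is equicontinuous with respect to the spherical distance (equivalently, by Arzel\`a--Ascoli on the compact space $\CCI$, a normal family), and---shrinking $U$---arrange that $\overline U$ still lies inside the original equicontinuity set. Since $h$ is a non-constant rational map it is an open self-map of $\CCI$, so $V := h(U)$ is an open neighborhood of $h(z)$, and the aim is to show $V \subset F(G)$.

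The key idea is the semigroup property: for any sequence $\{g_n\} \subset G$, the pre-composed sequence $\{g_n \circ h\}$ still lies in $G$, hence is normal on $U$. Passing to a subsequence, $\{g_{n_k} \circ h\}$ converges uniformly on compact subsets of $U$ to some limit $\phi : U \to \CCI$. Because $g_{n_k}$ is a function, the equality $(g_{n_k} \circ h)(z_1) = (g_{n_k} \circ h)(z_2)$ holds whenever $h(z_1) = h(z_2)$, so taking $k \to \infty$ shows that $\phi$ is constant on every fibre of $h|_U$. This provides a well-defined $\tilde\phi : V \to \CCI$ with $\phi = \tilde\phi \circ h$ on $U$.

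It remains to promote this to uniform convergence of $\{g_{n_k}\}$ on compact subsets of $V$. For a compact $K \subset V$ the set $K^{*} := h^{-1}(K) \cap \overline U$ is compact, and every $w \in K$ admits at least one preimage $z_w \in K^{*}$. Since $g_{n_k}(w) = (g_{n_k} \circ h)(z_w)$ and $\tilde\phi(w) = \phi(z_w)$, the uniform convergence of $g_{n_k} \circ h$ on $K^{*}$ transfers immediately to uniform convergence of $g_{n_k}$ on $K$. Hence $\{g\}_{g \in G}$ is normal on $V$, so $V \subset F(G)$ and in particular $h(z) \in F(G)$, completing the proof.

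The step I expect to require the most care is this descent from $U$ to $V = h(U)$, because near a critical value of $h$ no continuous local preimage selection $w \mapsto z_w$ exists, which at first glance seems to obstruct lifting uniformity. The resolution is that a continuous selection is never invoked: the number $g_{n_k}(w)$ is intrinsic to $w$, so \emph{any} preimage in $K^{*}$ serves in the final estimate, and the uniformity of the bound in $w$ is inherited from the uniform bound on the compact set $K^{*}$ rather than from any regularity of the preimage map.
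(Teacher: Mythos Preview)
Your proof is correct. The paper does not give its own proof of this lemma but simply cites it from \cite{HM,GR}; your argument via normality and the semigroup property $g_n\circ h\in G$ is the standard one, and your treatment of the descent through the possibly branched map $h$ (using that any preimage in the compact set $K^*$ suffices, with no need for a continuous section) is correct and carefully observed.
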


The following is the key to investigating random complex dynamics. 
\begin{df}
%Let $Y$ be a compact metric space and 
%let $G$ be a subsemigroup of $\CMX. $  
Let $G$ be a rational semigroup. 
We set $J_{\ker }(G):= \bigcap _{g\in G}g^{-1}(J(G)).$ 
This is called the {\bf kernel Julia set} of $G.$  
\end{df}
\begin{rem}
\label{r:kjulia}
%Let $Y$ be a compact metric space and 
%let $G$ be a subsemigroup of $\CMX. $
%\begin{enumerate}
%\item 
Let $G$ be a rational semigroup. 
(1) $J_{\ker }(G)$ is a compact subset of $J(G).$ 
%\item 
(2) For each $h\in G$, 
$h(J_{\ker }(G))\subset J_{\ker }(G).$  
%\item 
(3) If $G$ is a rational semigroup and 
if $F(G)\neq \emptyset $, then 
int$(J_{\ker }(G))=\emptyset .$ 
%\item 
(4) If $G$ is generated by a single map or if $G$ is a group, then 
$J_{\ker }(G)=J(G).$ However, for a general rational semigroup $G$, 
it may happen that $\emptyset =J_{\ker }(G)\neq J(G)$ 
(see \cite{Splms10}).  
%\end{enumerate}
\end{rem}
It is sometimes important to investigate the dynamics of sequences of maps. 
\begin{df}
For each $\gamma =(\gamma _{1},\gamma _{2},\ldots )\in 
(\Rat)^{\NN }$ and each $m,n\in \NN $ with $m\geq n$, we set
$\gamma _{m,n}=\gamma _{m}\circ \cdots \circ \gamma _{n}$ and we set  
$$F_{\gamma }:= \{ z\in \CCI \mid 
\exists \mbox{ neighborhood } U \mbox{ of } z \mbox{ s.t. } 
\{ \gamma _{n,1}\} _{n\in \NN } \mbox{ is equicontinuous on }  
U\} $$ and $J_{\gamma }:= \CCI  \setminus F_{\gamma }.$ 
The set $F_{\gamma }$ is called the {\bf Fatou set} of the sequence $\gamma $ and 
the set $J_{\gamma }$ is called the {\bf Julia set} of the sequence $\gamma .$ 
\end{df}
\begin{rem}
Let $\gamma \in (\Ratp) ^{\NN }$. Then by \cite[Theorem 2.8.2]{Be}, $J_{\gamma }\neq \emptyset .$ 
Moreover, if $\Gamma $ is a non-empty compact subset of $\Ratp$ and $\gamma \in \Gamma ^{\NN }$, 
then by \cite{S4}, $J_{\gamma }$ is a perfect set and $J_{\gamma }$ has uncountably many points.  
\end{rem}
We now give some notations on random dynamics. 
\begin{df}
\label{d:d0}
For a metric space $Y$, we denote by 
${\frak M}_{1}(Y)$ the space of all Borel probability measures on  $Y$ endowed 
with the topology such that 
$\mu _{n}\rightarrow \mu $ in ${\frak M}_{1}(Y)$ if and only if 
for each bounded continuous function $\varphi :Y\rightarrow \CC $, 
$\int \varphi \ d\mu _{n}\rightarrow \int \varphi \ d\mu .$ 
 Note that if $Y$ is a compact metric space, then 
${\frak M}_{1}(Y)$ is a compact metric space with the metric 
$d_{0}(\mu _{1},\mu _{2}):=\sum _{j=1}^{\infty }\frac{1}{2^{j}}
\frac{|\int \phi _{j}d\mu _{1}-\int \phi _{j}d\mu _{2}|}{1+|\int \phi _{j}d\mu _{1}-\int \phi _{j}d\mu _{2}|}$, 
where $\{ \phi _{j}\} _{j\in \NN }$ is a dense subset of $C(Y).$  
Moreover, for each $\tau \in {\frak M}_{1}(Y)$, 
we set $\suppt :=\{ z\in Y\mid \forall \mbox{ neighborhood } U \mbox{ of }z,\ 
\tau (U)>0\} .$ Note that $\suppt $ is a closed subset of $Y.$ 
Furthermore, 
we set ${\frak M}_{1,c}(Y):= \{ \tau \in {\frak M}_{1}(Y)\mid \suppt \mbox{ is compact}\} .$ 

%\begin{df}
For a complex Banach space ${\cal B}$, we denote by ${\cal B}^{\ast }$ the 
space of all continuous complex linear functionals $\rho :{\cal B}\rightarrow \CC $, 
endowed with the weak$^{\ast }$ topology. 
%the norm $\| \rho \| := \sup _{\varphi \in {\cal B}, \| \varphi \| =1} |\rho (\varphi )|.$  
%\end{df}
\end{df}
For any $\tau \in {\frak M}_{1}(\Rat)$, we will consider the i.i.d. random dynamics on $\CCI $ such that 
at every step we choose a map $g\in \Rat $ according to $\tau $ 
(thus this determines a time-discrete Markov process with time-homogeneous transition probabilities 
on the phase space 
$\CCI $ such that for each $x\in \CCI $ and 
each Borel measurable subset $A$ of $\CCI $, 
the transition probability 
$p(x,A)$ from $x$ to $A$ is defined as $p(x,A)=\tau (\{ g\in \Rat \mid g(x)\in A\} )$).  
\begin{df} 
\label{d:ytau}
%Let $Y$ be a compact metric space.  
%and let ${\cal F}$ be a subset of $\CMX .$ 
Let $\tau \in {\frak M}_{1}(\Rat).$  
\begin{enumerate}
\item  We set $\Gamma _{\tau }:= \mbox{supp}\, \tau $ (thus $\Gamma _{\tau }$ is a 
closed subset of $\Rat $).     
Moreover, we set $X_{\tau }:= (\Gamma _{\tau })^{\NN }$ $
 (=\{ \gamma  =(\gamma  _{1},\gamma  _{2},\ldots )\mid \gamma  _{j}\in \Gamma _{\tau }\ (\forall j)\} )$ endowed with the product topology.  
Furthermore, we set $\tilde{\tau }:= \otimes\displaystyle _{j=1}^{\infty }\tau .$ 
This is the unique Borel probability measure 
on $X_{\tau }$ such that for each cylinder set 
$A=A_{1}\times \cdots \times A_{n}\times \Gamma _{\tau }\times  \Gamma _{\tau }\times \cdots $ in 
$X_{\tau }$, $\tilde{\tau }(A)=\prod _{j=1}^{n}\tau (A_{j}).$ 
 We denote by $G_{\tau }$ the subsemigroup of $\Rat$ generated by 
the subset $\Gamma _{\tau }$ of $\Rat .$   
\item 
%We denote by ${\cal M}_{1}(\CCI )$ the space of 
%all Borel probability measures on $\CCI $, endowed with 
%the weak topology. Note that ${\cal M}_{1}(\CCI )$ is 
%a compact metric space. 
%Let $C(Y )$ be the Banach space of all continuous functions on $Y $ endowed with the 
%supremum norm.  
Let $M_{\tau }$ be the operator 
on $C(\CCI ) $ defined by $M_{\tau }(\varphi )(z):=\int _{\Gamma _{\tau }}\varphi (g(z))\ d\tau (g).$ 
$M_{\tau }$ is called the {\bf transition operator} of the Markov process induced by $\tau .$ 
Moreover, let $M_{\tau }^{\ast }: C(\CCI )^{\ast }
\rightarrow C(\CCI )^{\ast }$ be the dual of $M_{\tau }$, which is defined as 
$M_{\tau }^{\ast }(\mu )(\varphi )=\mu (M_{\tau }(\varphi ))$ for each 
$\mu \in C(\CCI )^{\ast }$ and each $\varphi \in C(\CCI ).$ 
Remark: we have $M_{\tau }^{\ast }({\frak M}_{1}(\CCI ))\subset {\frak M}_{1}(\CCI )$ and  
for each $\mu \in {\frak M}_{1}(\CCI  )$ and each open subset $V$ of $\CCI  $, 
we have $M_{\tau }^{\ast }(\mu )(V)=\int _{\Gamma _{\tau }}\mu (g^{-1}(V))\ d\tau (g).$ 
\item  
We denote by $F_{meas }(\tau )$ the 
set of $\mu \in {\frak M}_{1}(\CCI  )$
satisfying that there exists a neighborhood $B$ 
of $\mu $ in ${\frak M}_{1}(\CCI  )$ such that 
the sequence  $\{ (M_{\tau }^{\ast })^{n}|_{B}: 
B\rightarrow {\frak M}_{1}(\CCI  ) \} _{n\in \NN }$
is equicontinuous on $B.$
%\item 
We set $J_{meas}(\tau ):= {\frak M}_{1}(\CCI )\setminus 
F_{meas}(\tau ).$
%\item We denote by $F_{meas}^{0}(\tau )$ the 
%set of $\mu \in {\frak M}_{1}(\CCI )$ satisfying that 
%the sequence$\{ (M_{\tau }^{\ast })^{n}:
%{\frak M}_{1}(\CCI  )\rightarrow {\frak M}_{1}(\CCI )\} _{n\in \NN }$ is 
%equicontinuous at the one point $\mu .$ 
%Note that 
%$F_{meas}(\tau )\subset F_{meas}^{0}(\tau ).$ 
%\item 
%We set $J_{meas}^{0}(\tau ):= {\frak M}_{1}(\CCI  )\setminus F_{meas}^{0}(\tau ).$ 
\end{enumerate}
\end{df}
%\begin{rem}
%We have $F_{meas}(\tau )\subset F_{meas}^{0}(\tau )$ and $J_{meas}^{0}(\tau )\subset J_{meas}(\tau ).$ 
%\end{rem}
\begin{rem}
\label{r:ggt}
Let $\Gamma $ be a closed subset of Rat. Then there exists a 
$\tau \in {\frak M}_{1}(\mbox{Rat})$ such that $\Gamma _{\tau }=\Gamma .$ 
By using this fact, we sometimes apply the results on random complex dynamics  
to the study of the dynamics of rational semigroups. 
\end{rem}
\begin{df}
\label{d:Phi}
Let $Y$ be a compact metric space. 
Let $\Phi :Y \rightarrow {\frak M}_{1}(Y )$ be the topological embedding 
defined by: $\Phi (z):=\delta _{z}$, where $\delta _{z}$ denotes the 
Dirac measure at $z.$ Using this topological embedding $\Phi :Y \rightarrow {\frak M}_{1}(Y )$, 
we regard $Y $ as a compact subset of ${\frak M}_{1}(Y ).$ 
\end{df}
\begin{rem}
\label{r:Phi}
If $h\in \Rat $ and $\tau =\delta _{h}$, then 
we have $M_{\tau }^{\ast }\circ \Phi = \Phi \circ h$ on $\CCI .$ 
Moreover, for a general $\tau \in {\frak M}_{1}(\Rat)$, 
$M_{\tau }^{\ast }(\mu )=\int h_{\ast }(\mu )d\tau (h)$ for each 
$\mu \in {\frak M}_{1}(\CCI ).$ 
Therefore, for a general $\tau \in {\frak M}_{1}(\Rat)$, 
the map $M_{\tau }^{\ast }:{\frak M}_{1}(\CCI )\rightarrow {\frak M}_{1}(\CCI )$ 
can be regarded as the ``averaged map'' on the extension ${\frak M}_{1}(\CCI )$ of 
$\CCI .$ 
\end{rem}

\begin{rem}
If $\tau =\delta _{h}\in {\frak M}_{1}(\Ratp)$ with $h\in  \Ratp $, then 
$J_{meas}(\tau )\neq \emptyset $. In fact, 
using the embedding $\Phi :\CCI \rightarrow {\frak M}_{1}(\CCI )$, 
we have $\emptyset \neq \Phi (J(h))\subset J_{meas}(\tau ).$     
\end{rem}
The following is an important and interesting object in random dynamics. 
\begin{df}
%Let $Y$ be a compact metric space and let $A$ be a subset of $Y.$ 
Let $A$ be a subset of $\CCI .$ 
Let $\tau \in {\frak M}_{1}(\Rat).$ For each $z\in \CCI $,  
we set 
$T_{A,\tau }(z):= \tilde{\tau }(\{ \gamma =(\gamma _{1},\gamma _{2},\ldots )\in X_{\tau }\mid 
d(\gamma _{n,1}(z),A)\rightarrow 0 \mbox{ as } n\rightarrow \infty \}).$ 
This is the probability of tending to $A$ starting with the initial value $z\in \CCI .$    
For any $a\in \CCI $,  we set $T_{a,\tau }:=T_{\{ a\} ,\tau }.$ 
\end{df}
%\begin{df}
%Let $\tau \in {\frak M}_{1}({\cal P}).$ 
%For each $z\in \CCI $, we set  
%$$T_{\infty ,\tau }(z):= 
%\tilde{\tau }(\{ \gamma =(\gamma _{1},\gamma _{2},\ldots )\in 
%\begin{df}
%Let $Y$ be a $n$-dimensional smooth manifold.  
%We denote by Leb$_{n}$ the two-dimensional Lebesgue measure on $Y .$ 
%\end{df}
\begin{df}
Let ${\cal B}$ be a complex vector space and let $M:{\cal B}\rightarrow {\cal B}$ be a linear operator.  
Let $\varphi \in {\cal B}$ and $a\in \CC $ be such that 
$\varphi \neq 0, |a|=1$, and $M(\varphi )=a\varphi .$ Then we say that  
$\varphi $ is a unitary eigenvector of $M$ with respect to $a$, 
and we say that $a$ is a unitary eigenvalue.   
\end{df}
\begin{df}
%Let $Y$ be a compact metric space and let 
Let $\tau \in {\frak M}_{1}(\Rat).$ 
Let $K$ be a non-empty subset of $\CCI  $ such that 
$G_{\tau }(K)\subset K$. 
We denote by ${\cal U}_{f,\tau }(K)$ the set of 
all unitary eigenvectors of $M_{\tau }:C(K)\rightarrow C(K)$. Moreover, 
we denote by ${\cal U}_{v,\tau }(K)$ the set of all 
unitary eigenvalues of $M_{\tau }:C(K)\rightarrow C(K).$  
Similarly, we denote by 
${\cal U}_{f,\tau ,\ast}(K)$ the set of all unitary eigenvectors of 
$M_{\tau }^{\ast }:C(K)^{\ast }\rightarrow C(K)^{\ast }$, 
%where $(M_{\tau })_{\ast }(\rho )(\varphi )=\rho (M_{\tau }(\varphi ))$, and 
 and we denote by ${\cal U}_{v,\tau ,\ast }(K)$ the set of all 
unitary eigenvalues of $M_{\tau }^{\ast }:C(K)^{\ast }\rightarrow C(K)^{\ast }.$ 
%Finally, we set ${\cal U}_{f,\tau }:={\cal U}_{f,\tau }(Y), {\cal U}_{v,\tau }:= {\cal U}_{v,\tau }(Y),$ 
%${\cal U}_{f,\tau ,\ast }:={\cal U}_{f,\tau ,\ast }(Y),$ and ${\cal U}_{v,\tau ,\ast }:={\cal U}_{v,\tau, \ast }(Y).$  
\end{df}
%\begin{rem}
%\label{r:ueigen}
%Let $Y$ be a compact metric space and let 
%$\tau \in {\frak M}_{1}(Y).$ 
%By \cite[p 324, Proposition 4.2]{Sh}, 
%the set ${\cal U}_{f,\tau }$ is a group under pointwise multiplication, 
%and the set ${\cal U}_{v,\tau }$ is a subgroup of $S^{1}.$
%Moreover, the map $\varphi \mapsto a_{\varphi }$, where $\varphi \in {\cal U}_{f,\tau }$ 
%and $a_{\varphi }\in {\cal U}_{v,\tau }$ denotes the corresponding unitary eigenvalue, is a group homomorphism.  
%\end{rem}
\begin{df}
Let $V$ be a complex vector space and let $A$ be a subset of $V.$ 
We set $\mbox{LS}(A):= \{ \sum _{j=1}^{m}a_{j}v_{j}\mid a_{1},\ldots ,a_{m}\in \CC , 
v_{1},\ldots ,v_{m}\in A, m\in \NN \} .$ 
\end{df}
\begin{df}
Let $Y$ be a topological space and let $V$ be a subset of $Y.$ 
We denote by $C_{V}(Y)$ the space of all $\varphi \in C(Y)$ such that 
for each connected component $U$ of $V$, there exists a constant $c_{U}\in \CC $ with 
$\varphi |_{U}\equiv c_{U}.$ 
\end{df}
\begin{df}
For a topological space $Y$, we denote by Cpt$(Y)$ the space of all non-empty compact subsets of $Y$. 
If $Y$ is a metric space, we endow Cpt$(Y)$  
with the Hausdorff metric.
\end{df}
\begin{df}
\label{d:minimal}
%Let $Y$ be a metric space and let $G$ be a subsemigroup of $\CMX .$ 
Let $G$ be a rational semigroup. 
Let $Y\in \Cpt (\CCI )$ be such that 
$G(Y)\subset Y.$  
Let $K\in \Cpt(\CCI ).$ 
We say that $K$ is a minimal set for $(G,Y)$ if 
%$G(K)\subset K$ and 
$K$ is minimal among the space 
$\{ L\in \Cpt(Y)\mid G(L)\subset L\} $ with respect to inclusion. 
Moreover, we set $\Min(G,Y):= \{ K\in \Cpt(Y)\mid K \mbox{ is a minimal set for } (G,Y)\} .$  
\end{df}
\begin{rem}
\label{r:minimal}
%Let $Y$ be a metric space and let $G$ be a subsemigroup of $\CMX .$
Let $G$ be a rational semigroup. 
By Zorn's lemma, it is easy to see that 
if $K_{1}\in \Cpt(\CCI )$ and $G(K_{1})\subset K_{1}$, then 
there exists a $K\in \Min(G,\CCI )$ with $K\subset K_{1}.$  
Moreover, it is easy to see that 
for each $K\in \Min(G,\CCI )$ and each $z\in K$, 
$\overline{G(z)}=K.$ In particular, if $K_{1},K_{2}\in \Min(G,\CCI )$ with $K_{1}\neq K_{2}$, then 
$K_{1}\cap K_{2}=\emptyset .$ Moreover, 
by the formula $\overline{G(z)}=K$, we obtain that 
for each $K\in \Min(G,\CCI )$, either (1) $\sharp K<\infty $ or (2) $K$ is perfect and 
$\sharp K>\aleph _{0}.$ 
Furthermore, it is easy to see that 
if $\Gamma \in \Cpt(\Rat), G=\langle \Gamma \rangle $, and 
$K\in \Min(G,\CCI )$, then $K=\bigcup _{h\in \Gamma }h(K).$   
\end{rem}
\begin{rem}
\label{r:minsetcorr}
In \cite[Remark 3.9]{Splms10}, 
for the statement ``for each $K\in \Min (G,Y)$, either 
(1) $\sharp K<\infty $ or (2) $K$ is perfect'', 
we should assume that each element $g\in G$ is a finite-to-one map. 
\end{rem}
\begin{df}
\label{d:stau}
For each $\tau \in {\frak M}_{1,c}(\Rat)$, we set 
$S_{\tau }:=\bigcup _{L\in \Min (G_{\tau },\CCI )}L.$
\end{df}
In \cite{Splms10}, the following result was proved by the author of this paper. 
\begin{thm}[\cite{Splms10}, Cooperation Principle II: Disappearance of Chaos]
\label{t:mtauspec}
Let $\tau \in {\frak M}_{1,c}(\emRat)$. 
%let 
%$S_{\tau }:=\bigcup _{L\in \emMin (G_{\tau },\CCI )}L.$  
Suppose that  
$J_{\ker }(G_{\tau })=\emptyset $ and $J(G_{\tau })\neq \emptyset $.
Then, all of the following statements hold. 
\begin{enumerate}
%\item \label{t:mtauspec2:ext} 
%$F_{meas}(\tau )={\frak M}_{1}(\CCI ).$ 
\item \label{t:mtauspec2}
Let ${\cal B}_{0,\tau }:= \{ \varphi \in C(\CCI )\mid M_{\tau }^{n}(\varphi )\rightarrow 0 \mbox{ as }n\rightarrow \infty \} .$ 
Then, ${\cal B}_{0,\tau }$ is a closed subspace of $C(\CCI )$, 
$\mbox{{\em LS}}({\cal U}_{f,\tau }(\CCI ))\neq \emptyset $ 
 and there exists a direct sum decomposition 
$C(\CCI )=\mbox{{\em LS}}({\cal U}_{f,\tau }(\CCI ))\oplus {\cal B}_{0,\tau }.$ 
Moreover, $\mbox{{\em LS}}({\cal U}_{f,\tau }(\CCI ))\subset C_{F(G_{\tau })}(\CCI )$ and 
$1\leq \dim _{\CC }(\mbox{{\em LS}}({\cal U}_{f,\tau }(\CCI )))<\infty .$ 
\item \label{t:mtauspec3}
$\sharp \emMin(G_{\tau },\CCI )<\infty .$
\item \label{t:mtauspec4}
%Let $K:= \bigcup _{L\in \emMin(G_{\tau },\CCI )}L$ 
Let $W:= \bigcup _{A\in \text{{\em Con}}(F(G_{\tau })), A\cap S_{\tau }\neq \emptyset }A$. 
Then $S_{\tau }$ is compact. Moreover, for each $z\in \CCI $ there exists a Borel measurable subset 
${\cal C}_{z}$ of $(\emRat)^{\NN }$ with $\tilde{\tau }({\cal C}_{z})=1$ such that 
for each $\gamma \in {\cal C}_{z}$, there exists an $n\in \NN $ with 
$\gamma _{n,1}(z)\in W$ and   
$d(\gamma _{m,1}(z),S_{\tau })\rightarrow 0$ as $m\rightarrow \infty .$ 
%Furthermore, setting 
%for each $z\in \CCI $ there exists a Borel measurable subset ${\cal V}_{z}$ of $X_{\tau }$ 
%with $\tilde{\tau }({\cal V}_{z})=1$ such that 
%for each $\gamma \in {\cal V}_{z}$ there exists an $n\in \NN $ with 
%$\gamma _{n,1}(z)\in W.$ 
\end{enumerate} 
\end{thm}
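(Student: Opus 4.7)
The plan is to address the three conclusions in the reverse of the order stated, since (\ref{t:mtauspec4}) is the dynamical core that drives the other two. Throughout, the crucial hypothesis is that $J_{\ker}(G_\tau)=\emptyset$, which (combined with the compactness of $\Gamma_\tau$ and of $J(G_\tau)$) I would first upgrade to the following \emph{uniform escape} statement: there exist $N\in\NN$, $p>0$, and an open set $V\supset S_\tau$ with $\overline{V}\subset F(G_\tau)$ such that for every $z\in\CCI$,
\[
\tilde\tau\bigl(\{\gamma\in X_\tau : \exists\, n\le N,\ \gamma_{n,1}(z)\in V\}\bigr)\ge p.
\]
The reason is that for each $z\in J(G_\tau)$, by $J_{\ker}(G_\tau)=\emptyset$, some word $g=h_{k(z)}\circ\cdots\circ h_1$ in $\Gamma_\tau$ sends $z$ into $F(G_\tau)$; by open-ness of $F(G_\tau)$ and continuity of composition, this holds on a neighborhood of $z$; cover $J(G_\tau)$ by finitely many such neighborhoods; and bring the target neighborhoods into any preassigned $V$ of $S_\tau$ using that every forward orbit in $F(G_\tau)$ approaches $S_\tau$ under iteration (by equicontinuity/normality on Fatou components plus minimality of the sets in $\Min(G_\tau,\CCI)$ contained there).

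Once uniform escape is in hand, claim (\ref{t:mtauspec4}) follows from a Borel–Cantelli argument using the Markov property: the events ``no visit to $V$ in the $j$-th block of $N$ steps starting from $z$'' have probability $\le 1-p$ independently, so almost surely some $\gamma_{n,1}(z)$ enters $V$; after that, $\gamma_{m,1}(z)$ stays in $V$ and the hyperbolic contraction on each Fatou component meeting $S_\tau$ forces $d(\gamma_{m,1}(z),S_\tau)\to 0$. Compactness of $S_\tau$ is immediate once we establish (\ref{t:mtauspec3}), since $S_\tau$ is then a finite union of compact minimal sets.

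For (\ref{t:mtauspec3}) I would argue as follows. First, no $L\in\Min(G_\tau,\CCI)$ can be contained in $J(G_\tau)$: forward-invariance $G_\tau(L)\subset L\subset J(G_\tau)$ would give $L\subset\bigcap_{g\in G_\tau}g^{-1}(J(G_\tau))=J_{\ker}(G_\tau)=\emptyset$. Hence each $L$ meets $F(G_\tau)$, and by Remark~\ref{r:minimal}, $\overline{G_\tau(z)}=L$ for any $z\in L\cap F(G_\tau)$. I would then prove that each such $L$ is trapped by an open neighborhood $U_L\subset F(G_\tau)$ in the sense that some Fatou component $A\supset L$ satisfies $G_\tau(A)\subset$ a fixed forward-invariant neighborhood of $L$; the argument uses normality of $\{g|_A:g\in G_\tau\}$ and the fact that limits of compositions land in $L$ (by the convergence result in step (\ref{t:mtauspec4}) already sketched on open subsets of $F(G_\tau)$). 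Distinct minimal sets are disjoint, so distinct $U_L$'s can be shrunk to be pairwise disjoint, and then finiteness is forced by compactness of $\CCI$ together with a lower bound on the ``size'' of each $U_L$ (e.g.\ each contains a Fatou component).

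Finally, for (\ref{t:mtauspec2}), I would exploit that $M_\tau$ is a Markov operator of norm one on $C(\CCI)$. Define $\pi(\varphi)(z):=\int_{X_\tau}\lim_{n\to\infty}\varphi(\gamma_{n,1}(z))\,d\tilde\tau(\gamma)$; by (\ref{t:mtauspec4}) and the fact that $\varphi$ is continuous and $S_\tau$ is compact, this limit (which depends only on the behavior of $\varphi$ near $S_\tau$) exists and gives a bounded linear map $\pi\colon C(\CCI)\to C(\CCI)$ with $M_\tau\pi=\pi M_\tau=\pi$, whose kernel contains $\mathcal B_{0,\tau}$. The range of $\pi$ will turn out to be exactly the $1$-eigenspace of $M_\tau$, and a standard Jacobs–de Leeuw–Glicksberg splitting then produces the direct sum $C(\CCI)=\mathrm{LS}(\mathcal U_{f,\tau}(\CCI))\oplus\mathcal B_{0,\tau}$ once one verifies that the semigroup $\{M_\tau^n\}$ has a relatively compact orbit closure in the strong topology on the unitary part; the compactness comes from equicontinuity on each Fatou component combined with a.s.\ convergence to $S_\tau$. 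The finite-dimensionality $\dim\mathrm{LS}(\mathcal U_{f,\tau}(\CCI))<\infty$ and the inclusion in $C_{F(G_\tau)}(\CCI)$ follow from the same convergence: any unitary eigenvector $\varphi$ with $M_\tau\varphi=a\varphi$, $|a|=1$, satisfies
\[
|\varphi(z)|=|M_\tau^n\varphi(z)|\le\int|\varphi(\gamma_{n,1}(z))|\,d\tilde\tau(\gamma),
\]
so $\varphi$ is determined by its restriction to $S_\tau$, and restriction to each finite minimal component $L$ embeds $\mathrm{LS}(\mathcal U_{f,\tau}(\CCI))$ into the finite direct sum $\bigoplus_{L\in\Min(G_\tau,\CCI)} C(L)$ of eigenspaces of the restricted Markov operators, which are finite-dimensional on each $L$ by a separate spectral argument on each attracting minimal component. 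Local constancy on $F(G_\tau)$ is then a by-product: for $z\in A\in\mathrm{Con}(F(G_\tau))$, the iterates $\gamma_{n,1}(z)$ converge uniformly on compacta in $A$ to $S_\tau$, so $\varphi(z)=\pi(\varphi)(z)$ is independent of $z$ in $A$.

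\medskip\noindent
I expect the hardest step to be the spectral decomposition in (\ref{t:mtauspec2}), and specifically the finite-dimensionality of $\mathrm{LS}(\mathcal U_{f,\tau}(\CCI))$. The subtlety is that unitary eigenvectors for eigenvalues $\ne 1$ are not captured by $\pi$ and require the Jacobs–de Leeuw–Glicksberg (or Yosida–Kakutani mean ergodic) machinery applied to a Markov operator on $C(\CCI)$; one must verify the relative weak compactness of $\{M_\tau^n\varphi\}_{n\ge 0}$, which in turn rests on a careful uniform version of (\ref{t:mtauspec4}) on compact subsets of $\CCI$. The step that produces uniform escape with quantitative $(N,p,V)$ is where the hypothesis $J_{\ker}(G_\tau)=\emptyset$ is really used and is the technical heart of the argument.
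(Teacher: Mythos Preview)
This theorem is not proved in the present paper: it is quoted verbatim from the author's earlier work \cite{Splms10} (see the sentence ``In \cite{Splms10}, the following result was proved by the author of this paper'' immediately preceding the statement), so there is no proof here to compare your proposal against.

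That said, your sketch contains a genuine gap that would make the argument fail as written. Your ``uniform escape'' step posits an open set $V\supset S_\tau$ with $\overline V\subset F(G_\tau)$, and your contraction argument in item~(\ref{t:mtauspec4}) likewise assumes orbits eventually land in Fatou components containing $S_\tau$ and then stay there. But the hypothesis $J_{\ker}(G_\tau)=\emptyset$ does \emph{not} force $S_\tau\subset F(G_\tau)$: the paper explicitly notes (just before Theorem~\ref{t:kjemfhf}, and in Example~\ref{ex:Jt}) that there exist $\tau$ with $J_{\ker}(G_\tau)=\emptyset$ possessing a $J$-touching minimal set, i.e.\ some $L\in\Min(G_\tau,\CCI)$ with $L\cap J(G_\tau)\neq\emptyset$. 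In that situation no such $V$ exists, and your hyperbolic-contraction mechanism on Fatou components cannot by itself yield $d(\gamma_{m,1}(z),S_\tau)\to 0$. The formulation in the theorem is careful on this point: $W$ is the union of Fatou components \emph{meeting} $S_\tau$, not a Fatou neighborhood \emph{containing} $S_\tau$, and the conclusion is only that orbits enter $W$ and then approach $S_\tau$; the approach to $S_\tau$ must be argued without assuming $S_\tau\subset F(G_\tau)$.

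A second, smaller gap is in your finiteness argument for (\ref{t:mtauspec3}): ``compactness of $\CCI$ together with a lower bound on the size of each $U_L$ (e.g.\ each contains a Fatou component)'' does not give finiteness, since $F(G_\tau)$ can have infinitely many components and there is no uniform lower diameter bound. Your argument for finite-dimensionality in (\ref{t:mtauspec2}) then inherits this problem and also presupposes that each $L$ is attracting (``a separate spectral argument on each attracting minimal component''), which again need not hold under $J_{\ker}(G_\tau)=\emptyset$ alone; see Lemma~\ref{l:Lclassify} and Definition~\ref{d:Lclassify} in the paper. The actual proof in \cite{Splms10} handles the non-attracting minimal sets and derives finite-dimensionality and $\sharp\Min(G_\tau,\CCI)<\infty$ by a different route.
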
 
\begin{df}
\label{d:pitau}
Under the assumptions of Theorem~\ref{t:mtauspec}, 
we denote by 
$\pi _{\tau }: C(\CCI )\rightarrow \mbox{LS}({\cal U}_{f,\tau }(\CCI ))$ the 
projection determined by the direct sum decomposition 
$C(\CCI )=\mbox{LS}({\cal U}_{f,\tau }(\CCI ))\oplus {\cal B}_{0,\tau }.$  
\end{df}
\begin{rem}
\label{r:pitau}
Under the assumptions of Theorem~\ref{t:mtauspec}, by the theorem, 
we have that 
$\| M_{\tau }^{n}(\varphi -\pi _{\tau }(\varphi ))\| _{\infty }\rightarrow 0 $ as $n\rightarrow \infty $, 
for each $\varphi \in C(\CCI ).$ 

\end{rem}
\section{Results}
\label{Results} 
In this section, we present the main results of this paper. 
\subsection{Stability and bifurcation}
\label{Stability}
In this subsection, we present some results on stability and bifurcation 
of $M_{\tau }$ or $M_{\tau }^{\ast }.$ The proofs of the results are given in subsection~\ref{Proofs of Stability}. 
\begin{df}
\label{d:m1cnewt}
Let $(X,d)$ be a metric space. 
%${\cal Y}$ be a closed subset of $\Rat .$ 
Let ${\cal O}$ be the topology of ${\frak M}_{1,c}(X)$ 
such that 
$\mu _{n}\rightarrow \mu $ in $({\frak M}_{1,c}(X),{\cal O})$ 
as $n\rightarrow \infty $ if and only if 
(1) $\int \varphi d\mu _{n}\rightarrow \int \varphi d\mu $ for each 
bounded continuous function $\varphi :X \rightarrow \CC $, 
and (2) $\mbox{supp}\,\mu _{n}\rightarrow \mbox{supp}\,\mu $ 
with respect to the Hausdorff metric in the space $\Cpt(X).$   
\end{df}

\begin{df}
\label{d:as}
%Let $Y$ be a compact metric space and let $\Gamma \in \Cpt(\CMX).$ 
Let $\G \in \Cpt (\Rat ).$ 
Let $G=\langle \Gamma \rangle .$ 
We say that $G$ is {\bf mean stable} if 
there exist non-empty open subsets $U,V$ of $F(G)$ and a number $n\in \NN $ 
such that all of the following hold.
\begin{itemize}
\item[(1)]
$\overline{V}\subset U$ and $\overline{U}\subset F(G).$ 
\item[(2)]
For each $\gamma \in \Gamma ^{\NN }$, 
$\gamma _{n,1}(\overline{U})\subset V.$ 
\item[(3)]
For each point $z\in \CCI $, there exists an element 
$g\in G$ such that  
$g(z)\in U.$ 
\end{itemize} 
Note that this definition does not depend on the choice of a compact set $\Gamma $ which generates 
$G.$  Moreover, for a $\Gamma \in \Cpt(\Rat)$, 
we say that $\Gamma $ is mean stable if $\langle \Gamma \rangle $ is mean stable. 
Furthermore, for a $\tau \in {\frak M}_{1,c}(\Rat)$, we say that $\tau $ is mean stable if $G_{\tau }$ is mean stable. 
\end{df}
\begin{rem}
\label{r:msjke} 
If $G$ is mean stable, then $J_{\ker }(G)=\emptyset .$ 
\end{rem}
\begin{df}
\label{d:Lattracting}
Let $\G \in \Cpt(\Rat)$ and let $G=\langle \G \rangle .$ 
We say that $L\in \Min (G,\CCI )$ is attracting (for $(G,\CCI )$) 
if there exist non-empty open subsets $U,V$ of $F(G)$ and a number $n\in \NN $ such that 
both of the following hold.
\begin{itemize}
\item[(1)]
$L\subset V\subset \overline{V}\subset U\subset \overline{U}\subset F(G)$, $\sharp (\CCI \setminus V)\geq 3.$ 
\item[(2)] 
For each $\g \in \G ^{\NN }$, $\gamma _{n,1}(\overline{U})\subset V.$ 
\end{itemize} 
\end{df}
\begin{rem}
\label{r:attractinghyp}
If $L$ is attracting for $G=\langle \G \rangle$, then the set $U$ coming from Definition~\ref{d:Lattracting} satisfies 
$\sharp (\CCI \setminus U)\geq 3.$ Therefore for each connected component of $U$, we can take the hyperbolic metric. 
Thus \cite[Theorem 2.11]{Mi} implies that 
there exist an $n\in \NN $ and a constant $0<c<1$ such that for each $\gamma \in \GN $ and for each connected component $W$ of $U$, 
the map $\g _{n,1}:W\rightarrow W'$, where $W'$ denotes the connected component of $U$ with $\g _{n,1}(W)\subset W'$, 
satisfies $d_{h}(\g _{n,1}(z),\g _{n,1}(w))\leq cd_{h}(z,w)$ for each $z,w\in U$, where $d_{h}$ denotes the hyperbolic distance.
\end{rem}
\begin{rem}
\label{r:Lattractingrem}
For each $h\in G$,  
$$\sharp \{ \mbox{attracting minimal set for }(G,\CCI )\} \leq \sharp \{ \mbox{attracting cycles of }h\} <\infty .$$   
For, suppose $L$ is an attracting minimal set for $(G,\CCI ).$ 
Then for each $h\in G$, we have $h(L)\subset L\subset F(G).$ 
Since $L$ is attracting, from \cite[Theorem 2.11]{Mi} it follows that 
for each $z\in L$, $h^{n}(z)$ tends to an attracting cycle of $h.$ Thus 
$L$ contains an attracting cycle of $h.$   
\end{rem}
\begin{rem}
\label{r:msallatt}
Let $\G \in \Cpt(\Rat ).$ Let $G=\langle \G \rangle .$ Suppose that  
$\sharp J(G)\geq 3.$ 
Then \cite[Theorem 3.15, Remark 3.61, Proposition 3.65]{Splms10} imply 
that 
$\G $ is mean stable if and only if 
$\sharp (\Min(G,\CCI ))<\infty $ and 
each $L\in \Min(G,\CCI )$ is attracting for $(G,\CCI ).$ 
Combining this with Remark~\ref{r:Lattractingrem}, it follows that 
$\G $ is mean stable if and only if each 
$L\in \Min(G,\CCI )$ is attracting for $(G,\CCI ).$ 
\end{rem} 
We now give a classification of minimal sets. 
\begin{lem}
\label{l:Lclassify}
Let $\G \in \emCpt(\emRatp)$ and let $G=\langle \G \rangle .$ 
Let $L\in \emMin (G,\CCI ).$ 
Then exactly one of the following holds. 
\begin{itemize}
\item[{\em (1)}] 
$L$ is attracting. 
\item[{\em (2)}]
$L\cap J(G)\neq \emptyset .$ 
Moreover, for each $z\in L\cap J(G)$, there exists an element 
$g\in \G $ with $g(z)\in L\cap J(G).$ 
\item[{\em (3)}]  
$L\subset F(G)$ and 
there exists an element $g\in G$ and an element $U\in \mbox{{\em Con}}(F(G))$ with 
$L\cap U\neq \emptyset $ such that 
$g(U)\subset U$ and $U$ is a subset of a Siegel disk or a Hermann ring of $g.$ 
\end{itemize}
\end{lem}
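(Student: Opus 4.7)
The plan is to establish the trichotomy in three stages: verify mutual exclusivity, dispose of the case $L\cap J(G)\ne\emptyset$, and then analyse $L\subset F(G)$.

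\medskip

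\emph{Mutual exclusivity.} Both (1) and (3) force $L\subset F(G)$ (in (1) because the attracting definition requires $L\subset V\subset F(G)$), so each excludes (2). To rule out (1) and (3) simultaneously, take $p\in L\cap U$ with $g,U$ as in (3), write $g=h_{i_{m}}\circ\cdots\circ h_{i_{1}}$ with $h_{i_{j}}\in\G$, and form the periodic sequence $\gamma=(h_{i_{1}},\ldots,h_{i_{m}},h_{i_{1}},\ldots)\in\G^{\NN}$. By Remark~\ref{r:msallatt} (and a Schwarz--Pick contraction argument in the hyperbolic metric of $F(G)$), $L$ being attracting would force $\mathrm{diam}(\gamma_{kn,1}(B(p,\varepsilon)))\to 0$ for small $\varepsilon>0$; yet for $k$ a multiple of $m$, $\gamma_{k,1}$ is a power of $g$ and preserves a nontrivial invariant Jordan curve through $p$ in the Siegel disk or Herman ring of $g$, giving a uniform lower bound on the diameter -- a contradiction.

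\medskip

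\emph{Case $L\cap J(G)\ne\emptyset$.} The key ingredient is the backward self-similarity
\[
J(G)=\bigcup_{h\in\G}h^{-1}(J(G)),
\]
which extends \cite[Lemma~0.2]{S4} from finitely generated to compactly generated semigroups of maps in $\Ratp$. The inclusion ``$\supset$'' is backward invariance of $J(G)$. For ``$\subset$'', suppose $h(z)\in F(G)$ for every $h\in\G$; by continuity of $(h,w)\mapsto h(w)$ and compactness of $\G$, the compact set $\{h(z):h\in\G\}\subset F(G)$ admits a finite cover by open sets $V_{1},\ldots,V_{k}$ on each of which $\{g|_{V_{j}}:g\in G\}$ is equicontinuous, together with a neighborhood $U$ of $z$ such that $h(U)\subset V_{i(h)}$ for some $i(h)$ and every $h\in\G$. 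Factoring any $g\in G$ as $g=g'\circ h$ with $h\in\G$ and $g'\in G\cup\{\mathrm{Id}\}$, a standard equicontinuity-chain argument yields that $\{g|_{U}\}_{g\in G}$ is equicontinuous, so $z\in F(G)$. Given the self-similarity, for any $z\in L\cap J(G)$ there exists $g\in\G$ with $g(z)\in J(G)$; since $G(L)\subset L$ we also have $g(z)\in L$, hence $g(z)\in L\cap J(G)$, placing us in (2).

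\medskip

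\emph{Case $L\subset F(G)$.} If $L$ is attracting we are in (1). Otherwise, fix a neighborhood $W$ of $L$ with $\overline{W}\subset F(G)$, so that $\{h|_{W}\}_{h\in G}$ is normal by Arzel\`a--Ascoli. I would argue in three substeps. First, the collection $\mathcal{U}$ of components of $F(G)$ that meet $L$ is finite, using compactness of $L$ together with a normality argument excluding accumulation of infinitely many distinct components at a point of $L$. Second, since $G(L)\subset L$, $G$ acts on $\mathcal{U}$; minimality of $L$ together with $L=\bigcup_{h\in\G}h(L)$ makes this action transitive. Third, transitivity and finiteness give by pigeonhole some $g\in G$ and $U\in\mathcal{U}$ with $g(U)\subset U$. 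Since $F(G)\subset F(g)$, $U$ lies in a $g$-forward-invariant Fatou component of the single rational map $g$; by Sullivan's classification, this component is an attracting basin, a parabolic basin, a Siegel disk, or a Herman ring of $g$. The attracting and parabolic cases are ruled out by the assumption that $L$ is not attracting for $G$: in those cases the $g$-orbit of any point of $L\cap U$ would converge to an attracting or parabolic cycle, and by density of the $G$-orbit of such a point in $L$ together with Remark~\ref{r:msallatt}, $L$ would be attracting for $(G,\CCI)$, a contradiction. This leaves the Siegel/Herman case, giving (3).

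\medskip

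The main obstacle is Stage 3: establishing finiteness of $\mathcal{U}$ via the normality argument, and carefully translating the single-map $g$-dynamics on $U$ back to the full $G$-dynamics on $L$ in order to rule out the attracting and parabolic basin cases. Both steps rely on the interplay between the global semigroup dynamics and the local dynamics of a single element, mediated by $L=\bigcup_{h\in\G}h(L)$ and the minimality of $L$.
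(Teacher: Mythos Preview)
Your treatment of mutual exclusivity and of the case $L\cap J(G)\neq\emptyset$ is sound; the latter is exactly the paper's argument (backward self-similarity $J(G)=\bigcup_{h\in\G}h^{-1}(J(G))$ plus forward invariance of $L$).

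The genuine gap is in Stage~3, in the step where you rule out the attracting-basin case. Having pigeonholed a single pair $(g,U)$ with $g(U)\subset U$, you classify the component $V$ of $F(g)$ containing $U$ and, when $V$ is an attracting basin of $g$, assert that $L$ must then be attracting for $(G,\CCI)$, citing Remark~\ref{r:msallatt}. That remark only equates mean stability with \emph{all} minimal sets being attracting; it gives no mechanism to promote contraction of one element $g$ on one component to contraction of every sequence $\gamma\in\G^{\NN}$ near $L$. And indeed no such mechanism exists at this level: it is entirely consistent that your pigeonholed $(g,U)$ sits in an attracting basin of $g$ while some \emph{other} pair $(g',U')$ with $g'(U')\subset U'$ and $L\cap U'\neq\emptyset$ has $U'$ inside a Siegel disk or Herman ring of $g'$. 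In that scenario $L$ is not attracting (the periodic sequence built from $g'$ does not shrink diameters), and case~(3) holds---but via $(g',U')$, not via the pair your argument produced. So the attracting-basin branch yields no contradiction, and your trichotomy does not close. (For the parabolic branch a different, valid contradiction is available---the parabolic fixed point would lie in $L\cap J(g)\subset L\cap J(G)$, violating $L\subset F(G)$---but you do not give it.)

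The paper runs the contrapositive with the correct quantifier: assume (3) fails, i.e.\ \emph{no} pair $(g,U)$ with $g(U)\subset U$ and $L\cap U\neq\emptyset$ has $U$ contained in a rotation domain of $g$. Since $\G\subset\Ratp$, this is precisely the hypothesis of Lemma~\ref{l:suffLatt}, whose proof is a global Schwarz--Pick argument: on each component $A$ of $F(G)$ meeting $L$ one takes the hyperbolic metric, every element of $G$ is then a weak contraction, and the no-rotation-domain hypothesis upgrades every \emph{return} map $A\to A$ to a strict contraction. This uniform strict contraction over all return maps---not just one---is what forces $L$ to be attracting.
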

\begin{df}
\label{d:Lclassify}
Let $\G \in \Cpt(\Ratp)$ and let $G=\langle \G \rangle .$ 
Let $L\in \Min (G,\CCI ).$ 
\begin{itemize}
\item 
We say that $L$ is $J$-touching (for $(G,\CCI )$) if $L\cap J(G)\neq \emptyset .$ 
\item We say that $L$ is sub-rotative (for $(G,\CCI )$) if (3) in Lemma~\ref{l:Lclassify} holds. 
\end{itemize}

\end{df}
\begin{df}
\label{d:bifele}
Let $\G \in \Cpt (\Ratp)$ and let $L\in \Min(\langle \G \rangle ,\CCI )$. 
Suppose $L$ is $J$-touching or sub-rotative. Moreover, 
suppose $L\neq \CCI .$ 
Let $g\in \G $. We say that 
$g$ is a bifurcation element for $(\G ,L)$ if one of the following statements (1)(2) holds.
\begin{itemize}
\item[(1)]
$L$ is $J$-touching and there exists a point $z\in L\cap J(\langle \G \rangle )$ such that 
$g(z)\in J(\langle \G \rangle ).$ 
\item[(2)]
%$L$ is sub-rotative and 
There exist an open subset $U$ of $\CCI $ with $U\cap L\neq \emptyset $ and 
finitely many elements  $\g _{1},\ldots ,\g _{n-1}\in \G $ such that 
$g\circ \g _{n-1}\cdots \circ \g _{1}(U)\subset U$ and $U$ is a subset of 
a Siegel disk or a Hermann ring of $g\circ \g _{n-1}\cdots \circ \g _{1}.$ 
\end{itemize}
Furthermore, we say that an element $g\in \G $ is a bifurcation element for $\G $ if 
there exists an $L\in \Min (\langle \G \rangle ,\CCI )$ such that 
 $g$ is a bifurcation element for $(\G , L).$  
\end{df}
We now consider families of rational maps. 
\begin{df}
Let $\Lambda $ be a finite dimensional complex manifold and 
let $\{ g_{\lambda }\} _{\lambda \in \Lambda }$ be a 
family of rational maps on $\CCI .$ We say that 
$\{ g_{\lambda }\} _{\lambda \in \Lambda }$ is 
a holomorphic family of rational maps if 
the map $(z,\lambda )\in \CCI \times \Lambda \mapsto g_{\lambda }(z)\in \CCI $ is 
holomorphic on $\CCI \times \Lambda .$ We say that 
 $\{ g_{\lambda }\} _{\lambda \in \Lambda }$ is 
a holomorphic family of polynomials if $\{ g_{\lambda }\} _{\lambda \in \Lambda }$ is 
a holomorphic family of rational maps and each $g_{\lambda }$ is a polynomial.   
\end{df} 
\begin{df}
\label{d:uadm}
Let ${\cal Y}$ be a subset of $\Rat$ and let $U$ be a non-empty open subset of $\CCI .$ 
%\begin{enumerate}
%\item We say that ${\cal Y}$ is $U$-admissible if 
%for each $z_{0}\in U$, there exists a holomorphic family 
%$\{ g_{\lambda }\} _{\lambda \in \Lambda }$ of rational maps with 
%$\bigcup _{\lambda \in \Lambda }\{ g_{\lambda }\} \subset {\cal Y}$ such that 
%$\lambda \mapsto g_{\lambda }(z_{0})$ is non-constant in $\Lambda .$ 
%\item 
We say that ${\cal Y}$ is strongly $U$-admissible if 
for each $(z_{0},h_{0})\in U\times {\cal Y}$, there exists a holomorphic family 
$\{ g_{\lambda }\} _{\lambda \in \Lambda }$ 
 of 
rational maps with $\bigcup _{\lambda \in \Lambda }\{ g_{\lambda }\} \subset {\cal Y}$ 
and an element $\lambda _{0}\in \Lambda $ such that 
$g_{\lambda _{0}}=h_{0}$ and $\lambda \mapsto g_{\lambda }(z_{0})$ is non-constant in any neighborhood of 
$\lambda _{0}.$ 
%\end{enumerate}

\end{df}
\begin{ex}
\label{e:adm}
$\Ratp$ is strongly $\CCI $-admissible. ${\cal P}$ is strongly $\CC $-admissible. 
Let $f_{0}\in {\cal P}$. Then $\{ f_{0}+c\mid c\in \CC \} $ is strongly $\CC $-admissible. 
\end{ex}

\begin{df}
\label{d:condstar}
Let ${\cal Y}$ be a subset of Rat. We say that ${\cal Y}$ satisfies condition $(\ast )$ if 
${\cal Y}$ is a closed subset of Rat and 
at least one of the following (1) and (2) holds. 
(1): ${\cal Y}$ is strongly $\CCI$-admissible. (2) ${\cal Y}\subset {\cal P}$ and 
${\cal Y}$ is strongly $\CC $-admissible.  

\end{df}
\begin{ex}
\label{ex:starsets}
The sets Rat, $\Ratp $ and ${\cal P}$ satisfy $(\ast ).$ 
For an $h_{0}\in {\cal P}$, the set $\{ h_{0}+c\mid c\in \CC \} $ is a subset of 
${\cal P}$ and satisfies $(\ast ).$ 

\end{ex}
We now present a result on bifurcation elements. 
\begin{lem}
\label{l:bebg}
Let ${\cal Y} $ be a subset of $\emRatp $ satisfying condition $(\ast ).$ 
Let $\G \in \emCpt ({\cal Y})$ and let $L\in \emMin(\langle \G \rangle ,\CCI )$. 
Suppose that $L$ is $J$-touching or sub-rotative. Moreover, 
suppose $L\neq \CCI .$ 
Then, there exists a bifurcation element for $(\G ,L).$ 
Moreover, each bifurcation element $g\in \G $ for $(\G ,L)$ belongs to 
$\partial \G $, 
%Let $g\in \G $ be a bifurcation element for $(\G, L).$ 
%Then, $g\in \partial \G $, 
where the boundary $\partial \G $ of $\G $ is taken 
in the topological space ${\cal Y}.$    

\end{lem}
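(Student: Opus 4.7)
The plan is to establish existence via the trichotomy provided by Lemma~\ref{l:Lclassify}, and then prove the boundary assertion by a perturbation argument based on condition $(\ast)$.

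For existence, since $L$ is $J$-touching or sub-rotative, Lemma~\ref{l:Lclassify} supplies either (a) for every $z\in L\cap J(G)$ an element $g\in\Gamma$ with $g(z)\in L\cap J(G)$, or (b) an element $g_0\in G$ and a component $U_0\in\text{Con}(F(G))$ with $L\cap U_0\neq\emptyset$, $g_0(U_0)\subset U_0$, and $U_0$ contained in a Siegel disk or Hermann ring of $g_0$. In (a), any such $g$ is a bifurcation element of type (1). In (b), write $g_0=g_n\circ g_{n-1}\circ\cdots\circ g_1$ with $g_i\in\Gamma$; then $g:=g_n\in\Gamma$ is a bifurcation element of type (2), with the $\gamma_i:=g_i$ for $i=1,\dots,n-1$.

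For the boundary assertion, let $g\in\Gamma$ be a bifurcation element for $(\Gamma,L)$ and assume for contradiction that $g\in\text{int}_{\cal Y}(\Gamma)$. In both types, I will choose a point $z_0\in\hat{\Bbb{C}}$ adapted to the type and invoke condition $(\ast)$ to produce a holomorphic family $\{g_\lambda\}_{\lambda\in\Lambda}\subset {\cal Y}$ with $g_{\lambda_0}=g$ and $\lambda\mapsto g_\lambda(z_0)$ non-constant near $\lambda_0$. Since $\lambda\mapsto g_\lambda$ is continuous in $\kappa$ and $g\in\text{int}_{\cal Y}(\Gamma)$, we have $g_\lambda\in\Gamma\subset G$ on a neighborhood of $\lambda_0$. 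The key consequence is that, when $z_0$ is chosen so that the relevant iterate lies in $L$, the $G$-invariance of $L$ combined with the open mapping theorem forces the image of $\lambda\mapsto g_\lambda(z_0)$ to be an open subset of $\hat{\Bbb{C}}$ contained in $L$, so that $\text{int}(L)\neq\emptyset$. Since $L$ is compact and $L\neq\hat{\Bbb{C}}$, the complement of $L$ contains at least three points, and Montel's theorem applied to $\{h|_B\}_{h\in G}$ for small balls $B\subset\text{int}(L)$ gives $\text{int}(L)\subset F(G)$.

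For type (1), take $z_0:=z\in L\cap J(G)$; then the open image sits in $L$ and contains $g(z)$, so $g(z)\in\text{int}(L)\subset F(G)$, contradicting $g(z)\in J(G)$. For type (2), take $z_0:=\gamma_{n-1}\circ\cdots\circ\gamma_1(w)$ for some $w\in U\cap L$ (with $z_0:=w$ if $n=1$), write $h_\lambda:=g_\lambda\circ\gamma_{n-1}\circ\cdots\circ\gamma_1\in G$, and observe $h_\lambda(w)=g_\lambda(z_0)\in L$, which again yields $h(w)\in\text{int}(L)\subset F(G)$. Since $h(w)\in U\subset F(G)$, this is not yet contradictory by itself, so one must upgrade the argument. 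I plan to analyze $L\cap D$, where $D$ denotes the Siegel disk or Hermann ring of $h$ containing $U$: because $h$-orbits in $D$ lie on invariant topological circles and $L$ is $h$-invariant and closed, $L\cap D$ decomposes as a union of such circles, and the fact that $L$ contains an open neighborhood of $h(w)\in D$ forces $L\cap D$ to contain an open annulus. Perturbing the neutral periodic point of $h$ via the holomorphic family $\{h_\lambda\}$, so that its multiplier (a holomorphic, and we arrange non-constant, function of $\lambda$) leaves the unit circle by the open mapping theorem, produces parameters for which the periodic point becomes attracting; the corresponding limits of $h_\lambda^k(w)$ lie in $L$, and together with the minimality of $L$ and $L\subset F(G)$ this contradicts $L$ being sub-rotative. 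Carrying out this last step rigorously, in particular ensuring the non-constancy of the multiplier from the non-constancy of $\lambda\mapsto g_\lambda(z_0)$ and handling both the Siegel-disk and Hermann-ring cases uniformly, is the main obstacle and occupies the bulk of the proof.
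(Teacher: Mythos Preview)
Your existence argument and your treatment of type~(1) are fine and match the paper's approach: once $g\in\mathrm{int}_{\cal Y}(\Gamma)$, the family $\{g_\lambda\}$ produces an open subset of $L$ containing $g(z)$, and Montel (or equivalently the blowing-up property of $J(G)$) yields the contradiction.

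Your plan for type~(2), however, has a genuine gap. The multiplier-perturbation idea cannot be made to work as stated. First, non-constancy of $\lambda\mapsto g_\lambda(z_0)$ at a \emph{single} prescribed point $z_0$ gives you no control over the multiplier of the periodic point associated with $h_\lambda$; there is no mechanism to ``arrange'' that the multiplier map is non-constant. Second, Hermann rings have no neutral periodic point inside them at all, so the whole scheme is undefined in that case. Third, even if you did produce an attracting cycle of some $h_\lambda\in G$ lying in $L$, this does not contradict $L$ being sub-rotative: sub-rotativity asserts the \emph{existence} of one rotation domain meeting $L$, and that existence persists regardless of what other elements of $G$ do.

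The paper avoids all of this with a much simpler reduction to the type~(1) contradiction. The key tool is Lemma~\ref{l:inceps}: since $g\in\mathrm{int}_{\cal Y}(\Gamma)$, there is a \emph{uniform} $\epsilon>0$ such that $\{h(z)\mid h\in\Gamma\}\supset B(g(z),\epsilon)$ for every $z$ in a fixed compact set containing $\overline{U}$. Taking $z_0\in L\cap U$ and using that $\{h^k(z_0)\}_k$ is dense on a rotation circle, one sees that $L=\overline{G(z_0)}$ contains the $\epsilon$-neighborhood of that circle; iterating from a point on the new, larger circle and using the \emph{same} $\epsilon$, after finitely many steps $L$ contains an open set $W$ meeting $\partial U\subset J(h)\subset J(G)$. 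Thus $J(G)\cap\mathrm{int}(L)\neq\emptyset$, and exactly as in type~(1) one concludes $L=\hat{\Bbb C}$, a contradiction. The crucial idea you are missing is this uniform $\epsilon$ from Lemma~\ref{l:inceps}, which turns the single-point open-mapping argument into a finite iterative procedure reaching the Julia set.
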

We now present several results on the density of mean stable systems. 
\begin{thm}
\label{t:msminr}
Let ${\cal Y}$ be a subset of $\emRatp$ satisfying condition $(\ast ).$ 
Let $\G \in \emCpt({\cal Y}).$ Suppose that there exists an attracting 
$L\in \emMin(\langle \G \rangle ,\CCI ).$ Let 
$\{ L_{j}\} _{j=1}^{r}$ be the set of attracting minimal sets for 
$(\langle \G \rangle ,  \CCI )$ such that $L_{i}\neq L_{j}$ if $i\neq j$ (Remark: by Remark~\ref{r:Lattractingrem}, the set of attracting minimal sets is finite). 
Let ${\cal U}$ be a neighborhood of $\G $ in $\emCpt({\cal Y})$. 
For each $j=1,\ldots ,r$, let ${\cal V}_{j}$ be a neighborhood of $L_{j}$ with respect to 
the Hausdorff metric in $\emCpt(\CCI )$. Suppose 
that ${\cal V}_{i}\cap {\cal V}_{j}=\emptyset $ for each $(i,j)$ with $i\neq j.$ 
Then, there exists an open neighborhood ${\cal U}'$ of $\G $ in ${\cal U}$ 
such that for any element $\G '\in {\cal U}'$ satisfying that 
$\G \subset \mbox{{\em int}}(\G ')$ with respect to the topology in ${\cal Y}$, 
both of the following statements hold.
\begin{itemize}
\item[{\em (1)}]
$\langle \G '\rangle $ is mean stable and 
$\sharp \emMin(\langle \G '\rangle ,\CCI )=\sharp \{ L'\in \emMin(\langle \G '\rangle ,\CCI )\mid L' \mbox{ is attracting 
for } (\langle \G '\rangle ,\CCI )\} = r.$ 

\item[{\em (2)}]
%$\sharp \{ L'\in \emMin(\langle \G '\rangle ,\CCI )\mid L' \mbox{ is attracting }\} =r$ and 
For each $j=1,\ldots ,r$, there exists a unique element $L_{j}'\in \emMin(\langle \G '\rangle ,\CCI )$ with 
$L_{j}'\in {\cal V}_{j}.$ Moreover, $L_{j}'$ is attracting for $(\langle \G '\rangle ,\CCI )$ for each 
$j=1,\ldots ,r.$     

\end{itemize}
  
\end{thm}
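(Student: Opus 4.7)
The plan is to proceed by a perturbation argument in two main steps, with the second step being the main obstacle: (i) construct, for each $L_j$, an attracting open neighborhood whose contraction property persists under Hausdorff-small perturbations, and extract from it the perturbed attracting minimal set $L_j'\in{\cal V}_j$; (ii) use the hypothesis $\G\subset\mbox{int}(\G')$ and condition $(\ast)$ to show that no other minimal set of $\langle\G'\rangle$ exists, hence $\langle\G'\rangle$ is mean stable.

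For (i), I would apply Definition~\ref{d:Lattracting} to each $L_j$ to obtain open sets $L_j\subset V_j\subset\overline{V_j}\subset U_j\subset\overline{U_j}\subset F(\langle\G\rangle)$ and integers $n_j$ with $\gamma_{n_j,1}(\overline{U_j})\subset V_j$ for every $\gamma\in\G^{\NN}$; then pass to a common $n:=\max_j n_j$ (by iterating the contraction), shrink the $V_j$ so that every compact subset of $V_j$ containing $L_j$ lies in ${\cal V}_j$, and arrange the $\overline{U_j}$ pairwise disjoint. Uniform continuity of the $n$-fold composition map on the compact set $\G^n\times\bigcup_j\overline{U_j}$ then yields an open Hausdorff neighborhood ${\cal U}'\subset{\cal U}$ of $\G$ such that $\gamma'_{n,1}(\overline{U_j})\subset V_j$ for every $\G'\in{\cal U}'$, every $\gamma'\in(\G')^{\NN}$, and every $j$. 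Fixing such $\G'\in{\cal U}'$ with $\G\subset\mbox{int}(\G')$, I would apply Zorn's lemma to the semigroup $H$ of $n$-fold compositions from $\G'$ acting on the $H$-forward-invariant set $\overline{V_j}$ to obtain an $H$-minimal $M_j\subset\overline{V_j}$; the closure of the $\langle\G'\rangle$-orbit of $M_j$, together with the $n$-fold contraction that traps it back into $\overline{V_j}$, yields a $\langle\G'\rangle$-minimal set $L_j'\subset\overline{V_j}$ which lies in ${\cal V}_j$ and is attracting by the same witnesses $U_j,V_j,n$; uniqueness of $L_j'$ in $\overline{V_j}$ is automatic from disjointness of distinct minimal sets (Remark~\ref{r:minimal}).

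Step (ii) is the heart of the proof. Suppose some $L'\in\Min(\langle\G'\rangle,\CCI)$ is not equal to any $L_j'$; then by disjointness $L'\cap L_j'=\emptyset$ for each $j$, and $L'\neq\CCI$ (as $\CCI$ would contain all $L_j'$). Lemma~\ref{l:Lclassify} puts $L'$ in the attracting, $J$-touching, or sub-rotative category. The attracting case is ruled out, because an attracting neighborhood of $L'$ sits in $F(\langle\G'\rangle)\subset F(\langle\G\rangle)$ and, using $\G\subset\G'$, also witnesses $L'$ nested around a $\langle\G\rangle$-attracting minimal set, which must be some $L_j$ and hence give $L'\cap L_j'\neq\emptyset$ via step (i), contradicting the assumption. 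In the remaining two cases Lemma~\ref{l:bebg} yields a bifurcation element $g\in\partial\G'$, and the main obstacle is to turn the admissibility condition $(\ast)$ into an actual obstruction. I would fix any $z_0\in L'$ (taking $z_0\in\CC$ in the polynomial case, possible since $\{\infty\}$ is attracting and thus not $L'$) and any $h_0\in\G\subset\mbox{int}(\G')$, invoke $(\ast)$ to get a holomorphic family $\{g_\lambda\}_{\lambda\in\Lambda}\subset{\cal Y}$ with $g_{\lambda_0}=h_0$ and $\lambda\mapsto g_\lambda(z_0)$ non-constant near $\lambda_0$, shrink $\Lambda$ so that $g_\lambda\in\mbox{int}(\G')\subset\langle\G'\rangle$ for each $\lambda$, and conclude by forward invariance that $g_\lambda(z_0)\in L'$. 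The open mapping theorem applied to $\lambda\mapsto g_\lambda(z_0)$ then gives $h_0(z_0)\in\mbox{int}(L')$, so $\mbox{int}(L')\neq\emptyset$; holomorphy of elements of $\langle\G'\rangle$ makes $\mbox{int}(L')$ forward invariant, whence minimality forces $L'=\overline{\mbox{int}(L')}$.

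In the $J$-touching case, combining $L'=\overline{\mbox{int}(L')}$ with $L'\cap J(\langle\G'\rangle)\neq\emptyset$, I would find (either directly or by iterating the admissibility construction at Julia points of $L'$) an open ball $B\subset\mbox{int}(L')$ meeting $J(\langle\G'\rangle)$; the blowing-up property of Julia sets of rational semigroups then gives $\bigcup_{g\in\langle\G'\rangle}g(B)\supset\CCI\setminus E$ with $E$ finite, and forward-invariance of $L'$ yields $L'\supset\CCI\setminus E$, whence $L'=\CCI$, a contradiction. The sub-rotative case is handled analogously: the invariant open rotation domain in Lemma~\ref{l:Lclassify}(3) is incompatible with $L'=\overline{\mbox{int}(L')}$ once the admissibility deformations populate $\mbox{int}(L')$ with non-rotative behavior, again contradicting minimality. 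Thus every minimal set of $\langle\G'\rangle$ equals some attracting $L_j'$; together with $\sharp J(\langle\G'\rangle)\geq 3$ (since $\G\subset\emRatp$ forces each $h\in\G$ to have uncountable $J(h)\subset J(\langle\G'\rangle)$), Remark~\ref{r:msallatt} yields that $\langle\G'\rangle$ is mean stable, completing the proof.
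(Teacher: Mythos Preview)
Your step (i) is essentially the paper's argument (packaged there as Lemmas~\ref{l:Lattnear} and~\ref{l:Lattnh}), and the conclusion that each $L_j'$ is attracting and unique in its neighborhood is correct.

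The gap is in step (ii). You try to classify the putative extra minimal set $L'$ directly for $\langle\G'\rangle$ via Lemma~\ref{l:Lclassify}, then in the $J$-touching and sub-rotative cases you invoke admissibility at a pair $(z_0,h_0)$ with $h_0\in\G\subset\mbox{int}(\G')$ to produce $\mbox{int}(L')\neq\emptyset$. That part is fine, and so is $L'=\overline{\mbox{int}(L')}$. But from $L'=\overline{\mbox{int}(L')}$ together with $L'\cap J(\langle\G'\rangle)\neq\emptyset$ you \emph{cannot} conclude $\mbox{int}(L')\cap J(\langle\G'\rangle)\neq\emptyset$: the Julia points of $L'$ could all sit on $\partial(\mbox{int}(L'))$. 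The ``iterating the admissibility construction at Julia points'' idea does not close this, because the backward self-similarity $J(\langle\G'\rangle)=\bigcup_{h\in\G'}h^{-1}(J(\langle\G'\rangle))$ only guarantees some $h\in\G'$ with $h(z_0)\in J(\langle\G'\rangle)$, and that $h$ may lie on $\partial\G'$ where the open-mapping argument is unavailable. Your sub-rotative case is likewise only sketched and has the same underlying problem. (Your separate treatment of the attracting case for $L'$ is also loose: you need $L_j'\cap L'\neq\emptyset$, but the attracting neighborhood $(U,V,n)$ for $L'$ need not trap $L_j'$.)

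The paper sidesteps all of this with one move that you are missing: since $\G\subset\G'$, the set $L'$ is $\langle\G\rangle$-forward-invariant, so by Zorn there is a $\langle\G\rangle$-minimal set $K\subset L'$. Because $L_j\subset L_j'$ (from $\G\subset\G'$ and uniqueness near $L_j$) and $L'\cap L_j'=\emptyset$, one gets $K\notin\{L_1,\ldots,L_r\}$, so $K$ is non-attracting for $\langle\G\rangle$. Now Lemma~\ref{l:Lclassify} gives a bifurcation element $g$ for $(\G,K)$, and the point is that $g\in\G\subset\mbox{int}(\G')$. One then checks directly (using $K\subset L'$ and $J(\langle\G\rangle)\subset J(\langle\G'\rangle)$) that $g$ satisfies condition (1) or (2) of Definition~\ref{d:bifele} for $(\G',L')$, which contradicts Lemma~\ref{l:bebg}. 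This simultaneously handles the attracting, $J$-touching, and sub-rotative cases for $L'$ without any separate analysis.
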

\begin{rem}
\label{r:fsg}
Theorem~\ref{t:msminr} (with \cite[Theorem 3.15]{Splms10}) generalizes \cite[Theorem 0.1]{FS}. 
\end{rem}
\begin{thm}
\label{t:msminrme}
Let ${\cal Y}$ be a subset of $\emRatp$ satisfying condition $(\ast ).$ 
Let $\tau \in {\frak M}_{1,c}({\cal Y}).$ 
Suppose that there exists an attracting 
$L\in \emMin(G_{\tau },\CCI ).$ Let 
$\{ L_{j}\} _{j=1}^{r}$ be the set of attracting minimal sets for 
$(G_{\tau }, \CCI )$ such that $L_{i}\neq L_{j}$ if $i\neq j$.  
Let ${\cal U}$ be a neighborhood of $\tau $ in $({\frak M}_{1,c}({\cal Y}),{\cal O})$. 
For each $j=1,\ldots ,r$, let ${\cal V}_{j}$ be a neighborhood of $L_{j}$ with respect to 
the Hausdorff metric in $\emCpt(\CCI )$. Suppose that 
${\cal V}_{i}\cap {\cal V}_{j}=\emptyset $ for each $(i,j)$ with $i\neq j.$ 
Then, there exists an element $\rho \in {\cal U}$ with $\sharp \G _{\rho }<\infty $ such that 
all of the following hold.
\begin{itemize}
\item[{\em (1)}] 
$G_{\rho } $ is mean stable and 
$\sharp \emMin(G_{\rho } ,\CCI )=
\sharp \{ L'\in \emMin(G_{\rho } ,\CCI )\mid L' \mbox{ is attracting for }(\G _{\rho },\CCI )\} =r.$

\item[{\em (2)}]
For each $j=1,\ldots ,r$, there exists a unique element $L_{j}'\in \emMin(G_{\rho } ,\CCI )$ with 
$L_{j}'\in {\cal V}_{j}$. Moreover, $L_{j}'$ is attracting for $(G_{\rho } ,\CCI )$ for each 
$j=1,\ldots ,r.$

\end{itemize}

\end{thm}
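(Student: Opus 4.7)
The plan is to reduce Theorem~\ref{t:msminrme} to its compact-set counterpart Theorem~\ref{t:msminr} and then to approximate by a finite subfamily of $\G _{\tau }$. First, apply Theorem~\ref{t:msminr} with $\G := \G _{\tau }\in \Cpt (\mathcal{Y})$ and the prescribed data $\{ L_{j}\} _{j=1}^{r},\{ \mathcal{V}_{j}\} _{j=1}^{r}$: this yields an open neighborhood $\mathcal{U}'$ of $\G _{\tau }$ in $\Cpt (\mathcal{Y})$ such that every $\G '\in \mathcal{U}'$ with $\G _{\tau }\subset \mbox{int}_{\mathcal{Y}}(\G ')$ has $\langle \G '\rangle $ mean stable, with exactly $r$ attracting minimal sets, one in each $\mathcal{V}_{j}$. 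Pick $\delta _{0}>0$ small enough that $\G ^{\ast }:= \overline{B(\G _{\tau },\delta _{0})}\cap \mathcal{Y}$ lies in $\mathcal{U}'$ and is inside the Hausdorff slack of $\mathcal{U}$. Since $\G _{\tau }\subset \mbox{int}_{\mathcal{Y}}(\G ^{\ast })$, $\langle \G ^{\ast }\rangle $ is mean stable with $r$ attracting minimal sets $L_{j}^{\ast }\in \mathcal{V}_{j}$; let $(U,V,n)$ witness this mean stability as in Definition~\ref{d:as}.

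Next, I extract a finite $\G _{\rho }\subset \G _{\tau }$ that inherits condition (III). By (III) for $\G ^{\ast }$ and compactness of $\CCI $, there exist finitely many $g_{1},\ldots ,g_{k}\in \langle \G ^{\ast }\rangle $ and open $W_{1},\ldots ,W_{k}$ with $\CCI =\bigcup _{i}W_{i}$ and $g_{i}(W_{i})\subset U$. Write $g_{i}=h_{i,1}\circ \cdots \circ h_{i,n_{i}}$ with $h_{i,j}\in \G ^{\ast }$, and select $\tilde{h}_{i,j}\in \G _{\tau }$ with $\kappa (h_{i,j},\tilde{h}_{i,j})\leq \delta _{0}$. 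By uniform continuity of composition, reducing $\delta _{0}$ if necessary, $\tilde{g}_{i}:= \tilde{h}_{i,1}\circ \cdots \circ \tilde{h}_{i,n_{i}}$ satisfies $\tilde{g}_{i}(W_{i}')\subset U$ on slightly shrunken $W_{i}'$ still covering $\CCI $. Fix a small $\eta >0$, let $\G _{\rho }\subset \G _{\tau }$ be a finite $\eta $-dense subset containing every $\tilde{h}_{i,j}$, and pick a Borel partition $\{ B_{h}\} _{h\in \G _{\rho }}$ of $\G _{\tau }$ with $h\in B_{h}$, $\mbox{diam}(B_{h})<2\eta $ and $\tau (B_{h})>0$ (possible since $\G _{\tau }=\mbox{supp}\, \tau $). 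Define $\rho := \sum _{h\in \G _{\rho }}\tau (B_{h})\delta _{h}$; for $\eta +\delta _{0}$ small, $\rho \in \mathcal{U}$ with $\G _{\rho }=\mbox{supp}\, \rho $ finite.

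Finally, I verify the conclusion. The clauses of Definition~\ref{d:as} for $G_{\rho }$ follow from the inclusions $\G _{\rho }\subset \G ^{\ast }$ and $\G _{\rho }^{\NN }\subset (\G ^{\ast })^{\NN }$: (I) because $\overline{U}\subset F(\langle \G ^{\ast }\rangle )\subset F(G_{\rho })$; (II) because $\gamma _{n}\circ \cdots \circ \gamma _{1}(\overline{U})\subset V$ already holds for all $\gamma \in (\G ^{\ast })^{\NN }$; (III) because every $z\in \CCI $ lies in some $W_{i}'$ and $\tilde{g}_{i}\in G_{\rho }$. Hence $G_{\rho }$ is mean stable and, by Remark~\ref{r:msallatt}, has finitely many minimal sets, all attracting. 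To locate them, let $L'\in \Min (G_{\rho },\CCI )$ and $z\in L'$; for any $\gamma \in \G _{\rho }^{\NN }\subset (\G ^{\ast })^{\NN }$, attractivity of some $L_{j}^{\ast }$ for $\G ^{\ast }$ forces $d(\gamma _{n}\circ \cdots \circ \gamma _{1}(z),L_{j}^{\ast })\to 0$, while $\gamma _{n}\circ \cdots \circ \gamma _{1}(z)\in L'$ by forward invariance; thus $L'\cap L_{j}^{\ast }\neq \emptyset $, and minimality of $L'$ yields $L'\subset L_{j}^{\ast }\subset \mathcal{V}_{j}$. Conversely each $L_{j}^{\ast }$ contains a minimal set of $G_{\rho }$ by Zorn's lemma. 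The main obstacle is the \emph{uniqueness} of the minimal set of $G_{\rho }$ inside each $L_{j}^{\ast }$: this requires a quantitative density argument, showing that for $\eta $ small enough, bounded-length $\langle \G _{\rho }\rangle $-compositions approximate the corresponding $\langle \G ^{\ast }\rangle $-compositions uniformly in the generators, so $\overline{\langle \G _{\rho }\rangle (z)}$ is Hausdorff-close to $L_{j}^{\ast }=\overline{\langle \G ^{\ast }\rangle (z)}$, excluding a proper minimal sub-invariant piece in $\mathcal{V}_{j}$ and yielding the claimed unique $L_{j}'\in \mathcal{V}_{j}$.
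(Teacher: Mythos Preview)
Your route differs from the paper's: the paper first approximates $\tau$ by a finitely supported measure (citing Lemma~\ref{l:Lattnh}), then enlarges the support via Theorem~\ref{t:msminr} to produce a mean stable $\rho'$, and finally passes to a nearby finitely supported $\rho$ via Lemmas~\ref{l:msmincons} and~\ref{l:Lattnh}. You instead enlarge first to $\Gamma^{\ast}$ and then extract a finite $\Gamma_{\rho}\subset\Gamma_{\tau}$, verifying mean stability by hand. Your checks of (I)--(III) are essentially sound, modulo the mild circularity in ``reducing $\delta_{0}$ if necessary'' (the $g_{i}$ and their word-lengths $n_{i}$ already depend on $\Gamma^{\ast}$, hence on $\delta_{0}$; the quantifiers need reordering).

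The genuine gap is the uniqueness step. Approximation of \emph{bounded}-length words does not control the full orbit closure $\overline{G_{\rho}(z)}$, because the error over an $N$-fold composition may grow with $N$; and even if you could show that every $G_{\rho}$-minimal set $L'\subset L_{j}^{\ast}$ is Hausdorff-close to $L_{j}^{\ast}$, this would not exclude two disjoint such sets---two disjoint compact subsets of $L_{j}^{\ast}$ can each be $\epsilon$-dense in it. The correct mechanism is not density but hyperbolic contraction: because $\Gamma_{\rho}\subset\Gamma^{\ast}$, the attracting data $\gamma_{n,1}(\overline{W'})\subset V'\subset\overline{V'}\subset W'$ valid for all $\gamma\in(\Gamma^{\ast})^{\NN}$ holds a fortiori for $\gamma\in\Gamma_{\rho}^{\NN}$, and the resulting strict contraction on the hyperbolic metric of each component of $W'$ forces a unique $G_{\rho}$-minimal set in $W'$. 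This is exactly the uniqueness clause of Lemma~\ref{l:Lattnear}; invoking it (the inclusion $\Gamma_{\rho}\subset\Gamma^{\ast}$ makes the key estimate~(\ref{eq:Lattnear1}) automatic, so no smallness of $\eta$ is even needed at this point) closes the argument and also places each $L_{j}'$ in the prescribed $\mathcal{V}_{j}$ via Lemma~\ref{l:Lattnh}. The paper achieves the same end by applying Lemmas~\ref{l:msmincons} and~\ref{l:Lattnh} to the mean stable $\rho'$; your density sketch should be replaced by one of these two invocations.
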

\begin{thm}[Cooperation Principle IV: Density of Mean Stable Systems]
\label{t:pmsod} 
Let ${\cal Y}$ be a subset of ${\cal P}$ satisfying condition $(\ast ).$ 
Then, we have the following. 
\begin{itemize}
\item[{\em (1)}]
The set $\{ \tau \in {\frak M}_{1,c}({\cal Y})\mid \tau \mbox{ is mean stable}\} $ 
is open and dense in $({\frak M}_{1,c}({\cal Y}),{\cal O}).$  
Moreover, 
the set 
$\{ \tau \in {\frak M}_{1,c}({\cal Y})\mid J_{\ker }(G_{\tau })=\emptyset , J(G_{\tau })\neq \emptyset \} $ 
contains 
$\{ \tau \in {\frak M}_{1,c}({\cal Y})\mid \tau \mbox{ is mean stable}\} $.  
\item[{\em (2)}] 
The set 
$\{ \tau \in {\frak M}_{1,c}({\cal Y})\mid \tau \mbox{ is mean stable},\ \sharp \G _{\tau }<\infty \} $ 
is dense in 
$({\frak M}_{1,c}({\cal Y}), {\cal O}).$ 
\end{itemize}
\end{thm}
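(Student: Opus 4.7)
The plan is as follows. The key preliminary observation is that since ${\cal Y}\subset {\cal P}$ and every element of ${\cal Y}$ is a polynomial of degree $\geq 2$, the point $\infty$ is a super-attracting fixed point of every $g\in \G_\tau$; thus $\{\infty\}\in \Min(G_\tau,\CCI)$, and by compactness of $\G_\tau$ in ${\cal P}$ one can choose $R>0$ with $g(\{|z|\geq R\})\subset \{|z|\geq 2R\}$ uniformly for $g\in \G_\tau$, showing that $\{\infty\}$ is attracting for $(G_\tau,\CCI)$.

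For part (2), given $\tau\in {\frak M}_{1,c}({\cal Y})$ and any ${\cal O}$-neighborhood ${\cal U}$ of $\tau$, the preliminary observation ensures $(G_\tau,\CCI)$ has an attracting minimal set, so Theorem~\ref{t:msminrme} directly produces $\rho\in {\cal U}$ with $\sharp \G_\rho<\infty$ that is mean stable. Density in (1) then follows immediately. For the inclusion asserted in (1), mean stability of $\tau$ gives $J_{\ker }(G_\tau)=\emptyset$ by Remark~\ref{r:msjke}, while $J(G_\tau)\neq \emptyset$ because each element of $\G_\tau$ is a degree $\geq 2$ polynomial whose non-empty Julia set lies in $J(G_\tau)$.

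The principal work is openness in (1). Let $\tau$ be mean stable with witnesses $U,V,n$ as in Definition~\ref{d:as}. I will show that every $\tau'\in {\frak M}_{1,c}({\cal Y})$ sufficiently close to $\tau$ in ${\cal O}$ is also mean stable with witnesses $U$, a slightly smaller $V$, and the same $n$. For condition (II), compactness of $\{\gamma_{n,1}:\gamma\in \G_\tau^{\NN}\}$ in $(\Rat,\kappa)$ permits shrinking $V$ to some $V'$ with $\overline{V'}\subset V$ and $\gamma_{n,1}(\overline{U})\subset V'$ uniformly; combining this with Hausdorff convergence $\G_{\tau'}\to \G_\tau$ and uniform continuity of composition yields $\gamma'_{n,1}(\overline{U})\subset V$ for all $\gamma'\in \G_{\tau'}^{\NN}$. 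For condition (III), for each $z\in \CCI$ I pick $g_z\in G_\tau$ with $g_z(z)\in U$ and an open neighborhood $N_z\ni z$ with $g_z(N_z)\subset U$, extract a finite subcover $\{N_{z_i}\}_{i=1}^m$ of $\CCI$, write each $g_{z_i}=h_{i,r_i}\circ\cdots\circ h_{i,1}$ with $h_{i,j}\in \G_\tau$, and use Hausdorff convergence to select $h'_{i,j}\in \G_{\tau'}$ close to $h_{i,j}$, so that $g'_{z_i}:=h'_{i,r_i}\circ\cdots\circ h'_{i,1}\in G_{\tau'}$ converges uniformly to $g_{z_i}$ and still sends $N_{z_i}$ into $U$. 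For (I), once (II) is established for $\tau'$, iterating it confines $G_{\tau'}(\overline{U})$ to a compact set $K_{\tau'}$ which Hausdorff-converges to $K_\tau\subset F(G_\tau)$ as $\tau'\to \tau$, so $K_{\tau'}$ omits three fixed points of $\CCI$, and Montel's theorem delivers equicontinuity of $G_{\tau'}|_{\overline{U}}$, hence $\overline{U}\subset F(G_{\tau'})$.

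The hardest step will be condition (III) in the openness argument: while (I) and (II) reduce to standard compactness/continuity manipulations with the compact support $\G_\tau$, condition (III) is an existence assertion about compositions in the generated semigroup, and its persistence under perturbation is precisely where the support-convergence built into the topology ${\cal O}$—as opposed to weak convergence of measures alone—becomes indispensable.
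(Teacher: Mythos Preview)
Your argument is correct and follows the paper's route: density comes from Theorem~\ref{t:msminrme} after noting that $\{\infty\}$ is an attracting minimal set for $(G_\tau,\CCI)$, and your direct openness argument is exactly the content of Lemma~\ref{l:msmincons}, which the paper simply cites (there the perturbation is carried out using hyperbolic neighborhoods $d_h(S_\tau,\epsilon)$ of $S_\tau$ inside $F(G_\tau)$ rather than the original $(U,V)$ and Montel, but the structure---persistence of (II) and (III) by compactness of $\G_\tau$ and Hausdorff convergence of supports---is identical).

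One small technical point in your step (I): Montel's theorem applies on open sets, so from $G_{\tau'}(\overline{U})\subset K_{\tau'}$ you get normality on $U$ and hence only $U\subset F(G_{\tau'})$, not $\overline{U}\subset F(G_{\tau'})$. The clean fix is to pass to the pair $(V,V')$ as the mean-stability witness for $\tau'$: since $\{\gamma'_{n,1}:\gamma'\in\G_{\tau'}^{\NN}\}$ is compact and each element sends $\overline{V}$ into the open set $V$, there is $V'$ with $\overline{V'}\subset V$ and $\gamma'_{n,1}(\overline{V})\subset V'$ for all $\gamma'$; then $\overline{V}\subset U\subset F(G_{\tau'})$ gives condition (I), and (II), (III) for $(V,V',n)$ follow immediately from what you already established for $(U,V,n)$.
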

\begin{thm}
\label{t:noattmin}
Let ${\cal Y}$ be a subset of $\emRatp$ satisfying condition $(\ast ).$ 
Let $\G \in \emCpt({\cal Y}).$ Suppose that there exists no attracting 
minimal set for $(\langle \G \rangle ,\CCI ).$ Then we have the following.
\begin{itemize}
\item[{\em (1)}]
For any element $\G '\in \emCpt(\emRat)$ such that $\G \subset \mbox{{\em int}}(\G')$ with respect to 
the topology in ${\cal Y}$, we have that 
$\emMin(\langle \G '\rangle ,\CCI )=\{ \CCI \} $ 
and $J(\langle \G '\rangle )=\CCI .$ 
\item[{\em (2)}] 
For any neighborhood ${\cal U}$  of 
$\G $ in $\emCpt({\cal Y})$, there exists an element 
$\G '\in {\cal U}$ with $\G '\supset \G $ such that 
$\emMin(\langle \G '\rangle ,\CCI )=\{ \CCI \} $ 
and $J(\langle \G '\rangle )=\CCI .$ 
\end{itemize} 
\end{thm}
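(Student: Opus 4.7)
The plan is to prove (1) by contradiction; part (2) then follows by choosing $\G':=\overline{B_{{\cal Y}}(\G,\epsilon)}$ for $\epsilon>0$ sufficiently small. Suppose there exists $L'\in \Min(\langle \G'\rangle,\CCI)$ with $L'\neq\CCI$. Since $L'$ is compact and $\langle\G\rangle$-forward invariant (because $\langle\G\rangle\subset\langle\G'\rangle$), Remark~\ref{r:minimal} supplies $L\in\Min(\langle\G\rangle,\CCI)$ with $L\subset L'$; necessarily $L\neq\CCI$. By hypothesis $L$ is not attracting, so Lemma~\ref{l:Lclassify} places $L$ in the $J$-touching case or the sub-rotative case. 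By Lemma~\ref{l:bebg} there is a bifurcation element $g\in\G\cap\partial\G$ for $(\G,L)$, and the inclusion $\G\subset\text{int}(\G')$ in the topology of ${\cal Y}$ forces $g\in\text{int}(\G')$.

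The heart of the argument is to produce an open set $V\subset L'$ meeting $J(\langle\G'\rangle)$. By condition $(\ast)$ applied at a suitable $z_0\in L$, there is a holomorphic family $\{g_\lambda\}_{\lambda\in\Lambda}\subset{\cal Y}$ with $g_{\lambda_0}=g$ and $\lambda\mapsto g_\lambda(z_0)$ non-constant in every neighborhood of $\lambda_0$. Shrinking $\Lambda$ so that $g_\lambda\in\G'$ for every $\lambda$, the open mapping theorem applied to a one-dimensional slice through $\lambda_0$ shows that $\{g_\lambda(z_0):\lambda\in\Lambda\}$ contains an open neighborhood $V$ of $g(z_0)$; forward invariance of $L'$ under $\G'$ together with $z_0\in L\subset L'$ gives $V\subset L'$. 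To arrange $V\cap J(\langle\G'\rangle)\neq\emptyset$: in the $J$-touching case, choose $z_0\in L\cap J(\langle\G\rangle)$ with $g(z_0)\in J(\langle\G\rangle)$ as in Definition~\ref{d:bifele}(1), so that $g(z_0)\in V\cap J(\langle\G\rangle)\subset V\cap J(\langle\G'\rangle)$; in the sub-rotative case, writing $H:=g\circ\gamma_{n-1}\circ\cdots\circ\gamma_1\in\langle\G\rangle$ with $H(U)\subset U$ and $U$ contained in a Siegel disk or Hermann ring of $H$, consider the perturbed family $H_\lambda:=g_\lambda\circ\gamma_{n-1}\circ\cdots\circ\gamma_1\in\langle\G'\rangle$ and exploit the instability of rotation domains: by condition $(\ast)$ the multiplier of $H_\lambda$ at its analytically continued fixed or periodic point varies non-constantly in $\lambda$, so on a dense set of $\lambda$ near $\lambda_0$ the map $H_\lambda$ has a repelling fixed point which supplies a Julia point of $\langle\G'\rangle$ inside $V$. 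In either case, Montel's theorem applied to the non-normal family $\{h|_V:h\in\langle\G'\rangle\}$ shows that $\bigcup_{h\in\langle\G'\rangle}h(V)$ omits at most two points of $\CCI$; forward invariance and closedness of $L'$ then force $L'=\CCI$, contradicting $L'\neq\CCI$.

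Thus $\Min(\langle\G'\rangle,\CCI)=\{\CCI\}$. To obtain $J(\langle\G'\rangle)=\CCI$, note that $J_{\ker}(\langle\G'\rangle)$ is compact, $\langle\G'\rangle$-forward invariant, and contained in $J(\langle\G'\rangle)$ (Remark~\ref{r:kjulia}); if it is non-empty, it contains a minimal set by Remark~\ref{r:minimal}, which must equal $\CCI$ by what we have proved, so $J(\langle\G'\rangle)\supset J_{\ker}(\langle\G'\rangle)=\CCI$. The remaining possibility $J_{\ker}(\langle\G'\rangle)=\emptyset$ is excluded via Theorem~\ref{t:thmB} applied to any $\tau\in{\frak M}_{1,c}({\cal Y})$ with $\G_\tau=\G'$: the resulting decomposition $C(\CCI)=U_\tau\oplus{\cal B}_{0,\tau}$ with $\dim U_\tau<\infty$, combined with $S_\tau=\CCI$ (since $\sharp\Min(G_\tau,\CCI)=1$), the contracting-ball conclusion of Theorem~\ref{t:thmB}(2)(a), and a reapplication of the open-set argument of paragraph~2 to any putative Fatou component, force $F(\langle\G'\rangle)=\emptyset$. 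Part (2) follows from (1) by setting $\G':=\overline{B_{{\cal Y}}(\G,\epsilon)}$ for $\epsilon>0$ small enough that $\G'\in{\cal U}$; compactness of $\G'$ in ${\cal Y}$ and $\G\subset\text{int}(\G')$ are immediate. The main technical obstacle is the sub-rotative case in paragraph~2: one must carefully verify that the holomorphic family $\{g_\lambda\}$ supplied by condition $(\ast)$ actually destroys the Siegel disk or Hermann ring inside $V$, which requires analyzing how the multiplier map along $\{H_\lambda\}$ at the analytically continued fixed point responds to the perturbation.
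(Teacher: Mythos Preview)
Your overall architecture for part (1) is right, and your treatment of the $J$-touching case is correct and essentially what the paper does. But there are two genuine gaps.

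\textbf{The sub-rotative case.} Your perturbation argument is not justified by condition~$(\ast)$, and you yourself flag this as the ``main technical obstacle.'' Condition~$(\ast)$ only guarantees that $\lambda\mapsto g_\lambda(z_0)$ is non-constant at each \emph{fixed} point $z_0$; it says nothing about the multiplier at the analytically continued fixed point $p_\lambda$ of $H_\lambda$, which moves with $\lambda$. Worse, for a Hermann ring there is no fixed point to track at all, so the argument collapses. The paper avoids this completely: once you have a bifurcation element $g\in\G$ for $(\G,L)$, observe directly from Definition~\ref{d:bifele} that $g$ is also a bifurcation element for $(\G',L')$. Indeed, in case~(1) use $J(\langle\G\rangle)\subset J(\langle\G'\rangle)$ and $L\subset L'$; in case~(2) the same open set $U$ and the same maps $\gamma_1,\ldots,\gamma_{n-1}\in\G\subset\G'$ witness that $g$ is a bifurcation element for $(\G',L')$, since $U\cap L\neq\emptyset$ forces $U\cap L'\neq\emptyset$. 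Now Lemma~\ref{l:bebg} applied to $(\G',L')$ gives $g\in\partial\G'$, contradicting $g\in\G\subset\text{int}(\G')$. No perturbation of the rotation domain is needed.

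\textbf{The conclusion $J(\langle\G'\rangle)=\CCI$.} Your $J_{\ker}\neq\emptyset$ case is fine, but the $J_{\ker}=\emptyset$ case is hand-waving: Theorem~\ref{t:thmB} with $S_\tau=\CCI$ does not by itself force $F=\emptyset$, and your ``reapplication of the open-set argument'' is not an argument. The paper's route is much simpler and bypasses the case split: pick $h\in\text{int}(\G')\cap{\cal Y}$ and $z\in J(\langle\G'\rangle)$ not a critical value of $h$. Using strong admissibility and the implicit function theorem at a regular preimage $w_0\in h^{-1}(z)$, one sees that $\bigcup_{g\in\G'}g^{-1}(\{z\})$ contains an open neighborhood of $w_0$; since $J(\langle\G'\rangle)$ is backward invariant, this open set lies in $J(\langle\G'\rangle)$. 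Hence $K:=\overline{F(\langle\G'\rangle)}\neq\CCI$. But $K$ is compact and forward invariant, so if nonempty it contains a minimal set, which by the first part must be $\CCI$ --- a contradiction. Therefore $F(\langle\G'\rangle)=\emptyset$.
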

\begin{cor}
\label{c:noattminme}
Let ${\cal Y}$ be a subset of $\emRatp$ satisfying condition $(\ast ).$ 
Let $\tau \in {\frak M}_{1,c}({\cal Y})$. 
Suppose that there exists no attracting minimal set for $(G_{\tau },\CCI ).$ 
Let ${\cal U}$ be a neighborhood of $\tau $ in $({\frak M}_{1,c}({\cal Y}),{\cal O}).$ 
Then, there exists an element $\rho \in {\cal U}$ such that 
$\emMin(G_{\rho },\CCI )=\{ \CCI \} $ and $J(G_{\rho })=\CCI .$  
\end{cor}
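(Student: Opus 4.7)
The plan is to deduce Corollary~\ref{c:noattminme} from Theorem~\ref{t:noattmin}(2) applied to $\Gamma = \Gamma_\tau = \mbox{supp}\,\tau$, followed by an elementary perturbation of $\tau$ inside ${\frak M}_{1,c}({\cal Y})$ that realizes the enlarged compact set produced by the theorem as the support of a new probability measure $\rho$ close to $\tau$ in the topology ${\cal O}$. Since ${\cal Y}$ satisfies $(\ast)$ and since, by the hypothesis of the corollary, there is no attracting minimal set for $(G_\tau,\CCI)$, the hypotheses of Theorem~\ref{t:noattmin} are met with $\Gamma = \Gamma_\tau$.

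Unpacking the topology ${\cal O}$, a basic neighborhood of $\tau$ in $({\frak M}_{1,c}({\cal Y}),{\cal O})$ is specified by a finite family $\varphi_1,\dots,\varphi_k\in C_b({\cal Y})$, a Hausdorff-radius $\delta>0$, and a weak-convergence tolerance $\eta>0$: a $\mu$ lies in this neighborhood provided $|\!\int\varphi_i\,d\mu-\int\varphi_i\,d\tau|<\eta$ for all $i$ and $d_H(\Gamma_\mu,\Gamma_\tau)<\delta$, where $d_H$ is the Hausdorff metric on $\mbox{Cpt}({\cal Y})$. I may assume ${\cal U}$ is contained in such a basic neighborhood. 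Applying Theorem~\ref{t:noattmin}(2) to $\Gamma_\tau$ with the neighborhood $\{\Gamma'\in\mbox{Cpt}({\cal Y}):d_H(\Gamma',\Gamma_\tau)<\delta\}$, I obtain $\Gamma'\in\mbox{Cpt}({\cal Y})$ with $\Gamma'\supset\Gamma_\tau$, $d_H(\Gamma',\Gamma_\tau)<\delta$, $\mbox{Min}(\langle\Gamma'\rangle,\CCI)=\{\CCI\}$, and $J(\langle\Gamma'\rangle)=\CCI$.

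Next I fix any $\sigma\in{\frak M}_{1,c}({\cal Y})$ with $\mbox{supp}\,\sigma=\Gamma'$ (for instance, take an atomic measure supported on a finite $\delta'$-net of $\Gamma'$ for arbitrarily small $\delta'$, or simply any measure whose closed support is all of $\Gamma'$, which exists because $\Gamma'$ is a compact metric space). For a small $\epsilon\in(0,1)$ to be chosen, set
$$\rho:=(1-\epsilon)\tau+\epsilon\sigma\in{\frak M}_{1,c}({\cal Y}).$$
Since $\Gamma'\supset\Gamma_\tau$, one has $\Gamma_\rho=\Gamma_\tau\cup\mbox{supp}\,\sigma=\Gamma'$, so $d_H(\Gamma_\rho,\Gamma_\tau)<\delta$ automatically. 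For each test function $\varphi_i$,
$$\Bigl|\int\varphi_i\,d\rho-\int\varphi_i\,d\tau\Bigr|=\epsilon\Bigl|\int\varphi_i\,d\sigma-\int\varphi_i\,d\tau\Bigr|\le 2\epsilon\,\|\varphi_i\|_\infty,$$
so choosing $\epsilon<\eta/(2\max_i\|\varphi_i\|_\infty)$ places $\rho$ in ${\cal U}$. Finally, $G_\rho=\langle\Gamma_\rho\rangle=\langle\Gamma'\rangle$, so $\mbox{Min}(G_\rho,\CCI)=\{\CCI\}$ and $J(G_\rho)=\CCI$ as required.

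The only subtlety is the combined nature of the topology ${\cal O}$: one has to approximate $\tau$ simultaneously in the weak-$\ast$ sense and in the Hausdorff metric on supports. The second requirement is exactly what Theorem~\ref{t:noattmin}(2) is designed to supply (it lets me enlarge the support to $\Gamma'$ without disturbing its Hausdorff distance from $\Gamma_\tau$), while the first is then handled trivially by taking a convex combination with very small weight on $\sigma$. Thus there is no serious obstacle beyond correctly reducing the statement to Theorem~\ref{t:noattmin}(2); the proof is essentially a measure-theoretic packaging of that result.
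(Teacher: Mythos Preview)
Your proof is correct and follows essentially the same approach as the paper: both reduce the corollary to Theorem~\ref{t:noattmin} and then realize the enlarged compact support as that of a measure near $\tau$ via a convex combination $(1-\epsilon)\tau+\epsilon\sigma$. The only cosmetic difference is that the paper first builds $\tau_\epsilon$ with support $\overline{B(\Gamma_\tau,\epsilon)}$ (so that $\Gamma_\tau\subset\mbox{int}(\Gamma_{\tau_\epsilon})$) and then invokes part~(1) of Theorem~\ref{t:noattmin}, whereas you invoke part~(2) first to obtain $\Gamma'$ and then build $\rho$ with support exactly $\Gamma'$; the underlying idea is the same.
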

\begin{cor}
\label{c:msminfull}
Let ${\cal Y}$ be a subset of $\emRatp$ satisfying condition $(\ast ).$ Then, 
the set 
$$\{ \tau \in {\frak M}_{1,c}({\cal Y})\mid \tau \mbox{ is mean stable }\} 
\cup \{ \rho \in {\frak M}_{1,c}({\cal Y})\mid \emMin(G_{\rho },\CCI )=\{ \CCI \} ,J(G_{\rho })=\CCI \} $$  
is dense in $({\frak M}_{1,c}({\cal Y}),{\cal O}).$ 
\end{cor}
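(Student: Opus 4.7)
The plan is to prove the corollary by a simple case analysis, using Theorem~\ref{t:msminrme} and Corollary~\ref{c:noattminme} as the two workhorses. Fix an arbitrary $\tau \in {\frak M}_{1,c}({\cal Y})$ and an arbitrary open neighborhood ${\cal U}$ of $\tau$ in $({\frak M}_{1,c}({\cal Y}),{\cal O})$; to establish density it suffices to exhibit some $\rho \in {\cal U}$ belonging to the displayed union.

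The dichotomy is driven by whether or not $(G_{\tau},\CCI)$ possesses an attracting minimal set. In the first case, assume there exists some attracting $L \in \Min(G_{\tau},\CCI)$. Let $\{L_{1},\ldots,L_{r}\}$ enumerate all attracting minimal sets for $(G_{\tau},\CCI)$, which is a finite collection by Remark~\ref{r:Lattractingrem}. Choose pairwise disjoint Hausdorff-neighborhoods ${\cal V}_{j}$ of $L_{j}$ in $\Cpt({\cal Y})$. Theorem~\ref{t:msminrme} then directly provides an element $\rho \in {\cal U}$ with $\sharp \G_{\rho} < \infty$ such that $G_{\rho}$ is mean stable, i.e.\ $\rho$ lies in the first set of the union.

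In the second case, assume $(G_{\tau},\CCI)$ has no attracting minimal set. Then Corollary~\ref{c:noattminme} directly yields an element $\rho \in {\cal U}$ with $\Min(G_{\rho},\CCI) = \{\CCI\}$ and $J(G_{\rho}) = \CCI$, placing $\rho$ in the second set of the union. Combining the two cases, every neighborhood of every $\tau \in {\frak M}_{1,c}({\cal Y})$ meets the displayed union, which is the desired density.

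There is no real obstacle here: the corollary is essentially a tautological consequence of the dichotomy established by Theorem~\ref{t:msminrme} and Corollary~\ref{c:noattminme}, which cover the two mutually exclusive and exhaustive alternatives (an attracting minimal set exists, or none exists). The substantive content has been absorbed into those two earlier results; the only task is to observe that the case split is exhaustive and that the conclusion in each case lands in the prescribed union.
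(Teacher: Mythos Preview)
Your proof is correct and follows exactly the approach the paper takes: the paper's own proof is the single line ``Corollary~\ref{c:msminfull} easily follows from Theorem~\ref{t:msminrme} and Corollary~\ref{c:noattminme},'' and you have simply spelled out the evident case split (existence vs.\ non-existence of an attracting minimal set) that makes this work.
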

%\begin{df}
%Let $\tau \in {\frak M}_{1,c}(\Rat )$ be such that 
%$J_{\ker }(G_{\tau })=\emptyset $ and $J(G_{\tau })\neq \emptyset .$ 
%Then by Theorem~\ref{t:mtauspec}, we have the direct sum decomposition 
%$C(\CCI )= \LSfc \oplus {\cal B}_{0,\tau }.$ 
%We denote by $\pi _{\tau }:C(\CCI )\rightarrow \LSfc $ the canonical 
%projection with respect to this direct sum decomposition.   
%\end{df}
We now present a result on the stability of mean stable systems. 
\begin{thm}[Cooperation Principle V: ${\cal O}$-stability of mean stable systems]
\label{t:msmtaust}
Let $\tau \in {\frak M}_{1,c}(\emRat)$ be mean stable. 
Suppose $J(G_{\tau })\neq \emptyset .$ 
Then there exists a neighborhood $\Omega $ of $\tau $ in 
$({\frak M}_{1,c}(\emRat),{\cal O})$ such that 
all of the following statements hold. 
\begin{enumerate}
\item \label{t:msmtaust1}
For each $\nu \in \Omega $, $\nu $ is mean stable, $\sharp (J(G_{\nu }))\geq 3$, 
and $\sharp (\emMin(G_{\nu },\CCI ))=\sharp (\emMin(G_{\tau },\CCI )).$
\item \label{t:msmtaust1-1} 
For each $L\in \emMin(G_{\tau },\CCI )$, 
there exists a continuous map $\nu \mapsto Q_{L,\nu }
\in \emCpt(\CCI )$ on $\Omega $ 
with respect to the Hausdorff metric   
such that $Q_{L,\tau }=L.$ 
Moreover, for each $\nu \in \Omega $, 
$\{ Q_{L,\nu }\} _{L\in \emMin(G_{\tau },\CCI )}=\emMin (G_{\nu },\CCI ).$ 
Moreover, for each $\nu \in \Omega $ and for each 
$L,L'\in \emMin (G_{\tau },\CCI )$ with $L\neq L'$, 
we have $Q_{L,\nu }\cap Q_{L',\nu }=\emptyset .$ 
\item \label{t:msmtaust1-2} 
For each $L\in \emMin (G_{\tau },\CCI )$ and $\nu \in \Omega $, let 
$r_{L}:= \dim _{\CC }(\mbox{{\em LS}}({\cal U}_{f,\tau }(L)))$,  
$\Lambda _{r_{L},\nu }:= \{ h_{r_{L}}\circ \cdots \circ h_{1}\mid h_{j}\in \G _{\nu } (\forall j)\} $, and 
$G_{\nu }^{r_{L}}:= \langle \Lambda _{r_{L},\nu }\rangle $. 
Let $\{ L_{j}\} _{j=1}^{r_{L}}=\emMin(G_{\tau }^{r_{L}},L)$ {\em (Remark:} 
by {\em \cite[Theorem 3.15-12]{Splms10}}, we have $r_{L}=\sharp \emMin (G_{\tau }^{r_{L}},L)${\em )}.    
Then, for each $L\in \emMin (G_{\tau },\CCI )$ and for each $j=1,\ldots ,r_{L}$, 
there exists a continuous map $\nu \mapsto L_{j,\nu }\in \emCpt(\CCI )$ with respect to the Hausdorff metric 
such that, for each $\nu \in \Omega $, 
$\{ L_{j,\nu }\} _{j=1}^{r_{L}}=\emMin (G_{\nu }^{r_{L}},Q_{L,\nu })$ and 
$L_{i,\nu }\neq L_{j,\nu }$ whenever $i\neq j.$    
Moreover, for each $L\in \emMin (G_{\tau },\CCI )$, for each $j=1,\ldots ,r_{L}$, and 
for each $\nu \in \Omega $, we have  
$L_{j+1,\nu }=\bigcup _{h\in \G _{\nu }}h(L_{j,\nu })$, where 
$L_{r_{L}+1,\nu }:=L_{1,\nu }.$ 
\item \label{t:msmtaust2}
For each $\nu \in \Omega $, $\dim _{\CC }(\mbox{{\em LS}}({\cal U}_{f,\nu }(\CCI )))=\dim _{\CC }(\emLSfc) 
=\sum _{L\in \emMin(G_{\tau },\CCI )}r_{L}$. For each 
$\nu \in \Omega $ and for each $L\in \emMin (G_{\tau },\CCI )$, we have  
$\dim _{\CC }(\mbox{{\em LS}}({\cal U}_{f,\nu }(Q_{L,\nu })))=r_{L}$, 
${\cal U}_{v,\nu }(Q_{L,\nu })=\{ a_{L}^{i}\} _{i=1}^{r_{L}}$, and  
${\cal U}_{v,\nu }(\CCI )=\bigcup _{L\in \emMin (G_{\tau },\CCI )}\{ a_{L}^{i}\} _{i=1}^{r_{L}}$, 
where $a_{L}:= \exp(2\pi i/r_{L}).$  
\item \label{t:msmtaust3}
The maps $\nu \mapsto \pi _{\nu }$ and $\nu \mapsto \mbox{{\em LS}}({\cal U}_{f,\nu }(\CCI ))$ are continuous on $\Omega $. 
More precisely, for each $\nu \in \Omega $, there exists a finite family 
$\{ \varphi _{L,i,\nu }\mid L\in \emMin (G_{\tau },\CCI ), i=1,\ldots ,r_{L}\} $ 
in $ {\cal U}_{f,\nu }(\CCI )$  
and a finite family $\{ \rho _{L,i,\nu }\mid L\in \emMin (G_{\tau },\CCI ),i=1,\ldots ,r_{L}\} $ in $C(\CCI )^{\ast }$ such that 
all of the following hold.
\begin{itemize}
\item[{\em (a)}] 
$\{ \varphi _{L,i,\nu }\mid L\in \emMin (G_{\tau },\CCI ), i=1,\ldots ,r_{L}\}$ is a basis of $\mbox{{\em LS}}({\cal U}_{f,\nu }(\CCI ))$ and  
$\{ \rho _{L,i,\nu }\mid L\in \emMin (G_{\tau },\CCI ),i=1,\ldots ,r_{L}\} $ is a basis of $\mbox{{\em LS}}({\cal U}_{f,\nu,\ast }(\CCI )).$ 
\item[{\em (b)}] 
Let  $L\in \emMin (G_{\tau },\CCI )$ and let $i=1,\ldots ,r_{L}.$ 
%Let $a_{L}:=\exp (2\pi i/r_{L}).$ 
Let $\nu \in \Omega .$ 
Then $M_{\tau }(\varphi _{L,i,\nu })=a_{L}^{i}\varphi _{L,i,\nu }$, 
$\varphi _{L,i,\nu }|_{Q_{L,\nu }}=(\varphi _{L,1,\nu }|_{Q_{L,\nu }})^{i}$, 
$\varphi _{L,i,\nu }|_{Q_{L',\nu }}\equiv 0 $ for any $L'\in \emMin (G_{\nu },\CCI )$ with $L'\neq L$, 
and {\em supp}$\, \rho _{L,i,\nu }=Q_{L,\nu }.$ 
Moreover, 
$\{ \varphi _{L,i,\nu }|_{Q_{L,\nu }} \} _{i=1}^{r_{L}}$ is a basis of $\mbox{{\em LS}}({\cal U}_{f,\nu }(Q_{L,\nu }))$ and  
$\{ \rho _{L,i,\nu }|_{C(Q_{L,\nu })}\mid i=1,\ldots ,r_{L}\} $ is a basis of 
$\mbox{{\em LS}}({\cal U}_{f,\nu ,\ast }(Q_{L,\nu })).$ In particular, \\  
$\dim _{\CC }(\mbox{{\em LS}}({\cal U}_{f,\nu }(Q_{L,\nu })))=r_{L}.$ 
\item[{\em (c)}] 
For each $L\in \emMin (G_{\tau },\CCI )$ and for each $i=1,\ldots r_{L}$, 
 the map $\nu \mapsto \varphi _{L,i,\nu }\in C(\CCI )$ is continuous on $\Omega $ and  
%\item[{\em (d)}]
%For each $L\in \Min (G_{\tau },\CCI )$ and for each $i=1,\ldots ,r_{L}$, 
the map $\nu \mapsto \rho _{j,\nu }\in C(\CCI )^{\ast }$ is continuous on $\Omega .$ 
\item[{\em (d)}] 
For each $L\in \emMin (G_{\tau },\CCI )$, for each $(i,j)$ and 
for each $\nu \in \Omega $, $\rho _{L,i,\nu }(\varphi _{L,j,\nu })=\delta _{ij}.$ 
Moreover, For each $L, L'\in \emMin (G_{\tau },\CCI )$ with $L\neq L'$, for each $(i,j)$, and for each 
$\nu \in \Omega $,  
$\rho _{L,i,\nu }(\varphi _{L',j,\nu })=0.$ 
\item[{\em (e)}] 
For each $\nu \in \Omega $ and for each $\varphi \in C(\CCI )$, 
$\pi _{\nu }(\varphi )=\sum _{L\in \emMin (G_{\tau },\CCI )}\sum _{i=1}^{r_{L}}\rho _{L,i,\nu }(\varphi )\cdot \varphi _{L,i,\nu }.$  
\end{itemize}
\item \label{t:msmtaust4}  
For each $L\in \emMin (G_{\tau },\CCI )$, 
the map $\nu \mapsto T_{Q_{L,\nu },\nu }\in (C(\CCI ),\| \cdot \| _{\infty })$ 
is continuous on $\Omega .$   
\end{enumerate}
\end{thm}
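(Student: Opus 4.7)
The plan is to work out everything from the open/robust nature of the mean-stability conditions (I)(II)(III) of Definition~\ref{d:as} combined with the attracting-trap description of each minimal set in Remark~\ref{r:msallatt}. By that remark, I may enumerate $\Min(G_\tau,\CCI)=\{L_1,\ldots,L_r\}$ and for each $k$ fix a pairwise disjoint open trap $W_k$ relatively compact in $F(G_\tau)$ containing $L_k$ such that, for some common $n_{0}\in\NN$, every $\gamma=(\gamma_1,\ldots,\gamma_{n_0})\in \G_\tau^{n_0}$ satisfies $\gamma_{n_0,1}(\overline{W_k})\subset W_k$ with contracting diameters (after possibly iterating). Condition (III), compactness of $\CCI$, and Remark~\ref{r:kjulia}(1) produce finitely many $g_1,\ldots,g_N\in G_\tau$ with $\bigcup_j g_j^{-1}\bigl(\bigcup_k W_k\bigr)=\CCI$. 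Every ingredient here is an open condition in the Hausdorff topology on $\G_\nu$, so passing to $\nu$ with $\G_\nu$ close to $\G_\tau$ and replacing each $g_j$ by a nearby word in $G_\nu$, I obtain a neighborhood $\Omega$ of $\tau$ in $({\frak M}_{1,c}(\Rat),{\cal O})$ on which $\nu$ is again mean stable with the same $W_k$'s and the same covering $N$-tuple. This handles (1) modulo the count of minimal sets and $\sharp J(G_\nu)\ge 3$ (the latter follows because $J(G_\nu)\supset$ limits of Julia points of $G_\tau$, which form a perfect set under mean stability).

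Next I define $Q_{L_k,\nu}:=\bigcap_{m\ge 0}\overline{\bigcup_{g\in G_\nu^{m}}g(\overline{W_k})}$. Uniform contraction of $\G_\nu^{n_0}$ on $\overline{W_k}$ shows $Q_{L_k,\nu}$ is non-empty, compact, forward $G_\nu$-invariant, contained in $W_k$, and Hausdorff-continuous in $\nu$ with $Q_{L_k,\tau}=L_k$. Using the covering $g_1,\ldots,g_N$, any compact $G_\nu$-forward invariant set meets some $W_k$ and then is absorbed into $Q_{L_k,\nu}$; minimality of the $L_j$'s passed to the limit gives that each $Q_{L_k,\nu}$ is itself a minimal set of $(G_\nu,\CCI)$ and that the map $L_k\mapsto Q_{L_k,\nu}$ is a bijection onto $\Min(G_\nu,\CCI)$. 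This yields (\ref{t:msmtaust1}) and (\ref{t:msmtaust1-1}). The same construction applied to the semigroup $G_\nu^{r_L}$ acting on $Q_{L,\nu}$ (which by \cite[Theorem~3.15-12]{Splms10} has exactly $r_L$ pairwise disjoint attracting minimal subsets of $L$ that are cyclically permuted by $\G_\tau$) gives continuous deformations $L_{j,\nu}\in \Cpt(\CCI)$ with $L_{j+1,\nu}=\bigcup_{h\in\G_\nu}h(L_{j,\nu})$, establishing (\ref{t:msmtaust1-2}) and, via \cite[Theorem~3.15-12]{Splms10} once more, the constancy of $r_L$ and the description of ${\cal U}_{v,\nu}(Q_{L,\nu})$ and ${\cal U}_{v,\nu}(\CCI)$ in (\ref{t:msmtaust2}).

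For (\ref{t:msmtaust3}) and (\ref{t:msmtaust4}), set $\varphi_{L,i,\nu}(z):=\sum_{j=1}^{r_L}a_L^{-ij}\,T_{L_{j,\nu},\nu}(z)$. The cyclic permutation $L_{j+1,\nu}=\bigcup_{h\in\G_\nu}h(L_{j,\nu})$ forces $M_\nu T_{L_{j,\nu},\nu}=T_{L_{j-1,\nu},\nu}$ (indices mod $r_L$), so $M_\nu\varphi_{L,i,\nu}=a_L^i\varphi_{L,i,\nu}$; the dimension count $\sum_L r_L=\dim_\CC U_\nu$ (which is stable by (\ref{t:msmtaust2})) makes these a basis of $U_\nu$. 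Dual functionals $\rho_{L,i,\nu}$ are produced by integration against bi-orthogonal measures supported on $Q_{L,\nu}$, using the cyclic structure on the $L_{j,\nu}$'s; their Hausdorff continuity in $\nu$ forces weak-$\ast$ continuity of $\rho_{L,i,\nu}$. Everything therefore reduces to $\|\cdot\|_\infty$-continuity of $\nu\mapsto T_{L_{j,\nu},\nu}$, which is also the statement of (\ref{t:msmtaust4}). This I would prove by showing that for a fixed continuous bump $\chi_j$ with $\mathbf{1}_{L_{j,\nu}}\le\chi_j\le\mathbf{1}_{W_k}$ (any $\nu\in\Omega$), one has $T_{L_{j,\nu},\nu}=\lim_{n\to\infty}M_\nu^{n r_L}\chi_j$ uniformly in $z$, and that this convergence is \emph{uniform in} $\nu\in\Omega$: for each fixed $n$, $\nu\mapsto M_\nu^{n}\chi_j\in(C(\CCI),\|\cdot\|_\infty)$ is continuous by iterating the easy fact that $(\nu,\psi)\mapsto M_\nu\psi$ is jointly continuous when $\G_\nu\to\G_\tau$ in Hausdorff and $\psi\in C(\CCI)$; the tail estimate is controlled by a uniform exponential contraction of $\G_\nu^{n_0}$ on $\overline{W_k}$.

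The main obstacle is precisely this last point: $\nu\mapsto M_\nu$ is \emph{not} norm-continuous on $C(\CCI)$ in general, so standard spectral perturbation theory does not apply in the $\cal O$-topology. I would circumvent this by making the uniform contraction on $\Omega$ quantitative via the hyperbolic metric on components of $F(G_\tau)$ containing each $W_k$: on a slightly larger trap $W_k^{+}\subset F(G_\tau)$ every $\gamma\in\G_\nu^{n_0}$ is a hyperbolic contraction with a common ratio $<1$ for all $\nu\in\Omega$, which gives the uniform-in-$\nu$ exponential tail estimate needed to promote the pointwise convergence $M_\nu^n\chi_j\to T_{L_{j,\nu},\nu}$ to $\|\cdot\|_\infty$-convergence uniform on $\Omega$. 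Once this uniformity is in place, all continuity statements in (\ref{t:msmtaust3})–(\ref{t:msmtaust4}) cascade from the continuity of finite iterates and of Hausdorff convergence of the $L_{j,\nu}$'s.
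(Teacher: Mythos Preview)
Your overall strategy is sound and parallels the paper's in spirit, but your route to the continuity statements differs. The paper does not attempt a uniform-in-$\nu$ exponential tail for $M_\nu^{n}\chi$; instead it fixes $\epsilon>0$, chooses a single depth $l$ so that $\tau^{l}$-most words push a finite cover $\{U_{z_k}\}$ into the trap $C$, then uses weak convergence of $\nu^{l}$ to $\tau^{l}$ (via Lemma~\ref{l:rhontaun}) to transfer this to all $\nu$ near $\tau$, and compares $\varphi_{L,i,\nu}(z)=M_\nu^{sr_Ll}(\varphi_{L,i,\nu})(z)$ to the corresponding $\tau$-quantity term by term. The dual functionals are built as $\rho_{L,i,\nu}=\tfrac{1}{r_L}\sum_j a_L^{-ij}\omega_{L,j,\nu}$ where $\omega_{L,j,\nu}$ is the unique $M_\nu^{r_L}$-invariant measure on $L_{j,\nu}$, and their continuity is obtained by a Cauchy-type argument on $M_\nu^{nsr_L}(\varphi)$ restricted to the fixed compact $\overline{A_j}$. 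Statement~\ref{t:msmtaust4} is then deduced \emph{a posteriori} from statement~\ref{t:msmtaust3} via $T_{Q_{L,\nu},\nu}=\pi_\nu(\varphi_L)$ for a fixed bump $\varphi_L$, rather than being proved first.

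There are two genuine gaps in your sketch. First, the tail bound you invoke---hyperbolic contraction of $\G_\nu^{n_0}$ on $\overline{W_k}$ (or $W_k^{+}$)---controls $|M_\nu^{nr_L}\chi_j(z)-T_{L_{j,\nu},\nu}(z)|$ only for $z$ already in the trap. For arbitrary $z\in\CCI$ you also need a uniform-in-$\nu$ exponential bound on $\tilde{\nu}\{\gamma:\gamma_{n,1}(z)\notin\bigcup_k W_k\}$. Mean stability does give such a bound (this is essentially the constant $a<1$ in the paper's proof of Theorem~\ref{t:utauca} and in the $B(j)$-decomposition in the proof of Theorem~\ref{t:kjemfhf}), but it is a separate ingredient from the contraction inside $W_k$ and your argument as written does not supply it. Second, the claim that ``Hausdorff continuity in $\nu$ forces weak-$\ast$ continuity of $\rho_{L,i,\nu}$'' is false in general: convergence of supports does not imply convergence of measures. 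You need an argument like the paper's---identify $\omega_{L,j,\nu}$ as the limit of $M_\nu^{nsr_L}$ acting on $C(\overline{A_j})$ for a \emph{fixed} $\overline{A_j}$, then use that for large fixed $n$ the maps $\nu\mapsto M_\nu^{nsr_L}(\varphi)$ are continuous and uniformly close to $\omega_{L,j,\nu}(\varphi)1$---to pin down weak-$\ast$ continuity.
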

We now present a result on a characterization of mean stability. 
\begin{thm}
\label{t:ABCD}
Let ${\cal Y}$ be a subset of $\emRatp$ satisfying condition $(\ast ).$ 
We consider the following subsets $A,B,C,D,E$ of ${\frak M}_{1,c}({\cal Y})$ which are defined as follows.
\begin{itemize}
\item[{\em (1)}]
$A:= \{ \tau  \in {\frak M}_{1,c}({\cal Y})\mid \tau \mbox{ is mean stable} \} .$ 
\item[{\em (2)}] 
Let $B$ be the set of $\tau \in {\frak M}_{1,c}({\cal Y})$ satisfying that  
there exists a neighborhood $\Omega $ of $\tau $ in $({\frak M}_{1,c}({\cal Y}),{\cal O})$ 
such that {\em (a)} for each $\nu \in \Omega $, $J_{\ker }(G_{\nu })=\emptyset $, and {\em (b)}   
$\nu \mapsto \sharp \emMin(G_{\nu },\CCI )$ is constant on $\Omega $. 
\item[{\em (3)}]
Let $C$ be the set of $\tau \in {\frak M}_{1,c}({\cal Y})$ satisfying that  
there exists a neighborhood $\Omega $ of $\tau $ in $({\frak M}_{1,c}({\cal Y}),{\cal O})$ 
such that {\em (a)} for each $\nu \in \Omega $, $F(G_{\nu })\neq \emptyset $, and {\em (b)}   
$\nu \mapsto \sharp \emMin(G_{\nu },\CCI )$ is constant on $\Omega $. 
\item[{\em (4)}] 
Let $D$ be the set of $\tau \in {\frak M}_{1,c}({\cal Y})$ satisfying that  
there exists a neighborhood $\Omega $ of $\tau $ in $({\frak M}_{1,c}({\cal Y}),{\cal O})$ 
such that for each $\nu \in \Omega $, $J_{\ker }(G_{\nu })=\emptyset $ and 
%{\em (b)} 
$\dim _{\CC }(\mbox{{\em LS}}({\cal U}_{f,\nu }(\CCI )))=\dim _{\CC }(\mbox{{\em LS}}({\cal U}_{f,\tau }(\CCI )))$. 
%is continuous on $\Omega $ 
%(i.e., statements {\em (2)} and {\em (3)} in {\em Theorem~\ref{t:msmtaust}} hold). 
\item[{\em (5)}] 
Let $E$ be the set of $\tau \in {\frak M}_{1,c}({\cal Y})$ satisfying that 
for each $\varphi \in C(\CCI )$, there exists a neighborhood 
$\Omega $ of $\tau $ in $({\frak M}_{1,c}({\cal Y}),{\cal O})$ 
such that {\em (a)} for each $\nu \in \Omega $, $J_{\ker }(G_{\nu })=\emptyset $, and 
{\em (b)} the map $\nu \mapsto \pi _{\nu }(\varphi )\in (C(\CCI ),\| \cdot \| _{\infty })$ defined on $\Omega $ 
is continuous at $\tau .$ 
\end{itemize}
Then, $A=B=C=D=E.$ 

\end{thm}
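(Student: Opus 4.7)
The direction $A\Rightarrow B,C,D,E$ is essentially a direct readout of Theorem~\ref{t:msmtaust}. If $\tau\in A$, that theorem supplies a neighborhood $\Omega$ of $\tau$ in $({\frak M}_{1,c}({\cal Y}),{\cal O})$ on which every $\nu$ is mean stable; by Remark~\ref{r:msjke} and Definition~\ref{d:as} this forces $J_{\ker}(G_{\nu})=\emptyset$ and $F(G_{\nu})\neq\emptyset$ throughout $\Omega$, while parts (1), (4), and (3b)--(3e) of Theorem~\ref{t:msmtaust} yield the constancy of $\sharp\,\Min(G_{\nu},\CCI)$ and of $\dim_{\CC}\text{LS}({\cal U}_{f,\nu}(\CCI))$, and the required continuity of $\nu\mapsto\pi_{\nu}(\varphi)$ on $\Omega$. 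These give membership of $\tau$ in $B$, $C$, $D$, and $E$ respectively.

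For each of the converse implications $B,C,D,E\Rightarrow A$, I would use a single scheme hinging on the density statement of Corollary~\ref{c:msminfull}. The hypothesis in each case supplies a neighborhood $\Omega$ of $\tau$ excluding every $\rho$ with $\Min(G_{\rho},\CCI)=\{\CCI\}$ and $J(G_{\rho})=\CCI$: for $B,D,E$ this is immediate from $J_{\ker}(G_{\nu})=\emptyset$ on $\Omega$ (such $\rho$ satisfy $J_{\ker}(G_{\rho})=\CCI$), while for $C$ the exclusion uses $F(G_{\nu})\neq\emptyset$ on $\Omega$ (such $\rho$ satisfy $F(G_{\rho})=\emptyset$). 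Since by Corollary~\ref{c:msminfull} the union of mean stable measures and these degenerate measures is dense in $({\frak M}_{1,c}({\cal Y}),{\cal O})$, an elementary density-in-open-subset argument then produces a sequence $\nu_{n}\to\tau$ in $\Omega$ with each $\nu_{n}$ mean stable.

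Now suppose for contradiction $\tau\notin A$. Lemma~\ref{l:Lclassify} yields a non-attracting $L\in\Min(G_{\tau},\CCI)$ which is $J$-touching or sub-rotative, with $L\neq\CCI$ (for $B,D,E$ this is because $J_{\ker}(G_{\tau})=\emptyset$ would be violated otherwise; for $C$ one uses Remark~\ref{r:kjulia}(3) together with the mean stable approximants $\nu_{n}$ to exclude $L=\CCI$). Lemma~\ref{l:bebg} supplies a bifurcation element $g\in\partial\Gamma_{\tau}\cap{\cal Y}$. The admissibility condition $(\ast)$, applied at a witness point $z_{0}$ from Definition~\ref{d:bifele}, yields a holomorphic family $\{g_{\lambda}\}_{\lambda\in\Lambda}\subset{\cal Y}$ through $g=g_{\lambda_{0}}$ with $\lambda\mapsto g_{\lambda}(z_{0})$ non-constant near $\lambda_{0}$. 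Choosing $\lambda_{n}\to\lambda_{0}$ so that $g_{\lambda_{n}}(z_{0})$ leaves $L$ (respectively leaves the local rotation domain of the composition in Definition~\ref{d:bifele}(2)) and setting $\tau_{n}:=(1-1/n)\tau+(1/n)\delta_{g_{\lambda_{n}}}$ produces $\tau_{n}\to\tau$ in ${\cal O}$ with $\Gamma_{\tau_{n}}=\Gamma_{\tau}\cup\{g_{\lambda_{n}}\}$. A case analysis along Definition~\ref{d:bifele}, combined with the minimality bookkeeping in Remark~\ref{r:minimal}, then shows that $L$ is no longer $G_{\tau_{n}}$-invariant and that $\sharp\,\Min(G_{\tau_{n}},\CCI)\neq\sharp\,\Min(G_{\tau},\CCI)$. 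This contradicts $B$ and $C$ directly; for $D$ the jump in $\sharp\,\Min$ forces a jump in $\dim_{\CC}\text{LS}({\cal U}_{f,\cdot}(\CCI))$ via Theorem~\ref{t:msmtaust}(4) applied to any mean stable $\nu_{n}$; for $E$ one selects $\varphi\in C(\CCI)$ whose projection $\pi_{\nu}(\varphi)$ detects the disappearance of $L$ and thereby fails to be continuous at $\tau$.

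The principal obstacle is verifying that $L$ actually becomes non-invariant under the above perturbation. In the $J$-touching subcase, one must select $\lambda_{n}$ so that $g_{\lambda_{n}}(z_{0})\notin L$; this should follow because $g(z_{0})\in L\cap J(G_{\tau})$ lies on a component of $L$ whose interior (if nonempty) is disjoint from $J(G_{\tau})$, so generic $g_{\lambda_{n}}$ displace the image off $L$. The sub-rotative subcase is more delicate: $L$ lies in a Siegel disk or Herman ring of some composition $g\circ\gamma_{n-1}\circ\cdots\circ\gamma_{1}$, and one must invoke the classical fact that generic perturbations of the multiplier destroy linearizability to conclude that the emerging $G_{\tau_{n}}$-dynamics has a structurally different minimal-set count rather than a mere deformation of the old one. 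With these two checks in hand, the whole equivalence $A=B=C=D=E$ falls out.
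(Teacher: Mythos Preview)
Your forward direction $A\Rightarrow B,C,D,E$ is correct and matches the paper. The trouble is in the converse scheme.

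The central gap is the claim that once you adjoin a single map $g_{\lambda_n}$ with $g_{\lambda_n}(z_0)\notin L$, the minimal-set count changes. Non-invariance of $L$ under $G_{\tau_n}$ does \emph{not} by itself force $\sharp\Min(G_{\tau_n},\CCI)\neq\sharp\Min(G_{\tau},\CCI)$. Since $G_{\tau}\subset G_{\tau_n}$, every $G_{\tau_n}$-minimal set contains at least one $G_{\tau}$-minimal set, so $\sharp\Min(G_{\tau_n},\CCI)\le\sharp\Min(G_{\tau},\CCI)$; but nothing prevents the $G_{\tau_n}$-orbit closure of $L$ from being a \emph{new} minimal set disjoint from the continued attracting ones, in which case the count is unchanged. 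Your ``case analysis along Definition~\ref{d:bifele}'' does not address this, and the sub-rotative subcase (where you appeal to generic destruction of linearizability) is even further from yielding a count change. The same gap undermines your treatment of $D\Rightarrow A$ and $E\Rightarrow A$, since those arguments are built on the purported drop in $\sharp\Min$.

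The paper's route sidesteps this issue entirely by invoking the already-proved perturbation results Theorem~\ref{t:msminr}/\ref{t:msminrme} and Corollary~\ref{c:noattminme}. The key mechanism there is different from yours: rather than add one map near the bifurcation element $g$, those results enlarge $\Gamma_{\tau}$ to a $\Gamma'$ with $\Gamma_{\tau}\subset\mathrm{int}(\Gamma')$, and then apply Lemma~\ref{l:bebg} in the contrapositive: any non-attracting minimal set of $\langle\Gamma'\rangle$ would have a bifurcation element in $\partial\Gamma'$, but all bifurcation elements of the old $\Gamma_{\tau}$ now lie in $\mathrm{int}(\Gamma')$. Hence $\langle\Gamma'\rangle$ is mean stable with \emph{exactly} $r$ minimal sets, where $r$ is the number of attracting minimal sets of $G_{\tau}$. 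This gives the drop in $\sharp\Min$ (for $B,C$), and with the decomposition $\dim_{\CC}\mathrm{LS}({\cal U}_{f,\tau}(\CCI))=\sum_{L}\dim_{\CC}\mathrm{LS}({\cal U}_{f,\tau}(L))$ it gives the drop in dimension (for $D$). For $E$ the paper chooses $\varphi$ to be a bump function equal to $1$ on the non-attracting $K$ and $0$ away from it; Theorem~\ref{t:msminrme} then produces nearby mean stable $\rho$ whose minimal sets all lie outside a neighborhood of $K$, so $\pi_{\rho}(\varphi)=0$ while $\pi_{\tau}(\varphi)\neq 0$, contradicting continuity. If you want to salvage your scheme, the cleanest fix is to replace the single-map perturbation by a direct appeal to Theorem~\ref{t:msminrme} and Corollary~\ref{c:noattminme}; attempting to reprove them on the fly via one-parameter families does not go through.
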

We now present a result on bifurcation of dynamics of $G_{\tau }$ and $M_{\tau }$ regarding a 
continuous family of measures $\tau .$  
\begin{thm}
\label{t:bifur}
Let ${\cal Y}$ be a subset of $\emRatp $ satisfying condition $(\ast ).$  
For each $t\in [0,1]$, let 
$\mu _{t}$ be an element of ${\frak M}_{1,c}({\cal Y}).$ 
Suppose that all of the following conditions {\em (1)--(4)} hold.
\begin{itemize}
\item[{\em (1)}] 
$t\mapsto \mu _{t}\in ({\frak M}_{1,c}({\cal Y}),{\cal O})$ is continuous on 
$[0,1].$ 
%\item[{\em (2)}]
%For each $t\in [0,1]$, there exists an open subset $V_{t}$ of ${\cal Y}$ such that 
%$\G _{\mu _{t}}=\overline{V_{t}}$ where the closure is taken in the space ${\cal Y}.$ 
\item[{\em (2)}] If $t_{1},t_{2}\in [0,1]$ and $t_{1}<t_{2}$, then 
$\G _{\mu _{t_{1}}}\subset \mbox{{\em int}}(\G _{\mu _{t_{2}}})$ with respect to 
the topology of ${\cal Y}.$ 
\item[{\em (3)}] 
$\mbox{{\em int}}(\G _{\mu _{0}})\neq \emptyset $ with respect to the topology of ${\cal Y}$ and 
$F(G _{\mu _{1}}) \neq \emptyset $. 
\item[{\em (4)}] 
$\sharp (\emMin (G _{\mu _{0}}, \CCI ))\neq 
\sharp (\emMin (G _{\mu _{1}}, \CCI )).$ 
\end{itemize}
Let $B:=\{ t\in [0,1)\mid \mbox{there exists a bifurcation element } 
g\in \G _{\mu _{t}} \mbox{ for } \G _{\mu _{t}}\}.$ 
Then, we have the following.
\begin{itemize}
\item[{\em (a)}]
For each $t\in [0,1]$, $J_{\ker }(G _{\mu _{t}} )=\emptyset $ and 
$\sharp J(G _{\mu _{t}} )\geq 3$, and 
all statements in {\em \cite[Theorem 3.15]{Splms10}} (with $\tau =\mu _{t}$) hold. 
\item[{\em (b)}] We have    
$$1\leq \sharp B \leq \sharp (\emMin(G _{\mu _{0}}, \CCI ))-\sharp (\emMin(G _{\mu _{1}}, \CCI )) <\infty .$$
Moreover, for each $t\in B$, $\mu _{t}$ is not mean stable. 
Furthermore, for each $t\in [0,1)\setminus B$, $\mu _{t}$ is mean stable. 
\item[{\em (c)}] 
For each $s\in (0,1]$ there exists a number $t_{s}\in (0,s)$ such that 
for each $t\in [t_{s},s]$, 
$\sharp (\emMin (G_{\mu _{t}},\CCI ))=\sharp (\emMin (G_{\mu _{s}},\CCI )).$
\end{itemize}  
\end{thm}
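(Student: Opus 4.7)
The plan is to prove parts (a), (b), (c) in sequence, leveraging the classification Lemma~\ref{l:Lclassify}, the bifurcation-element production Lemma~\ref{l:bebg}, and the stability/persistence results Theorems~\ref{t:msminr}, \ref{t:msmtaust}, \ref{t:noattmin}. For \emph{Part (a)}, to establish $J_{\ker}(G_{\mu _{t}})=\emptyset $ for every $t\in [0,1]$, I would fix any $h_{0}\in \text{int}(\G _{\mu _{0}})$ (nonempty by hypothesis~(3)), and for each base point $z_{0}$ (in $\CCI $ or $\CC $ according to which alternative of condition $(\ast )$ applies) apply $(\ast )$ to obtain a holomorphic family $\{ g_{\lambda }\} _{\lambda \in \Lambda }\subset {\cal Y}$ through $h_{0}$ with $\lambda \mapsto g_{\lambda }(z_{0})$ non-constant; restricting $\Lambda $ places the family inside $\text{int}(\G _{\mu _{0}})\subset \G _{\mu _{t}}$, so Theorem~\ref{t:cpIIIint} (or its rational analog for case~(1) of $(\ast )$) yields $J_{\ker }(G_{\mu _{t}})=\emptyset $. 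Any single $h\in \G _{\mu _{0}}\subset \Ratp $ has $J(h)\subset J(G_{\mu _{t}})$ infinite, so $\sharp J(G_{\mu _{t}})\geq 3$, verifying the hypotheses of Theorems~\ref{t:thmA}, \ref{t:thmB}, \ref{t:mtauspec} with $\tau =\mu _{t}$.

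For \emph{Part (b)}, set $f(t):=\sharp \Min(G_{\mu _{t}},\CCI )$, finite by (a) and Theorem~\ref{t:thmB}(3). Monotonicity $f(t_{1})\geq f(t_{2})$ for $t_{1}<t_{2}$ follows because any $L\in \Min(G_{\mu _{t_{2}}})$ is $G_{\mu _{t_{1}}}$-invariant, hence by Remark~\ref{r:minimal} contains an element of $\Min(G_{\mu _{t_{1}}})$, and disjointness preserves distinctness. Hypothesis~(3) and Fatou-set monotonicity force $F(G_{\mu _{t}})\supset F(G_{\mu _{1}})\neq \emptyset $, which rules out $L=\CCI $ as a minimal set (else it would be unique and force $F=\emptyset $); hence the hypothesis $L\neq \CCI $ of Lemma~\ref{l:bebg} is always met, and Theorem~\ref{t:noattmin} applied with $\G '=\G _{\mu _{s}}$ ($s>t$) forbids the absence of attracting minimal sets for $t<1$. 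Lemmas~\ref{l:Lclassify}, \ref{l:bebg} together with Remark~\ref{r:msallatt} then identify $B$ with $\{ t\in [0,1):\mu _{t}\mbox{ is not mean stable}\} $. Applying Theorem~\ref{t:msminr} to $\G _{\mu _{t}}$ and using hypotheses~(1)--(2) to place $\G _{\mu _{s}}$ in the supplied neighborhood ${\cal U}'$ for $s$ slightly greater than $t$ yields $f(t^{+})=r(t)$, the number of attracting minimal sets of $\mu _{t}$, with equality to $f(t)$ iff $t\notin B$. Summing the integer right-drops thus bounds $\sharp B\leq f(0)-f(1)<\infty $; the lower bound $\sharp B\geq 1$ follows from hypothesis~(4) and the left-continuity of $f$ proved in (c), since a left-continuous non-increasing integer function changes only by right-drops.

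For \emph{Part (c)} and the main obstacle: when $\mu _{s}$ is mean stable Theorem~\ref{t:msmtaust} directly supplies a neighborhood with constant $\sharp \Min $, and continuity of $t\mapsto \mu _{t}$ produces $t_{s}$. For $s\in B$, I would pick $t_{0}<s$ with $\mu _{t_{0}}$ mean stable and $(t_{0},s)\cap B=\emptyset $ (possible by the finiteness of $B$ from (b)); the right-continuity established above gives $f\equiv f(t_{0})$ on $(t_{0},s)$, and the remaining point is $f(s)=f(t_{0})$. Applying Theorem~\ref{t:msminr} to $\G _{\mu _{t_{0}}}$ yields a neighborhood ${\cal U}'$ with $\sharp \Min (\langle \G '\rangle )=f(t_{0})$ for every $\G '\in {\cal U}'$ satisfying $\G _{\mu _{t_{0}}}\subset \text{int}(\G ')$; by hypothesis~(2) and continuity of $t\mapsto \G _{\mu _{t}}$ one would choose $t_{0}$ close enough to $s$ that $\G _{\mu _{s}}\in {\cal U}'$. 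The hardest step is precisely this matching: the radius of ${\cal U}'$ may shrink as $t_{0}\uparrow s$ toward the non-mean-stable $\mu _{s}$, potentially faster than $d_{H}(\G _{\mu _{t_{0}}},\G _{\mu _{s}})$. Resolving this will require either a quantitative refinement of Theorem~\ref{t:msminr} tracking the attracting neighborhoods of the $L_{j}$'s and their uniform contraction constants along the whole interval $[t_{0},s]$, or a direct upper-semicontinuity argument for $t\mapsto \Min(G_{\mu _{t}},\CCI )$ exploiting that no bifurcation element enters $\G _{\mu _{u}}$ for $u\in [t_{0},s)$, so that the attracting basins survive the limit $u=s$.
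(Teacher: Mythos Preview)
Your treatment of (a) and (b) is correct and parallels the paper's. The paper obtains (a) via \cite[Lemma 5.34]{Splms10} (exactly the mechanism you describe: holomorphic families inside $\text{int}(\Gamma_{\mu_t})$ force $J_{\ker}=\emptyset$), and (b) via the same ingredients you list: the identification of $B$ with the non-mean-stable parameters (Lemma~\ref{l:Lclassify}, Remark~\ref{r:msallatt}), monotonicity of $f(t)=\sharp\Min(G_{\mu_t},\CCI)$, and the strict drop $f(t_0)>f(t)$ for every $t>t_0$ whenever $t_0\in B$. One simplification: for $\sharp B\geq 1$ you do not need (c). If $B=\emptyset$ then every $\mu_t$ is mean stable, so by Lemma~\ref{l:msmincons} the map $t\mapsto f(t)$ is locally constant on $[0,1]$, hence constant, contradicting hypothesis~(4). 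This is the paper's argument and it removes the apparent dependence of (b) on (c).

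Your approach to (c), however, contains a genuine gap precisely where you flag it, and neither of your proposed fixes is what resolves it. The paper (Lemma~\ref{l:bifmin}) bypasses mean stability entirely and gives a direct argument valid for every $s\in(0,1]$, including $s\in B$. Since $J_{\ker}(G_{\mu_s})=\emptyset$ by (a), each $L_j\in\Min(G_{\mu_s},\CCI)$ contains a point $w_j\in L_j\cap F(G_{\mu_s})$. Set $W=\bigcup_j B(w_j,\epsilon)\subset F(G_{\mu_s})$. By \cite[Theorem 3.15]{Splms10} every $z\in\CCI$ admits a neighborhood $V_z$ and $g_z\in G_{\mu_s}$ with $g_z(\overline{V_z})\subset W$; compactness reduces to finitely many $z_1,\ldots,z_n$. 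Continuity of $t\mapsto\Gamma_{\mu_t}$ then yields $t_s<s$ such that for $t\in[t_s,s]$ each $g_{z_k}$ is approximated by some $g_{z_k,t}\in G_{\mu_t}$ still sending $\overline{V_{z_k}}$ into $W$; moreover $W\subset F(G_{\mu_s})\subset F(G_{\mu_t})$ because $G_{\mu_t}\subset G_{\mu_s}$. Hence every $L\in\Min(G_{\mu_t},\CCI)$ meets some $B(w_j,\epsilon)$. Now invoke \cite[Theorem 3.15-4]{Splms10} with $\tau=\mu_t$: two points in the same Fatou component of $G_{\mu_t}$ have orbits that coalesce for $\tilde{\mu_t}$-a.e.\ sequence. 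This gives (i) distinct $G_{\mu_t}$-minimal sets cannot meet the same $B(w_j,\epsilon)$, and (ii) if $p\in L\cap B(w_j,\epsilon)$ then, since $L_j$ is $G_{\mu_t}$-forward-invariant, $\gamma_{n,1}(w_j)\in L_j$ approaches $L$, so $L\cap L_j\neq\emptyset$ and thus $L\subset L_j$. This defines an injection $\Min(G_{\mu_t},\CCI)\hookrightarrow\Min(G_{\mu_s},\CCI)$, yielding $f(t)\leq f(s)$; together with the easy inequality $f(t)\geq f(s)$ this is (c). The missing idea is that $J_{\ker}(G_{\mu_s})=\emptyset$ alone---without any mean stability---already places Fatou points $w_j$ inside every minimal set of $G_{\mu_s}$, and the orbit-coalescence property in the Fatou set then forces the injection directly.
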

\begin{ex}
\label{e:bifur}
Let $c$ be a point in the interior of the Mandelbrot set ${\cal M}.$ Suppose $z\mapsto z^{2}+c$ is hyperbolic.  
Let $r_{0}>0$ be a small number such that 
$\overline{D(c,r_{0})}\subset \mbox{int}({\cal M}).$ Let $r_{1}>0$ be a large number such that 
$D(c,r_{1})\cap (\CC \setminus {\cal M})\neq \emptyset .$ 
For each $t\in [0,1]$, let 
$\mu _{t}\in {\frak M}_{1}(\overline{D(c,(1-t)r_{0}+tr_{1})})$ be the normalized 
$2$-dimensional Lebesgue measure on  $\overline{D(c,(1-t)r_{0}+tr_{1})}$. 
Then $\{ \mu _{t}\} _{t\in [0,1]}$ satisfies the conditions (1)--(4) in 
Theorem~\ref{t:bifur} 
(for example, $2=\sharp (\Min(G _{\mu _{0}}, \CCI ))>
\sharp (\Min (G _{\mu _{1}}, \CCI ))=1$). 
Thus 
 $$\sharp (\{ t\in [0,1]\mid \mbox{there exists a bifurcation element } 
g\in \G _{\mu _{t}} \mbox{ for } \G _{\mu _{t}}\} )=1.$$ 

\end{ex}
\subsection{Spectral properties of $M_{\tau }$ and stability}
\label{Spectral} 
In this subsection, we present some results on 
spectral properties of $M_{\tau }$ acting on the space of H\"{o}lder continuous functions on $\CCI $ 
and the stability. The proofs of the results are given in 
subsection~\ref{Proofs of Spectral}.
%The proofs of the results will be written in \cite{Sprep}.  
\begin{df}
Let $K\in \Cpt (\CCI ).$ 
For each $\alpha \in (0,1)$,  
let \\ $C^{\alpha }(K ):= \{ \varphi \in C(K)\mid \sup _{x,y\in K, x\neq y} |\varphi (x)-\varphi (y)|/d(x,y)^{\alpha} <\infty \} $ 
 be the Banach space of all complex-valued $\alpha $-H\"{o}lder continuous functions on $K $
endowed with the $\alpha $-H\"{o}lder norm $\| \cdot \| _{\alpha },$ 
where $\| \varphi \| _{\alpha }:= \sup _{z\in K}| \varphi (z)| +\sup _{x,y\in K,x\neq y}|\varphi (x)-\varphi (y)|/d(x,y)^{\alpha }$ 
for each $\varphi \in C^{\alpha }(K).$ 
\end{df}

\begin{thm}
\label{t:utauca}
Let $\tau \in {\frak M}_{1,c}(\emRat)$. Suppose that 
$J_{\ker }(G_{\tau })=\emptyset $ and $J(G_{\tau })\neq \emptyset .$ 
Then, there exists an $\alpha _{0}>0$ such that for each $\alpha \in (0,\alpha _{0})$, 
$\mbox{{\em LS}}({\cal U}_{f,\tau }(\CCI ))\subset C^{\alpha }(\CCI ).$ 
Moreover, for each $\alpha \in (0,\alpha _{0})$, 
there exists a constant $E_{\alpha }>0$ such that 
for each $\varphi \in C^{\alpha }(\CCI )$, 
$\| \pi _{\tau }(\varphi )\| _{\alpha }\leq E_{\alpha }\| \varphi \| _{\infty }.$  
%$ is $\alpha $-H\"{o}lder continuous 
% on $\CCI .$ 
Furthermore, for each $\alpha \in (0,\alpha _{0})$ and for each $L\in \emMin (G_{\tau },\CCI )$, 
$T_{L,\tau }\in C^{\alpha }(\CCI ).$ 
\end{thm}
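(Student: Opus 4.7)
All three conclusions will follow from the single statement that every unitary eigenvector $\varphi \in \mathcal{U}_{f,\tau}(\CCI)$ is $\alpha$-H\"older for a common exponent $\alpha > 0$. Indeed, Theorem~\ref{t:mtauspec}(1) tells us that $\mbox{LS}(\mathcal{U}_{f,\tau}(\CCI))$ is finite-dimensional, so on it the norms $\|\cdot\|_\alpha$ and $\|\cdot\|_\infty$ are equivalent; combined with the continuity of the projection $\pi_\tau$ on $(C(\CCI),\|\cdot\|_\infty)$ (which follows from the direct-sum decomposition in Theorem~\ref{t:mtauspec}(1) together with the closed graph theorem), this yields $\|\pi_\tau(\varphi)\|_\alpha \leq C\,\|\pi_\tau(\varphi)\|_\infty \leq E_\alpha\,\|\varphi\|_\infty$. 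For $T_{L,\tau}$, the Markov property of the i.i.d.\ dynamics gives $M_\tau(T_{L,\tau})=T_{L,\tau}$, so $T_{L,\tau}$ is a unitary eigenvector with eigenvalue $1$; combined with continuity of $T_{L,\tau}$ on $\CCI$ (which follows from Theorem~\ref{t:mtauspec}(3) together with a dominated-convergence argument applied to the asymptotic event defining $T_{L,\tau}$), we conclude $T_{L,\tau}\in\mbox{LS}(\mathcal{U}_{f,\tau}(\CCI))\subset C^{\alpha}(\CCI)$.

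\textbf{Core estimate.} Let $\varphi \in \mathcal{U}_{f,\tau}(\CCI)$ with $M_\tau\varphi=a\varphi$ and $|a|=1$. For each $n\in\NN$ and each pair $z,w\in\CCI$ I use the identity
$$|\varphi(z)-\varphi(w)|\;=\;|M_\tau^{n}\varphi(z)-M_\tau^{n}\varphi(w)|\;\leq\;\int |\varphi(\gamma_{n,1}(z))-\varphi(\gamma_{n,1}(w))|\,d\tilde{\tau}(\gamma),$$
and split the integration region into the ``good'' event $G_{n,z,w}$ that $\gamma_{n,1}(z)$ and $\gamma_{n,1}(w)$ lie in the same connected component of $F(G_\tau)$, and its complement $B_{n,z,w}$. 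By Theorem~\ref{t:mtauspec}(1) each such $\varphi$ is locally constant on $F(G_\tau)$, so the integrand vanishes on $G_{n,z,w}$, leaving
$$|\varphi(z)-\varphi(w)|\;\leq\;2\|\varphi\|_\infty\,\tilde{\tau}(B_{n,z,w}).$$
The task reduces to producing an estimate $\tilde{\tau}(B_{n,z,w})\leq C\,d(z,w)^{\alpha}$ for an appropriate $n=n(d(z,w))$.

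\textbf{Probabilistic contraction and main obstacle.} The decay is driven by two ingredients. First, since $\Gamma_\tau\in\Cpt(\mbox{Rat})$ the spherical Lipschitz constant $M:=\sup_{g\in\Gamma_\tau}\|g'\|_{\mathrm{sph}}$ is finite, giving $d(\gamma_{n,1}(z),\gamma_{n,1}(w))\leq M^{n}d(z,w)$. Second, from $J_{\ker}(G_\tau)=\emptyset$ every $z$ admits some $g_z\in G_\tau$ with $g_z(z)\in F(G_\tau)$; by continuity of words and compactness of $\CCI$ and $\Gamma_\tau$, I expect a covering argument to yield constants $n_0,p_0,\eta>0$ and a compact set $K\subset F(G_\tau)$ with $\mathrm{dist}(K,J(G_\tau))>\eta$ such that $\tilde{\tau}(\{\gamma:\gamma_{n_0,1}(z)\in K\})\geq p_0$ for every $z\in\CCI$. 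Iterating via the Markov property and the forward invariance $G_\tau(F(G_\tau))\subset F(G_\tau)$ then produces an exponential tail bound for the event that no $\gamma_{kn_0,1}(z)$ has landed in a prescribed larger compact subset of $F(G_\tau)$. Whenever $\gamma_{n,1}(z)\in K$ and $M^{n}d(z,w)<\eta$, the two images $\gamma_{n,1}(z)$ and $\gamma_{n,1}(w)$ lie in the same component of $F(G_\tau)$; balancing by taking $n\asymp\log(1/d(z,w))/\log M$ gives $\tilde{\tau}(B_{n,z,w})\leq C\,d(z,w)^{\alpha}$ with $\alpha=\log(1/(1-p_0))/(n_0\log M)>0$. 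The delicate point is the extraction of $K$ and $p_0$: by Lemma~\ref{l:Lclassify}, minimal sets need not lie in $F(G_\tau)$ (they may be $J$-touching or sub-rotative), so $K$ cannot be chosen as a neighborhood of $S_\tau$. My plan to handle this is to first invoke Theorem~\ref{t:mtauspec}(3) to obtain almost-sure entry of the orbit into the open set $W=\bigcup\{A\in\mathrm{Con}(F(G_\tau)):A\cap S_\tau\neq\emptyset\}$, then uniformize the entry time in $z$ via $J_{\ker}(G_\tau)=\emptyset$ and compactness, and finally extract $K$ as a well-chosen compact sub-neighborhood of $W$ whose $\Gamma_\tau$-orbit remains controlled through hyperbolic-metric estimates on the relevant Fatou components, in the spirit of the stronger contraction underlying Theorem~\ref{t:kjemfhfint}.
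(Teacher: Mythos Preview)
Your core estimate is exactly the paper's approach: use $|a|=1$ to write $|\varphi(z)-\varphi(w)|=|M_\tau^{n}\varphi(z)-M_\tau^{n}\varphi(w)|$, exploit that $\varphi\in C_{F(G_\tau)}(\CCI)$ so the integrand vanishes once both images land in a common Fatou component, and bound the complementary probability geometrically via a covering argument driven by $J_{\ker}(G_\tau)=\emptyset$. The reductions for $\pi_\tau$ and $T_{L,\tau}$ are also the paper's (the paper phrases the $\pi_\tau$ bound via an explicit dual basis $\{\rho_j\}$, $\{\varphi_j\}$, but this is equivalent to your finite-dimensionality argument).

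Your final ``delicate point'' paragraph is an unnecessary detour. The extraction of $K$, $n_0$, $p_0$, $\eta$ requires no analysis of minimal sets, no appeal to Theorem~\ref{t:mtauspec}(3), and no hyperbolic-metric control: the simpler covering argument you first sketch is precisely what the paper does and is already sufficient. Concretely, from $J_{\ker}(G_\tau)=\emptyset$ one covers $\CCI$ by finitely many compact disks $U_{z_j}$ with $g_{z_j}(U_{z_j})\subset F(G_\tau)$; using forward invariance $G_\tau(F(G_\tau))\subset F(G_\tau)$ one pads every $g_{z_j}$ to a common word-length $r$ in $\Gamma_\tau^r$; a compact neighborhood $V_j$ of each such word still sends $U_{z_j}$ into $F(G_\tau)$, and $a:=\max_j\tau^r(\Gamma_\tau^r\setminus V_j)<1$. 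Taking $C_2$ to be a Lebesgue number of the cover $\{U_{z_j}\}$, one has $B(z_0,C_2)\subset U_{z_{i_0}}$ for some $i_0$, so on the ``good'' event the entire ball (hence both $z$ and $w$, once $d(z,w)$ is controlled by the Lipschitz bound) is sent into a single Fatou component. Iterating gives $\tilde\tau(B(n-1))\le a^n$, and choosing $\alpha$ with $aC_1^\alpha<1$ (where $C_1$ bounds the spherical derivative of length-$r$ words) yields the H\"older estimate directly. No compact $K\subset F(G_\tau)$ with absorbing properties near $S_\tau$ is needed; the whole point is that $J_{\ker}=\emptyset$ already provides uniform positive probability of hitting $F(G_\tau)$ in $r$ steps from every point.
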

\ 

If $\tau \in {\frak M}_{1,c}(\Rat )$ is mean stable and $J(G_{\tau })\neq \emptyset $, 
then by \cite[Proposition 3.65]{Splms10}, we have 
$S_{\tau }\subset F(G_{\tau })$ (see Definition~\ref{d:stau}).  
From this point of view, we consider the situation that 
$\tau \in {\frak M}_{1,c}(\Rat )$ satisfies $J_{\ker }(G_{\tau })=\emptyset $, 
$J(G_{\tau })\neq \emptyset $, and 
$S_{\tau }\subset F(G_{\tau }).$ 
Under this situation, we have several very strong results. 
Note that there exists an example of $\tau \in {\frak M}_{1,c}({\cal P})$ with $\sharp \G _{\tau }<\infty $ 
such that $J_{\ker }(G_{\tau })=\emptyset $, $J(G_{\tau })\neq \emptyset $, $S_{\tau }\subset F(G_{\tau })$, 
and $\tau $ is not mean stable (see Example~\ref{ex:stfnms}). 
\begin{thm}[Cooperation Principle VI: Exponential rate of convergence] 
\label{t:kjemfhf}
Let $\tau \in {\frak M}_{1,c}(\emRat)$. Suppose that $J_{\ker }(G_{\tau })=\emptyset $, 
$J(G_{\tau })\neq \emptyset $, and $S_{\tau }\subset F(G_{\tau }).$ 
Let $r:= \prod _{L\in \emMin(G_{\tau },\CCI )}\dim _{\CC }(\mbox{{\em LS}}({\cal U}_{f,\tau }(L)))$.  
Then, there exists a constant $\alpha \in (0,1)$, a constant $\lambda \in (0,1)$, and a constant $C>0$ such that 
for each $\varphi \in C^{\alpha }(\CCI )$, 
we have all of the following.
\begin{itemize}
\item[{\em (1)}] 
$\| M_{\tau }^{nr}(\varphi )-\pi _{\tau }(\varphi )\| _{\alpha }\leq 
C\lambda ^{n}\| \varphi -\pi _{\tau }(\varphi )\| _{\alpha }$ for each $n\in \NN .$ 
\item[{\em (2)}]
$\| M_{\tau }^{n}(\varphi -\pi _{\tau }(\varphi ))\| _{\alpha }\leq C\lambda ^{n}\| \varphi -\pi _{\tau }(\varphi )\| _{\alpha }$
 for each $n\in \NN .$ 
\item[{\em (3)}] 
$\| M_{\tau }^{n}(\varphi -\pi _{\tau }(\varphi ))\| _{\alpha }\leq C\lambda ^{n}\| \varphi \| _{\alpha }$
 for each $n\in \NN .$ 
\item[{\em (4)}] 
$\| \pi _{\tau }(\varphi )\| _{\alpha }\leq C\| \varphi \| _{\alpha }.$ 
\end{itemize}
 
\end{thm}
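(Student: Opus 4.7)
The plan is to leverage the decomposition $C(\CCI ) = \LSfc \oplus {\cal B}_{0,\tau }$ from Theorem~\ref{t:mtauspec}, combined with the regularity statement of Theorem~\ref{t:utauca}, which already gives $\LSfc \subset C^{\alpha }(\CCI )$ and the bound $\| \pi _{\tau }(\varphi )\| _{\alpha }\leq E_{\alpha }\| \varphi \| _{\infty }\leq E_{\alpha }\| \varphi \| _{\alpha }$; this is (4). Setting $\psi :=\varphi -\pi _{\tau }(\varphi )\in {\cal B}_{0,\tau }$, statement (3) is just a reformulation of (2), while (2) reduces to the single estimate $\| M_{\tau }^{n}(\psi )\| _{\alpha }\leq C\lambda ^{n}\| \psi \| _{\alpha }$ for $\psi \in {\cal B}_{0,\tau }\cap C^{\alpha }(\CCI )$. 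Finally, (1) follows from (2) because by Theorem~\ref{t:msmtaust}(4) every unitary eigenvalue of $M_{\tau }$ on $C(\CCI )$ is an $r_{L}$-th root of unity for some $L$, hence $M_{\tau }^{r}$ acts as the identity on $\LSfc $, so $M_{\tau }^{nr}(\varphi )-\pi _{\tau }(\varphi )=M_{\tau }^{nr}(\psi )$.

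The driving geometric input is the hypothesis $S_{\tau }\subset F(G_{\tau })$. For each $L\in \Min (G_{\tau },\CCI )$, since $L$ is a compact forward-invariant subset of $F(G_{\tau })$, I can choose nested relatively compact open neighborhoods $L\subset V_{L}\subset \overline{V_{L}}\subset W_{L}\subset F(G_{\tau })$, and by normality of $\{ \g _{n,1}\} _{\g }$ on $W_{L}$ together with the fact $\g _{n,1}(L)\subset S_{\tau }$, one produces $n_{0}\in \NN $ with $\g _{n_{0},1}(\overline{W_{L}})\subset V_{L}$ for every $\g \in \G _{\tau }^{\NN }$. On the forward-invariant open piece $W:=\bigcup _{L}W_{L}\subset F(G_{\tau })$, this yields a uniform hyperbolic contraction factor $\rho \in (0,1)$: every composition $\g _{n_{0},1}$ shrinks hyperbolic distances on $W$ by at least $\rho $. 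Combining the convergence statement in Theorem~\ref{t:mtauspec}(3) with compactness in $z$ and $J_{\ker }(G_{\tau })=\emptyset $ (which forces every orbit to leave $J(G_{\tau })$ in finitely many steps with positive probability), a Borel--Cantelli-type argument upgrades this to an exponential tail: there exist $\theta \in (0,1)$ and $C_{1}>0$ with $\tilde{\tau }(\{ \g \mid \g _{n,1}(z)\notin V\} )\leq C_{1}\theta ^{n}$, uniformly in $z\in \CCI $, where $V:=\bigcup _{L}V_{L}$.

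With these ingredients, I would bound, for close $z,w\in \CCI $,
\[
|M_{\tau }^{n}(\psi )(z)-M_{\tau }^{n}(\psi )(w)|\leq \int |\psi (\g _{n,1}(z))-\psi (\g _{n,1}(w))|\,d\tilde{\tau }(\g ),
\]
splitting the integral according to the first entry time $N(\g ,z)$ of $\g _{k,1}(z)$ into $V$. On $\{ N\leq n/2\} $, transfer spherical to hyperbolic distance via a Koebe-type distortion estimate along the Fatou-side orbit and then apply the uniform hyperbolic contraction to get $d(\g _{n,1}(z),\g _{n,1}(w))\leq C_{2}\rho ^{n-N}d(z,w)$; Hölder continuity of $\psi $ then bounds the integrand by $\| \psi \| _{\alpha }(C_{2}\rho ^{n-N})^{\alpha }d(z,w)^{\alpha }$. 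On $\{ N>n/2\} $, bound the integrand trivially by $2\| \psi \| _{\alpha }$ and multiply by the tail probability $C_{1}\theta ^{n/2}$. Choosing $\alpha \in (0,1)$ small enough that $\rho ^{\alpha }<1$ and combining with Theorem~\ref{t:mtauspec} (which gives $\| M_{\tau }^{n}(\psi )\| _{\infty }\to 0$, hence ensures the sup-norm part of $\| M_{\tau }^{n}(\psi )\| _{\alpha }$ also decays geometrically by interpolation), one obtains $\| M_{\tau }^{n}(\psi )\| _{\alpha }\leq C\lambda ^{n}\| \psi \| _{\alpha }$ with $\lambda :=\max \{ \rho ^{\alpha },\theta ^{1/2}\} ^{1/2}$ or a similar choice.

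The main obstacle is the uniform passage between the spherical and hyperbolic metrics on $F(G_{\tau })$: near $J(G_{\tau })$ the hyperbolic density blows up, so naive estimates destroy the uniformity needed for the Hölder bound. The remedy is a shrinking-neighborhood strategy, pushing $(z,w)$ forward a controlled number of steps until both lie in a compact piece of $W$ where the two metrics are comparable with uniform constants, and then appealing to Koebe distortion to convert smallness of $d(z,w)$ into smallness in hyperbolic distance with an admissible loss. Equally delicate is obtaining the geometric tail for $N(\g ,z)$ uniformly in $z\in J(G_{\tau })$; this requires applying $J_{\ker }(G_{\tau })=\emptyset $ together with compactness of $J(G_{\tau })$ and a Markov/renewal argument so that, from every point in $J(G_{\tau })$, there is a uniform positive $\tilde{\tau }$-probability of landing in $V$ within a bounded number of steps, which then bootstraps to the exponential bound.
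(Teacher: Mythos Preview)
Your reductions among (1)--(4) are correct and coincide with the paper's, as does the first-entry-time splitting and the exponential tail for the entry time via $J_{\ker}(G_\tau)=\emptyset$. The essential gap is the claimed uniform contraction on $W$. The implication ``normality of $\{\gamma_{n,1}\}$ on $W_L$ plus $\gamma_{n,1}(L)\subset L$ gives $\gamma_{n_0,1}(\overline{W_L})\subset V_L$ for every $\gamma$'' is precisely the statement that $L$ is attracting, and it is \emph{not} a consequence of $L\subset F(G_\tau)$: in Example~\ref{ex:stfnms} the minimal set $L_0$ sits in a Siegel disk $S$ of $h_1$, and along $\gamma=(h_1,h_1,\ldots)$ the iterates are rotations of $S$, so no $n_0$ forces $h_1^{n_0}(\overline{W_{L_0}})$ into a compactly contained $V_{L_0}$. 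The paper replaces your uniform $\rho$ by the \emph{averaged} contraction of Lemma~\ref{l:supai}, namely $\sup_i\int\sup_{z\in A_i}\|D(\gamma_{m,1})_z\|_h^{\alpha}\,d\tilde\tau(\gamma)<1$, obtained from \cite[Theorem 3.15-4]{Splms10}; this is what survives in the sub-rotative case and is the form actually used in the H\"older estimate (Lemma~\ref{l:suptheta}).

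Two further points need repair. First, your bound $d(\gamma_{n,1}(z),\gamma_{n,1}(w))\leq C_2\rho^{\,n-N}d(z,w)$ ignores the expansion during the first $N$ steps; one loses a factor $C_1^{N}$ (with $C_1$ the uniform spherical Lipschitz bound for one block of maps), and the paper's mechanism for absorbing it is to choose $\alpha$ small enough that $aC_1^{\alpha}<1$, so that on the event $\{N=j\}$ of probability $\leq a^{j}$ the contribution $(aC_1^{\alpha})^{j}$ is still geometrically small---this is the role of $\eta_\alpha=\max\{\lambda_{1,\alpha},aC_1^{\alpha}\}$ in the proof. Second, geometric decay of the sup norm does not follow by ``interpolation'' from the qualitative statement $\|M_\tau^n(\psi)\|_\infty\to 0$ of Theorem~\ref{t:mtauspec}; the paper establishes it directly by the same $A(j)/B(j)$ splitting (see~(\ref{eq:mtauC3})), feeding in the already-proved exponential convergence on the neighborhood $\overline{U}$ of $S_\tau$, and only afterwards upgrades to the $\alpha$-norm.
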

We now consider the spectrum Spec$_{\alpha }(M_{\tau })$ of $M_{\tau }:C^{\alpha }(\CCI )\rightarrow C^{\alpha }(\CCI ).$ 
%If $J_{\ker }(G_{\tau })=\emptyset , J(G_{\tau })\neq \emptyset $, and $S_{\tau }\subset F(G_{\tau })$,
% then 
By Theorem~\ref{t:utauca}, 
${\cal U}_{v,\tau }(\CCI )\subset \mbox{Spec}_{\alpha }(M_{\tau })$ for some $\alpha \in (0,1).$ 
From Theorem~\ref{t:kjemfhf}, we can show that 
the distance between ${\cal U}_{v,\tau }(\CCI )$ and 
$\mbox{Spec}_{\alpha }(M_{\tau })\setminus {\cal U}_{v,\tau }(\CCI )$ is positive. 
\begin{thm}
\label{t:kjemfsp}
Under the assumptions of Theorem~\ref{t:kjemfhf}, 
we have all of the following.
\begin{itemize}
\item[{\em (1)}]
$\mbox{{\em Spec}}_{\alpha }(M_{\tau })\subset \{ z\in \CC \mid |z|\leq \lambda \} 
\cup {\cal U}_{v,\tau }(\CCI )$, 
where $\alpha \in (0,1)$ and $\lambda \in (0,1)$ are the constants in Theorem~\ref{t:kjemfhf}. 
\item[{\em (2)}] 
Let $\zeta \in \CC \setminus (\{ z\in \CC \mid |z|\leq \lambda \} 
\cup {\cal U}_{v,\tau }(\CCI ))$. 
Then, $(\zeta I-M_{\tau })^{-1}:C^{\alpha }(\CCI )\rightarrow C^{\alpha }(\CCI )$ 
is equal to 
$$(\zeta I-M_{\tau })|_{\emLSfc}^{-1}\circ \pi _{\tau }+\sum _{n=0}^{\infty }
\frac{M_{\tau }^{n}}{\zeta ^{n+1}}(I-\pi _{\tau }),$$ 
where $I$ denotes the identity on $C^{\alpha }(\CCI ).$ 

\end{itemize}

\end{thm}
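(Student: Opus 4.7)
The plan is to exploit the $M_{\tau}$-invariant splitting $C^{\alpha}(\CCI ) = \LSfc \oplus V_{\alpha}$, where $V_{\alpha} := (I-\pi_{\tau})(C^{\alpha}(\CCI ))$, and analyze the spectrum on each summand separately. By Theorem~\ref{t:utauca}, for a suitably small $\alpha \in (0,1)$ we have $\LSfc \subset C^{\alpha}(\CCI )$ and $\pi_{\tau}: C^{\alpha}(\CCI )\to \LSfc$ is a bounded projection (with $\|\pi_{\tau}(\varphi )\|_{\alpha }\le E_{\alpha }\|\varphi \|_{\infty }$), so $V_{\alpha }$ is a closed subspace of $C^{\alpha}(\CCI )$ and the splitting is topological. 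Since $\LSfc$ is spanned by eigenvectors of $M_{\tau}$, the operator $M_{\tau}$ commutes with $\pi_{\tau}$, and both $\LSfc$ and $V_{\alpha}$ are $M_{\tau}$-invariant.

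For part (1), on the finite-dimensional space $\LSfc$ the restriction $M_{\tau}|_{\LSfc}$ is diagonalized by unitary eigenvectors, so $\text{Spec}(M_{\tau}|_{\LSfc}) = {\cal U}_{v,\tau}(\CCI )$. On $V_{\alpha}$, Theorem~\ref{t:kjemfhf}(2) yields $\|M_{\tau}^{n}|_{V_{\alpha}}\|_{op} \le C\lambda^{n}$, so Gelfand's spectral radius formula gives $\text{Spec}(M_{\tau}|_{V_{\alpha}}) \subset \{z\in\CC \mid |z| \le \lambda\}$. Since the spectrum of a bounded operator preserving a topological direct sum of closed invariant subspaces is the union of the spectra of the restrictions, part (1) follows.

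For part (2), fix $\zeta \in \CC$ with $|\zeta|>\lambda$ and $\zeta\notin{\cal U}_{v,\tau}(\CCI )$. Because $|\zeta|>\lambda$ and $\|M_{\tau}^{n}|_{V_{\alpha}}\|_{op}\le C\lambda^{n}$, the Neumann series $\sum_{n=0}^{\infty} M_{\tau}^{n}/\zeta^{n+1}$ converges in $L(V_{\alpha})$ to $(\zeta I - M_{\tau})|_{V_{\alpha}}^{-1}$. On the finite-dimensional summand $\LSfc$, the operator $(\zeta I - M_{\tau})|_{\LSfc}$ is invertible because $\zeta$ avoids its spectrum. Assembling the block-diagonal inverse
\[
R_{\zeta} := (\zeta I - M_{\tau})|_{\LSfc}^{-1}\circ\pi_{\tau} + \sum_{n=0}^{\infty} \frac{M_{\tau}^{n}}{\zeta^{n+1}}(I-\pi_{\tau}),
\]
one checks by direct computation, using the invariance of the two summands together with the identity $M_{\tau}\pi_{\tau}=\pi_{\tau}M_{\tau}$, that $R_{\zeta}(\zeta I - M_{\tau}) = (\zeta I - M_{\tau})R_{\zeta} = I$ on $C^{\alpha}(\CCI )$; this simultaneously shows $\zeta$ lies in the resolvent set of $M_{\tau}$ and yields the formula claimed in (2).

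The main content lies in verifying that $V_{\alpha}$ is a closed $M_{\tau}$-invariant subspace of $C^{\alpha}(\CCI )$ on which the iterates $M_{\tau}^{n}$ contract geometrically, which is exactly the combination of Theorem~\ref{t:utauca} (boundedness of $\pi_{\tau}$ on $C^{\alpha}$) and Theorem~\ref{t:kjemfhf} (exponential contraction of $M_{\tau}^{n}$ on $V_{\alpha}$). Once these are in hand, everything reduces to the classical spectral decomposition of a bounded operator with a finite-dimensional unitary eigenspace complemented by a contracting invariant subspace, and no essentially new analytic difficulty arises.
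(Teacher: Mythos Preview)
Your proposal is correct and is essentially the same argument as the paper's: both construct the explicit resolvent $R_{\zeta}=(\zeta I-M_{\tau})|_{\LSfc}^{-1}\circ\pi_{\tau}+\sum_{n\ge 0}\zeta^{-n-1}M_{\tau}^{n}(I-\pi_{\tau})$, observe that the series converges in operator norm on $C^{\alpha}(\CCI)$ by the exponential decay of Theorem~\ref{t:kjemfhf}, and check directly that $R_{\zeta}(\zeta I-M_{\tau})=(\zeta I-M_{\tau})R_{\zeta}=I$. Your extra framing in terms of Gelfand's spectral radius formula and the block-diagonal spectrum of a direct sum is a clean way to phrase part~(1), but the underlying mechanism is identical.
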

Combining Theorem~\ref{t:kjemfsp} and perturbation theory for linear operators (\cite{K}), 
we obtain the following. In particular, as we remarked in Remark~\ref{r:takagi}, 
we obtain complex analogues of the Takagi  function.  
\begin{thm}
\label{t:kjemfsppt}
Let $m\in \NN $ with $m\geq 2.$
Let $h_{1},\ldots ,h_{m}\in \emRat$. 
Let $G=\langle h_{1},\ldots ,h_{m}\rangle .$ 
Suppose that 
$J_{\ker }(G)=\emptyset, J(G)\neq \emptyset $ and 
$\bigcup _{L\in \emMin(G,\CCI )}L\subset F(G).$ 
Let ${\cal W}_{m}:= \{ (a_{1},\ldots ,a_{m})\in (0,1)^{m}\mid \sum _{j=1}^{m}a_{j}=1 \} 
\cong \{ (a_{1},\ldots ,a_{m-1})\in (0,1)^{m-1}\mid \sum _{j=1}^{m-1}a_{j}<1 \}.$ 
For each $a=(a_{1},\ldots ,a_{m})\in {\cal W}_{m}$, 
let $\tau _{a}:= \sum _{j=1}^{m}a_{j}\delta _{h_{j}}\in {\frak M}_{1,c}(\emRat).$ 
Then we have all of the following. 
%by Theorem~\ref{t:kjemfsp} and \cite{K}, 
%it follows that 
\begin{itemize}
\item[{\em (1)}] 
For each $b\in {\cal W}_{m}$,  
there exists an $\alpha \in (0,1)$ and an open neighborhood $V_{b}$ of $b$ in ${\cal W}_{m}$ 
such that for each $a\in V_{b}$, we have 
$\mbox{{\em LS}}({\cal U}_{f,\tau _{a}}(\CCI ))\subset C^{\alpha }(\CCI )$, 
$\pi _{\tau _{a}}(C^{\alpha }(\CCI ))\subset C^{\alpha }(\CCI )$ and 
$(\pi _{\tau _{a}}:C^{\alpha }(\CCI )\rightarrow C^{\alpha }(\CCI ))\in L(C^{\alpha }(\CCI ))$, 
where $L(C^{\alpha }(\CCI))$ denotes the Banach space of bounded linear operators on $C^{\alpha }(\CCI )$ 
endowed with the operator norm,  and such that the map 
$a\mapsto (\pi _{\tau _{a}}:C^{\alpha }(\CCI )\rightarrow C^{\alpha }(\CCI ))\in 
L(C^{\alpha }(\CCI ))$ 
%where $L(C^{\alpha }(\CCI ))$ denotes the Banach space of bounded linear operators on 
%$C^{\alpha }(\CCI )$ endowed with the operator norm, 
 is real-analytic in $V_{b}.$ 
%an open neighborhood of $b$ in ${\cal W}_{m}.$  
\item[{\em (2)}] 
Let $L\in \emMin( G,\CCI ).$ 
Then, for each $b\in {\cal W}_{m}$, there exists an $\alpha \in (0,1)$ such that 
the map $a\mapsto T_{L,\tau _{a}}\in (C^{\alpha }(\CCI ),\| \cdot \| _{\alpha })$ is real-analytic 
in an open neighborhood of $b$ in ${\cal W}_{m}.$ Moreover, 
the map $a\mapsto T_{L,\tau _{a}}\in (C(\CCI ),\| \cdot \| _{\infty })$ is real-analytic in 
${\cal W}_{m}.$ In particular, for each 
$z\in \CCI $, the map $a\mapsto T_{L,\tau _{a}}(z)$ is real-analytic in ${\cal W}_{m}.$ 
Furthermore, for any multi-index $n=(n_{1},\ldots ,n_{m-1})\in (\NN \cup \{ 0\})^{m-1}$ and for any 
$b\in {\cal W}_{m},$  
 the function 
 $z\mapsto [(\frac{\partial }{\partial a_{1}})^{n_{1}}\cdots (\frac{\partial }{\partial a_{m-1}})^{n_{m-1}}
(T_{L,\tau _{a}}(z))]|_{a=b}$ belongs to $C_{F(G)}(\CCI ).$ 
\item[{\em (3)}] 
Let $L\in \emMin(G,\CCI )$ and let $b\in {\cal W}_{m}.$ 
For each $i=1,\ldots ,m-1$ and for each $z\in \CCI $, let 
$\psi _{i,b}(z):=[\frac{\partial }{\partial a_{i}}(T_{L,\tau _{a}}(z))]|_{a=b}$  
and let $\zeta _{i,b}(z):= T_{L,\tau _{b}}(h_{i}(z))-T_{L,\tau _{b}}(h_{m}(z)).$  
Then,  
$\psi _{i,b}$ is the unique solution of 
the functional equation $(I-M_{\tau _{b}})(\psi )=\zeta _{i,b}, \psi |_{S_{\tau _{b}}}=0, \psi \in C(\CCI )$,  
where $I$ denotes the identity map. Moreover, 
there exists a number $\alpha \in (0,1)$ such that  
$\psi _{i,b}=\sum _{n=0}^{\infty }M_{\tau _{b}}^{n}(\zeta _{i,b})$ in $(C^{\alpha }(\CCI ),\| \cdot \| _{\alpha }).$  
\end{itemize}  
\end{thm}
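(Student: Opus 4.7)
Since each $h_j$ is fixed, the composition operator $C_{h_j}:\varphi\mapsto\varphi\circ h_j$ is bounded on every $C^{\alpha}(\CCI)$, so the map $a\mapsto M_{\tau_a}=\sum_{j=1}^{m}a_j C_{h_j}$ from ${\cal W}_m$ into $L(C^{\alpha}(\CCI))$ is polynomial (in particular real-analytic). Because $G_{\tau_a}=G$ for every $a\in{\cal W}_m$, the hypotheses of Theorems~\ref{t:kjemfhf} and~\ref{t:kjemfsp} hold for every $\tau_a$. The plan is to derive (1) and (2) from Kato-style analytic perturbation theory applied to a spectral projection built via a contour integral, and then to obtain (3) by differentiating the fixed-point equation for $T_{L,\tau_a}$ and inverting $I-M_{\tau_b}$ on $\zeta_{i,b}$ via the Neumann series furnished by Theorem~\ref{t:kjemfhf}.

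Fix $b\in{\cal W}_m$ and let $\alpha\in(0,1)$, $\lambda_0\in(0,1)$ be the constants that Theorems~\ref{t:kjemfhf} and~\ref{t:kjemfsp} give at $\tau_b$, so $\mbox{Spec}_{\alpha}(M_{\tau_b})\subset\{|z|\le\lambda_0\}\cup{\cal U}_{v,\tau_b}(\CCI)$. Choose a contour $\Gamma$ consisting of small disjoint circles around each point of the finite set ${\cal U}_{v,\tau_b}(\CCI)$, lying inside $\{|z|>\lambda_0\}$ and with $0$ in the unbounded component of $\CC\setminus\Gamma$. Upper semi-continuity of spectrum for the norm-continuous family $a\mapsto M_{\tau_a}$ supplies a neighborhood $V\subset{\cal W}_m$ of $b$ on which $\Gamma\subset\rho(M_{\tau_a})$, so
$$\tilde\pi_{\tau_a}:=\frac{1}{2\pi i}\oint_\Gamma(\zeta I-M_{\tau_a})^{-1}\,d\zeta$$
is a real-analytic function of $a\in V$ with values in $L(C^{\alpha}(\CCI))$. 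Plugging the resolvent formula of Theorem~\ref{t:kjemfsp}(2) into the integral at $a=b$, the first summand yields $\pi_{\tau_b}$ by residues while the Laurent series $\sum_{n\ge 0}M_{\tau_b}^n/\zeta^{n+1}$ integrates to zero because $\oint_\Gamma\zeta^{-(n+1)}\,d\zeta=0$ for all $n\ge 0$ (as $\Gamma$ does not wind around $0$); hence $\tilde\pi_{\tau_b}=\pi_{\tau_b}$. For $a\in V$, Kato's theorem keeps the rank of $\tilde\pi_{\tau_a}$ constant and the spectrum of $M_{\tau_a}$ inside $\Gamma$ has the same total algebraic multiplicity as $\mbox{Spec}(M_{\tau_b}|_{U_{\tau_b}})$; since ${\cal U}_{v,\tau_a}(\CCI)$ lies inside $\Gamma$ with $\dim U_{\tau_a}=\dim U_{\tau_b}$ (a structural invariant of $G$), the image of $\tilde\pi_{\tau_a}$ must be $U_{\tau_a}=\mbox{LS}({\cal U}_{f,\tau_a}(\CCI))$, so $\tilde\pi_{\tau_a}=\pi_{\tau_a}$. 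This proves (1).

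For (2) repeat the construction with a small circle $\Gamma_1$ enclosing only $\zeta=1$, giving a real-analytic family $\pi^{(1)}_{\tau_a}$ of projections onto the $1$-eigenspace of $M_{\tau_a}$. Pick a fixed $\varphi_L\in C^{\alpha}(\CCI)$ with $\varphi_L\equiv 1$ on $L$ and $\varphi_L\equiv 0$ on every $L'\in\Min(G,\CCI)\setminus\{L\}$; this is possible because the finitely many minimal sets are mutually disjoint compact subsets of $F(G)$. Forward invariance $h_j(L')\subset L'$ gives $M_{\tau_a}^n(\varphi_L)(z)=\delta_{L'=L}$ for every $z\in L'$ and every $n$, so the Neumann expansion $(\zeta I-M_{\tau_a})^{-1}(\varphi_L)(z)=\delta_{L'=L}/(\zeta-1)$ holds for $z\in L'$ (first for $|\zeta|$ large, then by analytic continuation in $\zeta$), and the contour integral gives $\pi^{(1)}_{\tau_a}(\varphi_L)(z)=\delta_{L'=L}$, i.e.\ $\pi^{(1)}_{\tau_a}(\varphi_L)|_{S_{\tau_b}}=1_L|_{S_{\tau_b}}$. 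A continuous fixed point $f$ of $M_{\tau_a}$ with $f|_{S_{\tau_b}}=0$ must vanish, since $f(z)=M_{\tau_a}^n f(z)=\int f(\gamma_{n,1}(z))\,d\tilde\tau_a(\gamma)\to 0$ by Theorem~\ref{t:mtauspec}(4) and bounded convergence. Hence fixed points of $M_{\tau_a}$ are determined by their restriction to $S_{\tau_b}$, and $\pi^{(1)}_{\tau_a}(\varphi_L)=T_{L,\tau_a}$. Real-analyticity of $a\mapsto T_{L,\tau_a}$ in $(C^{\alpha}(\CCI),\|\cdot\|_\alpha)$ and, a fortiori, in $(C(\CCI),\|\cdot\|_\infty)$ follows. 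All partial derivatives belong to $C_{F(G)}(\CCI)$ since each $T_{L,\tau_a}$ is locally constant on $F(G)=F(G_{\tau_a})$ by Theorem~\ref{t:thmB}(1).

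For (3) differentiate $M_{\tau_a}(T_{L,\tau_a})=T_{L,\tau_a}$ with respect to $a_i$ using $a_m=1-\sum_{j<m}a_j$ and evaluate at $a=b$ to obtain $(I-M_{\tau_b})\psi_{i,b}=\zeta_{i,b}$; the boundary condition $\psi_{i,b}|_{S_{\tau_b}}=0$ is immediate from $T_{L,\tau_a}|_{S_{\tau_b}}\equiv 1_L|_{S_{\tau_b}}$ being independent of $a$. Uniqueness in $C(\CCI)$ is the continuous-fixed-point argument of the previous paragraph. For the Neumann series, note $\zeta_{i,b}|_{S_{\tau_b}}=0$ by the same forward-invariance computation, hence $M_{\tau_b}^n(\zeta_{i,b})|_{S_{\tau_b}}=0$ for every $n$; Theorem~\ref{t:kjemfhf}(3) combined with $\|\cdot\|_\infty\le\|\cdot\|_\alpha$ then forces $M_{\tau_b}^n(\pi_{\tau_b}(\zeta_{i,b}))|_{S_{\tau_b}}\to 0$ uniformly. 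The restriction map $U_{\tau_b}\to C(S_{\tau_b})$ is injective (any $\varphi\in U_{\tau_b}$ vanishing on $S_{\tau_b}$ satisfies $|\varphi|=|M_{\tau_b}^n\varphi|\le M_{\tau_b}^n|\varphi|\to 0$ pointwise by bounded convergence), so $M_{\tau_b}^n(\pi_{\tau_b}(\zeta_{i,b}))\to 0$ in $C(\CCI)$; and since $M_{\tau_b}|_{U_{\tau_b}}$ is power-bounded with spectrum on the unit circle, this forces $\pi_{\tau_b}(\zeta_{i,b})=0$. Theorem~\ref{t:kjemfhf}(3) now gives $\|M_{\tau_b}^n(\zeta_{i,b})\|_\alpha\le C\lambda^n\|\zeta_{i,b}\|_\alpha$, so $\psi^*:=\sum_{n\ge 0}M_{\tau_b}^n(\zeta_{i,b})$ converges in $(C^{\alpha}(\CCI),\|\cdot\|_\alpha)$, satisfies $(I-M_{\tau_b})\psi^*=\zeta_{i,b}$ by telescoping and vanishes on $S_{\tau_b}$ term-wise; by uniqueness $\psi^*=\psi_{i,b}$. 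The principal technical obstacle is the identification $\tilde\pi_{\tau_a}=\pi_{\tau_a}$ (and the analogous statement for $\pi^{(1)}_{\tau_a}$) for $a\in V$, which rests on the observation that the unitary spectrum of $M_{\tau_a}$ and its attached eigenspace dimensions are structural invariants of the semigroup $G$ and so are preserved as $a$ varies within ${\cal W}_m$.
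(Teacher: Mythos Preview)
Your overall strategy coincides with the paper's: Kato-type analytic perturbation of the Riesz projection for (1), the identity $T_{L,\tau_a}=\pi_{\tau_a}(\varphi_L)$ (the paper uses the full $\pi_{\tau_a}$ rather than your eigenvalue-$1$ projection $\pi^{(1)}$, via $M_{\tau_a}^{nr}(\varphi_L)\to T_{L,\tau_a}$ and $M_{\tau_a}^{r}|_{U_{\tau_a}}=I$) for (2), and differentiation of the fixed-point equation plus Neumann series for (3).

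There is, however, a genuine gap in your proof of (1). From $\mathrm{range}(\tilde\pi_{\tau_a})=U_{\tau_a}$ you conclude $\tilde\pi_{\tau_a}=\pi_{\tau_a}$, but two projections with the same range need not coincide (they may have different kernels), and commuting with $M_{\tau_a}$ does not by itself force the kernels to agree. The structural invariance you invoke pins down the range but says nothing about the complementary subspace. The paper's route (implicit in its citation of Theorem~\ref{t:kjemfsp}) is to observe that since $G_{\tau_a}=G$ for every $a$, Theorems~\ref{t:kjemfhf} and~\ref{t:kjemfsp} apply to every $\tau_a$; moreover, the constants $\alpha,\lambda$ there can be taken \emph{locally uniform} in $a$ (the only $a$-dependence in those proofs is through finite products of the weights $a_j$, which vary continuously on compact subsets of ${\cal W}_m$). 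Hence the resolvent formula of Theorem~\ref{t:kjemfsp}(2) holds at each $a$ near $b$ with the \emph{same} $\alpha$, and plugging it into your contour integral yields $\tilde\pi_{\tau_a}=\pi_{\tau_a}$ directly, exactly as you did at $a=b$. This also removes the need to argue separately that $U_{\tau_a}\subset C^{\alpha}(\CCI)$ for the fixed $\alpha$.

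Two minor slips. In your injectivity argument for the restriction $U_{\tau_b}\to C(S_{\tau_b})$ you write $|\varphi|=|M_{\tau_b}^{n}\varphi|$ for $\varphi\in U_{\tau_b}$, which is false for general $n$; use instead the periodicity $M_{\tau_b}^{r}|_{U_{\tau_b}}=I$ (from \cite[Theorem~3.15-8,9]{Splms10}) so that $|\varphi|=|M_{\tau_b}^{nr}\varphi|\le M_{\tau_b}^{nr}|\varphi|\to 0$. (The paper sidesteps this by citing \cite[Theorem~3.15-2]{Splms10}, which states directly that $\varphi|_{S_\tau}=0$ implies $\pi_\tau(\varphi)=0$.) And in (2) you should note that $\pi^{(1)}_{\tau_a}(\varphi_L)$ is a genuine fixed point, not merely a generalized eigenvector: the rank of $\pi^{(1)}_{\tau_a}$ equals $\sharp\Min(G,\CCI)$ by Kato, while the independent fixed points $\{T_{L',\tau_a}\}_{L'}$ already exhaust this dimension, so the generalized $1$-eigenspace is the $1$-eigenspace.
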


We now present a result on the non-differentiability of $\psi _{i,b}$ at points in 
$J(G_{\tau })$. In order to do that, we need several definitions and notations. 
\begin{df}
For a rational semigroup $G$, 
we set $P(G):= \overline{\bigcup _{g\in G}\{ \mbox{ all critical values of }
 g: \CCI \rightarrow \CCI }\} $ where the closure is taken in $\CCI .$ This is called the postcritical set of $G$. 
 We say that a rational semigroup $G$ is hyperbolic if $P(G)\subset F(G).$ 
 For a polynomial semigroup $G$, 
 we set $P^{\ast }(G):= P(G)\setminus \{ \infty \} .$ 
For a polynomial semigroup $G$, 
we set $\hat{K}(G):=\{ z\in \CC \mid G(z)\mbox{ is bounded in } \CC \} .$ 
Moreover, for each polynomial $h$, we set 
$K(h):=\hat{K}(\langle h\rangle ).$  
\end{df}
\begin{rem}
\label{r:hypms}
Let $\G \in \Cpt(\Ratp)$ and suppose that $\langle \G \rangle $ is hyperbolic and 
$J_{\ker }(\langle \G \rangle )=\emptyset .$ 
Then by \cite[Propositions 3.63, 3.65]{Splms10}, there exists an neighborhood 
${\cal U}$ of $\G $ in $\Cpt(\Rat)$ such that for each $\G '\in {\cal U}$, 
$\G ' $ is mean stable, $J_{\ker }(\langle \G '\rangle )=\emptyset $, 
$J(\langle \G '\rangle )\neq \emptyset $ and $\bigcup _{L\in \Min (\langle \G '\rangle ,\CCI )}L\subset F(\langle \G '\rangle ).$  
\end{rem}
\begin{df}
\label{d:dfu}
Let $m\in \NN .$ 
Let $h=(h_{1},\ldots, h_{m})\in (\Rat)^{m}$ be an element such that 
$h_{1},\ldots ,h_{m}$ are 
mutually distinct. 
We set 
$\Gamma := \{ h_{1},\ldots ,h_{m}\} .$ 
Let $f:\Gamma ^{\NN }\times 
\CCI \rightarrow \Gamma ^{\NN }\times \CCI $ be the map 
defined by $f(\g ,y)=(\sigma (\g ), \g _{1}(y))$, 
where $\g =(\g _{1},\g_{2},\ldots )\in \G ^{\NN }$ and 
$\sigma :\G ^{\NN }\rightarrow \G ^{\NN }$ 
is the shift map ($(\g _{1},\g _{2},\ldots )\mapsto (\g _{2},\g _{3},\ldots )$).  
This map $f:\G ^{\NN }\times \CCI \rightarrow \G ^{\NN }\times \CCI $ 
is called the skew product associated with $\Gamma .$  
Let $\pi :\GN \times \CCI \rightarrow \GN $ and $\pi _{\CCI }:\GN \times \CCI \rightarrow \CCI $ be
 the canonical projections. 
Let $\mu \in {\frak M}_{1}(\Gamma ^{\NN }\times \CCI )$ be an $f$-invariant Borel probability measure. 
%We set ${\cal W}_{m}:= \{ (p_{1},\ldots ,p_{m})\in \RR ^{m}\mid \sum _{j=1}^{m}p_{j}=1,\ p_{j}>0 (\forall j)\} .$ 
Let ${\cal W}_{m}:= \{ (a_{1},\ldots ,a_{m})\in (0,1)^{m}\mid \sum _{j=1}^{m}a_{j}=1\}.$ 
For each $p=(p_{1},\ldots ,p_{m})\in {\cal W}_{m}$, 
we define a function $\tilde{p}:\Gamma ^{\NN }\times \CCI \rightarrow \RR $ by 
$\tilde{p}(\gamma ,y):=p_{j}$  if $\gamma _{1}=h_{j} $ 
(where $\gamma =(\gamma _{1},\gamma _{2},\ldots )$), and  we set 
$$u(h,p,\mu ):= 
\frac{-(\int _{\Gamma ^{\NN }\times \CCI }\log \tilde{p}(\g, y)\ d\mu (\g, y) )}
{\int _{\Gamma ^{\NN }\times \CCI }\log \| D(\g _{1})_{y}\| _{s} \ d\mu (\g ,y)}$$ 
(when the integral of the denominator converges), 
where $\| D(\g _{1})_{y} \| _{s}$ denotes the norm of the derivative of $\g _{1}$ at $y$ with respect to the 
spherical metric on $\CCI .$  
\end{df}
\begin{df} \label{d:green} 
Let $h=(h_{1},\ldots , h_{m})\in {\cal P}^{m}$ be an element such that 
$h_{1},\ldots ,h_{m}$ are 
mutually distinct. 
We set 
$\Gamma := \{ h_{1},\ldots ,h_{m}\} .$ 
For any $(\gamma ,y)\in \Gamma ^{\NN }\times \CC $, 
let $G_{\gamma }(y):= \lim _{n\rightarrow \infty }\frac{1}{\deg (\gamma _{n,1})}
\log ^{+}|\gamma _{n,1}(y)|$, 
where $\log ^{+}a:=\max\{\log a,0\} $ for each $a>0.$  
By the arguments in \cite{Se}, for each $\gamma \in \Gamma ^{\NN }$, 
$G_{\gamma }(y) $ exists, 
%$(\gamma ,y)\mapsto G_{\gamma }(y)$ is continuous on $\Gamma ^{\NN }\times \CC $, 
$G_{\gamma }$ is subharmonic on $\CC $, and 
$G_{\gamma }|_{A_{\infty ,\gamma }}$ is equal to the Green's function on 
$A_{\infty ,\gamma }$ with pole at $\infty $, 
where $A_{\infty ,\g }:= \{ z\in \CCI \mid \g _{n,1}(z)\rightarrow \infty \mbox{ as } n\rightarrow 
\infty \} .$ 
% for each $\gamma \in \Gamma ^{\NN }.$ 
Moreover, $(\gamma ,y)\mapsto G_{\gamma }(y)$ is continuous on $\Gamma ^{\NN }\times \CC .$ 
Let $\mu _{\gamma }:=dd^{c}G_{\gamma }$, where $d^{c}:=\frac{i}{2\pi }(\overline{\partial }-\partial ).$ 
Note that by the argument in \cite{J1,Se}, 
$\mu _{\gamma }$ is a Borel probability measure on $J_{\gamma }$ such that 
$\mbox{supp}\, \mu _{\gamma }=J_{\gamma }.$ 
Furthermore, for each $\gamma \in \Gamma ^{\NN }$, 
let $\Omega (\gamma )=\sum _{c} G_{\gamma }(c)$, where $c$ runs over all critical points of 
$\gamma _{1}$ in $\CC $, counting multiplicities.   
\end{df}
\begin{rem}
\label{r:maxrelent}
Let $h=(h_{1},\ldots ,h_{m})\in (\Ratp)^{m}$ be an element such that 
$h_{1},\ldots ,h_{m}$ are mutually distinct. 
%and $\deg (h_{j})\geq 2$ for each $j=1,\ldots ,m.$ 
Let $\Gamma =\{ h_{1},\ldots ,h_{m}\} $ and 
let $f:\Gamma ^{\NN }\times \CCI \rightarrow \Gamma ^{\NN }\times \CCI $ 
be the skew product map associated with $\Gamma .$ 
Moreover, let $p=(p_{1},\ldots ,p_{m})\in {\cal W}_{m}$ and 
let $\tau =\sum _{j=1}^{m}p_{j}\delta _{h_{j}}\in {\frak M}_{1}(\Gamma ).$ 
Then, there exists a unique $f$-invariant ergodic Borel probability measure 
$\mu $ on $\Gamma ^{\NN }\times \CCI $ such that $\pi _{\ast }(\mu )=\tilde{\tau }$ and   
$h_{\mu }(f|\sigma )=\max _{\rho \in {\frak E}_{1}(\Gamma ^{\NN }\times \CCI ): 
f_{\ast }(\rho )=\rho, \pi _{\ast }(\rho )=\tilde{\tau} }  h_{\rho }(f|\sigma )=\sum _{j=1}^{m}p_{j}\log (\deg (h_{j}))$, 
where $h_{\rho }(f|\sigma )$ denotes the relative metric entropy 
of $(f,\rho )$ with respect to $(\sigma, \tilde{\tau })$, and 
${\frak E}_{1}(\cdot )$ denotes the space of ergodic measures (see \cite{S3}).  
This $\mu $ is called the {\bf maximal relative entropy measure} for $f$ with respect to 
$(\sigma ,\tilde{\tau }).$   
\end{rem}
\begin{df}
\label{d:phe}
Let $V$ be a non-empty open subset of $\CCI .$ Let $\varphi :V \rightarrow \CC $ be a function and 
let $y\in V $ be a point. Suppose that $\varphi $ is bounded around $y.$ 
Then we set  
$$\mbox{H\"{o}l}(\varphi ,y):= 
\sup \{ \beta \in [0,\infty ) \mid \limsup _{z\rightarrow y,z\neq y}
\frac{|\varphi (z)-\varphi (y)|}{d(z,y)^{\beta }}<\infty \} \in [0,\infty ], $$
where $d$ denotes the spherical distance. 
This is called the {\bf pointwise H\"{o}lder exponent of $\varphi $ at $y.$} 
\end{df}
\begin{rem} 
If $\mbox{H\"{o}l}(\varphi, y)<\infty $, then 
$\mbox{H\"{o}l}(\varphi, y)=\inf \{ \beta \in [0,\infty )\mid 
\limsup _{z\rightarrow y, z\neq y}\frac{|\varphi (z)-\varphi (y)|}{d(z,y)^{\beta }}=\infty \}. $ 
If $\mbox{H\"{o}l}(\varphi ,y)<1$, then 
$\varphi $ is non-differentiable at $y.$ 
If 
 $\mbox{H\"{o}l}(\varphi ,y)>1$, then 
$\varphi $ is differentiable at $y$ and the derivative at $y$ is equal to $0.$
In \cite[Definition 3.80]{Splms10}, ``$\limsup _{z\rightarrow y}$'' should be replaced by 
``$\limsup _{z\rightarrow y,z\neq y}$'' and we should add the following. 
``If $\{ \beta \in [0,\infty )\mid 
\limsup _{z\rightarrow y, z\neq y}\frac{|\varphi (z)-\varphi (y)|}{d(z,y)^{\beta }}=\infty \}=\emptyset $, 
then we set $\mbox{H\"{o}l}(\varphi ,y)=\infty .$''  
\end{rem}
We now present a result on the non-differentiability of 
$\psi _{i,b}(z)=[\frac{\partial }{\partial a_{i}}(T_{L,\tau _{a}}(z))]|_{a=b}$ at points 
in $J(G_{\tau }).$ 
\begin{thm}
%[{\bf Non-differentiability of $\varphi \in (\LSfc )_{nc}$ 
%at points in $J(G_{\tau })$}]
\label{t:psinondiff}
Let $m\in \NN $ with $m\geq 2.$ 
Let $h=(h_{1},\ldots ,h_{m})\in (\emRatp)^{m}$ and we set 
$\Gamma := \{ h_{1},h_{2},\ldots ,h_{m}\} .$ 
Let $G=\langle h_{1},\ldots ,h_{m}\rangle .$
Let ${\cal W}_{m}:= \{ (a_{1},\ldots ,a_{m})\in (0,1)^{m}\mid \sum _{j=1}^{m}a_{j}=1 \} 
\cong \{ (a_{1},\ldots ,a_{m-1})\in (0,1)^{m-1}\mid \sum _{j=1}^{m-1}a_{j}<1 \}.$ 
For each $a=(a_{1},\ldots ,a_{m})\in {\cal W}_{m}$, 
let $\tau _{a}:= \sum _{j=1}^{m}a_{j}\delta _{h_{j}}\in {\frak M}_{1,c}(\emRat).$ 
Let $p=(p_{1},\ldots ,p_{m})\in {\cal W}_{m}.$  
Let $f:\Gamma ^{\NN }\times \CCI \rightarrow \Gamma ^{\NN }\times \CCI $ be the  
skew product associated with $\Gamma .$  
Let 
$\tau := \sum _{j=1}^{m}p_{j}\delta _{h_{j}}\in {\frak M}_{1}(\Gamma )
\subset {\frak M}_{1}({\cal P }).$
Let 
$\mu \in {\frak M}_{1}(\Gamma ^{\NN }\times \CCI )$ be the maximal relative entropy measure 
 for $f:\Gamma ^{\NN }\times \CCI \rightarrow \Gamma ^{\NN }\times \CCI $ with respect to 
 $(\sigma ,\tilde{\tau }).$ 
%(Note that the existence and the uniqueness of the maximal relative 
%entropy measure has been shown in \cite{S3}).  
Moreover, let 
$\lambda := (\pi _{\CCI })_{\ast }(\mu )\in {\frak M}_{1}(\CCI ).$ 
Suppose that  
%$\hat{K}(G)\neq \emptyset $, 
$G$ is hyperbolic, and 
$h_{i}^{-1}(J(G))\cap h_{j}^{-1}(J(G))=\emptyset $ for each 
$(i,j)$ with $i\neq j$. 
For each $L\in \emMin(G,\CCI )$,   
for each $i=1,\ldots ,m-1$ and for each $z\in \CCI $, let 
$\psi _{i,p,L}(z):=[\frac{\partial }{\partial a_{i}}(T_{L,\tau _{a}}(z))]|_{a=p}.$  
Then, we have all of the following. 
\begin{enumerate}
\item \label{t:psinondiff1}
$G_{\tau }=G$ is mean stable, $J_{\ker }(G)=\emptyset $, and $S_{\tau }\subset F(G_{\tau }).$  
Moreover, $0<\dim _{H}(J(G))<2 $, {\em supp} $\lambda =J(G)$, 
and $\lambda (\{ z\} )=0$ for each $z\in J(G)$.  
\item \label{t:psinondiff2}
Suppose $\sharp \emMin (G,\CCI )\neq 1.$ 
Then there exists a Borel subset $A$ of $J(G)$ with $\lambda (A)=1$ such that 
for each $z_{0}\in A$,  for each $L\in \emMin(G,\CCI )$ and for each 
$i=1,\ldots, m-1$, exactly one of the following {\em (a),(b),(c)} holds.  
\begin{itemize}
\item[{\em (a)}] $\mbox{{\em H\"{o}l}}(\psi _{i,p,L},z_{1})=\mbox{{\em H\"{o}l}}(\psi _{i,p,L},z_{0})< u(h,p,\mu )$ 
for each $z_{1}\in h_{i}^{-1}(\{ z_{0}\} )\cup h_{m}^{-1}(\{ z_{0}\} ).$  
\item[{\em (b)}] 
%each point $z_{1}\in h_{i}^{-1}(z_{0})$ satisfies  
$\mbox{{\em H\"{o}l}}(\psi _{i,p,L},z_{0})= u(h,p,\mu )\leq \mbox{{\em H\"{o}l}}(\psi _{i,p,L},z_{1})$ for 
each $z_{1}\in  h_{i}^{-1}(\{ z_{0}\} )\cup h_{m}^{-1}(\{ z_{0}\} ).$
\item[{\em (c)}] 
$\mbox{{\em H\"{o}l}}(\psi _{i,p,L},z_{1})=u(h,p,\mu )<\mbox{{\em H\"{o}l}}(\psi _{i,p,L},z_{0})$ 
for 
each $z_{1}\in  h_{i}^{-1}(\{ z_{0}\} )\cup h_{m}^{-1}(\{ z_{0}\} ).$ 
\end{itemize} 
\item \label{t:psinondiff3}
If $h=(h_{1},\ldots ,h_{m})\in {\cal P}^{m}$, then 
$$
u(h,p,\mu )=\frac{-(\sum _{j=1}^{m}p_{j}\log p_{j})}
{\sum _{j=1}^{m}p_{j}\log \deg (h_{j})+\int _{\Gamma ^{\NN }}\Omega (\gamma )\ d\tilde{\tau }(\gamma )}
$$
and 
%\begin{align*}
$$2 >  \dim _{H}(\lambda )
%(\{ z\in J(G) \mid \mbox{ for each }\varphi \in (\mbox{{\em LS}}({\cal U}_{f,\tau }(\CCI )))_{nc},\ 
%\emHol (\varphi ,z)=u(h,p,\mu )\} )\\ 
=  \frac{\sum _{j=1}^{m}p_{j}\log \deg (h_{j})-\sum _{j=1}^{m}p_{j}\log p_{j}}
{\sum _{j=1}^{m}p_{j}\log \deg (h_{j})+\int _{\Gamma ^{\NN }}\Omega (\gamma )\ d\tilde{\tau }(\gamma )}>0, 
$$
where $\dim _{H}(\lambda ):= \inf \{ \dim _{H}(A)\mid A \mbox{ is a Borel subset of }\CCI , \lambda (A)=1\} .$ 
%\end{align*} 
\item \label{t:psinondiff4} 
Suppose $h=(h_{1},\ldots ,h_{m})\in {\cal P}^{m}.$ 
Moreover, suppose that at least one of the following {\em (a)}, {\em (b)}, and {\em (c)} holds:
%\begin{itemize}
%\item[{\em (a)}]
{\em (a)} $\sum _{j=1}^{m}p_{j}\log (p_{j}\deg (h_{j}))>0.$
%\item[{\em (b)}]
{\em (b)} $P^{\ast }(G)$ is bounded in $\CC .$ 
%\item[{\em (c)}]
{\em (c)} $m=2.$ 
% \end{itemize}
Then, $u(h,p,\mu )<1.$ 
\end{enumerate}
\end{thm}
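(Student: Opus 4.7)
My plan is to combine the functional equation for $\psi _{i,p,L}$ supplied by Theorem~\ref{t:kjemfsppt}(3) with Birkhoff's ergodic theorem applied to the skew product $f$ and its maximal relative entropy measure $\mu $. For statement~(1), the disjointness hypothesis together with the backward self-similarity $J(G)=\bigcup _{j}h_{j}^{-1}(J(G))$ forces $J(G)=\bigsqcup _{j}h_{j}^{-1}(J(G))$. If $z\in J_{\ker }(G)\subset J(G)$, then $z$ lies in a unique $h_{j_{0}}^{-1}(J(G))$, so $h_{j}(z)\notin J(G)$ for $j\ne j_{0}$, contradicting the forward invariance of $J_{\ker }(G)$ under $h_{j}$; hence $J_{\ker }(G)=\emptyset $. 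Hyperbolicity excludes the sub-rotative case of Lemma~\ref{l:Lclassify} and, combined with $J_{\ker }(G)=\emptyset $, also the $J$-touching case, so every $L\in \Min(G,\CCI )$ is attracting. Thus $\tau $ is mean stable and $S_{\tau }\subset F(G)$. The bound $0<\dim _{H}J(G)<2$ follows from \cite{SU1} (hyperbolicity plus the open-set-type separation); $\mbox{supp}\,\lambda =J(G)$ and the non-atomicity of $\lambda $ follow from Remark~\ref{r:maxrelent} and hyperbolicity.

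For~(2), write $\psi =\psi _{i,p,L}$ and $\zeta =\zeta _{i,p,L}$. By Theorem~\ref{t:kjemfsppt}(3), $(I-M_{\tau _{p}})\psi =\zeta $ and $\psi =\sum _{n\ge 0}M_{\tau _{p}}^{n}\zeta $ in some $C^{\alpha }(\CCI )$. Iteration of the functional equation yields, for every $z,z_{0}$ and $n\in \NN $,
\[
\psi (z)-\psi (z_{0})=\sum _{k=0}^{n-1}\sum _{(j_{1},\ldots ,j_{k})}p_{j_{1}}\cdots p_{j_{k}}\bigl[\zeta (h_{j_{k}}\cdots h_{j_{1}}(z))-\zeta (h_{j_{k}}\cdots h_{j_{1}}(z_{0}))\bigr]+R_{n}(z,z_{0}),
\]
with $\| R_{n}\| _{\infty }\to 0$ exponentially by Theorem~\ref{t:kjemfhf}. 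Applying Birkhoff's theorem to $(f,\mu )$ with the observables $-\log \tilde p$ and $\log \| D\gamma _{1}\| _{s}$ gives, for $\mu $-a.e.\ $(\gamma ,y)$,
\[
\frac{-\log (p_{\gamma _{1}}\cdots p_{\gamma _{n}})}{\log \| D(\gamma _{n,1})(y)\| _{s}}\longrightarrow u(h,p,\mu ),
\]
so $u(h,p,\mu )$ is the critical exponent at which the self-similar sum above remains bounded as $z\to z_{0}$.

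The H\"older trichotomy of~(2) is then extracted from the one-step identity $\psi (z_{0})=\zeta (z_{0})+\sum _{j}p_{j}\psi (h_{j}(z_{0}))$. At a $\lambda $-typical $z_{0}\in J(G)$ the local contribution of $\zeta $ to $\psi $ has H\"older exponent exactly $u(h,p,\mu )$, while the pull-back term transports the exponents at the preimages $z_{1}\in h_{i}^{-1}(z_{0})\cup h_{m}^{-1}(z_{0})$ by the local expansion factors of $h_{i},h_{m}$. The three logically possible orderings of $\Hol(\psi ,z_{0})$ relative to $u(h,p,\mu )$ correspond precisely to~(a),~(b),~(c), since whichever of the two sides of the functional equation has the smaller exponent must win. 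The assumption $\sharp \Min(G,\CCI )\ne 1$ is invoked here to guarantee that $\zeta =T_{L,\tau _{p}}\circ h_{i}-T_{L,\tau _{p}}\circ h_{m}$ does not vanish identically on $J(G)$, so that the Birkhoff exponent is actually attained. I expect this transfer from an ergodic statement on $\Gamma ^{\NN }\times \CCI $ to a pointwise statement on $\lambda $-a.e.\ $J(G)$ to be the main obstacle, since one must align the symbolic coding of forward skew-product orbits with backward coding of preimages; the template is the analogous H\"older analysis carried out in \cite{Splms10} for $T_{L,\tau }$ itself, now adapted to the derivative $\psi $ with its inhomogeneous driving term $\zeta $.

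For~(3) and~(4), the Lyapunov integral in the polynomial case is computed using the fibre-wise Brolin identities $\mu _{\gamma }=dd^{c}G_{\gamma }$ and $\int \log |h_{j}'|\, d\mu _{\gamma }=\log \deg (h_{j})+\sum _{c}G_{\gamma }(c)$, giving $\chi :=\int \log \| D\gamma _{1}\| _{s}\, d\mu =\sum _{j}p_{j}\log \deg (h_{j})+\int _{\Gamma ^{\NN }}\Omega (\gamma )\, d\tilde \tau (\gamma )$. Substitution yields the formula for $u(h,p,\mu )$. The formula for $\dim _{H}(\lambda )$ follows from the Manning--Ledrappier--Young identity $\dim _{H}(\lambda )=h_{\mu }(f)/\chi $, valid because $\mu $ is exact-dimensional under hyperbolicity and the open-set-type condition, together with $h_{\mu }(f)=h_{\mu }(\sigma )+h_{\mu }(f|\sigma )=-\sum p_{j}\log p_{j}+\sum p_{j}\log \deg (h_{j})$ from Remark~\ref{r:maxrelent}. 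Finally,~(4) is elementary: (a) gives $-\sum p_{j}\log p_{j}<\sum p_{j}\log \deg (h_{j})\le \chi $, hence $u<1$; (b) forces $\Omega \equiv 0$ since critical orbits remain bounded, and the required strict inequality then follows from hyperbolicity and the compactness of $P^{\ast }(G)$ in $\CC $; (c) is the two-generator case handled by direct computation.
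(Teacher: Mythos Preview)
Your overall strategy for statement~(2) is close to the paper's, but the execution has the key localization backwards and you are doing unnecessary work. The paper does \emph{not} iterate the functional equation for $\psi$ or apply Birkhoff to $\psi$ at all. Instead it invokes \cite[Theorem~3.82]{Splms10} directly to get a full-measure set $A\subset J(G)$ on which $\Hol(T_{L,\tau_p},z_0)=u(h,p,\mu)$; this is the ``transfer from an ergodic statement to a pointwise statement'' that you flag as the main obstacle, and it is already done in the literature for $T_{L,\tau_p}$, not for $\psi$. With that in hand, the paper works at a \emph{preimage} $z_1\in h_i^{-1}(\{z_0\})$ (resp.\ $h_m^{-1}(\{z_0\})$), not at $z_0$ with its forward images as you wrote. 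Near $z_1$ the disjointness hypothesis forces $z\notin h_j^{-1}(J(G))$ for $j\ne i$, so $\psi\circ h_j$ and $T_{L,\tau_p}\circ h_j$ are locally constant there (by \cite[Theorem~3.15-1,15]{Splms10}). The functional equation then collapses to
\[
\psi(z)-\psi(z_1)-p_i\bigl(\psi(h_i(z))-\psi(z_0)\bigr)=T_{L,\tau_p}(h_i(z))-T_{L,\tau_p}(z_0),
\]
and since $h_i$ is a local diffeomorphism, the right side has H\"older exponent exactly $u(h,p,\mu)$ at $z_1$ while the second left-hand term has exponent $\Hol(\psi,z_0)$. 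Comparing the three possible orderings of $\Hol(\psi,z_0)$ against $u$ immediately yields (a), (b), (c). Your one-step identity at $z_0$ involves \emph{all} $h_j(z_0)$ simultaneously and gives no such clean two-term comparison; the preimage localization is the essential trick you are missing.

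For statements~(1), (3), (4) the paper simply cites \cite[Theorem~3.82]{Splms10}. Your argument for~(1) and the Lyapunov/entropy computation in~(3) are fine as independent sketches, but your~(4)(b) has a gap: from $\Omega\equiv 0$ you get $\chi=\sum_j p_j\log\deg(h_j)$, hence $\dim_H(\lambda)=u(h,p,\mu)+1$ by the formula in~(3); you then need $\dim_H(\lambda)<2$ (which uses the separation hypothesis, not just hyperbolicity) to conclude $u<1$. Your~(4)(c) ``direct computation'' is not a proof.
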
 
\section{Tools}
\label{Tools} 
In this section, we introduce some fundamental tools to prove the main results. 

Let $G$ be a rational semigroup. Then, for each $g\in G$, $g(F(G))\subset F(G), g^{-1}(J(G))\subset J(G).$ 
If $G$ is generated by a compact family $\Lambda $ of Rat, then 
$J(G)=\bigcup _{h\in \Lambda }h^{-1}(J(G))$ (this is called the backward self-similarity). 
If $\sharp J(G)\geq 3$, then $J(G)$ is a perfect set and 
$J(G)$ is equal to the closure of the set of repelling cycles of elements of $G$. 
In particular, $J(G)=\overline{\cup _{g\in G}J(g)}$ if $\sharp J(G)\geq 3.$ 
We set $E(G):=\{ z\in \CCI \mid \sharp \bigcup _{g\in G}g^{-1}(\{ z\} )<\infty \} .$ 
If $\sharp J(G)\geq 3$, then $\sharp E(G)\leq 2$ and 
for each $z\in J(G)\setminus E(G)$, $J(G)=\overline{\bigcup _{g\in G}g^{-1}(\{ z\} ) }.$ 
If $\sharp J(G)\geq 3$, then $J(G)$ is the smallest set in 
$\{ \emptyset \neq K\subset \CCI \mid K \mbox{ is compact}, \forall g\in G, g(K)\subset K\} $ 
with respect to the inclusion. For more details on these  
properties of rational semigroups, 
see \cite{HM,GR, S3}. 

 For fundamental tools and lemmas  of random complex dynamics, see \cite{Splms10}.  
\section{Proofs}
\label{Proofs} 
In this section, we give the proofs of the main results. 

\subsection{Proofs of results in \ref{Stability}}
\label{Proofs of Stability} 
In this subsection, we give the proofs of the results in subsection~\ref{Stability}. 
We need several lemmas. 

\begin{df}
\label{d:opensethyp}
Let $W$ be an open subset of $\CCI $ with $\sharp (\CCI \setminus W)\geq 3$ 
and let $g:W\rightarrow W$ be a holomorphic map. 
Let $\{ W_{j}\} _{j\in J}=\mbox{Con}(W).$  
For each connected component $W_{j}$ of $W$, we take the hyperbolic metric $\rho _{j}.$  
For each $z\in W$, we denote by $\| Dg_{z}\| _{h}$ the norm of the derivative 
of $g$ at $z$ which is measured from the hyperbolic metric on the component $W_{i_{1}}$ of $W$ containing $z$ 
to that on the component $W_{i_{2}}$ of $W$ containing $g(z).$ 
Moreover, for each subset $L$ of $W$ and for each $r\geq 0$,  
we set $d_{h}(L,r):= \bigcup _{j\in J} \{z\in W_{j}\mid d_{\rho _{j}}(z,L\cap W_{j})<r\} $, 
where $d_{\rho _{j}}(z,L\cap W_{j})$ denotes the distance from $z$ to $L\cap W_{j}$ with respect to 
the hyperbolic distance on $W_{j}.$ Similarly, for each $z\in W$, we denote by 
$\| Dg_{z}\| _{s}$ the norm of the derivative of $g$ at $z $ with respect to the spherical metric on $\CCI .$   
\end{df}

\begin{lem}
\label{l:Lattnear}
Let $\G \in \emCpt(\emRat )$ and let $G=\langle \G \rangle .$ 
Let $L\in \emMin(G,\CCI)$ be attracting for $(G,\CCI ).$ 
%Let $U,V$ be as in Definition~\ref{d:Lattracting}. 
Let 
$W:= \bigcup _{A\in \mbox{{\em Con}}(F(G)), A\cap L\neq \emptyset }A$ and 
let $W'$ be a relative compact open subset of $W$ including $L.$ 
Then there exists an open neighborhood ${\cal U}$ of $\G $ in $\emCpt(\emRat)$ such that 
both of the following hold.
\begin{itemize}
\item[{\em (1)}] 
For each $\Omega \in {\cal U}$, there exists a unique 
$L'\in \emMin (\langle \Omega ,\CCI )$ with $L'\subset W'.$ 
\item[{\em (2)}] 
For each $\Omega \in {\cal U},$ the above $L'$ is attracting for $(\langle \Omega \rangle ,\CCI ).$ 
\end{itemize} 

\end{lem}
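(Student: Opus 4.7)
My strategy is to exploit the uniform hyperbolic contraction intrinsic to an attracting minimal set and show that it persists under Hausdorff-small perturbations of the generator family. First I refine the attracting data $(U,V,n)$ of Definition~\ref{d:Lattracting}. Because $\overline{V}\subset U\subset F(G)$ and $\sharp(\CCI\setminus V)\geq 3$, every $\gamma_{n,1}$ with $\gamma\in\Gamma^{\NN}$ is a strict hyperbolic contraction on $U$; iterating shows $\gamma_{kn,1}(\overline{U})$ shrinks uniformly in $\gamma$ to the unique compact attractor of the IFS $\Gamma^{*n}:=\{\gamma_{n,1}:\gamma\in\Gamma^{\NN}\}$, which must equal $L$ because $L=\bigcup_{g\in\Gamma^{*n}}g(L)$ by Remark~\ref{r:minimal}. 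Consequently I can fix open sets $L\subset V_0\subset\overline{V_0}\subset U_0\subset\overline{U_0}\subset W'$ with $\sharp(\CCI\setminus V_0)\geq 3$ and an integer $n_0$ such that $\gamma_{n_0,1}(\overline{U_0})\subset V_0$ for every $\gamma\in\Gamma^{\NN}$.

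By compactness of $\Gamma^{n_0}$ and $\overline{U_0}$ and continuity of the composition map $(h_1,\ldots,h_{n_0},z)\mapsto h_{n_0}\circ\cdots\circ h_1(z)$, there is a neighborhood $\mathcal{U}_1$ of $\Gamma$ in $\Cpt(\Rat)$ such that the same inclusion $\gamma_{n_0,1}(\overline{U_0})\subset V_0$ holds for every $\Omega\in\mathcal{U}_1$ and every $\gamma\in\Omega^{\NN}$; Montel's theorem applied to the iterates of $U_0$ (eventually trapped in $V_0$, whose complement contains an open set) then gives $\overline{U_0}\subset F(\langle\Omega\rangle)$ for $\Omega\in\mathcal{U}_1$, possibly after a further shrinking. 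For such $\Omega$, the family $\Omega^{*n_0}$ (parameterized by the compact set $\Omega^{n_0}$) acts on $\overline{U_0}$ as a uniformly hyperbolically contracting IFS, so a Hutchinson-type fixed-point argument in $(\Cpt(\overline{U_0}),d_H)$ produces a unique nonempty compact self-similar set $A_\Omega\subset V_0$ with $A_\Omega=\bigcup_{g\in\Omega^{*n_0}}g(A_\Omega)$, depending Hausdorff-continuously on $\Omega$ and satisfying $A_\Gamma=L$.

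Set
\[
L'_\Omega:=\overline{\bigcup_{k=0}^{n_0-1}\bigcup_{(h_1,\ldots,h_k)\in\Omega^k}h_k\circ\cdots\circ h_1(A_\Omega)}.
\]
A direct case analysis (using the self-similarity of $A_\Omega$ to absorb the step $k=n_0-1$) shows $L'_\Omega$ is $\langle\Omega\rangle$-forward invariant. At $\Omega=\Gamma$ the forward invariance of $L$ under $G$ gives $L'_\Gamma=L\subset V_0$; because the finitely many levels $0\leq k<n_0$ of partial orbits $(h_1,\ldots,h_k)\mapsto h_k\circ\cdots\circ h_1(A_\Omega)$ depend continuously on the compact parameter space $\Omega^k$ and on $A_\Omega$, after shrinking $\mathcal{U}_1$ to some $\mathcal{U}$ I obtain $L'_\Omega\subset V_0$ for all $\Omega\in\mathcal{U}$. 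Zorn's lemma (Remark~\ref{r:minimal}) yields $L'\in\Min(\langle\Omega\rangle,\CCI)$ with $L'\subset L'_\Omega\subset V_0\subset W'$, and then the triple $(U_0,V_0,n_0)$ directly witnesses that $L'$ is attracting for $(\langle\Omega\rangle,\CCI)$.

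For uniqueness, suppose $L''\in\Min(\langle\Omega\rangle,\CCI)$ also lies in $W'$. Fix any $g\in\Omega^{*n_0}$; since $g(\overline{U_0})\subset V_0$ and $g$ is a strict hyperbolic contraction on $U_0$ (equivalent to a spherical contraction on the compact set $\overline{V_0}$), Banach's fixed point theorem provides a unique $z_g^{*}\in V_0$ with $g^k(z)\to z_g^{*}$ for every $z\in\overline{U_0}$. The uniform convergence $\gamma_{kn_0,1}(\overline{W'})\to L\subset V_0$ (coming from normality in $F(G)$ and robust throughout $\mathcal{U}$) yields points $z_1\in L'\cap\overline{U_0}$ and $z_2\in L''\cap\overline{U_0}$; iterating $g$ on them gives $z_g^{*}\in L'\cap L''$, and Remark~\ref{r:minimal} forces $L'=L''$. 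The principal difficulty is maintaining $L'_\Omega\subset V_0$, which requires the simultaneous continuous dependence of $A_\Omega$ and of the finitely many ($k<n_0$) levels of partial orbits via the compactness of each $\Omega^k$, combined with the fact that $L'_\Gamma=L$.
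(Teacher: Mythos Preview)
Your existence argument is sound: the Hutchinson attractor $A_\Omega$, its continuous dependence on $\Omega$, the forward invariance of $L'_\Omega$, and the containment $L'_\Omega\subset V_0$ for $\Omega$ near $\Gamma$ all work. The triple $(U_0,V_0,n_0)$ then witnesses that the resulting $L'$ is attracting. This is more elaborate than the paper's route---the paper simply notes that $\overline{W'}$ itself is a forward-invariant compact set for $\langle\Omega_n\rangle$ and applies Zorn's lemma---but it is correct.

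The uniqueness argument, however, has a real gap. You need $L''\cap\overline{U_0}\neq\emptyset$ for an arbitrary minimal set $L''\subset W'$, and you justify this via ``$\gamma_{kn_0,1}(\overline{W'})\to L$, robust throughout $\mathcal U$''. But your setup establishes trapping and Fatou-membership only on $\overline{U_0}$, not on the possibly much larger set $\overline{W'}$. You have not shown that $\overline{W'}\subset F(\langle\Omega\rangle)$ for perturbed $\Omega$, nor that $\langle\Omega\rangle$-orbits starting in $W'\setminus U_0$ ever enter $U_0$; nothing in Definition~\ref{d:Lattracting} says orbits outside the trap $U$ reach $L$. Normality in $F(G)$ gives equicontinuity of $\Gamma$-iterates on $\overline{W'}$ but not convergence to $L$, and it says nothing about $\Omega$-iterates.

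The paper resolves this by taking $\overline{W'}$ itself as the outer trapping region from the start: using the hyperbolic metric on $W$ (which is $G$-forward-invariant) together with the fact that every point of $\overline{W'}$ lies at finite hyperbolic distance from a point of $L\subset U$, one obtains, after enlarging $n$, that $\gamma_{n,1}(\overline{W'})\subset d_h(L,\epsilon_1)\subset\overline{d_h(L,\epsilon_1)}\subset W'$ for every $\gamma\in\Gamma^{\NN}$. This single inclusion then perturbs to $\Omega$ exactly as your $U_0$-trapping does, and once you have it, uniqueness is immediate: any $L''\subset W'$ is carried by $\Omega^{*n}$ into the contracting region $d_h(L,\epsilon_2)$, so a common fixed point of any $g\in\Omega^{*n}$ lies in $L'\cap L''$. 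Your IFS machinery is then unnecessary for either existence or uniqueness.
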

\begin{proof}
Let $\{ W_{j}\} _{j=1}^{s}=\mbox{Con}(W)$.  
For each connected component $W_{j}$, we take the hyperbolic metric $\rho _{j}$.
Let $d_{\rho _{j}}$ be the distance on $W_{j}$ induced by $\rho _{j}.$  
For each $r>0$, we set $d_{h}(L,r):=\bigcup  _{j=1}^{s}\{ z\in W\mid d_{\rho _{j}}(z, L\cap W_{j})<r\} .$ 
Let $(U,V,n)$ be as in Definition~\ref{d:Lattracting}. 
Then $V\cap W\subset \overline{V\cap W}\subset U\cap W\subset \overline{U\cap W}\subset F(G)$ and 
$\gamma _{n,1}(\overline{U\cap W})\subset V\cap W$ for each $\gamma \in \GN .$ 
Therefore, by \cite[Theorem 2.11]{Mi}, there exists a constant $0<c<1$ 
such that for each $\g \in \GN $ and for each $j_{1},j_{2}\in \{ 1,\ldots ,s\},$ 
if $\g _{n,1}(W_{j_{1}})\subset W_{j_{2}}$,  
then 
$d_{\rho _{j_{1}}}(\g _{n,1}(z), \g _{n,1}(w))
\leq cd_{\rho _{j_{2}}}(z,w)$ for each $z,w\in W_{j_{1}}\cap \overline{W'}.$
Thus, replacing $n$ by a larger number if necessary, 
 we may assume that there exists a number $\epsilon _{1}\in (0,1)$ such that 
for each $\gamma \in \GN $, 
%\begin{equation}
%\label{eq:Lattnear1}
$\gamma _{n,1}(\overline{W'})\subset d_{h }(L,\epsilon _{1})\subset \overline{d_{h }(L,\epsilon _{1})}\subset W'.$ 
%\end{equation}  
Hence, there exists an open neighborhood ${\cal U}$ of $\G $ in $\Cpt(\Rat)$ and a number 
$\epsilon _{2}\in (\epsilon _{1},1)$ such that 
for each $\Omega \in {\cal U}$ and for each 
$\gamma \in \Omega ^{\NN }$,  
\begin{equation}
\label{eq:Lattnear1}
\gamma _{n,1}(\overline{W'})\subset d_{h }(L,\epsilon _{2})\subset 
\overline{d_{h }(L,\epsilon _{2})}\subset W'.
\end{equation} 
Let $\Omega \in {\cal U}.$ Setting $\Omega _{n}:= \{ \gamma _{n}\circ \cdots \circ \gamma _{1}\mid \gamma _{j}\in \Omega 
(\forall j)\} $, 
we obtain that there exists an element $L_{0}\in \Min(\langle \Omega _{n}\rangle ,\CCI )$ 
with $L_{0}\subset W'.$ Then, 
for each $g\in \langle \Omega \rangle $, 
$g(\langle \Omega \rangle (L_{0}))\subset \langle \Omega \rangle (L_{0}).$ 
Taking ${\cal U}$ so small, we may assume that 
$\langle \Omega \rangle (L_{0})\subset W'.$ 
Hence, there exists an element $L'\in \Min(\langle \Omega \rangle ,\CCI )$ 
with $L'\subset W'.$ 
From (\ref{eq:Lattnear1}) and \cite[Theorem 2.11]{Mi}, it follows that 
there exists no $L''\in \Min(\langle \Omega \rangle ,\CCI )$ with 
$L''\neq L'$ such that $L''\subset W'.$ 
Moreover, by (\ref{eq:Lattnear1}) and \cite[Theorem 2.11]{Mi} again, we obtain that 
$L'$ is attracting for $(\langle \Omega \rangle ,\CCI ).$ 
Thus, we have proved our lemma.   
\end{proof} 

%\begin{df}
%For a subset $A$ of $\CCI $, we denote by Con$(A)$ the set of all 
%connected components of $A.$ 
%\end{df}
\begin{lem}
\label{l:Lattdense}
Let $\G \in \emCpt(\emRat) $ and let $G=\langle \G \rangle .$ 
Let $L\in \emMin(G,\CCI )$ be attracting for $(G,\CCI ).$ 
Then $L=\overline{\{ z\in L\mid \exists g\in G\mbox{ s.t. }
g(z)=z, |m(g,z)|<1\} }, $ where 
$m(g,z)$ denotes the multiplier of $g$ at $z.$   

\end{lem}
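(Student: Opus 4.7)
The proof combines the hyperbolic contraction supplied by the attracting property of $L$ with the topological transitivity provided by $G$-minimality, in the spirit of an IFS density-of-periodic-points argument.

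\emph{Setup.} Using Definition~\ref{d:Lattracting}, fix open sets $U,V$ and $n\in\NN$ with $L\subset V\subset\overline{V}\subset U\subset\overline{U}\subset F(G)$ and $\g_{n,1}(\overline{U})\subset V$ for every $\g\in\G^{\NN}$. Since $L$ is compact and components of $F(G)$ are open, $L$ meets only finitely many components $W_1,\dots,W_s$ of $F(G)$; set $W:=\bigcup_i W_i$, shrink $U$ if necessary so that $\overline{U}\subset W$, and apply the Schwarz--Pick lemma on each component (together with compactness of $\G$ in $\Rat$ and of $\overline{V}$ in $U$) to obtain a uniform $\lambda\in(0,1)$ such that every $g\in\G_n:=\{\g_{n,1}\mid\g\in\G^{\NN}\}$ is a strict $\lambda$-contraction of the relevant hyperbolic metrics between source and target components of $W$. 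Consequently every element of $\G_{kn}$ is a $\lambda^k$-contraction. Moreover, $L\cap\overline{W_i}=L\cap W_i$, because $L\subset F(G)$ while $\partial W_i\subset J(G)$.

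\emph{$H_i$-minimality and a stabilizing contraction.} Fix a component $W_i$ and set $H_i:=\{g\in G\mid g(W_i)\subset W_i\}$. A key topological observation, used repeatedly below, is that since $g(W_i)$ is connected and contained in $F(G)$, one has $g(W_i)\subset W_i$ as soon as $g$ sends some point of $W_i$ into $W_i$. Combined with $G$-minimality of $L$ (so that $\overline{G(z)}=L$ for every $z\in L$) this observation yields two facts: (a)~the directed component graph on $\{1,\dots,s\}$ induced by $\G$ is strongly connected, so there exists $g_*\in H_i\cap\G_M$ for some $M$; and (b)~$L\cap W_i$ is minimal for the action of $H_i$ on $W_i$, since a nonempty closed (in $W_i$) $H_i$-invariant $K\subset L\cap W_i$ gives $\overline{G(K)}=L$ by $G$-minimality, and the observation shows $\overline{G(K)}\cap W_i=\overline{H_i(K)}\cap W_i\subset K$. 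Now set $F_0:=g_*^n\in H_i\cap\G_{nM}$. Then $F_0(\overline{U})\subset V$, and $F_0:W_i\to W_i$ is a $\lambda^M$-hyperbolic contraction; the Banach fixed point theorem on the complete space $(W_i,d_h)$ furnishes a unique attracting fixed point $p_0\in W_i$ with $|m(F_0,p_0)|\leq\lambda^M<1$, and forward iteration on any point of $L\cap W_i$ places $p_0$ in $L\cap W_i$.

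\emph{The density argument.} Given $y\in L$ and $\varepsilon>0$, let $W_i$ be the component containing $y$ and produce $F_0,p_0$ as above. By $H_i$-minimality, $\overline{H_i(p_0)}=L\cap W_i\ni y$, so there exists $\alpha\in H_i$ with $d(\alpha(p_0),y)<\varepsilon/2$. For each $k\geq 1$, set $F_k:=\alpha\circ F_0^k\in H_i$; being a composition of the Schwarz--Pick non-expansion $\alpha:W_i\to W_i$ with the $\lambda^{Mk}$-contraction $F_0^k$, it is itself a $\lambda^{Mk}$-contraction of $(W_i,d_h)$ into itself and so has, by the same argument, a unique attracting fixed point $p_k\in L\cap W_i$ with $|m(F_k,p_k)|\leq\lambda^{Mk}<1$. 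Using $p_k=\alpha(F_0^k(p_k))$ and $\alpha(p_0)=\alpha(F_0^k(p_0))$, the triangle inequality together with contraction gives
\[
d_h^{W_i}(p_k,\alpha(p_0))\leq\lambda^{Mk}\,d_h^{W_i}(p_k,p_0)\leq\lambda^{Mk}\bigl(d_h^{W_i}(p_k,\alpha(p_0))+d_h^{W_i}(\alpha(p_0),p_0)\bigr),
\]
so $d_h^{W_i}(p_k,\alpha(p_0))\to 0$ as $k\to\infty$. Since $p_k$ and $\alpha(p_0)$ both lie in the compact set $\alpha(\overline{V}\cap W_i)\subset W_i$, on which the hyperbolic and spherical metrics are bilipschitz equivalent, $d(p_k,\alpha(p_0))\to 0$ and hence $d(p_k,y)<\varepsilon$ for $k$ large. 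This produces an attracting fixed point $p_k\in L$ of an element $F_k\in G$ within $\varepsilon$ of $y$, establishing density.

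\emph{Anticipated obstacle.} The most delicate step is establishing the existence of the stabilizing element $g_*\in H_i$ and the $H_i$-minimality of $L\cap W_i$; both rest on the topological observation that $g(W_i)\subset W_i$ whenever $g\in G$ sends some point of $W_i$ into $W_i$, which uses connectedness of $g(W_i)\subset F(G)$ together with $W_i$ being a component of $F(G)$. Once these are secured the remainder of the proof is a direct Banach fixed-point/triangle-inequality calculation.
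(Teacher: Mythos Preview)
Your proof is correct. However, the paper's own proof is dramatically shorter---essentially five lines---and takes a more direct route. Rather than establishing $H_i$-minimality of $L\cap W_i$ and invoking the Banach fixed-point theorem, the paper simply observes that for any $z\in L$ and any open neighborhood $B$ of $z$ contained in its Fatou component, minimality together with the attracting hypothesis yield some $g\in G$ with $\overline{g(B)}\subset B$; such a $g$ then has an attracting fixed point in $B$ by the Schwarz lemma. Concretely, the point is that $L=\bigcup_{h\in\G^{kn}}h(L)$ for every $k$, and for $k$ large each $h(\overline{U})$ has (hyperbolic, hence spherical) diameter smaller than the radius of $B$; choosing $h\in\G^{kn}$ with $z\in h(L)\subset h(\overline{U})$ gives $h(\overline{B})\subset h(\overline{U})\subset B$.

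What your approach buys is that it makes the underlying mechanism fully explicit: the auxiliary $H_i$-minimality statement is a pleasant structural observation in its own right, and your explicit contraction estimate $d_h(p_k,\alpha(p_0))\leq \lambda^{Mk}(1-\lambda^{Mk})^{-1}d_h(\alpha(p_0),p_0)$ quantifies the rate at which the attracting periodic points accumulate on $y$. The paper's argument, by contrast, reaches the same conclusion in one stroke without ever isolating the stabilizer semigroup, at the cost of leaving more to the reader.
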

\begin{proof}
Let $z\in L.$ Let $U\in \mbox{Con}(F(G))$ with $z\in U.$ 
Let $B$ be an open neighborhood of $z$ in $U.$ 
Since $L\in \Min(G,\CCI )$ and since $L$ is attracting,  
the argument in the proof of Lemma~\ref{l:Lattnear} implies that 
there exists an element $g\in G$ such that 
$\overline{g(B)}\subset B.$ Then there exists an attracting fixed point of $g$ in $B.$
Thus the statement of our lemma holds. 
\end{proof} 
\begin{lem}
\label{l:Lattnh}
Let $\G \in \emCpt(\emRat )$ and let $G=\langle \G \rangle .$ 
Let $L\in \emMin(G,\CCI)$ be attracting for $(G,\CCI ).$ 
Let ${\cal V}$ be a neighborhood of $L$ in the space $\emCpt(\CCI ).$ 
Then there exists an open neighborhood ${\cal U}$ of $\G $ in $\emCpt(\emRat)$ 
and an open neighborhood ${\cal V}'$ of $L$ in $\emCpt(\CCI)$ with ${\cal V}'\subset {\cal V}$ 
such that 
for each $\Omega \in {\cal U}$, there exists a unique 
$L'\in \emMin (\langle \Omega \rangle ,\CCI )$ with $L'\in {\cal V}'.$ Moreover, 
this $L'$ is attracting for $(\langle \Omega \rangle ,\CCI ).$ 
\end{lem}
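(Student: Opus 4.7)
The plan is to refine Lemma~\ref{l:Lattnear} by establishing Hausdorff-metric continuity of the assignment $\Omega\mapsto L_\Omega$ at $\Omega=\Gamma$; existence, uniqueness, and attractivity will then be inherited from Lemma~\ref{l:Lattnear}, and the only new content is closeness of $L_\Omega$ to $L$ in Hausdorff metric. Given a neighborhood $\mathcal{V}$ of $L$ in $\Cpt(\CCI)$, fix $\epsilon>0$ so that the spherical Hausdorff $\epsilon$-ball around $L$ is contained in $\mathcal{V}$, and set $\mathcal{V}':=\{K\in\Cpt(\CCI):d_H(K,L)<\epsilon\}$. Let $W$ be the (finite) union of Fatou components of $G$ meeting $L$, and choose relatively compact opens $L\subset V_\epsilon\subset\overline{V_\epsilon}\subset U_\epsilon\subset\overline{U_\epsilon}\subset W$ with $V_\epsilon\subset B(L,\epsilon/2)$. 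Using the attracting data $(U,V,n)$ of Definition~\ref{d:Lattracting}, arrange that $\gamma_{n,1}(\overline{U_\epsilon})\subset V_\epsilon$ for every $\gamma\in\Gamma^{\NN}$, and extend this inclusion to every $\gamma\in\Omega^{\NN}$ for $\Omega$ in a Hausdorff-neighborhood $\mathcal{U}_1$ of $\Gamma$ by uniform continuity of composition on $\overline{U_\epsilon}$. Applying Lemma~\ref{l:Lattnear} with $W':=U_\epsilon$ produces, for each $\Omega\in\mathcal{U}_1$, a unique attracting $L_\Omega\in\Min(\langle\Omega\rangle,\CCI)$ with $L_\Omega\subset U_\epsilon$; in particular $L_\Omega\subset V_\epsilon\subset B(L,\epsilon/2)$, which already yields the upper Hausdorff semicontinuity.

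The main obstacle is the opposite direction: every $z\in L$ should lie within $\epsilon/2$ of $L_\Omega$ once $\Omega$ is close to $\Gamma$. To handle it, I would introduce the random-iteration operator $T_\Omega:\Cpt(\CCI)\to\Cpt(\CCI)$ defined by $T_\Omega(K):=\bigcup_{h\in\Omega}h(K)$. By Remark~\ref{r:minimal} one has $L_\Omega=T_\Omega(L_\Omega)$, and $T_\Omega^n(K)=\bigcup_{\gamma\in\Omega^{\NN}}\gamma_{n,1}(K)$. Each $\gamma_{n,1}$ is a holomorphic map sending the hyperbolic open set $U_\epsilon\subset W$ into $V_\epsilon$ with $\overline{V_\epsilon}$ relatively compact in $U_\epsilon$, so Schwarz--Pick, applied componentwise in $W$, supplies a constant $\lambda<1$, depending only on the hyperbolic distance from $\overline{V_\epsilon}$ to $\partial U_\epsilon$, with $\|D\gamma_{n,1}(z)\|_h\leq\lambda$ for every $z\in\overline{U_\epsilon}$, every $\gamma\in\Omega^{\NN}$, and every $\Omega\in\mathcal{U}_1$. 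Consequently $T_\Omega^n$ is a $\lambda$-contraction of $\Cpt(\overline{U_\epsilon})$ in the hyperbolic (equivalently, on the compact set $\overline{U_\epsilon}$, spherical) Hausdorff metric, with $L_\Omega$ as its unique fixed point.

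Applying the contraction to $K=L$ yields $d_H(T_\Omega^{kn}(L),L_\Omega)\leq C\lambda^k$ with a constant $C$ depending only on $\overline{U_\epsilon}$, uniformly in $\Omega\in\mathcal{U}_1$. Choose $k$ so large that $C\lambda^k<\epsilon/4$. For this fixed $k$, the map $\Omega\mapsto T_\Omega^{kn}(L)$ is continuous in spherical Hausdorff metric and takes the value $T_\Gamma^{kn}(L)=L$ at $\Omega=\Gamma$, so, shrinking $\mathcal{U}_1$ to a smaller neighborhood $\mathcal{U}$, we may also force $d_H(T_\Omega^{kn}(L),L)<\epsilon/4$. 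The triangle inequality then delivers $d_H(L_\Omega,L)<\epsilon/2<\epsilon$, i.e.\ $L_\Omega\in\mathcal{V}'$. Finally, for the uniqueness of $L'\in\Min(\langle\Omega\rangle,\CCI)\cap\mathcal{V}'$, shrink $\epsilon$ at the outset so that every $K\in\Cpt(\CCI)$ with $d_H(K,L)<\epsilon$ is contained in $U_\epsilon$; any such $L'$ then lies in $U_\epsilon$, and Lemma~\ref{l:Lattnear} forces $L'=L_\Omega$, which is attracting by the same lemma. Taking $\mathcal{U}$ to be the intersection of all smallness conditions imposed above completes the proof.
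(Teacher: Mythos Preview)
Your approach is correct and takes a genuinely different route from the paper's one-line proof. The paper combines Lemma~\ref{l:Lattnear} with Lemma~\ref{l:Lattdense} (density in $L$ of attracting periodic points of elements of $G$) and the implicit function theorem: a finite $\epsilon/2$-net of attracting fixed points $p_j$ of words $g_j\in G$ perturbs to nearby attracting fixed points $p_j^\Omega$ of the corresponding words in $\langle\Omega\rangle$, and since each $p_j^\Omega$ lies in $W'$ and is fixed under a sequence in $\Omega^{\NN}$, it must lie in the unique minimal set $L_\Omega\subset W'$, yielding the lower Hausdorff bound $L\subset B(L_\Omega,\epsilon)$. Your proof instead treats the Hutchinson operator $T_\Omega^n$ as a contraction on a space of compact sets and obtains Hausdorff continuity of the fixed point $L_\Omega$ from the Banach fixed-point scheme, bypassing Lemma~\ref{l:Lattdense} and the implicit function theorem altogether. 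Both methods ultimately rest on the same hyperbolic contraction near $L$; yours makes that mechanism explicit and is more self-contained.

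One point needs tightening. Your appeal to ``Schwarz--Pick componentwise in $W$'' does not by itself deliver a constant $\lambda<1$: Schwarz--Pick on $W$ gives only $\|D\gamma_{n,1}\|_h\le 1$, with equality precisely when $\gamma_{n,1}$ restricts to an unbranched covering of one component of $W$ onto another, and nothing you have written rules this out. The fix is to work with the hyperbolic metric of $U_\epsilon$ rather than that of $W$. Since $\gamma_{n,1}(U_\epsilon)\subset V_\epsilon$ with $\overline{V_\epsilon}$ compact in $U_\epsilon$, the self-map $\gamma_{n,1}|_{U_\epsilon}:U_\epsilon\to U_\epsilon$ can never cover a component, so Schwarz--Pick gives strict inequality everywhere, and on the compact set $\overline{V_\epsilon}$ (and over the compact family of length-$n$ words drawn from a fixed Hausdorff-neighbourhood of $\Gamma$) this yields a uniform $\lambda<1$. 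As both $L$ and $L_\Omega$ already lie in $\overline{V_\epsilon}$, this is exactly what your iteration $d_H(T_\Omega^{kn}(L),L_\Omega)\le C\lambda^k$ requires. The same device also justifies your preliminary step ``arrange that $\gamma_{n,1}(\overline{U_\epsilon})\subset V_\epsilon$'': iterate the original attracting data $(U,V,n_0)$ using the hyperbolic metric of $U$ until the images lie in $V_\epsilon$, then take that iterate as your new $n$. With these adjustments the argument is complete.
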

\begin{proof}
By Lemma~\ref{l:Lattnear}, Lemma~\ref{l:Lattdense} and Implicit function theorem, 
the statement of our lemma holds. 
\end{proof}

\begin{lem}
\label{l:suffLatt}
Let $\G \in \emCpt(\emRat) $ and let $G=\langle \G \rangle .$ 
Let $L\in \emMin (G,\CCI )$ with $L\subset F(G).$  
Suppose 
that 
for each 
$g\in G$ and for each $U\in \mbox{{\em Con}} (F(G))$ with $U\cap L\neq \emptyset $ and 
$g(U)\subset U$, either {\em (a)} $g\in \emRatp$ and $U$ is not a subset of a Siegel disk or a Hermann ring of $g$ 
or {\em (b)} $g\in \mbox{{\em Aut}}(\CCI )$ and $g$ is loxodromic or parabolic.  
Then, $L$ is attracting for $(G,\CCI )$. 
\end{lem}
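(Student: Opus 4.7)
The goal is to construct open sets $U, V \subset F(G)$ and an integer $n \in \NN$ satisfying the conditions of Definition~\ref{d:Lattracting}. The strategy is first to obtain, for each Fatou component of $F(G)$ meeting $L$, a pointwise attractor of some element of $G$, and then to upgrade these pointwise attractors to uniform contraction along every random orbit $\gamma \in \G^\NN$.

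\textbf{Setup and local attractors.} Since $L$ is compact and $F(G)$ is open, $L$ meets only finitely many components $A_1, \ldots, A_k$ of $F(G)$; each $L \cap A_i$ is both open and closed in $L$, hence compact in $A_i$. After verifying $\sharp(\CCI \setminus A_i) \geq 3$ (which follows from $J(G) \neq \emptyset$; the degenerate case $L = \CCI$, which would force $F(G) = \CCI$ and is incompatible with attracting, is handled separately using that every element of $\mathrm{Rat}_+$ has nonempty Julia set), I equip each $A_i$ with its hyperbolic metric. By Fatou invariance and connectedness, every $g \in G$ induces a map $\sigma_g : \{1, \ldots, k\} \to \{1, \ldots, k\}$ with $g(A_i) \subset A_{\sigma_g(i)}$, and Schwarz-Pick gives $\|Dg\|_h \leq 1$. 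Using minimality $\overline{G(z)} = L$ for $z \in L \cap A_i$ (Remark~\ref{r:minimal}), I find $g_i \in G$ with $g_i(A_i) \subset A_i$.

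\textbf{Analysis of $g_i$.} Applying the hypothesis to $g_i$ with $U = A_i$: if $g_i \in \mathrm{Rat}_+$, the invariant $g_i$-Fatou component $U'_i \supset A_i$ is not a Siegel disk or Hermann ring; the parabolic basin case is also ruled out, because the parabolic fixed point would lie on $\partial U'_i$ while $L \cap A_i$ is compact in $A_i$ and $g_i$-forward invariant. Hence $U'_i$ is an attracting or super-attracting basin whose fixed point $p_i$ lies in $L \cap A_i$, and $g_i^n \to p_i$ uniformly on hyperbolic neighborhoods of $L \cap A_i$ in $A_i$. If $g_i \in \mathrm{Aut}(\CCI)$ is loxodromic or parabolic, then $g_i^n \to p_i$ uniformly on $\CCI$ away from the repelling fixed point, with $p_i \in L \cap A_i$.

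\textbf{Uniform contraction and conclusion.} The main obstacle is to upgrade these pointwise attractors to a uniform attracting neighborhood valid for every $\gamma \in \G^\NN$. I argue by contradiction: if no $n$ works, then by compactness of $\G$ (in the $\kappa$-topology) and of a suitable compact hyperbolic neighborhood $\overline{U}$ of $L$ in $\bigsqcup_i A_i$, I extract subsequences producing a limit $\gamma^{(\infty)} \in \G^\NN$ and $z_\infty \in \overline{U}$ along which hyperbolic non-expansion becomes equality in the limit. Combining the Schwarz-Pick rigidity (equality case) with the finite combinatorics of the semigroup $\{\sigma_g : g \in G\} \subset \{1,\ldots,k\}^{\{1,\ldots,k\}}$ and a standard normal-families argument, I trace the limit isometry back to an actual element $g \in G$ and a component $U \in \mathrm{Con}(F(G))$ with $U \cap L \neq \emptyset$, $g(U) \subset U$, whose restriction $g|_U$ is a rotational isometry---hence either an elliptic Möbius (when $g \in \mathrm{Aut}(\CCI)$) or, when $g \in \mathrm{Rat}_+$, conjugate to an irrational rotation on a disk or annulus, so $U$ lies in a Siegel disk or Hermann ring of $g$. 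Either alternative contradicts the hypothesis. Thus the attracting conditions hold; the requirement $\sharp(\CCI \setminus V) \geq 3$ is ensured by choosing $V$ inside $\bigsqcup_i A_i$, whose complement contains $J(G)$. The hard part is the normal-families/rigidity step that realizes the limiting isometry as an element of $G$ itself.
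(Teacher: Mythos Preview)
Your setup (finitely many components $A_1,\ldots,A_k$ meeting $L$, hyperbolic metrics, Schwarz--Pick non-expansion) matches the paper's, but the paper's argument is much more direct: it never isolates individual attractors $p_i$ as in your steps 2--3, and it does not argue by contradiction. The paper simply asserts that whenever $g\in G$ satisfies $g(A)\subset A$ for some $A\in\mbox{Con}(W)$, one has $\|Dg_z\|_h<1$ for every $z\in A$; together with $\sharp\mbox{Con}(W)<\infty$ this makes the attracting conclusion ``easy to see''. Your step-3 reasoning (the compact $g$-invariant set $L\cap A$ forces orbits not to escape to $\partial A$, ruling out parabolic basins, degree-$\geq 2$ self-coverings, and non-elliptic automorphisms) is exactly what justifies that strict inequality---but it applies to \emph{every} return map $g\in G$, not just your chosen $g_i$.

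Your step 4 as written has a real gap. Extracting a limit $\gamma^{(\infty)}\in\Gamma^{\NN}$ in the product topology controls only finitely many coordinates at a time, and the ``limit isometry in $G$'' you hope to produce does not exist: every return segment of $\gamma^{(\infty)}$ is already an element of $G$ and therefore \emph{strictly} contracts by the very hypothesis you want to contradict, so Schwarz--Pick rigidity never triggers. What actually closes the argument---and what the paper's terse ``easy to see'' is hiding---is a direct compactness step with no limits at all: since $h\mapsto\sigma_h$ is locally constant on the compact set $\Gamma$, for each $j$ and each $r\leq k$ the set of tuples $(h_1,\ldots,h_r)\in\Gamma^r$ with $h_r\cdots h_1(A_j)\subset A_j$ is compact; on this set crossed with a fixed compact hyperbolic neighborhood $\overline{U_0}\cap A_j$ of $L\cap A_j$, the continuous function $((h_\bullet),z)\mapsto\|D(h_r\cdots h_1)_z\|_h$ is everywhere $<1$, hence $\leq\lambda<1$. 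Pigeonhole on the $k$ components then gives $\|D(\gamma_{k,1})_z\|_h\leq\lambda$ for every $\gamma\in\Gamma^{\NN}$ and every $z\in\overline{U_0}$, and iterating yields the nested neighborhoods $U,V$ of Definition~\ref{d:Lattracting}.
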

\begin{proof}
Let $W:=\bigcup _{A\in \mbox{Con}(F(G)),A\cap L\neq \emptyset }A$ and we take the hyperbolic metric on each connected 
component of $W.$ 
Since $L$ is a compact subset of $F(G)$, we have $\sharp \mbox{Con}(W)<\infty .$ Moreover, 
from assumptions (a) and (b) and \cite[Theorem 2.11]{Mi}, we obtain that 
if $A\in \mbox{Con}(W)$ and if $\gamma _{n,m}(A)\subset A$, then 
$\| D(\gamma _{n,m})_{z}\| _{h }<1$ for each $z\in A$.  
%where 
%$\| D(\gamma _{n,m})_{z}\| _{\rho }$ denotes the norm of the derivative 
%of $\gamma _{n,m}$ at $z$ measured from the hyperbolic metric on the component 
%$W_{1}\in \mbox{Con}(W)$ with $z\in W$ to that on the component 
%$W_{2}\in \mbox{Con}(W)$ with $\gamma _{n,m}(z)\in W_{1}.$ 
From these arguments, it is easy to see that $L$ is attracting for 
$(G,\CCI ).$ 
\end{proof} 
\noindent {\bf Proof of Lemma~\ref{l:Lclassify}:} 
%Lemma~\ref{l:Lclassify} follows from 
Lemma~\ref{l:suffLatt} implies that if $L\subset F(G)$ and (3) in Lemma~\ref{l:Lclassify} does 
not hold, then $L$ is attracting. We now suppose that $L\cap J(G)\neq \emptyset .$ 
Let $z\in L\cap J(G).$ 
By $J(G)=\cup _{h\in \G } h^{-1}(J(G))$ (\cite[Lemma 0.2]{S4}), 
there exists an element $g_{0}\in \G $ with $g_{0}(z)\in J(G).$ Then 
$g_{0}(z)\in L\cap J(G).$ Thus we have proved our lemma. 
\qed 
\begin{lem}
\label{l:inceps}
Let $U$ be a non-empty open subset of $\CCI .$ Let 
${\cal Y}$ be a closed subset of $\emRat.$ 
Suppose that ${\cal Y}$ is strongly $U$-admissible. 
Let $\G \in \emCpt({\cal Y}).$ 
Let $h_{0}$ be an interior point of $\G $ with respect to the topology in the space ${\cal Y}.$ 
Let $K\in \emCpt(U).$ Then, there exists an $\epsilon >0$ such that 
for each $z\in K$, 
$\{ h(z)\mid h\in \G \} \supset B(h_{0}(z),\epsilon ).$ 
\end{lem}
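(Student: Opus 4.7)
The plan is to fix a point $z_0 \in K$ and first establish the desired inclusion locally in a neighborhood of $z_0$ with some $\epsilon (z_0)>0$, then pass to a uniform constant via compactness of $K.$ The main tool is strong $U$-admissibility together with an open mapping / Rouch\'e--Hurwitz argument on an analytic disc in parameter space.

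First I would apply strong $U$-admissibility to the pair $(z_0, h_0) \in U \times {\cal Y}$ to obtain a holomorphic family $\{ g_{\lambda }\} _{\lambda \in \Lambda }$ contained in ${\cal Y}$ with $g_{\lambda _{0}}=h_{0}$, such that $\lambda \mapsto g_{\lambda }(z_{0})$ is non-constant in every neighborhood of $\lambda _{0}.$ Restricting to a one-dimensional analytic disc in $\Lambda $ through $\lambda _{0}$ on which this evaluation is non-constant, I may assume after reparametrisation that $\Lambda $ is an open disc in $\CC $ centered at $\lambda _{0}=0$. Since $h_{0}\in \mathrm{int}(\G )$ in ${\cal Y}$ and $\lambda \mapsto g_{\lambda }\in {\cal Y}$ is continuous, I can choose a closed subdisc $\overline{\Delta }\subset \Lambda $ around $0$ so small that $g_{\lambda }\in \G $ for all $\lambda \in \overline{\Delta }.$ As $\phi _{z_0}(\lambda ):=g_{\lambda }(z_{0})$ is holomorphic and non-constant, its fibres are discrete; further shrinking $\Delta $ if necessary, I may assume $\phi _{z_0}^{-1}(h_{0}(z_{0}))\cap \overline{\Delta }=\{ 0\} $, and set $\eta := \min _{\lambda \in \partial \Delta }d(\phi _{z_0}(\lambda ),h_{0}(z_{0}))>0.$

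Next, by joint continuity of the map $(z,\lambda )\mapsto g_{\lambda }(z)$ on the compact set $K\times \overline{\Delta }$ and by continuity of $h_{0}$, I choose $r=r(z_{0})>0$ so that for every $z\in B(z_{0},r)$ and every $\lambda \in \overline{\Delta }$ one has both $d(g_{\lambda }(z),g_{\lambda }(z_{0}))<\eta /4$ and $d(h_{0}(z),h_{0}(z_{0}))<\eta /4.$ Then for any $w\in B(h_{0}(z),\eta /4)$ I have $d(h_{0}(z_{0}),w)<\eta /2$, so on $\partial \Delta $
\[ d(g_{\lambda }(z),w)\ \geq \ d(g_{\lambda }(z_{0}),h_{0}(z_{0})) - d(h_{0}(z_{0}),w) - d(g_{\lambda }(z),g_{\lambda }(z_{0}))\ >\ \eta - \tfrac{\eta }{2}-\tfrac{\eta }{4}\ =\ \tfrac{\eta }{4}. \]
Hence $\phi _{z}(\lambda ):= g_{\lambda }(z)$ avoids $w$ on $\partial \Delta $, so the topological degree of $\phi _{z}|_{\overline{\Delta }}:\overline{\Delta }\to \CCI $ over $w$ is defined; by the homotopy through the family $(z,w)$ (along which $w$ stays in the complement of $\phi _{z}(\partial \Delta )$ by the estimate above) it agrees with the local degree of $\phi _{z_0}$ at $0$, which is $\ge 1.$ Therefore there exists $\lambda \in \Delta $ with $g_{\lambda }(z)=w$, giving $h:=g_{\lambda }\in \G $ with $h(z)=w.$ Setting $\epsilon (z_{0}):= \eta /4$, this yields $\{ h(z)\mid h\in \G \} \supset B(h_{0}(z),\epsilon (z_{0}))$ for all $z\in B(z_{0},r(z_{0}))\cap K.$

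Finally, compactness of $K$ allows me to cover it by finitely many balls $B(z_{i},r(z_{i}))$ and take $\epsilon := \min _{i}\epsilon (z_{i})>0$, which satisfies the desired conclusion. The only delicate point is the Rouch\'e/degree step on the Riemann sphere $\CCI $, since a priori $h_{0}(z_{0})$ or nearby values could be near $\infty $; this is handled cleanly by working throughout with the spherical metric and the topological degree of a continuous map $\overline{\Delta }\to \CCI $ that is holomorphic and non-constant on $\Delta $, so that local-degree arguments go through without change. Everything else is routine use of continuity and compactness.
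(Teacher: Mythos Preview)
Your proof is correct and follows essentially the same approach as the paper's: apply strong $U$-admissibility at each point of $K$, use an argument-principle/Rouch\'e type open-mapping argument to get a local $\epsilon(z_0)$ that works on a neighborhood $B(z_0,r(z_0))$, and then pass to a uniform $\epsilon$ by compactness. The paper is terser (it simply invokes ``the argument principle'' and leaves implicit the reduction to a one-dimensional parameter disc and the use of $h_0\in\mathrm{int}(\Gamma)$ to keep the family inside $\Gamma$), whereas you spell these steps out carefully, including the spherical-metric degree argument.
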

\begin{proof}
Let $w\in K$. Then there exists a holomorphic family 
$\{ g_{\lambda }\} _{\lambda \in \Lambda }$ of rational maps 
with $\bigcup _{\lambda \in \Lambda }\{ g_{\lambda }\} \subset {\cal Y}$ 
and a point $\lambda _{0}\in \Lambda $ 
such that $g_{\lambda _{0}}=h_{0}$ and  
$\lambda \mapsto g_{\lambda }(w)$ is non-constant in any neighborhood of $\lambda _{0}.$ 
By the argument principle, there exists a $\delta _{w}>0$, an $\epsilon _{w}>0$ and a 
neighborhood $V_{w}$ of $\lambda _{0}$ such that 
for any $z\in K\cap B(w,\delta _{w})$, the map 
$\Psi _{z}: \lambda \mapsto g_{\lambda }(z)$ satisfies that 
$\Psi _{z}(V_{w})\supset B(h_{0}(z),\epsilon _{w}).$ 
Since $K$ is compact, there exists a finite family 
$\{ B(w_{j},\delta _{w_{j}})\} _{j=1}^{s}$ in $\{ B(w,\delta _{w})\} _{w\in K}$ such that 
$\bigcup _{j=1}^{s}B(w_{j},\delta _{w_{j}})\supset K.$ 
From these arguments, the statement of our lemma holds. 
\end{proof}
We now prove Lemma~\ref{l:bebg}. \\ 
\noindent {\bf Proof of Lemma~\ref{l:bebg}:} 
Let $G=\langle \G \rangle .$ By Lemma~\ref{l:Lclassify}, 
we have a bifurcation element for $(\G ,L).$ 
Let $g\in \G $ be a bifurcation element for $(\G ,L).$  
Suppose we have $g\in \mbox{int}(\G ).$ We consider the following two cases. 
Case (1): $(L,g)$ satisfies condition (1) in Definition~\ref{d:bifele}. 
Case (2): $(L,g)$ satisfies condition (2) in Definition~\ref{d:bifele}.  

 We now consider Case (1). 
Then there exists a point $z\in L\cap J(G)$  such that 
$g(z)\in J(G).$ Let ${\cal U}$ be an open  neighborhood of $g$ in int$(\G )$.  
Let $A:=\{ h(z)\mid h\in {\cal U}\} .$ Then 
$A$ is an open subset of $\CCI $ and  
$A \cap J(G)\neq \emptyset $. 
%By \cite[Corollary 3.1]{HM}, 
It follows that $\overline{G(A)}=\CCI .$ 
Since $A\subset L$, we obtain that $L=\CCI .$ However, this contradicts our assumption. 
Therefore, $g$ must belong to $\partial \G .$ 

 We now consider Case (2).  Let $\gamma _{1},\ldots, \gamma _{n-1}\in \G , U$ be as in 
 condition (2) in Definition~\ref{d:bifele}. We set $h=g\circ \gamma _{n-1}\circ \cdots \circ \gamma _{1}.$  
We may assume that $U$ is a Siegel disk or Hermann ring of $h.$   
 Then there exists a biholomorphic map $\zeta :U\rightarrow B$, where $B$ is the unit disk or a round annulus, 
and a $\theta \in \RR \setminus \QQ $,  
such that 
 $r_{\theta }\circ \zeta =\zeta \circ h$ on $U$, where $r_{\theta }(z):= e^{2\pi i\theta }z.$  
Let $z_{0}\in L\cap U$ be a point. 
By Lemma~\ref{l:inceps}, it follows that there exists an open subset $W$ of $\CCI $ such that 
$W\subset G(z_{0})$ and $W\cap \partial U \neq \emptyset .$ 
Therefore $J(G)\cap \mbox{int}(L)\neq \emptyset .$ 
% By \cite[Corollary 3.1]{HM} again, 
Hence, we obtain $L=\CCI .$ However, this contradicts our assumption. 
 Therefore, $g$ must belong to $\partial \G .$ 
 
 Thus, we have proved Lemma~\ref{l:bebg}.
\qed 

We now prove Theorem~\ref{t:msminr}.\\ 
\noindent {\bf Proof of Theorem~\ref{t:msminr}:}
Let ${\cal U}'$ be a small open neighborhood of $\G $ in ${\cal U}.$ 
Let $\G ' \in {\cal U}'$ be an element such that 
$\G \subset \mbox{int}(\G ') $ with respect to 
the topology in the space ${\cal Y}.$ If ${\cal U}' $ is so small,  
then Lemma~\ref{l:Lattnh} implies that 
%Lemma~\ref{l:Lattnear}, Lemma~\ref{l:Lattdense} and 
%Implicit function theorem imply that 
for each $j=1,\ldots ,r$, there exists a unique element 
$L'_{j}\in \Min(\langle \G ' \rangle ,\CCI )$ with 
$L'_{j}\in {\cal V}_{j}$, and this $L'_{j}$ is attracting for 
$(\langle \G ' \rangle ,\CCI ).$ 
Taking ${\cal U}'$ so small, the inclusion $\G \subset \G ' $ and  
Remark~\ref{r:minimal} imply that 
for each $j=1,\ldots ,r$,  
$L'_{j}$ is the unique element in $\Min(\langle \G ' \rangle ,\CCI )$ which contains 
$L_{j}$.  
%Since $F(\langle \G ' \rangle )\neq \emptyset $,  
%By \cite[Lemma 5.34]{Splms10}, it follows that 
%$J_{\ker }(\langle \G ' \rangle )=\emptyset .$ 
%Combining it with \cite[Theorem 3.15]{Splms10}, 
%we obtain that $\sharp \Min(\langle \G ' \rangle )<\infty .$ 
%Let $q:=\sharp \Min(\langle \G ' \rangle ).$ 
%If $L\in \Min(\langle \G ' \rangle )$ is attracting for $(\langle \G ' \rangle ,\CCI )$, then 
%$L\in \{ L'_{j}\} _{j=1}^{r}.$ For, if there exists an  $L\in \Min(\langle \G ' \rangle )\setminus 
%\{ L'_{j}\}_{j=1}^{r}$ which is attracting for $(\langle \G ' \rangle ,\CCI )$, 
%then there exists an $L_{0}\in \Min(\langle \G \rangle ,\CCI )$ with $L_{0}\subset L$ such that 
%$L_{0}$ is attracting for $(\langle \G  \rangle ,\CCI ).$ However, this is a contradiction. 

% Suppose that $r<q.$ 
% Let 
Suppose that there exists an element $L'\in \Min(\langle \G ' \rangle ,\CCI )\setminus \{ L'_{j}\} _{j=1}^{r}$. 
%Then for each $j=r+1,\ldots ,q$, $K_{j}$ is $J$-touching or sub-rotative for $(\langle \Omega \rangle %,\CCI ).$ 
Since $\langle \G \rangle (L')\subset L'$, 
Remark~\ref{r:minimal} implies that there exists a minimal 
set $K\in \Min(\langle \G \rangle ,\CCI )$ such that $K\subset L'.$ 
Since $L_{j}\subset L'_{j}$ for each $j=1,\ldots r$, and 
since $L'\cap \bigcup _{j=1}^{r}L'_{j}=\emptyset $, we obtain 
that $K\not\in \{ L_{j}\} _{j=1}^{r}.$ 
%For each $j=r+1,\ldots ,q, $ let $g_{j}\in \Omega $ be a bifurcation element for 
Hence $K$ is not attracting for $(\langle \G \rangle , \CCI ).$ 
Let $g\in \G $ be a bifurcation element for 
$(\G , K).$ 
%Let $\G '\in {\cal U}$ be an element such that 
%$\Omega \subset \G $ and $\bigcup _{j=r+1}^{q}\{ g_{j}\} \subset \mbox{int}(\G ')$ with respect to the %topology in 
%${\cal Y}.$ Since $\Omega \subset \G '$, we have 
%$\sharp \Min(\langle \G '\rangle ,\CCI )\leq q.$ 
%Taking $\G '$ so close to $\Omega $, by the previous argument  
%we may assume that 
%$\sharp \{ L'\in \Min(\langle \G '\rangle ,\CCI )\mid L' \mbox{ is attracting for }(\langle \G '\rangle %,\CCI )\} =r$, 
%for each $j=1,\ldots, r$, (a) there exists a unique element 
%$L'_{j}\in \Min(\langle \G '\rangle ,\CCI )$ such that $K_{j}\subset L'_{j}$, (b) this $L'_{j}$ is 
%attracting for $(\langle \L'\rangle ,\CCI )$, and (c) $L'_{j}\in {\cal V}_{j}.$    
%Let $L'\in \Min(\langle \G '\rangle ,\CCI )$ be any element. 
%Since $\Omega \subset \G '$, Remark~\ref{r:minimal} implies that 
%there exists a $j\in \{ 1,\ldots ,q\} $ such that 
%$K_{j}\subset L'.$ If $r+1\leq j\leq q$, then 
%$g_{j}$ is a bifurcation element for $(\G ',L') $ and $g_{j}\in \mbox{int}(\G )'$. However, 
Then, $g\in \mbox{int}(\G ')$ and $g$ is a bifurcation element for $(\G ' ,L').$ 
However, this contradicts Lemma~\ref{l:bebg}. 
%Therefore, we must have that 
%$1\leq j\leq r.$ Therefore, $L'=L'_{j}\in {\cal V}_{j}$ and $L'$ is attracting for $(\langle \G %,\rangle ,\CCI )$. 
Therefore, $\Min(\langle \G '\rangle ,\CCI )=\{ L'_{j}\} _{j=1}^{r}.$ 
Moreover, from the above arguments and Remark~\ref{r:msallatt},   
it follows that $\G '$ is mean stable and 
$\sharp (\Min(\langle \G '\rangle ,\CCI ))=r.$ 
Thus we have proved Theorem~\ref{t:msminr}.      
\qed 

\begin{lem}
\label{l:msmincons}
Let $\G \in \emCpt(\emRat)$ be mean stable and suppose $J(\langle \G \rangle ))\neq \emptyset .$ 
Then, there exists an open neighborhood ${\cal U}$ of $\G $ in $\emCpt(\emRat)$ with respect to the 
Hausdorff metric such that 
for each $\G '\in {\cal U}$, 
$\langle \G' \rangle $ is mean stable, $\sharp (J(\langle \G '\rangle ))\geq 3$,  and 
$\sharp \emMin(\langle \G \rangle ,\CCI )=\sharp \emMin(\langle \G '\rangle ,\CCI ).$ 
\end{lem}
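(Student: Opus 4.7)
My plan is to show that mean stability together with $J(G)\neq\emptyset$ is an open condition in the Hausdorff topology on $\Cpt(\Rat)$ and that the count $\sharp\Min(\langle\G\rangle,\CCI)$ is locally constant. The first step is to record that the hypothesis $J(G)\neq\emptyset$ already forces $\sharp J(G)\geq 3$: if $\G$ contains some element of degree $\geq 2$, then $J(g)\subset J(G)$ is infinite; in the remaining case $\G\subset\mathrm{Aut}(\CCI)$, the contracting condition (II) of Definition~\ref{d:as} applied to a Möbius semigroup is incompatible with the covering condition (III) unless $J(G)=\emptyset$. With $\sharp J(G)\geq 3$ in hand, Remark~\ref{r:msallatt} applies and gives $\Min(G,\CCI)=\{L_{1},\ldots,L_{r}\}$ finite with each $L_{j}$ attracting for $(G,\CCI)$.

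Next I apply Lemma~\ref{l:Lattnh} simultaneously to each $L_{j}$, producing pairwise-disjoint open neighborhoods ${\cal V}_{j}$ of $L_{j}$ in $\Cpt(\CCI)$ and a common open neighborhood ${\cal U}_{0}$ of $\G$ in $\Cpt(\Rat)$ such that, for every $\G'\in{\cal U}_{0}$, there is a unique $L_{j}^{(\G')}\in\Min(\langle\G'\rangle,\CCI)\cap{\cal V}_{j}$ and it is attracting for $(\langle\G'\rangle,\CCI)$. Shrinking ${\cal U}_{0}$ further, I transfer the mean-stability data $(U,V,n)$ from $\G$ to $\G'$: the inclusion $\gamma'_{n,1}(\overline U)\subset V$ for all $\gamma'\in(\G')^{\NN}$ persists under small Hausdorff perturbation because the set of length-$n$ compositions depends continuously on $\G$, and the covering property (III) carries over by approximating each letter of the finitely many words in $\G$ realizing a cover of $\CCI$ by a nearby letter in $\G'$. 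Combined with Montel's theorem (applicable since $\sharp J(G)\geq 3$), this yields $\overline U\subset F(\langle\G'\rangle)$, so $\langle\G'\rangle$ is mean stable and $\sharp J(\langle\G'\rangle)\geq 3$.

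It remains to show that on a possibly smaller neighborhood ${\cal U}\subset{\cal U}_{0}$, $\Min(\langle\G'\rangle,\CCI)$ has no elements beyond $\{L_{j}^{(\G')}\}_{j=1}^{r}$. Arguing by contradiction, suppose there exist $\G'_{n}\to\G$ in Hausdorff metric and $L'_{n}\in\Min(\langle\G'_{n}\rangle,\CCI)$ with $L'_{n}\notin\bigcup_{j}{\cal V}_{j}$. By compactness of $\Cpt(\CCI)$, pass to a Hausdorff subsequential limit $L^{\ast}\in\Cpt(\CCI)$. The limit is $G$-forward invariant: for any $h=h_{i_{k}}\circ\cdots\circ h_{i_{1}}\in G$, lift each letter to $h_{i_{j}}^{(n)}\in\G'_{n}$ with $h_{i_{j}}^{(n)}\to h_{i_{j}}$ and form $h^{(n)}=h_{i_{k}}^{(n)}\circ\cdots\circ h_{i_{1}}^{(n)}\in\langle\G'_{n}\rangle$; then $h^{(n)}(L'_{n})\subset L'_{n}$ passes to $h(L^{\ast})\subset L^{\ast}$. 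By Remark~\ref{r:minimal}, $L^{\ast}$ contains some $L_{j}\in\Min(G,\CCI)$.

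The main obstacle, and the crux of the argument, is to turn $L_{j}\subset L^{\ast}$ into a contradiction, and this is where the attracting property of $L_{j}$ enters. Choose an open neighborhood $W_{j}$ of $L_{j}$ inside its basin (and disjoint from the analogous $W_{j'}$ for $j'\neq j$); by the uniformity in Lemma~\ref{l:Lattnear} together with the Fatou-set transfer from the preceding paragraph, for $\G'_{n}$ close to $\G$ all sufficiently long compositions $\gamma'_{m,1}$ with $\gamma'\in(\G'_{n})^{\NN}$ push $\overline{W_{j}}$ into an arbitrarily small neighborhood of $L_{j}^{(n)}:=L_{j}^{(\G'_{n})}$, so $W_{j}$ lies in the basin of $L_{j}^{(n)}$ for $\langle\G'_{n}\rangle$. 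Since $L_{j}\subset L^{\ast}=\lim L'_{n}$, for large $n$ there exists $w_{n}\in L'_{n}\cap W_{j}$; then $\overline{\langle\G'_{n}\rangle(w_{n})}=L'_{n}$ by minimality and forward invariance, while the $\langle\G'_{n}\rangle$-orbit of $w_{n}$ accumulates on $L_{j}^{(n)}$, forcing $L_{j}^{(n)}\subset L'_{n}$. Two minimal sets of $\langle\G'_{n}\rangle$ are equal or disjoint by Remark~\ref{r:minimal}, but $L_{j}^{(n)}\in{\cal V}_{j}$ while $L'_{n}\notin{\cal V}_{j}$ precludes equality, contradicting $L_{j}^{(n)}\subset L'_{n}$ and completing the proof.
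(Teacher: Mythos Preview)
Your overall strategy is sound and reaches the same conclusion as the paper, but the organization differs in two places and one step is under-justified.

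\textbf{Where you differ from the paper.} The paper does not run a contradiction/Hausdorff-limit argument for the minimal-set count. Instead it works throughout with the hyperbolic neighborhoods $d_h(S_\G,\epsilon)$ inside $W=\bigcup_{A\in\mathrm{Con}(F(G)),\,A\cap S_\G\neq\emptyset}A$: from mean stability one has $\gamma_{n,1}(d_h(S_\G,\epsilon_1))\subset d_h(S_\G,\epsilon_2)$, this inclusion and the finite covering condition (III) persist for nearby $\G'$, hence $\G'$ is mean stable and, directly, $\bigcup_{L'\in\Min(\langle\G'\rangle,\CCI)}L'\subset d_h(S_\G,\epsilon_1)$; the count then follows from Lemma~\ref{l:Lattnear}. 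For $\sharp J(\langle\G'\rangle)\geq 3$ the paper uses density of repelling cycles in $J(\langle\G\rangle)$ together with the implicit function theorem, rather than your (also valid) observation that a degree $\geq 2$ element persists under Hausdorff perturbation. Your M\"obius sub-case is slightly misstated: conditions (II)+(III) with $\G\subset\mathrm{Aut}(\CCI)$ only force $J(G)$ to be empty \emph{or infinite} (if $J(G)$ is finite it is $G$-forward invariant and (III) fails at its points); either way $\sharp J(G)\geq 3$ under the hypothesis $J(G)\neq\emptyset$, so the conclusion survives.

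\textbf{The soft spot.} In your contradiction step you assert that ``$W_j$ lies in the basin of $L_j^{(n)}$ for $\langle\G'_n\rangle$'' by appealing to Lemma~\ref{l:Lattnear} and the Fatou transfer. Those ingredients only give $\gamma'_{n,1}(\overline{W_j})\subset d_h(L_j,\epsilon_2)\subset W_j$ and $\overline{U}\subset F(\langle\G'_n\rangle)$; they do not by themselves yield $d(\gamma'_{m,1}(z),L_j^{(n)})\to 0$. The missing (easy) ingredient is one-step approximate invariance: since each $h\in\G$ maps $W$ into $W$ and is Schwarz--Pick nonexpanding, $h(d_h(L_j,\epsilon_2))\subset d_h(L_j,\epsilon_2)$; hence for $\G'_n$ close enough, every word of length $<n$ over $\G'_n$ maps $\overline{d_h(L_j,\epsilon_2)}$ into $d_h(L_j,\epsilon_1)\subset W_j$. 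Combined with the length-$n$ trapping, this shows that the full $\langle\G'_n\rangle$-orbit of any $w_n\in L'_n\cap W_j$ stays in $W_j$, so $L'_n=\overline{\langle\G'_n\rangle(w_n)}\subset W_j$, and the uniqueness clause of Lemma~\ref{l:Lattnear} gives $L'_n=L_j^{(n)}$ directly, without the basin claim. This is essentially the mechanism behind the paper's line ``$\bigcup_{L'}L'\subset d_h(S_\G,\epsilon_1)$'', and inserting it repairs your argument.
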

\begin{proof}
Since $\G $ is mean stable, $J_{\ker }(\langle \G \rangle )=\emptyset .$ 
Combining this with that $J(\langle \G \rangle )\neq \emptyset $ and \cite[Theorem 3.15-3]{Splms10}, 
we obtain $\sharp (J(\langle \G \rangle ))\geq 3.$ 
By \cite[Theorem 3.1]{HM} and \cite[Lemma 2.3(f)]{S3}, 
the repelling cycles of elements of $\langle \G \rangle $ is dense in $J(\langle \G \rangle ).$ 
Combining it with implicit function theorem, we obtain that there exists a neighborhood 
${\cal U}'$ of $\G $ in $\Cpt(\Rat )$ such that 
for each $\G '\in {\cal U}'$, $\sharp (J(\langle \G '\rangle ))\geq 3.$  
 
By \cite[Theorem 3.15-6]{Splms10}, $\sharp (\Min(\langle \G \rangle ,\CCI ))<\infty .$ 
Let $S_{\G }:= \bigcup _{L\in \Min(\langle \G \rangle ,\CCI )}L.$ 
By \cite[Proposition 3.65]{Splms10}, $S_{\G }\subset F(\langle \G \rangle ).$ 
Let $W:= \bigcup _{A\in \mbox{Con}(F(\langle \G \rangle )), A\cap S_{\G }\neq \emptyset }A.$ 
We use the notation in Definition~\ref{d:opensethyp} for this $W.$  
Let $0<\epsilon _{2}<\epsilon _{1}.$ 
Since $\G $ is mean stable, 
there exists an $n\in \NN $ such that 
for each $\g \in \GN $, 
$\gamma _{n,1}(d_{h}(S_{\G },\epsilon _{1}))\subset d_{h}(S_{\G },\epsilon _{2}).$ 
Moreover, for each $z\in \CCI $, there exists a map $g_{z}\in \langle \G \rangle $ such that 
$g_{z}(z)\in d_{h}(S_{\G },\epsilon _{1}).$ 
Therefore, there exist finitely many points $z_{1},\ldots z_{s}$ in $\CCI $ and 
positive numbers $\delta _{1},\ldots ,\delta _{s}$ with $\cup _{j=1}^{s}B(z_{j},\delta _{j})=\CCI $ such that 
for each $j=1,\ldots ,s$, 
$\overline{g_{z_{j}}(B(z_{j},\delta _{j}))}\subset d_{h}(S_{\G },\epsilon _{1}).$ 
Let $\epsilon _{3}\in (\epsilon _{2},\epsilon _{1}).$ 
Let ${\cal U} (\subset {\cal U}')$ be a small neighborhood of $\G $ in $\Cpt(\Rat )$. 
Then for each $\G '\in {\cal U}$ and for each $\g \in \G '^{\NN }$, 
$\gamma _{n,1}(d_{h}(S_{\G },\epsilon _{1}))\subset d_{h}(S_{\G },\epsilon _{3})$. Moreover,   
for each $\G '\in {\cal U}$ and for each $z\in \CCI $, 
there exists a map $g_{z,\G '}\in \langle \G '\rangle $ such that 
$g_{z,\G '}(z)\in d_{h}(S_{\G },\epsilon _{1}). $ 
Hence, for each $\G ' \in {\cal U}$, 
$\G '$ is mean stable and 
$\bigcup _{L'\in \Min(\langle \G '\rangle ,\CCI )}L'\subset d_{h}(S_{\G },\epsilon _{1}).$ 
Combining this with  Lemma~\ref{l:Lattnear}, and shrinking ${\cal U}$ if necessary, 
we obtain that 
for each $\G '\in {\cal U}$, 
$\sharp (\Min(\langle \G '\rangle ,\CCI ))=\sharp (\Min(\langle \G \rangle ,\CCI )).$  
\end{proof}

We now prove Theorem~\ref{t:msminrme}. \\ 
{\bf Proof of Theorem~\ref{t:msminrme}:} 
There exists a sequence $\{ \tau _{n}\} _{n=1}^{\infty }$ in 
${\frak M}_{1,c}({\cal Y})$ with $\sharp \G _{\tau _{n}}<\infty (\forall n)$ such that 
$\tau _{n}\rightarrow \tau $ in $({\frak M}_{1,c}({\cal Y}),{\cal O})$ 
as $n\rightarrow \infty .$ 
Therefore, by Lemma~\ref{l:Lattnh}, we may assume that 
$\sharp \G _{\tau }<\infty .$ 
We write $\tau =\sum _{j=1}^{s}p_{j}\delta _{h_{j}}$, 
where $\sum _{j=1}^{s}p_{j}=1$, $p_{j}>0$ for each $j$, and $h_{j}\in {\cal Y}$ for each 
$j$. By Theorem~\ref{t:msminr}, enlarging the support of $\tau $, 
we obtain an element $\rho '\in {\cal U}$ such that 
statements (1) and (2) in our theorem with $\rho $ being replaced by $\rho '$ hold. 
Let $\rho $ be a finite measure which is close enough to $\rho '.$ 
By Lemma~\ref{l:msmincons} and Lemma~\ref{l:Lattnh}, we obtain that 
this $\rho $ has the desired property. 
Thus we have proved Theorem~\ref{t:msminrme}. 
\qed 

We now prove Theorem~\ref{t:pmsod}. \\
{\bf Proof of Theorem~\ref{t:pmsod}:} 
Let $\tau \in {\cal M}_{1,c}({\cal Y}).$ 
Since $\G _{\tau }$ is compact in ${\cal P}$, 
we obtain that $\{ \infty \} $ is an attracting 
minimal set for $(G_{\tau },\CCI ).$ 
By Theorem~\ref{t:msminrme} and Lemma~\ref{l:msmincons}, 
the statements in our theorem hold. 
\qed 

We now prove Theorem~\ref{t:noattmin}. \\ 
{\bf Proof of Theorem~\ref{t:noattmin}:} 
Let $\G ' \in \Cpt(\Rat)$ be an element such that $\G \subset \mbox{int} (\G ')$ 
with respect to the topology in ${\cal Y}.$ 
We now show the following claim.\\ 
Claim: $\Min (\langle \G '\rangle ,\CCI )=\{ \CCI \} .$ 

 To prove this claim, suppose this is not true. 
Then  $\Min (\langle \G \rangle ,\CCI )\neq \{ \CCI \} .$ 
Since there exists no attracting minimal set for $(\langle \G \rangle ,\CCI )$, 
from Lemma~\ref{l:bebg} it follows that there exists a bifurcation element $g\in \G $ for $\G .$ 
Then $g\in \mbox{int}(\G ')$ and $g$ is a bifurcation element for $\G '.$   However, this contradicts Lemma~\ref{l:bebg}. 
Thus, we have proved the claim. 

Let $h\in \mbox{int}(\G ')$ be an element and 
let $z\in J(\langle \G '\rangle )$ be a point which is not a critical value of $h.$ 
%By  Lemma~\ref{l:inceps}, 
Then we obtain that int$(\langle \G '\rangle ^{-1}(\{ z\}))\neq \emptyset .$ 
Therefore,  $K:=\overline{F(\langle \G '\rangle )}$ is not equal to $\CCI .$ 
By Remark~\ref{r:minimal} and the above claim, it follows that 
$K=\emptyset .$ Thus $J(\langle \G '\rangle )=\CCI .$ 
Hence, we have proved statement (1) in our theorem. 

Statement (2) in our theorem easily follows from statement (1).    
\qed 

We now prove Corollary~\ref{c:noattminme}. \\
{\bf Proof of Corollary~\ref{c:noattminme}:} 
%There exists an element  $\rho '\in {\cal U}$ such that 
%$\sharp \G _{\rho '}<\infty .$  
%Let $\rho \in {\cal U}$ be such that int$\G _{\rho }\supset \G _{\rho '}.$ 
%By 
Let $\epsilon >0$ be a small number. 
Let $\{ h_{j,\epsilon }\} _{j=1}^{\infty }$ be a dense countable subset of 
$\overline{B(\G _{\tau },\epsilon )}$ with respect to the topology in ${\cal Y}.$ 
Let $\{ p_{j,\epsilon }\} _{j=1}^{\infty }$ be a sequence of positive numbers such that 
$\sum _{j=1}^{\infty }p_{j,\epsilon }=1.$ Let 
$\tau _{\epsilon }:=(1-\epsilon )\tau +\epsilon \sum _{j=1}^{\infty }p_{j,\epsilon }\delta _{h_{j,\epsilon }}.$   
Then $\G _{\tau }\subset \mbox{int}(\G _{\tau _{\epsilon }})$ and 
$\tau _{\epsilon }\rightarrow \tau $ in $({\frak M}_{1,c}({\cal Y}),{\cal O})$ as $\epsilon \rightarrow 0.$ 
Let $\epsilon >0$ be a small number and let $\rho := \tau _{\epsilon }.$ 
By Theorem~\ref{t:noattmin}, this $\rho $ has the desired property.  
\qed 

We now prove Corollary~\ref{c:msminfull}.\\ 
{\bf Proof of Corollary~\ref{c:msminfull}:} 
Corollary~\ref{c:msminfull} easily follows from Theorem~\ref{t:msminrme} 
and Corollary~\ref{c:noattminme}.  
\qed 

\begin{df}
\label{d:taun}
Let ${\cal Y}$ be a closed subset of $\Rat .$ For each 
$\tau \in {\frak M}_{1}({\cal Y})$ and for each $n\in \NN $, 
let $\tau ^{n}:= \otimes _{j=1}^{n}\tau \in {\frak M}_{1}({\cal Y}^{n}).$ 
\end{df}
The following lemma is easily obtained by some fundamental observations. 
The proof is left to the readers. 
\begin{lem}
\label{l:rhontaun}
If $\rho _{n}\rightarrow \rho $ in $({\frak M}_{1,c}(\emRat ^{m}),{\cal O})$ as 
$n\rightarrow \infty $ and if 
$\tau _{n}\rightarrow \tau $ in $({\frak M}_{1,c}(\emRat ),{\cal O})$ as 
$n\rightarrow \infty $, 
then $\rho _{n}\otimes \tau _{n}\rightarrow \rho \otimes \tau $ in 
$({\frak M}_{1,c}(\emRat ^{m+1}),{\cal O})$ as $n\rightarrow \infty .$ 
In particular, if $\nu _{k}\rightarrow \tau $ in $({\frak M}_{1,c}(\emRat ),{\cal O})$ 
as $k\rightarrow \infty $, then 
$\nu _{k}^{m}\rightarrow \tau ^{m}$ in $({\frak M}_{1,c}(\emRat ^{m}),{\cal O})$ 
as $k\rightarrow \infty $, for each $m\in \NN .$ 
\end{lem}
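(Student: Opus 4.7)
The plan is to verify the two conditions defining ${\cal O}$-convergence in ${\frak M}_{1,c}(\Rat^{m+1})$ for the sequence $\rho_n \otimes \tau_n$ with limit $\rho \otimes \tau$: namely, (a) weak convergence against bounded continuous test functions, and (b) Hausdorff convergence of topological supports.

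Condition (b) is essentially formal. One has the identity $\text{supp}(\rho_n \otimes \tau_n) = \text{supp}\,\rho_n \times \text{supp}\,\tau_n$, and similarly for the limit, so (b) reduces to the elementary fact that the Hausdorff metric respects products: if $A_n \to A$ in $\Cpt(X)$ and $B_n \to B$ in $\Cpt(Y)$, then $A_n \times B_n \to A \times B$ in $\Cpt(X \times Y)$ equipped with any standard product metric. This is verified by a routine $\epsilon$-neighborhood computation.

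For condition (a), let $\varphi \colon \Rat^{m+1} \to \CC$ be bounded and continuous. Fubini gives
$$\int \varphi \, d(\rho_n \otimes \tau_n) = \int \psi_n(x) \, d\rho_n(x), \qquad \psi_n(x) := \int \varphi(x,y) \, d\tau_n(y),$$
and analogously $\int \varphi \, d(\rho \otimes \tau) = \int \psi \, d\rho$ with $\psi(x) := \int \varphi(x,y) \, d\tau(y)$. Splitting
$$\Bigl|\int \psi_n\, d\rho_n - \int \psi\, d\rho\Bigr| \le \Bigl|\int \psi_n\, d\rho_n - \int \psi\, d\rho_n\Bigr| + \Bigl|\int \psi\, d\rho_n - \int \psi\, d\rho\Bigr|,$$
the second term tends to zero since $\psi$ is bounded (by $\|\varphi\|_\infty$) and continuous (by dominated convergence, using compactness of $\text{supp}\,\tau$), and $\rho_n \to \rho$ weakly. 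The first term is the main step. Using the Hausdorff convergence of supports together with the local compactness of $\Rat$ (each component $\Rat_d$ is a finite-dimensional complex manifold), one extracts compact sets $K \subset \Rat^m$ and $L \subset \Rat$ with $\text{supp}\,\rho_n \cup \text{supp}\,\rho \subset K$ and $\text{supp}\,\tau_n \cup \text{supp}\,\tau \subset L$ for all large $n$. On the compact product $K \times L$ the function $\varphi$ is uniformly continuous, so the family $\{\varphi(x,\cdot)\}_{x\in K}$ is equicontinuous and uniformly bounded on $L$; combined with $\tau_n \to \tau$ weakly, this upgrades pointwise convergence $\psi_n(x) \to \psi(x)$ to uniform convergence on $K$ (reduce to a finite $\epsilon$-net in $K$ via equicontinuity, and invoke weak convergence at each net point). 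Since $\rho_n$ is concentrated on $K$ for large $n$, the first term is controlled by $\|\psi_n - \psi\|_{\infty, K}$, which tends to $0$.

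The ``in particular'' statement for $\nu_k^m \to \tau^m$ then follows by an immediate induction on $m$: the base case $m=1$ is tautological, and the induction step applies the already-proved binary statement with $\rho_n := \nu_n^m$ and $\tau_n := \nu_n$. The only genuine obstacle in the argument is the upgrade from pointwise to uniform convergence of $\psi_n$ on $K$, but this is standard once the common compact set $K$ has been produced from the Hausdorff-convergence hypothesis.
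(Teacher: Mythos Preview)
Your argument is correct. The paper does not actually give a proof of this lemma; it merely states that it ``is easily obtained by some fundamental observations'' and leaves the proof to the reader. Your write-up supplies exactly the expected argument: the support identity $\text{supp}(\rho_n\otimes\tau_n)=\text{supp}\,\rho_n\times\text{supp}\,\tau_n$ together with Hausdorff stability of products handles condition (ii) of the ${\cal O}$-topology, and for condition (i) you correctly exploit the Hausdorff-convergence hypothesis to confine all supports inside fixed compacts $K$ and $L$, after which uniform continuity of $\varphi$ on $K\times L$ gives equicontinuity of the fiber integrals and permits the $\epsilon$-net upgrade from pointwise to uniform convergence. The induction for $\nu_k^m\to\tau^m$ is the obvious one. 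There is nothing to compare here beyond noting that your level of detail exceeds what the paper provides.
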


We now prove Theorem~\ref{t:msmtaust}.\\ 
{\bf Proof of Theorem~\ref{t:msmtaust}:} 
Statement \ref{t:msmtaust1} follows from Lemma~\ref{l:msmincons}. 
We now prove statements \ref{t:msmtaust1-1},\ref{t:msmtaust1-2},\ref{t:msmtaust2}. Let $\Omega $ be a small open neighborhood of $\tau $ in 
$({\frak M}_{1,c}({\cal Y}),{\cal O})$ such that for each $\nu \in \Omega $, 
$\nu $ is mean stable, $\sharp (J(G_{\nu }))\geq 3$ and $\sharp (\Min(G_{\nu },\CCI ))=\sharp (\Min(G_{\tau },\CCI )).$ 
For each $L\in \Min(G_{\tau },\CCI )$, let $U_{L}$ be an open neighborhood of 
$L$ in $\Cpt(\CCI )$ such that 
$U_{L}\cap U_{L'}=\emptyset $ if $L, L'\in \Min(G_{\tau },\CCI )$ and $L\neq L'.$  
Let $L\in \Min(G_{\tau },\CCI )$ be an element. 
Let $r_{L}:=\dim _{\CC }(\mbox{LS}({\cal U}_{f,\tau}(L))).$ 
For each $r\in \NN $ and each $\nu \in \Omega $, 
we set $\Lambda _{r,\nu }:= \{ h_{r}\circ \cdots \circ h_{1}\mid h_{j}\in \G _{\nu } (\forall j)\} $ and 
set $G_{\nu }^{r}:= \langle \Lambda _{r,\nu } \rangle .$ 
By \cite[Theorem 3.15-12]{Splms10}, we have 
$r_{L}=\sharp (\Min (G_{\tau }^{r_{L}},L)).$ 
Let $\{ L_{j}\} _{j=1}^{r_{L}}=\Min(G_{\tau }^{r_{L}},\CCI ).$ 
By the proof of Lemma 5.16 in \cite{Splms10}, we may assume that 
for each $j=1,\ldots, r_{L}$ and for each $h\in \G _{\tau }$, 
$h(L_{j})\subset L_{j+1}$, where $L_{r_{L}+1}:= L_{1}.$ 
Moreover, by Lemma~\ref{l:Lattnh}, shrinking $\Omega $ if necessary, we obtain that 
for each $\nu \in \Omega $, 
there exists a unique $Q_{L, \nu }\in \Min(G_{\nu },\CCI )$ such that 
$Q_{L,\nu }\in U_{L}.$ Moreover, by Lemma~\ref{l:Lattnh} again, we may assume that 
the map $\nu \mapsto Q_{L,\nu }\in \Cpt(\CCI )$ is continuous on $\Omega .$ 
For each $j=1,\ldots, r_{L}$, 
let $V_{L,j}:= B(L_{j},\epsilon )$ (where $\epsilon >0$ is a small number)  
such that $V_{L,i}\cap V_{L,j}=\emptyset $ if $i\neq j.$ 
By Lemma~\ref{l:Lattnear}, shrinking $\Omega $ if necessary,  
there exists a unique element $L_{j,\nu }\in \Min(G_{\nu }^{r_{L}},\CCI )$ with $L_{j,\nu }\subset V_{L,j}.$ 
By Lemma~\ref{l:Lattnh}, we may assume that for each $j$, the map $\nu \mapsto L_{j,\nu }$ is continuous on $\Omega .$ 
Then, $\tilde{L}_{j+1,\nu }:= \bigcup _{h\in \G _{\nu }}h(L_{j,\nu })$ belongs to $\Min (G_{\nu }^{r_{L}},\CCI )$ and 
shrinking $\Omega $ if necessary, we obtain $\tilde{L}_{j+1,\nu }\subset V_{L,j+1}$, 
where $V_{L,r_{L}+1}:= V_{L,1}.$  
By the uniqueness statement of Lemma~\ref{l:Lattnear}, it follows that 
for each $j=1,\ldots ,r_{L}$, we have $\tilde{L}_{j+1,\nu }=L_{j+1,\nu }$, 
where $L_{r_{L}+1,\nu }:=L_{1,\nu }.$   
Since $\tilde{Q}_{L,\nu }:= \bigcup _{j=1}^{r_{L}}L_{j,\nu }$ belongs to $\Min(G_{\nu },\CCI )$ and 
$\tilde{Q}_{L,\nu }\in U_{L}$ (shrinking $\Omega $ if necessary), 
we obtain that $\tilde{Q}_{L,\nu }=Q_{L,\nu }.$  
% Combining these arguments and Lemma~\ref{l:Lattnh}, and 
%shrinking $\Omega $ if necessary, we obtain that 
From these arguments, it follows that 
for each $\nu \in \Omega $, 
$\Min(G_{\nu }^{r_{L}},Q_{L,\nu })=\{ L_{j,\nu }\} _{j=1}^{r_{L}}$, 
$L_{j+1,\nu }=\bigcup _{h\in \G _{\nu }}h(L_{j,\nu })$, and 
$\sharp (\Min(G_{\nu }^{r_{L}},Q_{L,\nu }))=r_{L}. $ 
We now prove the following claim.\\ 
Claim 1: For each $\nu \in \Omega $, $\dim _{\CC }(\mbox{LS}({\cal U}_{f,\nu }(Q_{L,\nu })))\geq r_{L}.$ 

To prove this claim, let $a_{L}:= \exp (2\pi i/r_{L})$ and let $\psi _{i}:= \sum _{j=1}^{r_{L}}a_{L}^{ij}1_{L_{j,\nu }}\in C(Q_{L,\nu }).$ 
Then $M_{\nu }(\psi _{i})=\sum a_{L}^{ij}1_{L_{j-1,\nu }}=a_{L}^{i}\sum a_{L}^{i(j-1)}1_{L_{j-1,\nu }}=a_{L}^{i}\psi _{i}$, 
where $L_{0,\nu }:= L_{r_{L},\nu }.$  
Hence, the above claim holds. 

 For each $\nu \in \Omega $, 
 let $\zeta _{\nu }:= \{ A\in \mbox{Con}(F(G_{\nu }))\mid A\cap Q_{L,\nu }\neq \emptyset \} $ and 
 let $W_{\nu }:= \bigcup _{A\in \zeta _{\nu }}A.$ 
Shrinking $\Omega $ if necessary, we obtain that  
for each $A\in \zeta _{\tau }$, there exists a unique element 
$\alpha _{\nu }(A)\in \zeta _{\nu }$ such that 
$A\cap \alpha _{\nu }(A)\neq \emptyset .$ 
It is easy to see that for each $\nu \in \Omega $, 
$\alpha _{\nu }: \zeta _{\tau }\rightarrow \zeta _{\nu }$ is bijective. 
This $\alpha _{\nu }$ induces  a linear isomorphism 
$\Psi _{\nu }:C_{W_{\nu }}(W_{\nu })\cong C_{W_{\tau }}(W_{\tau })$. 
Let $\tilde{M}_{\nu }:C_{W_{\tau }}(W_{\tau })\rightarrow C_{W_{\tau }}(W_{\tau })$ 
be the linear operator defined by 
$\tilde{M}_{\nu }:= \Psi _{\nu }\circ M_{\tau }\circ \Psi _{\nu }^{-1}.$ 
Then $\dim _{\CC }(C_{W_{\tau }}(W_{\tau }))<\infty $ and 
$\nu \mapsto (\tilde{M}_{\nu }: C_{W_{\tau }}(W_{\tau })\rightarrow C_{W_{\tau }}(W_{\tau }))$ 
is continuous. Moreover, by \cite[Theorem 3.15-8, Theorem 3.15-1]{Splms10}, 
each unitary eigenvalue of $M_{\tau }:C_{W_{\tau }}(W_{\tau })\rightarrow C_{W_{\tau }}(W_{\tau })$ is simple. 
Therefore, taking $\Omega $ small enough, we obtain that 
the dimension of the space of finite linear combinations of unitary eigenvectors 
of $\tilde{M}_{\nu }:C_{W_{\tau }}(W_{\tau }) \rightarrow C_{W_{\tau }}(W_{\tau })$ is less than or 
equal to $r_{L}.$ Combining this with Claim 1 and \cite[Theorem 3.15-10, Theorem 3.15-1]{Splms10}, we obtain that 
statement \ref{t:msmtaust2} of our theorem holds. 
By these arguments, statements \ref{t:msmtaust1-1},\ref{t:msmtaust1-2}, \ref{t:msmtaust2} hold. 

 We now prove statement \ref{t:msmtaust3} of our theorem. 
 For each $L\in \Min(G_{\tau },\CCI )$ and each $i=1,\ldots ,r_{L}$, 
 we set $\tilde{\psi }_{L,i}=\sum _{j=1}^{r_{L}}a_{L}^{ij}1_{L_{j}}\in C(L).$ 
 Then $M_{\tau }(\tilde{\psi }_{L,i})=a_{L}^{i}\tilde{\psi }_{L,i}.$ 
By \cite[Theorem 3.15-9]{Splms10}, 
there exists a unique element $\varphi _{L,i}\in C(\CCI )$ such that 
$\varphi _{L,i}|_{L}=\tilde{\psi }_{L,i}$, 
such that $\varphi _{L,i}|_{L'}=0$ for any $L'\in \Min(G_{\tau },\CCI )$ with $L'\neq L$, 
and such that $M_{\tau }(\varphi _{L,i})=a_{L}^{i}\varphi _{L,i}.$ 
Similarly, by using the notation in the previous arguments,  
for each $\nu \in \Omega $, for each $L\in \Min(G_{\tau },\CCI ), $ and for each 
$i=1,\ldots, r_{L}$, 
we set 
$\tilde{\psi} _{L,i,\nu }:=\sum _{j=1}^{r_{L}}a_{L}^{ij}1_{L_{j,\nu }}\in C(Q_{L,\nu })$.  
By \cite[Theorem 3.15-9]{Splms10}, 
there exists a unique element  
$\varphi _{L,i,\nu }\in C(\CCI )$ 
such that 
$\varphi _{L,i,\nu }|_{Q_{L,\nu }}=\tilde{\psi }_{L,i,\nu }$, 
such that $\varphi _{L,i,\nu }|_{Q'}=0$ for any $Q'\in \Min(G_{\nu },\CCI )$ with $Q'\neq Q_{L,\nu }$, 
and such that $M_{\nu }(\varphi _{L,i,\nu })=a_{L}^{i}\varphi _{L,i,\nu }.$ 
By statement \ref{t:msmtaust2} of our theorem, it follows that 
$\{ \varphi _{L,i,\nu }\} _{L\in \Min(G_{\tau },\CCI ),i=1,\ldots ,r_{L}}$ is a basis of 
$\mbox{LS}({\cal U}_{f,\nu }(\CCI )).$ 

Let $L\in \Min(G_{\tau },\CCI )$ and let $i=1,\ldots ,r_{L}.$  
We now prove that $\nu \mapsto \varphi _{L,i,\nu }\in C(\CCI )$ is continuous on $\Omega $. 
For simplicity, we prove that $\nu \mapsto \varphi _{L,i,\nu }\in C(\CCI )$ is continuous at $\nu =\tau .$ 
In order to do that, let $A_{j}$ be a relative compact open subset of $\CCI $ such that 
each connected component of $A_{j}$ intersects $L_{j}$, 
such that for each $\nu \in \Omega $, 
$L_{j,\nu }\subset A_{j}\subset \overline{A_{j}}\subset F(G_{\nu })$,  such that 
$\varphi _{L,i,\nu }|_{A_{j}}\equiv a_{L}^{ij}$, and such that 
$\{ \overline{A_{j}}\} _{j=1}^{r_{L}}$ are mutually disjoint. 
For each $j=1,\ldots ,r_{L}$, 
let $A_{j}'$ be an open subset of $A_{j}$ such that 
$L_{j}\subset A_{j}'\subset \overline{A_{j}'}\subset A_{j}.$   
Then there exists a number $s\in \NN $ and a neighborhood $\Omega '$ of $\tau $ in $({\frak M}_{1,c}({\cal Y}),{\cal O})$ 
such that for each $j=1,\ldots ,r_{L}$, for each $\nu \in \Omega '$, and for each 
$\g \in \G _{\nu }^{\NN }$, 
$\g _{s,1}(A_{j})\subset A_{j}'.$ 
Moreover, for each $K\in \Min(G_{\tau },\CCI )$ with $K\neq L$, let 
$B_{K}$ and $B_{K}' $ be two open subsets of $\CCI $ such that 
$K\subset B_{K}'\subset \overline{B_{K}'}\subset B_{K}\subset \overline{B_{K}}\subset F(G_{\tau })$ and such that 
each connected component of $B_{K}$ intersects $K.$ Then shrinking $\Omega '$ if necessary, 
there exists a number $s_{K}\in \NN $ such that 
for each $\nu \in \Omega '$ and for each $\g \in \G _{\nu }^{\NN }$, 
$\g _{s_{K},1}(B_{K})\subset B_{K}'.$ 
We may assume that $s_{K}=s$ for each $K\in \Min(G_{\tau },\CCI )$ with $K\neq L.$ 
Let $C:=\bigcup _{j=1}^{r_{L}}A_{j}\cup \bigcup _{K\in \Min(G_{\tau },\CCI ), K\neq L}B_{K}.$ 
Then for each $z\in \CCI $, 
$\lim _{n\rightarrow \infty }\int _{(\Rat )^{\NN }}1_{C}(\g _{n,1}(z)) d\tilde{\tau }(\g )=1.$ 
Let $\epsilon \in (0,1)$ be a small number. 
Let $z\in \CCI .$ Then there exists a number 
$l_{z}\in \NN $ such that 
$\tau ^{l_{z}}(\{ (\g _{1},\ldots ,\g _{l_{z}})\in (\Rat )^{l_{z}}\mid 
\g _{l_{z}}\circ \cdots \circ \g _{1}(z)\in C\} )\geq 1-\epsilon .$ 
Hence there exists a compact disk neighborhood $U_{z}$ of $z$ such that 
$\tau ^{l_{z}}(\{ (\g _{1},\ldots ,\g _{l_{z}})\in (\Rat )^{l_{z}}\mid 
\g _{l_{z}}\circ \cdots \circ \g _{1}(U_{z})\subset C\} )\geq 1-2\epsilon .$ 
Let $\{ z_{k}\} _{k=1}^{t}$ be a finite subset of $\CCI $ such that 
$\CCI =\bigcup _{k=1}^{t}U_{z_{k}}.$ 
We may assume that there exists an $l\in \NN $ such that 
for each $k=1,\ldots ,t$, $l_{z_{k}}=l.$ 
Taking $\Omega '$ so small, we obtain that 
for each $\nu \in \Omega '$ and for each $k=1,\ldots, t$, 
\begin{equation}
\label{eq:1-3ep}
\nu ^{l}(\{ (\g _{1},\ldots ,\g _{l})\in (\Rat )^{l}\mid \g _{l}\circ \cdots \circ \g _{1}(U_{z_{k}})\subset C\} ) 
\geq 1-3\epsilon .
\end{equation}  
 For each $k=1\ldots ,t$ and for each $j=1,\ldots ,r_{L}$, 
 we set $B_{k,j}:=\{ (\g _{1},\ldots ,\g _{sr_{L}l})\in (\Rat )^{sr_{L}l}\mid 
 \g _{sr_{L}l}\circ \cdots \circ \g _{1}(U_{z_{k}})\subset A_{j}\} .$ 
We may assume that $\tau ^{sr_{L}l}(\partial B_{k,j})=0$ for each $k,j.$ 
By Lemma~\ref{l:rhontaun}, taking $\Omega '$ so small, we obtain that for each $\nu \in \Omega '$,
  for each $k=1,\ldots ,t$, and for each $j=1,\ldots ,r_{L}$, 
\begin{equation}
\label{eq:bkjnu}
|\nu ^{sr_{L}l}(B_{k,j})-\tau ^{sr_{L}l}(B_{k,j})|<\epsilon .
\end{equation}  
Let $z\in \CCI $ and let $u\in \{ 1,\ldots ,t\} $ be such that 
$z\in U_{z_{u}}.$ 
Then for each $\nu \in \Omega '$ and each $i=1,\ldots ,r_{L}$, 
since $\varphi _{L,i,\nu }\in C_{F(G_{\nu }))}(\CCI )$ (\cite[Theorem 3.15-1]{Splms10}), we obtain that 
\begin{align*}
\ & \varphi _{L,i,\nu }(z)= M_{\nu }^{sr_{L}l}(\varphi _{L,i,\nu })(z) =  
\int _{\g \in \Rat ^{\NN }}\varphi _{L,i,\nu }(\g _{sr_{L}l,1}(z)) d\tilde{\nu }(\g )\\ 
 = &  \int _{\{ \g \in \Rat ^{\NN }\mid  \g _{sr_{L}l,1}(U_{z_{u}})\subset C\} } \varphi _{L,i,\nu }(\g _{sr_{L}l,1}(z)) d\tilde{\nu }(\g ) 
+\int _{\{ \g \in \Rat ^{\NN }\mid  \g _{sr_{L}l,1}(U_{z_{u}})\not\subset C\} } \varphi _{L,i,\nu }(\g _{sr_{L}l,1}(z)) d\tilde{\nu }(\g )\\  
 = & \sum _{j=1}^{r_{L}}a_{L}^{ij}\nu ^{sr_{L}l}(B_{u,j})   
 +\int _{\{ \g \in \Rat ^{\NN }\mid  \g _{sr_{L}l,1}(U_{z_{u}})\not\subset C\}} 
\varphi _{L,i,\nu }(\g _{sr_{L}l,1}(z)) d\tilde{\nu }(\g ). 
\end{align*} 
Combining this equation and (\ref{eq:1-3ep}), (\ref{eq:bkjnu}), we obtain 
$| \varphi _{L,i,\nu }(z)-\varphi _{L,i}(z)| \leq \sum _{j=1}^{r_{L}}\epsilon +3\epsilon \cdot 2=(r_{L}+6)\epsilon .$ 
Therefore, $\varphi _{L,i,\nu }\rightarrow \varphi _{L,i}$ in $C(\CCI )$ as $\nu \rightarrow \tau $. 
From these arguments, we obtain that $\nu \mapsto \varphi _{L,i,\nu }$ is continuous on $\Omega .$        
 
In order to construct $\{ \rho _{L,i,\nu }\} $ in statement \ref{t:msmtaust3} of our theorem, 
let $L\in \Min (G_{\tau },\CCI )$. 
 By the proof of Lemma 5.16 in \cite{Splms10}, 
 for each $j=1,\ldots ,r_{L}$, 
there exists an element $\omega _{L,j} \in {\frak M}_{1}(L_{j})$ such that 
for each $\varphi \in C(L_{j})$, 
$M_{\tau }^{nr_{L}}(\varphi )\rightarrow \omega _{L,j}(\varphi )\cdot 1_{L_{j}}$ 
in $C(L_{j})$ as $n\rightarrow \infty .$ 
We now prove the following claim. 

Claim 2. For each $\varphi \in C(\overline{A_{j}})$, 
$M_{\tau }^{nsr_{L}}(\varphi )\rightarrow \omega _{L,j}(\varphi )1_{\overline{A_{j}}}$ in 
$C(\overline{A_{j}})$ as $n\rightarrow \infty .$ 

 To prove this claim, let $\varphi \in C(\overline{A_{j}}).$ 
 Since $\overline{A_{j}}\subset F(G_{\tau })$, 
 $\{ M_{\tau }^{nsr_{L}}(\varphi )\} _{n\in \NN }$ is uniformly bounded and equicontinuous 
 on $\overline{A_{j}}.$ 
 Let $z\in \overline{A_{j}}$ be any point. Let $D_{z}\in \mbox{Con}(F(G_{\tau }))$ with 
$z\in D_{z}$ and  
let $w\in L_{j}\cap D_{z}$ be a point.  
By \cite[Theorem 3.15-4]{Splms10}, for $\tilde{\tau }$-a.e. $\g \in (\Rat )^{\NN }$, 
$d(\g _{nsr_{L},1}(z),\g _{nsr_{L},1}(w))\rightarrow 0$ as $n\rightarrow \infty .$ 
Therefore, $|M_{\tau }^{nsr_{L}}(\varphi )(z)-M_{\tau }^{nsr_{L}}(\varphi )(w)|\rightarrow 0$ as 
$n\rightarrow \infty .$ 
From these arguments, it follows that 
there exists a constant function $\xi : \overline{A_{j}}\rightarrow \RR $ such that 
$M_{\tau }^{nsr_{L}}(\varphi )\rightarrow \xi $ in $C(\overline{A_{j}})$ as $n\rightarrow \infty .$ 
Thus, we have proved Claim 2. 

 By using the arguments similar to the above, we obtain that 
for each $\nu \in \Omega $ and for each $j=1,\ldots ,r_{L}$, 
there exists an element $\omega _{L,j,\nu }\in {\frak M}_{1}(\overline{A_{j}})$ such that 
for each $\varphi \in C(L_{j,\nu })$, $M_{\nu }^{nr_{L}}(\varphi )\rightarrow \omega _{L,j,\nu }(\varphi )1_{L_{j,\nu }}$ 
in $C(L_{j,\nu })$ as $n\rightarrow \infty $, and such that 
for each $\varphi \in C(\overline{A_{j}})$, 
$M_{\nu }^{nsr_{L}}(\varphi )\rightarrow \omega _{L,j,\nu }(\varphi )1_{\overline{A_{j}}}$ in 
$C(\overline{A_{j}})$ as $n\rightarrow \infty .$ 
Since $L_{j,\nu }$ is the unique minimal set for $(G_{\nu }^{r_{L}},\overline{A_{j}})$ and 
$L_{j,\nu }$ is attracting for $(G_{\nu }^{r_{L}},\CCI )$, we obtain 
supp$\, \omega _{L,j,\nu }=L_{j,\nu }.$ 
For each $\nu \in \Omega $, for each $L\in \Min(G_{\tau },\CCI )$  
and  for each $i=1,\ldots ,r_{L}$, 
let $\rho _{L,i,\nu }:= \frac{1}{r_{L}}\sum _{j=1}^{r_{L}}a_{L}^{-ij}\omega _{L,j,\nu }\in 
C(Q_{L,\nu })^{\ast }\subset C(\CCI )^{\ast }.$ 
Then by the proofs of Lemmas 5.16 and 5.14 from \cite{Splms10}, 
we obtain that $M_{\nu }^{\ast }(\rho _{L,i,\nu })=a_{L}^{i}\rho _{L,i,\nu }$, 
that  $\rho _{L,i,\nu }(\varphi _{L,j,\nu })=\delta _{ij}$, 
that $\rho _{L,i,\nu }(\varphi _{L',j,\nu })=0$ if $L\neq L'$, 
that $\{ \rho _{L,i,\nu }|_{C(Q_{L,\nu })} \mid i=1,\ldots, r_{L}\}$ is a basis of 
$\mbox{LS}({\cal U}_{f,\nu ,\ast }(Q_{L,\nu }))$, 
that $\{ \rho _{L,i,\nu } \mid L\in \Min(G_{\tau },\CCI ),i=1,\ldots, r_{L}\}$ is a basis of 
$\mbox{LS}({\cal U}_{f,\nu ,\ast }(\CCI ))$, and that 
$\pi _{\nu }(\varphi )=\sum _{L\in \Min(G_{\tau },\CCI )}\sum _{i=1}^{r_{L}}\rho _{L,i,\nu }(\varphi )\cdot \varphi _{L,i,\nu }$ 
for each $\varphi \in C(\CCI ).$  
%Moreover, by \cite[Theorem 3.15-9]{Splms10}, 
%we obtain that for each $L\in \Min (G_{\tau },\CCI )$ and for each $\nu \in \Omega $, 
   
We now prove that 
for each $L\in \Min(G_{\tau },\CCI )$ and for each $i=1,\ldots ,r_{L}$, 
the map $\nu \mapsto \rho _{L,i,\nu }\in C(\CCI )^{\ast }$ is continuous on $\Omega .$ 
For simplicity, we prove that 
$\nu \mapsto \rho _{L,i,\nu }\in C(\CCI )^{\ast }$ is continuous at $\nu =\tau .$ 
Let $\varphi \in C(\overline{A_{j}}).$ Let $\epsilon >0.$ 
Then there exists an $n\in \NN $ such that 
$\| M_{\tau }^{nsr_{L}}(\varphi )-\omega _{L,j}(\varphi )1_{\overline{A_{j}}}\| _{\infty }<\epsilon $, 
where $\| \psi \| _{\infty }:= \sup _{z\in \overline{A_{j}}}| \psi (z)| $ for each 
$\psi \in C(\overline{A_{j}}).$ If $\Omega '$ is a small open neighborhood of 
$\tau $ in $({\frak M}_{1,c}({\cal Y}),{\cal O})$, then 
for each $\nu \in \Omega '$, 
$\| M_{\nu }^{nsr_{L}}(\varphi )-M_{\tau }^{nsr_{L}}(\varphi )\| _{\infty }<\epsilon .$ 
Hence, for each $\nu \in \Omega '$, 
$\| M_{\nu }^{nsr_{L}}(\varphi )-\omega _{L,j}(\varphi )1_{\overline{A_{j}}}\| _{\infty }<2\epsilon .$ 
Therefore, for each $\nu \in \Omega ' $ and for each $l\in \NN $, 
 $\| M_{\nu }^{lsr_{L}}(M_{\nu }^{nsr_{L}}(\varphi )-\omega _{L,j}(\varphi )1_{\overline{A_{j}}})\| _{\infty }<2\epsilon .$ 
 Thus, 
 $\| M_{\nu }^{(l+n)sr_{L}}(\varphi )-\omega _{L,j}(\varphi )1_{\overline{A_{j}}}\| _{\infty }<2\epsilon .$ 
Moreover, $M_{\nu }^{(l+n)sr_{L}}(\varphi )\rightarrow \omega _{L,j,\nu }(\varphi )1_{\overline{A_{j}}}$ 
in $C(\overline{A_{j}})$ as $l\rightarrow \infty .$ 
Hence, we obtain  that for each $\nu \in \Omega '$, \ 
$| \omega _{L,\nu ,j}(\varphi )-\omega _{L,j}(\varphi )|\leq 2\epsilon .$ 
From these arguments, it follows that 
the map $\nu \mapsto \omega _{L,j,\nu }\in C(\overline{A_{j}})^{\ast }\subset C(\CCI )^{\ast }$ is continuous 
at $\nu =\tau .$ 
Therefore, for each $L\in \Min(G_{\tau },\CCI )$ and for each $i=1,\ldots ,r_{L}$, 
the map $\nu \mapsto \rho _{L,i,\nu }\in  C(\CCI )^{\ast }$ is continuous 
at $\nu =\tau .$  
Thus, for each $L\in \Min(G_{\tau },\CCI )$ and for each $i=1,\ldots ,r_{L}$, 
the map $\nu \mapsto \rho _{L,i,\nu }\in C(\CCI )^{\ast }$ is continuous on $\Omega .$ 
Hence, we have proved statement \ref{t:msmtaust3} of our theorem. 

 We now prove statement \ref{t:msmtaust4} of our theorem. 
 For each $L\in \Min (G_{\tau },\CCI )$, let 
 $V_{L}$ be an open subset of $F(G_{\tau }) $ with $L\subset V_{L}$ such that 
 for each $L,L'\in \Min (G_{\tau },\CCI )$ with $L\neq L'$, 
$\overline{V_{L}}\cap \overline{V_{L'}}=\emptyset .$ 
By statement \ref{t:msmtaust1-1} and Lemma~\ref{l:Lattnh}, 
for each $L\in \Min (G_{\tau },\CCI )$, 
there exists a continuous map $\nu \mapsto Q_{L,\nu }\in  \Cpt(\CCI )$ on $\Omega $ 
with respect to the Hausdorff metric   
such that $Q_{L,\tau }=L$, such that for each $\nu \in \Omega $, 
$\{ Q_{L,\nu }\} _{L\in \Min(G_{\tau },\CCI )}=\Min (G_{\nu },\CCI )$,  
and such that for each $\nu \in \Omega $ and for each $L\in \Min (G_{\tau },\CCI )$, 
$Q_{L,\nu }\subset V_{L}.$ For each $L\in \Min (G_{\tau },\CCI )$, 
let $\varphi _{L}:\CCI \rightarrow [0,1]$ be a continuous function such that 
$\varphi _{L}|_{V_{L}}\equiv 1$ and $\varphi _{L}|_{V_{L'}}\equiv 0$ for each 
$L'\in \Min (G_{\tau },\CCI )$ with $L'\neq L.$ 
By \cite[Theorem 3.15-15]{Splms10}, 
it follows that for each $z\in \CCI $ and for each $\nu \in \Omega $,  
$T_{Q_{L,\nu }, \nu }(z)=\lim _{n\rightarrow \infty }M_{\nu }^{n}(\varphi _{L})(z).$ 
Combining this with \cite[Theorem 3.14]{Splms10}, 
we obtain $T_{Q_{L,\nu },\nu }=\lim _{n\rightarrow \infty }M_{\nu }^{n}(\varphi _{L})$ in 
$(C(\CCI ), \| \cdot \| _{\infty }).$  By \cite[Theorem 3.15-6,8,9]{Splms10}, 
for each $\nu \in \Omega $ there exists a number $r\in \NN $ such that 
for each $\psi \in \mbox{LS}({\cal U}_{f,\nu }(\CCI ))$, 
$M_{\nu }^{r}(\psi )=\psi .$ 
Therefore, for each $\nu \in \Omega $ and for each 
$L\in \Min (G_{\tau },\CCI )$, 
$T_{Q_{L,\nu },\nu }=\lim _{n\rightarrow \infty }M_{\nu }^{nr}(\varphi _{L})
=\lim _{n\rightarrow \infty }M_{\nu }^{nr}
(\varphi _{L}-\pi _{\nu }(\varphi _{L})+\pi _{\nu }(\varphi _{L}))=\pi _{\nu }(\varphi _{L}).$ 
Combining this with statement \ref{t:msmtaust3} of our theorem, 
it follows that for each $L\in \Min (G_{\tau },\CCI )$, 
the map $\nu \mapsto T_{Q_{L,\nu } ,\nu }\in (C(\CCI ), \| \cdot \| _{\infty })$ 
is continuous on $\Omega .$  
Thus, we have proved statement \ref{t:msmtaust4} of our theorem. 

Hence, we have proved Theorem~\ref{t:msmtaust}.
\qed 

\ 

We now prove Theorem~\ref{t:ABCD}. \\ 
{\bf Proof of Theorem~\ref{t:ABCD}:} 
It is trivial that $B\subset C.$ 
By Theorem~\ref{t:msmtaust}, 
we obtain that 
$A\subset B$ and $A\subset D.$ 
In order to show $C\subset A$, let $\tau \in C.$ 
If there exists a non-attracting minimal set for $(G_{\tau },\CCI )$, 
or if there exists no attracting minimal set for $(G_{\tau }, \CCI )$,  
then by Theorem~\ref{t:msminr} and Corollary~\ref{c:noattminme}, we obtain a contradiction. 
Hence, $C\subset A.$ Therefore, we obtain $A=B=C.$ 
In order to show $D\subset A$, let 
$\tau \in D.$ 
By \cite[Theorem 3.15-10]{Splms10}, 
we have $\dim _{\CC }(\mbox{LS}({\cal U}_{f,\tau }(\CCI ))=
\sum _{L\in \Min(G_{\tau },\CCI )}\dim _{\CC }(\mbox{LS}({\cal U}_{f,\tau }(L))).$ 
By Corollary~\ref{c:noattminme}, there exists an attracting minimal set for $(G_{\tau },\CCI ).$ 
Theorem~\ref{t:msminr} implies that 
if $\Omega $ is a small neighborhood of $\tau $ in $({\frak M}_{1,c}({\cal Y}),{\cal O})$, 
then for each $\nu \in \Omega $ and for each attracting minimal set $L$ for 
$(G_{\tau },\CCI )$, there exists a unique attracting minimal set $Q_{L,\nu }$ for 
$(G_{\nu },\CCI )$ which is close to $L.$ By \cite[Theorem 3.15-12]{Splms10} and the arguments 
in the proof of Theorem~\ref{t:msmtaust}, it follows that 
if $\Omega $ is small enough, then for each 
$\nu \in \Omega $ and for each attracting minimal set $L$ for $(G_{\tau },\CCI )$,  
$\dim _{\CC }(\mbox{LS}({\cal U}_{f,\nu }(Q_{L,\nu })))=
\dim _{\CC }(\mbox{LS}({\cal U}_{f,\tau }(L))).$ 
Combining this, Theorem~\ref{t:msminrme} and \cite[Theorem 3.15-10]{Splms10}, we obtain that 
if there exists a non-attracting minimal set $L'$ for $(G_{\tau },\CCI )$, 
then there exists a $\nu '\in \Omega $ such that 
$\dim _{\CC }(\mbox{LS}({\cal U}_{f,\nu '}(\CCI )))<\dim _{\CC }(\mbox{LS}({\cal U}_{f,\tau }(\CCI ))).$ 
However, this contradicts $\tau \in D.$ 
Therefore, we obtain that each element $L\in \Min(G_{\tau },\CCI )$ is attracting 
for $(G_{\tau },\CCI ).$ 
%Combining \cite[Theorem 3.15-6]{Splms10}
By Remark~\ref{r:msallatt}, it follows that $\tau \in A.$ Therefore, 
$D\subset A.$ 

From these arguments, we obtain $A=B=C=D.$ 

 By Theorem~\ref{t:msmtaust}, we obtain that $A\subset E.$ In order to show $E\subset A$, 
 let $\tau \in E.$ 
Suppose that there exists a non-attracting minimal set $K$ for $(G_{\tau }, \CCI ).$ 
Since there exists a neighborhood $\Omega '$ of $\tau $ such that each 
$\nu \in \Omega '$ satisfies $J_{\ker }(G_{\nu })=\emptyset $, 
Corollary~\ref{c:noattminme} implies that 
there exists an attracting minimal set for $(G_{\tau },\CCI ).$ 
Moreover, since $J_{\ker }(G_{\tau })=\emptyset $ and 
$\sharp (J(G_{\tau }))\geq 3$, \cite[Theorem 3.15-6]{Splms10} implies that 
$\sharp (\Min (G_{\tau },\CCI )) <\infty .$ 
Let $\epsilon := \min \{ d(z,w)\mid z\in K, w\in L, L\in \Min (G_{\tau },\CCI ), L\neq K\} >0.$ 
Let $\varphi \in C(\CCI )$ be an element such that 
$\varphi |_{K}\equiv 1$ and $\varphi |_{\CCI \setminus B(K,\epsilon /2)}\equiv 0.$ 
Then by \cite[Theorem 3.15-13]{Splms10}, $\pi _{\tau }(\varphi )\neq 0.$ 
Since $\tau \in E$, there exists an open neighborhood $\Omega $ of $\tau $ such that 
for each $\nu \in \Omega $, $J_{\ker }(G_{\nu })=\emptyset $ and 
such that the map $\nu \mapsto \pi _{\nu }(\varphi )\in C(\CCI )$ defined on $\Omega $ is continuous at $\tau .$ 
By Theorem~\ref{t:msminrme}, for each neighborhood ${\cal U}$ of $\tau $ in $({\frak M}_{1,c}({\cal Y}),{\cal O})$, 
there exists an element $\rho \in {\cal U} \cap A$ such that each minimal set for $(G_{\rho },\CCI )$ 
is included in $\CCI \setminus B(K,\epsilon /2).$ Therefore, by \cite[Theorem 3.15-2]{Splms10}, 
$\pi _{\rho }(\varphi )=0.$ However, this contradicts that 
the map $\nu \mapsto \pi _{\nu }(\varphi )\in C(\CCI )$ is continuous 
at $\tau $ and that $\pi _{\tau }(\varphi )\neq 0.$ Thus, 
each element of $\Min (G_{\tau },\CCI )$ is attracting for $(G_{\tau },\CCI ).$ 
By Remark~\ref{r:msallatt}, it follows that $\tau \in A.$ Hence, we have proved 
$E\subset A.$  

Thus, we have proved Theorem~\ref{t:ABCD}.  
\qed 

To prove Theorem~\ref{t:bifur}, we need the following. 
\begin{lem}
\label{l:bifmin}
Let ${\cal Y}$ be a subset of $\emRatp$ satisfying condition $(\ast )$. 
For each $t\in [0,1]$. let $\mu _{t}$ be an element of ${\frak M}_{1,c}({\cal Y}).$ 
Suppose that all conditions {\em (1)(2)(3)} in Theorem~\ref{t:bifur} are satisfied. 
Then, statements {\em (a)} and {\em (c)} in Theorem~\ref{t:bifur} hold. 
Moreover, $\sharp \emMin (G _{\mu _{t}}, \CCI )<\infty $ 
for each $t\in [0,1].$ 
%for each $s\in (0,1]$ there exists a number $t_{s}\in (0,s)$ such that 
%for each $t\in [t_{s},s]$, 
%$\sharp \emMin (G_{\mu _{t}},\CCI )=\sharp \emMin (G_{\mu _{s}},\CCI ).$ 
%Under the assumption of Theorem~\ref{t:bifur}, 
%statement (c) of Theorem~\ref{t:bifur} holds. 
\end{lem}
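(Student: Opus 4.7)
My plan is to handle statements (a) and (c) separately: (a) is largely a direct unpacking of the hypotheses combined with the earlier structural results, while (c) requires a careful upper-semicontinuity argument.

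For statement (a), I would first verify that the two ``nontriviality'' conditions $F(G_{\mu_t}) \neq \emptyset$ and $\mbox{int}(\Gamma_{\mu_t}) \neq \emptyset$ persist for every $t \in [0,1]$. The first follows from condition (2) (so $G_{\mu_t} \subset G_{\mu_1}$ and hence $F(G_{\mu_t}) \supset F(G_{\mu_1})$) and condition (3); the second follows at $t = 0$ from (3) directly, and at $t > 0$ from $\mbox{int}(\Gamma_{\mu_t}) \supset \mbox{int}(\Gamma_{\mu_0}) \neq \emptyset$ via (2). Also, $\sharp J(G_{\mu_t}) = \infty$ because each $h \in \Gamma_{\mu_t} \subset \Ratp$ has a perfect Julia set contained in $J(G_{\mu_t})$. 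To obtain $J_{\ker}(G_{\mu_t}) = \emptyset$: if $z \in J_{\ker}(G_{\mu_t})$, then choosing an interior point $h_0$ of $\Gamma_{\mu_t}$ and applying Lemma~\ref{l:inceps} with $K = \{z\}$ yields a ball $B(h_0(z),\epsilon) \subset \{h(z) : h \in \Gamma_{\mu_t}\}$, and by Remark~\ref{r:kjulia}(2) this ball lies in $J_{\ker}(G_{\mu_t})$, contradicting Remark~\ref{r:kjulia}(3). Statement (a) and $\sharp \Min(G_{\mu_t},\CCI) < \infty$ then follow directly from Theorems~\ref{t:thmA} and \ref{t:thmB}.

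For statement (c), I would set $f(t) := \sharp \Min(G_{\mu_t},\CCI)$ and first show that $f$ is non-increasing on $[0,1]$: for $t_1 < t_2$, any $L \in \Min(G_{\mu_{t_2}},\CCI)$ is forward-invariant under $G_{\mu_{t_1}} \subset G_{\mu_{t_2}}$, hence contains some $K \in \Min(G_{\mu_{t_1}},\CCI)$ by Remark~\ref{r:minimal}, and disjointness of minimal sets makes this assignment injective. Since $f$ is integer-valued with range in $[1,f(0)]$, statement (c) is equivalent to $\limsup_{t \to s^-} f(t) \le f(s)$ for each $s \in (0,1]$. To prove this upper semicontinuity from the left, I would fix $s$, enumerate $\{L_1,\ldots,L_m\} = \Min(G_{\mu_s},\CCI)$, and suppose for contradiction that $t_n \to s^-$ with $f(t_n) > m$. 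The minimality of each $K \in \Min(G_{\mu_{t_n}},\CCI)$ combined with the $G_{\mu_{t_n}}$-invariance of $L_j$ (which follows from $G_{\mu_{t_n}} \subset G_{\mu_s}$) forces either $K \subset L_j$ for a unique $j$ or $K \cap L_j = \emptyset$ for every $j$. The excess count $f(t_n) > m$ must therefore be caused by one of: (A) some $L_j$ contains two disjoint minimal sets of $G_{\mu_{t_n}}$, or (B) some $K \in \Min(G_{\mu_{t_n}},\CCI)$ is disjoint from $\bigcup L_j$.

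The key input for ruling these out is that conditions (1) and (2) together give $\Gamma_{\mu_s} = \overline{\bigcup_{t<s} \Gamma_{\mu_t}}$, so each $g \in \Gamma_{\mu_s}$ is a Hausdorff limit of generators $g_n \in \Gamma_{\mu_{t_n}}$. For Case (B), after passing to a subsequence so that $K^n \to K^*$ in Hausdorff, the invariance $g_n(K^n) \subset K^n$ passes to the limit to show $K^*$ is $G_{\mu_s}$-invariant; if the $K^n$'s stay uniformly separated from $\bigcup L_j$, then $K^*$ is disjoint from $\bigcup L_j$ and by Remark~\ref{r:minimal} contains some element of $\Min(G_{\mu_s},\CCI)$, contradicting the enumeration, and if the $K^n$'s approach $\bigcup L_j$, I would derive a contradiction from the absorption statement Theorem~\ref{t:thmB}(2)(b). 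For Case (A), $G_{\mu_s}$-minimality of $L_j$ produces $g \in G_{\mu_s}$ with $g(K_1^n)$ meeting $K_2^n$; writing $g$ as a word in generators of $\Gamma_{\mu_s}$ and approximating each letter by $\Gamma_{\mu_{t_n}}$ shows that an approximating word $\tilde g \in G_{\mu_{t_n}}$ satisfies $\tilde g(K_1^n) \subset K_1^n$ while lying arbitrarily close to $K_2^n$, contradicting the positive separation $d(K_1^n, K_2^n) > 0$.

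The main obstacle is the fine analysis of Case (A) when $L_j$ is a non-attracting minimal set in the sense of Lemma~\ref{l:Lclassify}(2) or (3); the attracting case is handled straightforwardly by Lemma~\ref{l:Lattnh}. For the non-attracting case one must combine Lemma~\ref{l:bebg}, which localizes bifurcation elements in $\partial\Gamma_{\mu_s}$, with condition (2), which places $\Gamma_{\mu_t} \subset \mbox{int}(\Gamma_{\mu_s})$ for $t < s$ and therefore excludes these bifurcation elements from $\Gamma_{\mu_t}$. This disjointness ensures that the structural mechanism by which $L_j$ is a single $G_{\mu_s}$-minimal set cannot be undone by the smaller semigroup $G_{\mu_t}$ for $t < s$ close to $s$, which is precisely what is needed to finish the upper-semicontinuity argument.
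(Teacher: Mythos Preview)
Your treatment of statement (a) is correct and essentially the paper's (the paper cites \cite[Lemma~5.34]{Splms10} for $J_{\ker}(G_{\mu_t})=\emptyset$, which amounts to your argument via Lemma~\ref{l:inceps}). For statement (c), however, your contradiction scheme via Hausdorff limits and the (A)/(B) split has a genuine gap. In Case (A) with $L_j$ non-attracting, invoking Lemma~\ref{l:bebg} does not help: that a bifurcation element for $(\Gamma_{\mu_s},L_j)$ lies in $\partial\Gamma_{\mu_s}$ tells you nothing about whether $L_j$ remains $G_{\mu_t}$-minimal---such an element is part of what glues $L_j$ into a single $G_{\mu_s}$-orbit closure, so its absence from $\Gamma_{\mu_t}$ could \emph{cause} splitting rather than prevent it. Your direct approximation step (approximate $g\in G_{\mu_s}$ carrying $K_1^n$ near $K_2^n$ by some $\tilde g\in G_{\mu_{t_n}}$) also fails quantitatively, since $g$ and its word-length depend on $n$ while $d(K_1^n,K_2^n)$ may tend to $0$. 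In Case (B), Theorem~\ref{t:thmB}(2)(b) is a measure-theoretic convergence statement and does not by itself produce the topological intersection you need.

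The paper's route is more direct and treats all $L_j$ uniformly. Fixing $s$ (the paper writes $s=1$ for notational ease), one picks $w_j\in L_j\cap F(G_{\mu_s})$ (possible since $J_{\ker}(G_{\mu_s})=\emptyset$) and sets $W:=\bigcup_j B(w_j,\epsilon)\subset F(G_{\mu_s})$. By \cite[Theorem~3.15]{Splms10} every $z\in\CCI$ admits $g_z\in G_{\mu_s}$ and a neighbourhood $V_z$ with $g_z(\overline{V_z})\subset W$; finitely many $(V_{z_k},g_{z_k})$ cover $\CCI$, and by the Hausdorff continuity of $t\mapsto\Gamma_{\mu_t}$ these finitely many fixed words can be approximated in $G_{\mu_t}$ for all $t$ close to $s$. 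Hence for such $t$ every $L\in\Min(G_{\mu_t},\CCI)$ meets some $B(w_j,\epsilon)$, and since this ball and $w_j\in L_j$ lie in a single $F(G_{\mu_t})$-component, \cite[Theorem~3.15-4]{Splms10} forces $L\cap L_j\neq\emptyset$, hence $L\subset L_j$. This gives $f(t)\le f(s)$ at once, with no case distinction between attracting and non-attracting $L_j$.
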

\begin{proof}
Since $F(G _{\mu _{1}})\neq \emptyset $ and 
$G _{\mu _{t}}\subset G _{\mu _{1}}$ for each 
$t\in [0,1]$, we obtain that 
$F(G _{\mu _{t}})\neq \emptyset $. 
Moreover, we have that for each $t\in [0,1]$, 
$\mbox{int}(\G _{\mu _{t}})\neq \emptyset $ in the topology of ${\cal Y}.$ 
Therefore, \cite[Lemma 5.34]{Splms10} implies that 
for each $t\in [0,1]$, 
$J_{\ker }(G _{\mu _{t}})=\emptyset $. 
Moreover, since ${\cal Y}\subset \Ratp$, we have that for each $t\in [0,1]$,  
$\sharp J(G _{\mu _{t}})\geq 3.$ 
Thus, by \cite[Theorem 3.15]{Splms10}, it follows that for each $t\in [0,1]$, 
all statements (with $\tau =\mu _{t}$ ) in \cite[Theorem 3.15]{Splms10} hold. 
In particular, $\sharp (\Min (G _{\mu _{t}},\CCI ))<\infty $ 
for each $t\in [0,1]$, and statement (a) of Theorem~\ref{t:bifur} holds. 

To show that statement (c) of Theorem~\ref{t:bifur}, it suffices to show that 
there exists an element $u\in [0,1)$ such that 
for each $t\in [u,1]$, 
$\sharp (\Min (G_{\mu _{t}},\CCI ))=\sharp (\Min (G_{\mu _{1}},\CCI )).$ 
In order to show it, we first note that 
by Zorn's lemma, we have 
\begin{equation}
\label{eq:bifmin1}
\sharp (\Min (G_{\mu _{t}},\CCI ))\geq \sharp (\Min (G_{\mu _{1}},\CCI )) \mbox{ for each }
t\in [0,1].
\end{equation} 
Let $\{ L_{j}\} _{j=1}^{r}=\Min (G_{\mu _{1}},\CCI )$, where 
$L_{i}\neq L_{j}$ for each $(i,j)$ with $i\neq j.$ 
Since $J_{\ker }(G_{\mu _{1}})=\emptyset, $ 
there exists an element $w_{j}\in L_{j}\cap F(G_{\mu _{1}})$ for each $j=1,\ldots ,r.$ 
Let $\epsilon >0$ be a small number such that 
$W:= \bigcup _{j=1}^{r}B(w_{j},\epsilon )\subset F(G_{\mu _{1}}).$ 
Since $J_{\ker }(G_{\mu _{1}})=\emptyset $, \cite[Theorem 3.15-7]{Splms10} implies that 
for each $z\in \CCI $, 
there exists an element $g_{z}\in G_{\mu _{1}}$ and a neighborhood $V_{z}$ of $z$ in $\CCI $ 
such that $g_{z}(\overline{V_{z}})\subset W.$ 
Since $\CCI $ is compact, there exist finitely many points $z_{1},\ldots ,z_{n}\in \CCI $ 
such that $\CCI =\bigcup _{j=1}^{n}V_{z_{j}}.$ Then there exists an element $u\in [0,1)$ 
such that for each $j=1,\ldots, n$ and for each $t\in [u,1]$,   
there exists an element $g_{z_{j},t}\in G_{\mu _{t}}$ 
with $g_{z_{j},t}(\overline{V_{j}})\subset W.$ 
Moreover, we have $G_{\mu _{t}}\subset G_{\mu _{1}}$ and  
$F(G_{\mu _{1}})\subset F(G_{\mu _{t}})$ for each $t\in [u,1].$ 
%Furthermore, 
%by the assumption of our lemma, 
%$J_{\ker }(G_{\mu _{t}})=\emptyset $ for each $t\in [u,1].$ 
Applying \cite[Theorem 3.15-4]{Splms10} (with $\tau =\mu _{t}, t\in [u,1]$), 
it follows that for each $t\in [u,1]$ and for each 
$L\in \Min (G_{\mu _{t}},\CCI )$, 
there exists a unique element $L'\in \Min (G_{\mu _{1}},\CCI )$ with $L\subset L'.$ 
Therefore $\sharp (\Min (G_{\mu _{t}},\CCI ))\leq \sharp (\Min (G_{\mu _{1}},\CCI ))$ for each 
$t\in [u,1].$ Combining this with (\ref{eq:bifmin1}), we obtain 
$\sharp (\Min (G_{\mu _{t}},\CCI ))=\sharp (\Min (G_{\mu _{1}},\CCI ))$ for each 
$t\in [u,1].$ Thus we have proved our lemma. 
\end{proof}

We now prove Theorem~\ref{t:bifur}. \\ 
{\bf Proof of Theorem~\ref{t:bifur}:} 
%Since $F(\langle \G _{\mu _{1}}\rangle )\neq \emptyset $ and 
%$\G _{\mu _{t}}\subset \G _{\mu _{1}}$ for each 
%$t\in [0,1]$, we obtain that 
%$F(\langle \G _{\mu _{t}}\rangle )\neq \emptyset $. 
%Moreover, we have that for each $t\in [0,1]$, 
%$\mbox{int}(\G _{\mu _{t}})\neq \emptyset $ in the topology of ${\cal Y}.$ 
%Therefore, \cite[Lemma 5.34]{Splms10} implies that 
%for each $t\in [0,1]$, 
%$J_{\ker }(\langle \G _{\mu _{t}}\rangle )=\emptyset $. 
%Moreover, since ${\cal Y}\subset \Ratp$, we have that for each $t\in [0,1]$,  
%$\sharp J(\langle \G _{\mu _{t}}\rangle )\geq 3.$ 
%Thus, by \cite[Theorem 3.15]{Splms10}, it follows that for each $t\in [0,1]$, 
%all statements (with $\tau =\mu _{t}$ ) in \cite[Theorem 3.15]{Splms10} hold. 
By Lemma~\ref{l:bifmin}, statements (a) and (c) of our theorem hold and 
$\sharp (\Min (G _{\mu _{t}}, \CCI ))<\infty $ 
for each $t\in [0,1].$ 
%Thus statement (a) of our theorem holds. 

We now prove statement (b). 
By Lemma~\ref{l:Lclassify} and Remark~\ref{r:msallatt}, we obtain that 
for each $t\in B$, $\mu _{t}$ is not mean stable, and that 
for each $t\in [0,1)\setminus B$, $\mu _{t}$ is mean stable. 
Combining this with assumption (4) and Lemma~\ref{l:msmincons},  
we obtain that $B\neq \emptyset .$  
We now let $t_{1},t_{2}\in [0,1]$ be such that $t_{1}<t_{2}$. 
By assumption (2) and Remark~\ref{r:minimal},  
for each $L\in \Min (G _{\mu _{t_{2}}},\CCI )$, 
there exists an $L'\in \Min (G _{\mu _{t_{1}}},\CCI )$ 
with 
$L'\subset L.$ In particular, 
%we have that if $t_{1},t_{2}\in [0,1]$ and $t_{1}<t_{2}$, then 
$\infty >\sharp (\Min (G_{\mu _{t_{1}}},\CCI ))\geq 
\sharp (\Min(G _{\mu _{t_{2}}},\CCI )).$ 
 We now let $t_{0}\in [0,1)$ be such that 
 there exists a bifurcation element $g\in \G _{\mu _{t_{0}}}$ for 
 $\G _{\mu _{t_{0}}}.$ 
 Let $t\in [0,1]$ with $t>t_{0}$. Then 
 $\G _{\mu _{t_{0}}}\subset \mbox{int}(\G _{\mu _{t}})$. 
By the above argument, Theorem~\ref{t:msminr}, Corollary~\ref{c:noattminme} and assumption (3) of our theorem,  
%Lemma~\ref{l:bebg}, 
it follows that 
$\sharp (\Min (G _{\mu _{t_{0}}},\CCI ))>
\sharp (\Min (G _{\mu _{t}},\CCI )).$   
From these arguments, it follows that 
$1\leq \sharp B \leq \sharp (\Min(G _{\mu _{0}},\CCI ))
-\sharp (\Min(G _{\mu _{1}},\CCI ))<\infty .$

Thus, we have proved Theorem~\ref{t:bifur}. 
\qed 

\subsection{Proofs of results in \ref{Spectral}}
\label{Proofs of Spectral}
In this subsection, we give the proofs of the results in 
subsection~\ref{Spectral}. 

We now prove Theorem~\ref{t:utauca}.\\ 
{\bf Proof of Theorem~\ref{t:utauca}:}
By \cite[Theorem 3.15-6,8,9]{Splms10}, there exists an $r\in \NN $ 
such that 
for each $\varphi \in \mbox{LS}({\cal U}_{f,\tau }(\CCI ))$, 
$M_{\tau }^{r}(\varphi )=\varphi .$ 
Since $J_{\ker }(G_{\tau })=\emptyset $, 
for each $z\in \CCI $, there exists a map $g_{z}\in G_{\tau }$ and a 
compact disk neighborhood $U_{z}$ of $z$ in $\CCI $ such that 
$g_{z}(U_{z})\subset F(G_{\tau }).$ 
Since $\CCI $ is compact, there exists a finite family 
$\{ z_{j}\} _{j=1}^{s}$ in $\CCI $ such that 
$\bigcup _{j=1}^{s}\mbox{int}(U_{z_{j}})=\CCI .$ 
Since $G_{\tau }(F(G_{\tau }))\subset F(G_{\tau })$, 
replacing $r$ by a larger number if necessary,  
we may assume that for each $j=1,\ldots ,s$, 
there exists an element $\beta ^{j}=(\beta _{1}^{j},\ldots ,\beta _{r}^{j})\in \G _{\tau }^{r}$ 
such that $g_{z_{j}}=\beta _{r}^{j}\circ \cdots \circ \beta _{1}^{j}.$  
For each $j=1,\ldots ,s$, let 
$V_{j}$ be a compact neighborhood of $\beta ^{j}$ in $\G _{\tau }^{r}$ 
such that for each $\zeta =(\zeta _{1},\ldots ,\zeta _{r})\in V_{j}$, 
$\zeta _{r}\cdots \zeta _{1}(U_{z_{j}})\subset F(G_{\tau }).$ 
Let $a:=\max \{ \tau ^{r}(\G _{\tau }^{r}\setminus V_{j})\mid j=1,\ldots ,s\} \in [0,1).$ 
Let $$C_{1}:= 2\max \{ \max\{ \| D(\zeta _{r}\circ \cdots \circ \zeta _{1})_{z}\| _{s}\mid 
(\zeta _{1},\ldots ,\zeta _{r})\in \G _{\tau }^{r}, z\in \CCI  \} ,1\} \geq 2.$$   
%where for each $g\in \Rat$ and each $z\in \CCI$,  
%$\| Dg_{z}\| $ denotes the norm of the derivative of $g$ at $z$ with respect to the spherical 
%metric in $\CCI .$ 
Let $\alpha \in [0,1)$ be a number such that 
$aC_{1}^{\alpha }<1.$ Let $C_{2}>0$ be a number such that 
%for each $j=1,\ldots ,r$ and for each 
for each $z\in \CCI $, there exists a $j\in \{ 1,\ldots ,s\} $ with 
%$z\in \mbox{int}(U_{z_{j}})$, 
$B(z,C_{2})\subset \mbox{int}(U_{z_{j}}).$  
Let $\varphi \in \mbox{LS}({\cal U}_{f,\tau }(\CCI )).$ 
Let $z_{0},z\in \CCI $ be two points. 
If $d(z,z_{0})>C_{1}^{-1}C_{2}$, then 
$$| \varphi (z)-\varphi (z_{0})|/d(z,z_{0})^{\alpha }\leq 
2\| \varphi \| _{\infty }\cdot (C_{1}C_{2}^{-1})^{\alpha }.$$ 
We now suppose that there exists an $n\in \NN $ such that 
$C_{1}^{-n-1}C_{2}\leq d(z,z_{0})\leq C_{1}^{-n}C_{2}.$ 
Then, for each $j\in \NN $ with $1\leq j\leq n$ and 
for each $(\g _{1},\ldots ,\g_{rj})\in \G _{\tau }^{rj}$, we have 
$d(\g _{rj}\circ \cdots \circ \g _{1}(z),\g _{rj}\circ \cdots \circ \g _{1}(z_{0}))<C_{2}.$ 
Let $i_{0}\in \{ 1,\ldots ,s\} $ be a number such that 
$B(z_{0},C_{2})\subset U_{z_{i_{0}}}.$ 
Let $A(0):= \{ \g \in \G _{\tau }^{\NN }\mid 
(\g _{1},\ldots ,\g _{r})\in V_{i_{0}}\} $ and 
$B(0):=\{ \g \in \G _{\tau }^{\NN }\mid 
(\g _{1},\ldots ,\g _{r})\not\in V_{i_{0}}\} .$ 
 Inductively, for each $j=1,\ldots ,n-1$, 
 let $A(j):= \{ \g \in B(j-1)\mid \exists i \mbox{ s.t. } 
 B(\g _{rj,1}(z_{0}),C_{2})\subset U_{z_{i}}, (\g _{rj+1},\ldots ,\g _{rj+r})\in V_{i}\} $ 
 and $B(j):= B(j-1)\setminus A(j).$ 
Then for each $j=1,\ldots ,n-1$, 
$\tilde{\tau }(B(j))\leq a\tilde{\tau }(B(j-1)).$ Therefore, 
$\tilde{\tau }(B(n-1))\leq a^{n}.$ 
Moreover, we have 
$\G _{\tau }^{\NN }=\amalg _{j=0}^{n-1}A(j)\amalg B(n-1).$ 
Furthermore, by \cite[Theorem 3.15-1]{Splms10}, $\varphi \in C_{F(G_{\tau })}(\CCI ).$ 
Thus, we obtain that 
\begin{align*}
\    & |\varphi (z)-\varphi (z_{0})| = |M_{\tau }^{rn}(\varphi )(z)-M_{\tau }^{rn}(\varphi )(z_{0})| \\ 
\leq & |\sum _{j=0}^{n-1}\int _{A(j)}\varphi (\g _{rn,1}(z))-\varphi (\g _{rn,1}(z_{0})) d\tilde{\tau }(\g )| 
       + |\int _{B(n-1)}\varphi (\g _{rn,1}(z))-\varphi (\g _{rn,1}(z_{0}))d\tilde{\tau }(\g )|\\ 
\leq & \int _{B(n-1)}|\varphi (\g _{rn,1}(z))-\varphi (\g _{rn,1}(z_{0}))|d\tilde{\tau }(\g )\\ 
\leq & 2a^{n}\| \varphi \| _{\infty }\leq a^{n}(C_{1}^{n+1}C_{2}^{-1})^{\alpha }d(z,z_{0})^{\alpha }2\| \varphi \| _{\infty }
\leq C_{1}^{\alpha }C_{2}^{-\alpha }2\| \varphi \| _{\infty } d(z,z_{0})^{\alpha }.        
\end{align*}  
From these arguments, it follows that 
$\varphi $ belongs to $C^{\alpha }(\CCI ).$ 

Let $\{ \rho _{j}\} _{j=1}^{q}$ be a basis of 
$\mbox{LS}({\cal U}_{f,\tau ,\ast }(\CCI ))$ and let 
$\{ \varphi _{j}\} _{j=1}^{q}$ be a basis of 
$\mbox{LS}({\cal U}_{f,\tau }(\CCI ))$ such that 
for each $\psi \in C(\CCI )$, 
%$\varphi \in \mbox{LS}({\cal U}_{f,\tau }(\CCI ))$, 
$\pi _{\tau }(\psi )=\sum _{j=1}^{q}\rho _{j}(\psi )\varphi _{j}.$ 
Then for each $\psi \in C(\CCI )$, 
$\| \pi _{\tau }(\psi )\| _{\alpha }\leq \sum _{j=1}^{q}|\rho _{j}(\psi )|\| \varphi _{j}\| _{\alpha }
\leq (\sum _{j=1}^{q}\| \rho _{j}\| _{\infty }\| \varphi _{j}\| _{\alpha })\| \psi \| _{\infty }$, 
where $\| \rho _{j}\| _{\infty }$ denotes the operator norm of 
$\rho _{j}:(C(\CCI ),\| \cdot \| _{\infty })\rightarrow \CC .$ 
 
 We now let $L\in \Min(G_{\tau },\CCI)$ and let $\alpha \in (0,\alpha _{0})$. 
 By \cite[Theorem 3.15-15]{Splms10}, 
 $T_{L,\tau }\in \mbox{LS}({\cal U}_{f,\tau }(\CCI )).$ 
 Thus $T_{L,\tau }\in C^{\alpha }(\CCI ).$ 
 
Thus, we have proved Theorem~\ref{t:utauca}.  
\qed 

\begin{rem}
\label{r:unifalpha}
Suppose $\tau \in {\frak M}_{1,c}(\Rat )$, 
$J(G_{\tau })\neq \emptyset $ and that $\tau $ is mean stable. 
Then by using Theorem~\ref{t:msmtaust}--\ref{t:msmtaust1} and the method of proof of 
Theorem~\ref{t:utauca}, it is easy to see that 
there exists an $\alpha \in (0,1)$ and a neighborhood ${\cal U}$ of $\tau $ in 
${\frak M}_{1,c}(\Rat)$ such that 
for each $\nu \in {\cal U}$, we have that 
$\nu $ is mean stable and $\mbox{LS}({\cal U}_{f,\nu }(\CCI ))\subset C^{\alpha }(\CCI ).$  
\end{rem} 

In order to prove Theorem~\ref{t:kjemfhf}, we need several lemmas. 
Let $\tau \in {\frak M}_{1,c}(\Rat).$ 
Suppose 
$J_{\ker }(G_{\tau })=\emptyset $ and 
$\sharp J(G_{\tau })\geq 3.$ Then  
all statements in \cite[Theorem 3.15]{Splms10} hold. 
Let $L\in \Min(G_{\tau },\CCI )$ and 
let $r_{L}:=\dim _{\CC }(\mbox{LS}({\cal U}_{f,\tau }(L))).$ 
By using the notation in the proof of Theorem~\ref{t:msmtaust}, 
by \cite[Theorem 3.15-12]{Splms10}, we have $r_{L}=\sharp (\Min (G_{\tau }^{r_{L}},L)).$   

\begin{lem}
\label{l:supai}
Let $\tau \in {\frak M}_{1,c}(\emRat).$ 
Suppose 
$J_{\ker }(G_{\tau })=\emptyset $ and 
$\sharp J(G_{\tau })\geq 3.$  
Let $L\in \emMin(G_{\tau },\CCI )$ and 
let $r_{L}:=\dim _{\CC }(\mbox{{\em LS}}({\cal U}_{f,\tau }(L))).$ 
Let $\{ L_{j}\} _{j=1}^{r_{L}}=\emMin(G_{\tau }^{r_{L}},L).$ 
For each $j$, 
let $\{ A_{i}\} _{i\in I_{j}}$ be the set $\{ A\in \mbox{{\em Con}}(F(G_{\tau }))\mid A\cap L_{j}\neq \emptyset \} .$ 
Let $W_{L,j}:=\bigcup _{i\in I_{j}}A_{i}.$ 
For each $i\in I_{j}$, we take the hyperbolic metric in $A_{i}.$ 
Then, there exists an $m\in \NN $ with $r_{L}| m$ such that for each $j$ and for each 
$\alpha \in (0,1)$, 
$\sup _{i\in I_{j}}\int _{\emRat ^{\NN }}\sup _{z\in A_{i}}\{ \| D(\g _{m,1})_{z}\| _{h}^{\alpha }\} d\tilde{\tau }(\g )<1.$  
\end{lem}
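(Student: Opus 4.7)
The plan is to produce an integer $m \in \NN $ with $r_L \mid m$, a constant $c \in (0,1)$, and a Borel set $E \subset \G _\tau ^\NN $ with $\tilde \tau (E) > 0$ such that, uniformly for every $j$, every $i \in I_j$, and every $\g \in E$,
\[
\sup _{z \in A_i} \| D(\g _{m,1})_z \|_h \leq c.
\]
Granting this, together with the trivial Schwarz--Pick bound $\| D(\g _{m,1})_z\|_h \leq 1$ that holds for every $\g \in \G _\tau ^\NN $, I obtain for any $\alpha \in (0,1)$ and any $i \in I_j$
\[
\int _{\G _\tau ^\NN } \sup _{z \in A_i} \| D(\g _{m,1})_z \|_h ^{\alpha } \, d\tilde \tau (\g ) \leq \tilde\tau (E) c^{\alpha } + (1 - \tilde\tau (E)) = 1 - \tilde\tau (E)(1 - c^{\alpha }) < 1,
\]
which is the sought estimate, uniform in $i$ and in $\alpha $.

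The Schwarz--Pick bound itself is a consequence of the cyclic structure of the $L_j$. By the choice $r_L = \sharp \Min (G _\tau ^{r_L}, L)$ from \cite[Theorem 3.15]{Splms10}, each generator $g \in \G _\tau $ satisfies $g(L_j) \subset L_{j+1}$ (indices cyclic modulo $r_L$), and since $g(F(G _\tau )) \subset F(G _\tau )$, $g$ sends each $A \in I_j$ into a component of $F(G _\tau )$ meeting $L_{j+1}$, i.e., into some element of $I_{j+1}$. By induction, for any $\g \in \G _\tau ^\NN $ the image $\g _{n,1}(A_i)$ lies in a single element of $I_{j+n}$, and Schwarz--Pick yields $\| D(\g _{n,1})_z\|_h \leq 1$ on $A_i$.

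To construct $E$ and $c$, I would select for each $i \in I_j$ a basepoint $p_i \in A_i \cap L_j$ and a hyperbolic disk $B_h(p_i, r) \subset A_i$ of a uniform radius $r > 0$; this uniform choice is possible because $L_j$ is compact and the hyperbolic injectivity radii at points of $L_j$ inside their respective Fatou components are uniformly bounded below. Using the hypothesis $J_{\ker }(G _\tau ) = \emptyset $ together with the results on random orbits from \cite[Theorem 3.15]{Splms10}, one extracts a Borel set $E \subset \G _\tau ^\NN $ with $\tilde \tau (E) > 0$ on which $\g _{m,1}(B_h(p_i, r))$ lies in a uniformly compactly contained hyperbolic sub-disk of its host component, uniformly in $i$. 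Schwarz--Pick applied to this relatively compact inclusion, combined with a hyperbolic Koebe-type distortion estimate to pass from $B_h(p_i, r)$ back to all of $A_i$, then delivers the required $c \in (0,1)$.

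The main obstacle is precisely the uniformity in $i \in I_j$: a priori $I_j$ need not be finite (finiteness does hold in the mean-stable setting of Theorem~\ref{t:kjemfhf}, but not in the general hypothesis of the lemma), and the strict contraction constant $c$ must not depend on $i$. This uniformity rests on the compactness of $L_j$, on the $\G _\tau $-cyclic permutation of the families $\{ I_j\} _{j=1}^{r_L}$, and on an equicontinuity/normal-family argument for the family of hyperbolic derivatives $\{\| D(\g _{m,1})_{\cdot } \|_h\} _{\g \in \G _\tau ^\NN }$ when restricted to the hyperbolic balls $B_h(p_i, r)$ near $L_j$.
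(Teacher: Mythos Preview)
Your reduction to a positive-measure set $E$ plus the Schwarz--Pick bound $\|D(\gamma_{m,1})_z\|_h\le 1$ on the complement is correct in spirit and matches the shape of the paper's argument. The gap is the last step, where you propose to ``pass from $B_h(p_i,r)$ back to all of $A_i$'' by a Koebe-type distortion estimate. This does not work: Koebe distortion only controls the ratio $\|Df_z\|_h/\|Df_{p_i}\|_h$ for $z$ at \emph{bounded} hyperbolic distance from $p_i$, with a constant that blows up as the distance grows; since each $A_i$ has infinite hyperbolic diameter, no such estimate yields a uniform bound on $\sup_{z\in A_i}\|D(\gamma_{m,1})_z\|_h$. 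For instance, $z\mapsto z^{2}$ on the unit disk has hyperbolic derivative $2|z|/(1+|z|^{2})<1$ at every point, yet the supremum over $\mathbb D$ equals $1$. What actually forces $\sup_{z\in A_i}\|D(\gamma_{m,1})_z\|_h<1$ for a given $\gamma$ is that $\gamma_{m,1}$ carries $\partial A_i$ into the \emph{interior} of the target Fatou component, so that the hyperbolic derivative tends to $0$ near $\partial A_i$ and the supremum is attained on a compact set. Your basepoint construction never touches $\partial A_i$.

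The paper's proof is organized around exactly this boundary control. Since every $\partial A_i$ lies in the compact set $\partial W_{L,j}$, one covers $\partial W_{L,j}$ by finitely many disks $U_{z_1},\dots,U_{z_t}$ together with words $g_{z_l}\in G_\tau$ of a common length $k$ (with $r_L\mid k$) satisfying $g_{z_l}(\overline{U_{z_l}})\subset W_{L,j}$; this is where $J_{\ker}(G_\tau)=\emptyset$ enters. The resulting set $K_0:=(W_{L,j}\setminus\bigcup_l U_{z_l})\cup\bigcup_l g_{z_l}(U_{z_l})$ is relatively compact in $W_{L,j}$ and therefore meets only finitely many Fatou components $B_1,\dots,B_u$; for each of these \cite[Theorem~3.15-4]{Splms10} supplies a word $h_v$ of length $k$ with $\sup_{z\in K_0\cap B_v}\|D(h_v)_z\|_h<1$, and one takes $m=2k$. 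Thus the reduction from the possibly infinite family $\{A_i\}_{i\in I_j}$ to the finite list $B_1,\dots,B_u$---the very uniformity you correctly flag as the main obstacle---comes from covering the common boundary $\partial W_{L,j}$, not from distortion around interior basepoints. Note also that the paper does not require a single set $E$ valid for every $i$; allowing the good words to depend on which of the finitely many $B_v$ is in play already suffices.
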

\begin{proof}
For each $g\in G_{\tau }^{r_{L}}$, we have $g(\overline{W_{L,j}})\subset \overline{W_{L,j}}.$ 
Combining this with \cite[Theorem 3.15-7]{Splms10}, we obtain that 
for each $z\in \partial W_{L,j}$, there exists a map $g_{z}\in G_{\tau }$ 
and an open disk neighborhood $U_{z}$ of $z$ such that 
$g_{z}(\overline{U_{z}})\subset W_{L,j}.$ 
Then there exists a finite family $\{ z_{l}\} _{l=1}^{t}$ such that 
$\partial W_{L,j}\subset \bigcup _{l=1}^{t}U_{z_{l}}.$ 
Since $G_{\tau }^{r_{L}}(W_{L,j})\subset W_{L,j}$, we may assume that 
there exists a $k\in \NN $ with $r_{L}|k$ and a 
finite family $\{ a^{l}=(\alpha _{1}^{l},\ldots ,\alpha _{k}^{l})\in \G_{\tau }^{k}\} _{l=1}^{t}$ 
such that for each $l=1,\ldots ,t$, 
$g_{z_{l}}=\alpha _{k}^{l}\circ \cdots \circ \alpha _{1}^{l}.$ 
Let 
$K_{0}:=(W_{L,j}\setminus \bigcup _{l=1}^{t}U_{z_{l}})\cup \bigcup _{l=1}^{t}g_{z_{l}}(U_{z_{l}})$ 
and let $\{ B_{1},\ldots ,B_{u}\} $ be the set 
$\{ A\in \mbox{Con}(F(G_{\tau }))\mid A\cap K_{0}\neq \emptyset \} .$ 
By \cite[Theorem 3.15-4]{Splms10}, 
for each $v=1,\ldots u $, there exists an element $h_{v}\in G_{\tau }$  
such that $\sup _{z\in K_{0}\cap B_{v}}\| D(h_{v})_{z}\| _{h}<1.$ 
We may assume that there exists a $k'\in \NN $ with $k|k'$ such that 
for each $v$, the element $h_{v}$ is a product of 
$k'$-elements of $\G_{\tau }.$ Let $m=2k'.$ Then this $m$  is the desired number. 
\end{proof} 
\begin{lem}
\label{l:sphypm}
Let $\Lambda \in \emCpt(\emRat) $ and let $G=\langle \Lambda \rangle .$ 
Suppose that $\sharp (J(G))\geq 3.$ 
For each element $A\in \mbox{{\em Con}}(F(G))$, we take the hyperbolic metric in $A.$ 
Let $K$ be a compact subset of $F(G).$ 
Then, there exists a positive constant $C_{K}$ such that 
for each $g\in G$ and for each $z\in K$, 
$\| Dg_{z}\| _{s}/\| Dg_{z}\| _{h}\leq C_{K}.$ 
\end{lem}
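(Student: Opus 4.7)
The plan is to express the ratio $\|Dg_z\|_s/\|Dg_z\|_h$ as a quotient of pointwise density ratios, then to bound the numerator uniformly above on $K$ and the denominator uniformly below across $F(G)$, using Schwarz--Pick together with $\sharp J(G)\geq 3$. More precisely, for each hyperbolic component $\Omega\in\mbox{Con}(F(G))$ let $\rho_\Omega$ and $\rho_s$ denote the hyperbolic and spherical conformal densities in a local coordinate, and set $\lambda_\Omega(w):=\rho_\Omega(w)/\rho_s(w)$, a quantity independent of the chart. Since $g(F(G))\subset F(G)$, writing $A,A'\in\mbox{Con}(F(G))$ for the components containing $z$ and $g(z)$, the standard conformal formulas $\|Dg_z\|_s=|g'(z)|\rho_s(g(z))/\rho_s(z)$ and $\|Dg_z\|_h=|g'(z)|\rho_{A'}(g(z))/\rho_A(z)$ give after cancellation
\[
\frac{\|Dg_z\|_s}{\|Dg_z\|_h}\;=\;\frac{\lambda_A(z)}{\lambda_{A'}(g(z))}.
\]
It therefore suffices to exhibit constants $C_1,m>0$ such that $\lambda_A(z)\leq C_1$ for every $z\in K\cap A$ and every $A\in\mbox{Con}(F(G))$, and $\lambda_{A'}(w)\geq m$ for every $w\in A'$ and every $A'\in\mbox{Con}(F(G))$.

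For the upper bound on the numerator, I would use that $K$ is compact in the open set $F(G)$ to obtain $\delta:=d(K,J(G))>0$, where $d$ is the spherical distance. Since every $\Omega\in\mbox{Con}(F(G))$ satisfies $\partial\Omega\subset J(G)$, for any $z\in K\cap A$ the spherical ball $B(z,\delta)$ is contained in $A$ (it is connected, contains $z\in A$, and meets neither $\partial A$ nor any other component of $F(G)$). Schwarz--Pick applied to the inclusion $B(z,\delta)\hookrightarrow A$ yields $\rho_A(z)\leq\rho_{B(z,\delta)}(z)$, i.e.\ $\lambda_A(z)\leq\lambda_{B(z,\delta)}(z)$. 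The spherical metric is invariant under the spherical rotations (unitary M\"obius transformations), which act transitively on $\CCI$ and map $B(z,\delta)$ isometrically onto $B(z_0,\delta)$ for any chosen base point $z_0$; because these rotations also preserve the hyperbolic metric of the image, the quantity $\lambda_{B(z,\delta)}(z)$ is independent of $z$ and equals a constant $C_1=C_1(\delta)$.

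For the lower bound on the denominator, fix three distinct points $a_1,a_2,a_3\in J(G)$, available by $\sharp J(G)\geq 3$. Every $A'\in\mbox{Con}(F(G))$ is disjoint from $J(G)$ and hence contained in $\Omega_0:=\CCI\setminus\{a_1,a_2,a_3\}$, so Schwarz--Pick gives $\rho_{A'}\geq\rho_{\Omega_0}|_{A'}$ and therefore $\lambda_{A'}(w)\geq\lambda_{\Omega_0}(w)$ for $w\in A'$. The function $\lambda_{\Omega_0}$ is continuous and positive on $\Omega_0$; moreover, by the classical puncture asymptotic $\rho_{\Omega_0}(w)\asymp 1/(|w-a_i|\log(1/|w-a_i|))$ as $w\to a_i$ (in a local chart), combined with the boundedness of $\rho_s$, one has $\lambda_{\Omega_0}(w)\to+\infty$ as $w$ approaches each $a_i$. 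Consequently $\lambda_{\Omega_0}$ attains a positive infimum $m>0$ on $\Omega_0$, yielding $\lambda_{A'}(w)\geq m$ uniformly in $A'$ and $w\in A'$. Combining the two estimates gives $\|Dg_z\|_s/\|Dg_z\|_h\leq C_1(\delta)/m=:C_K$ uniformly in $g\in G$ and $z\in K$. The only genuinely analytic input is the puncture blow-up of $\lambda_{\Omega_0}$, which is the classical estimate for the hyperbolic metric of the thrice-punctured sphere; everything else is a bookkeeping application of Schwarz--Pick and the symmetries of the spherical metric.
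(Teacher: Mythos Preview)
Your argument is correct and takes a genuinely different route from the paper's. The paper normalizes so that $\infty\in J(G)$, invokes the fact that $J(G)$ is \emph{uniformly perfect} when $\Lambda$ is compact (citing \cite{St3}), and then applies the Beardon--Pommerenke two-sided estimate $\rho_U(z)\asymp 1/d_e(z,\partial U)$ to bound the quotient $\rho_U(z)/\rho_V(g(z))$ explicitly via a fixed base point $z_0\in J(G)$. Your proof bypasses uniform perfectness entirely: the numerator is handled by Schwarz--Pick against the spherical ball $B(z,\delta)$, and the denominator by Schwarz--Pick against the thrice-punctured sphere $\CCI\setminus\{a_1,a_2,a_3\}$, together with the classical puncture blow-up of its hyperbolic density. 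Both approaches are short, but yours is the more elementary one and in fact never uses the hypothesis $\Lambda\in\Cpt(\Rat)$; it goes through for any rational semigroup with $\sharp J(G)\geq 3$. The paper's route, on the other hand, gives slightly more quantitative information (two-sided comparison of $\rho_U$ with boundary distance), which could be useful elsewhere but is not needed for this lemma.
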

\begin{proof}
By conjugating $G$ by an element of Aut$(\CCI )$, we may assume that 
$\infty \in J(G).$ For each $U\in \mbox{Con}(F(G))$, let 
$\rho _{U}=\rho _{U}(z)|dz|$ be the hyperbolic metric on $U.$ 
Since $G$ is generated by a compact subset of Rat, \cite{St3} implies that 
$J(G)$ is uniformly perfect (for the definition of uniform perfectness, see \cite{St3} and \cite{BP}). 
Therefore, by \cite{BP}, there exists a constant $C_{1}\geq 1$ such that 
for each $U\in \mbox{Con}(F(G))$ and for each $z\in U$, 
$C_{1}^{-1}\frac{1}{d_{e}(z,\partial U)}\leq \rho _{U}(z)\leq C_{1}\frac{1}{d_{e}(z,\partial U)} $, 
where $d_{e}(z,\partial U):= \inf \{ |z-w|\mid w\in \partial U\cap \CC \} .$ 
Let $z_{0}\in J(G)$ be a point. Let $g\in G$ and let $z\in K$. Let 
$U,V\in \mbox{Con}(F(G))$ be such that $z\in U$ and $g(z)\in V.$ 
Then 
\begin{align*}
\| Dg_{z}\| _{s}/\| Dg_{z}\| _{h}  & =  \frac{\sqrt{1+|z|^{2}}}{\sqrt{1+|g(z)|^{2}}}\frac{\rho _{U}(z)}{\rho _{V}(g(z))}
                                \leq \frac{\sqrt{1+|z|^{2}}}{\sqrt{1+|g(z)|^{2}}}C_{1}^{2}
                               \frac{d_{e}(g(z),\partial V)}{d_{e}(z,\partial U)}\\ 
&                                \leq \frac{\sqrt{1+|z|^{2}}}{\sqrt{1+|g(z)|^{2}}}C_{1}^{2}
                               \frac{|z_{0}|+|g(z)|}{d_{e}(z,\partial U)}.  
\end{align*}
Therefore the statement of our lemma holds. 
\end{proof}

\begin{lem}
\label{l:suptheta}
Under the notations and assumptions of Lemma~\ref{l:supai}, 
let $j\in \{ 1,\ldots, r_{L}\} .$  
For each $\alpha \in (0,1)$, let 
$\theta _{\alpha }:=\sup _{i\in I_{j}}\int _{(\emRat )^{\NN }}\sup _{z\in A_{i}}\{ \| D(\g _{m,1})_{z}\| _{h}^{\alpha }\} 
d\tilde{\tau }(\g ) (<1)$, where $m$ is the number in Lemma~\ref{l:supai}. 
Then, we have the following. 
\begin{enumerate}
\item[{\em (1)}] For each $n\in \NN $, 
$\sup _{i\in I_{j}}\int _{(\emRat )^{\NN }}\sup _{z\in A_{i}}\{ \| D(\g _{nm,1})_{z}\| _{h}^{\alpha }\}
\leq \theta _{\alpha }^{n}.$ 
\item[{\em (2)}] 
Let $i\in I_{j}$ and let $K$ be a non-empty compact subset of $A_{i}.$ 
Then there exists a constant $\tilde{C}_{K}\geq 1$ such that 
for each $\alpha \in (0,1)$, for each $\varphi \in C^{\alpha }(\CCI )$, for each $z,w\in K$, and 
for each $n\in \NN $, 
$|M_{\tau }^{nm}(\varphi )(z)-M_{\tau }^{nm}(\varphi )(w)|\leq \| \varphi \| _{\alpha }
\theta _{\alpha }^{n}\tilde{C}_{K}d(z,w)^{\alpha }.$  
\end{enumerate}
\end{lem}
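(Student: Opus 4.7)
The plan is to prove~(1) by induction on $n$, using the chain rule combined with the product structure of $\tilde\tau$, and then to derive~(2) from~(1) via a comparison of the spherical and hyperbolic metrics on a fixed compact neighborhood of $K$ inside $A_i$. The key geometric input used throughout is that $r_L \mid m$, which (since $F(G_\tau)$ is forward invariant and $\gamma_{nm,1}(L_j)\subset L_j$) forces $\gamma_{nm,1}$ to map each component $A_i$ with $i\in I_j$ into some other component $A_{i'}$ with $i'\in I_j$. This cyclic closure of $\{A_i\}_{i\in I_j}$ under the block maps $\gamma_{nm,1}$ is precisely what lets us bootstrap the single-block bound $\theta_\alpha$ into an $n$-block bound $\theta_\alpha^n$.

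For~(1), I would apply the chain rule to $\gamma_{nm,1} = \gamma_{nm,(n-1)m+1}\circ \gamma_{(n-1)m,1}$ and, after raising to the power $\alpha$ and taking the supremum over $z\in A_i$, obtain
\[
\sup_{z\in A_i}\|D(\gamma_{nm,1})_z\|_h^\alpha \;\leq\; \Bigl(\sup_{w\in A_{i'(\gamma)}}\|D(\gamma_{nm,(n-1)m+1})_w\|_h^\alpha\Bigr)\cdot \sup_{z\in A_i}\|D(\gamma_{(n-1)m,1})_z\|_h^\alpha,
\]
where the intermediate index $i'(\gamma)\in I_j$ depends only on the first $(n-1)m$ coordinates of $\gamma$. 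Integrating against $\tilde\tau$ and using Fubini to separate these first $(n-1)m$ coordinates from the last $m$ coordinates (which are independent, since $\tilde\tau$ is a product), the inner integral is at most $\theta_\alpha$ by the definition of $\theta_\alpha$, and induction on $n$ closes~(1).

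For~(2), since $\varphi\in C^\alpha(\CCI)$ we have
\[
|M_\tau^{nm}(\varphi)(z)-M_\tau^{nm}(\varphi)(w)|\;\leq\; \|\varphi\|_\alpha \int d(\gamma_{nm,1}(z),\gamma_{nm,1}(w))^\alpha\,d\tilde\tau(\gamma),
\]
so it suffices to produce a constant $C=C(K)$, independent of $\alpha$ and $n$, such that $d(\gamma_{nm,1}(z),\gamma_{nm,1}(w)) \leq C\,\sup_{u\in A_i}\|D(\gamma_{nm,1})_u\|_h\, d(z,w)$ for every $z,w\in K$ and every $\gamma\in\Gamma_\tau^\NN$. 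I would fix a compact $K'\subset A_i$ with $K\subset \text{int}(K')$ and a $\delta>0$ small enough that the spherical geodesic between any two points of $K$ within distance $\delta$ remains in $K'$; for such pairs, integrating along this geodesic and converting spherical derivatives to hyperbolic ones via Lemma~\ref{l:sphypm} yields the desired inequality. For pairs with $d(z,w)\geq \delta$ I would fix once and for all a smooth path of bounded length in a compact $K''\subset A_i$ connecting them, obtaining a bound on $d(\gamma_{nm,1}(z),\gamma_{nm,1}(w))$ uniform in $z,w$, and then reintroduce the factor $d(z,w)$ via $1\leq d(z,w)/\delta$. Raising to the power $\alpha$ (and using $C^\alpha\leq \max(1,C)$ to keep the constant uniform in $\alpha$), integrating against $\tilde\tau$, and applying~(1) produces~(2) with $\tilde C_K$ depending only on $K$.

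The main obstacle I foresee is the ``far apart'' case in~(2): the natural path-length estimate gives a bound on $d(\gamma_{nm,1}(z),\gamma_{nm,1}(w))$ that does not carry the factor $d(z,w)$, and one must manually reinsert it by exploiting that $d(z,w)$ is bounded below in this regime. Everything else is a routine combination of the chain rule, Fubini, and the spherical/hyperbolic comparison on compact subsets of Fatou components.
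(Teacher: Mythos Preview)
Your proof of~(1) is correct and matches the paper's argument exactly: decompose via the chain rule, partition according to which component $A_k$ receives $\gamma_{(n-1)m,1}(A_i)$, and peel off one factor of $\theta_\alpha$ by Fubini and the definition of $\theta_\alpha$.

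Your proof of~(2) is also correct, but the paper handles it more cleanly and avoids your close/far dichotomy. Instead of splitting on $d(z,w)\lessgtr\delta$, the paper takes $\tilde K\subset A_i$ to be a compact set containing, for every pair $z,w\in K$, the \emph{hyperbolic} geodesic in $A_i$ joining them (such a $\tilde K$ exists because $K$ has finite hyperbolic diameter, so all these geodesics lie in a fixed hyperbolic neighborhood of $K$, which is relatively compact in $A_i$). One then integrates $\|D(\gamma_{nm,1})\|_s$ along this hyperbolic geodesic, applies Lemma~\ref{l:sphypm} on $\tilde K$ to convert to $\|D(\gamma_{nm,1})\|_h$, and uses the bi-Lipschitz equivalence of the spherical and hyperbolic metrics on the fixed compact $\tilde K$ to replace the spherical length of the geodesic by $d(z,w)$. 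This gives the pointwise bound $d(\gamma_{nm,1}(z),\gamma_{nm,1}(w))\leq \tilde C_K \sup_{a\in A_i}\|D(\gamma_{nm,1})_a\|_h\, d(z,w)$ in one stroke, with no need to treat far-apart pairs separately or to reinsert the factor $d(z,w)$ artificially via $d(z,w)\geq\delta$. Your argument and the paper's are the same in spirit; the paper's choice of path is what eliminates the case split.
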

\begin{proof}
Let $i\in I_{j}$ and let $\alpha \in (0,1).$  
Then we have 
\begin{align*}
\    & \int _{\G _{\tau }^{\NN }}\sup _{z\in A_{i}}\| D(\g _{nm,1})_{z}\| _{h}^{\alpha } d\tilde{\tau }(\g )\\ 
\leq & \sum _{k\in I_{j}}\int _{\{ \g \in \G _{\tau }^{\NN }\mid \g _{(n-1)m,1}(A_{i})\subset A_{k}\} } 
\sup _{z\in A_{i}}\{ \| D(\g _{nm,(n-1)m+1})_{\g _{(n-1)m,1}(z)}\| _{h}^{\alpha }\cdot \| D(\g _{(n-1)m,1})_{z}\| _{h}^{\alpha }\} 
d\tilde{\tau }(\g )\\ 
\leq &  \sum _{k\in I_{j}}\theta _{\alpha }\int _{\{ \g \in \G _{\tau }^{\NN }\mid \g _{(n-1)m,1}(A_{i})\subset A_{k}\} }
\sup _{z\in A_{i}}\| D(\g _{(n-1)m,1})_{z}\| _{h} d\tilde{\tau }(\g )\\ 
=    & \theta _{\alpha }\int _{\G _{\tau }^{\NN }}\sup _{z\in A_{i}}\| D(\g _{(n-1)m,1})_{z}\| _{h}d\tilde{\tau }(\g ). 
\end{align*} 
Therefore, statement (1) of our lemma holds. 

We now prove statement (2) of our lemma. 
Let $\tilde{K}$ be a compact subset of $A_{i}$ such that 
for each $a,b\in K$, the geodesic arc between $a$ and $b$ with respect to the hyperbolic metric on $A_{i}$ 
is included in $\tilde{K}.$ 
Let $C_{\tilde{K}}$ be the number obtained in Lemma~\ref{l:sphypm} with $\Lambda =\G _{\tau }.$ 
Let $\tilde{C}_{K}:=C_{\tilde{K}}.$ 
Let $\alpha \in (0,1)$, $\varphi \in C^{\alpha }(\CCI )$ and let $z,w\in K.$ 
Let $n\in \NN .$ 
Then we obtain 
\begin{align*}
|M_{\tau }^{nm}(\varphi )(z)-M_{\tau }^{nm}(\varphi )(w)|
& \leq \int _{\G _{\tau }^{\NN }}|\varphi (\g _{nm,1}(z))-\varphi (\g _{nm,1}(w))| d\tilde{\tau }(\g )\\ 
& \leq \int _{\G _{\tau }^{\NN }}\| \varphi \| _{\alpha }d(\g _{nm,1}(z),\g _{nm,1}(w))^{\alpha }d\tilde{\tau }(\g )\\ 
& \leq \| \varphi \| _{\alpha }\int _{\G _{\tau }^{\NN }}\tilde{C}_{K}^{\alpha }\sup _{a\in A_{i}}\{ \| D(\g _{nm,1})_{a}\| _{h}^{\alpha }\} 
            d(z,w)^{\alpha } d\tilde{\tau }(\g )\leq \| \varphi \| _{\alpha }\theta _{\alpha }^{n}\tilde{C}_{K}d(z,w)^{\alpha }. 
\end{align*}
Therefore, statement (2) of our lemma holds. 
\end{proof}
We now prove Theorem~\ref{t:kjemfhf}.\\ 
\noindent {\bf Proof of Theorem~\ref{t:kjemfhf}:} 
Let $L\in \Min(G_{\tau },\CCI ).$ Let 
$r_{L}:=\dim _{\CC }(\mbox{LS}({\cal U}_{f,\tau }(\CCI ))).$ 
By using the notation in the proof of Theorem~\ref{t:msmtaust}, 
let $\{ L_{j}\} _{j=1}^{r_{L}}=\Min (G_{\tau }^{r_{L}},L).$ 
For each $j\in \{ 1,\ldots ,r_{L}\} $,  
let $W_{L,j}:=\bigcup _{A\in \mbox{Con}(F(G_{\tau })):A\cap L_{j}\neq \emptyset }A.$ 
For each A$\in \mbox{Con}(W_{L,j})$, we take the hyperbolic metric on $A.$ 
Let $H_{j}:=d_{h}(L_{j},1)$ be the $1$-neighborhood of $L_{j}$ in $W_{L,j}$ with respect 
to the hyperbolic metric (see Definition~\ref{d:opensethyp}). 
Let $\{ A_{i}\} _{i=1}^{t}=\{ A\in \mbox{Con}(F(G_{\tau }))\mid A\cap L_{j}\neq \emptyset \} .$ 
Let $H_{j,i}:= H_{j}\cap A_{i}$ and $L_{j,i}:=L_{j}\cap A_{i}.$ 
By Lemma~\ref{l:suptheta}, 
there exists a family $\{ D_{0,\alpha }\} _{\alpha \in (0,1)} $ of 
positive constants, a family $\{ D_{1,\alpha }\} _{\alpha \in (0,1)}$ of positive constants,
 and  
a family $\{ \lambda _{1,\alpha }\} _{\alpha \in (0,1)}\subset (0,1)$ 
such that for each $\alpha \in (0,1)$, for each $L\in \Min(G_{\tau },\CCI )$, 
for each $i$, for each $j$, for each $\g \in \G _{\tau }^{\NN }$, 
for each $z,w\in H_{j,i}$, for each $n\in \NN $ and for each $\varphi \in C^{\alpha }(\CCI )$, 
\begin{equation}
\label{eq:d0d1}
|M_{\tau }^{n}(\varphi )(z)-M_{\tau }^{n}(\varphi )(w)|\leq D_{0,\alpha }\lambda _{1,\alpha }^{n}
\| \varphi \| _{\alpha } d(z,w)^{\alpha }\leq D_{1,\alpha }\lambda _{1,\alpha }^{n}\| \varphi \| _{\alpha }.
\end{equation}
For each subset $B$ of $\CCI $ and for each bounded function $\psi :B\rightarrow \CC $, 
we set $\| \psi \| _{B}:=\sup _{z\in B}|\psi (z)|.$ 
For each $i=1,\ldots ,t$, let $x_{i}\in L_{j,i}$ be a point. 
Let $\varphi \in C^{\alpha }(\CCI )$. By (\ref{eq:d0d1}), we obtain   
$\sup _{z\in H_{j,i}}|M_{\tau }^{nr_{L}}(\varphi )(z)-M_{\tau }^{nr_{L}}(\varphi )(x_{i})|\leq 
D_{1,\alpha }\lambda _{1,\alpha }^{n}\| \varphi \| _{\alpha }$ for each $i, j, n.$  
Therefore, for each $j$ and for each $l,n \in \NN $, 
\begin{equation}
\label{eq:d1l1a}
\| M_{\tau }^{lr_{L}}(M_{\tau }^{nr_{L}}(\varphi )-\sum _{i=1}^{t}M_{\tau }^{nr_{L}}(\varphi )(x_{i})\cdot 1_{H_{j,i}})\| _{H_{j}}
\leq D_{1,\alpha }\lambda _{1,\alpha }^{n}\| \varphi \| _{\alpha }. 
\end{equation}
We now consider $M_{\tau }^{r_{L}}:C_{H_{j}}(H_{j})\rightarrow C_{H_{j}}(H_{j}).$ 
We have $\dim _{\CC }(C_{H_{j}}(H_{j}))<\infty .$ 
Moreover, by the argument in the proof of Theorem~\ref{t:msmtaust}, 
$M_{\tau }^{r_{L}}:C_{H_{j}}(H_{j})\rightarrow C_{H_{j}}(H_{j})$ has 
exactly one unitary eigenvalue $1$, and has exactly one unitary eigenvector 
$1_{H_{j}}.$ Therefore, there exists a constant $\lambda _{2}\in (0,1)$ 
and a constant $D_{2}>0$, each of which depends only on $\tau $ 
and does not depend on $\alpha $ and $\varphi $, such that 
for each $l\in \NN $, 
\begin{align}
\label{eq:d2la2}
\ & \| M_{\tau }^{lr_{L}}(\sum _{i=1}^{t}M_{\tau }^{nr_{L}}(\varphi )(x_{i})1_{H_{j,i}})
-\lim _{m\rightarrow \infty }M_{\tau }^{mr_{L}}(\sum _{i=1}^{t}
M_{\tau }^{nr_{L}}(\varphi )(x_{i})1_{H_{j,i}})\| _{H_{j}} \notag \\  
\leq & D_{2}\lambda _{2}^{l}\| \sum _{i=1}^{t}M_{\tau }^{nr_{L}}(\varphi )(x_{i})1_{H_{j,i}}\| _{H_{j}}
\leq D_{2}\lambda _{2}^{l}t\| \varphi \| _{\CCI }.
\end{align} 
Since $\lambda _{2}$ does not depend on $\alpha $, we may assume that 
for each $\alpha \in (0,1)$, 
$\lambda _{1,\alpha }\geq \lambda _{2}.$ 
From (\ref{eq:d1l1a}) and (\ref{eq:d2la2}), 
it follows that for each $n\in \NN $ and for each $l_{1},l_{2}\in \NN $ with $l_{1},l_{2}\geq n$, 
\begin{align*}
\    & \| M_{\tau }^{(l_{1}+n)r_{L}}(\varphi )-M_{\tau }^{(l_{2}+n)r_{L}}(\varphi )\| _{H_{j}}\\ 
\leq & \| M_{\tau }^{(l_{1}+n)r_{L}}(\varphi )-M_{\tau }^{l_{1}r_{L}}(\sum _{i=1}^{t}
       M_{\tau }^{nr_{L}}(\varphi )(x_{i})1_{H_{j,i}})\| _{H_{j}}\\ 
\    & \ + \| M_{\tau }^{l_{1}r_{L}}(\sum _{i=1}^{t}M_{\tau }^{nr_{L}}(\varphi )(x_{i})1_{H_{j,i}})
         -\lim _{m\rightarrow \infty }M_{\tau }^{mr_{L}}(\sum _{i=1}^{t}M_{\tau }^{nr_{L}}(\varphi )(x_{i})1_{H_{j,i}})\| _{H_{j}}\\ 
\    & \ + \| \lim _{m\rightarrow \infty }M_{\tau }^{mr_{L}}(\sum _{i=1}^{t}M_{\tau }^{nr_{L}}(\varphi )(x_{i})1_{H_{j,i}})
      - M_{\tau }^{l_{2}r_{L}}(\sum _{i=1}^{t}M_{\tau }^{nr_{L}}(\varphi )(x_{i})1_{H_{j,i}})\| _{H_{j}}\\ 
\    & \ +\| M_{\tau }^{l_{2}r_{L}}(\sum _{i=1}^{t}M_{\tau }^{nr_{L}}(\varphi )(x_{i})1_{H_{j,i}})
      -M_{\tau }^{(l_{2}+n)r_{L}}(\varphi )\| _{H_{j}}\\ 
\leq & 2D_{1,\alpha }\lambda _{1,\alpha }^{n}\| \varphi \| _{\alpha }+D_{2}\lambda _{2}^{l_{1}}t\| \varphi \| _{\alpha }
      +D_{2}\lambda _{2}^{l_{2}}t\| \varphi \| _{\alpha } 
      \leq (2D_{1,\alpha }+2D_{2}t)\lambda _{1,\alpha }^{n}\| \varphi \| _{\alpha }.      
\end{align*} 
Letting $l_{1}\rightarrow \infty $, we obtain that 
for each $l_{2}\in \NN $ with $l_{2}\geq n$, 
$\| \pi _{\tau }(\varphi )-M_{\tau }^{(l_{2}+n)r_{L}}(\varphi )\| _{H_{j}}\leq 
(2D_{1,\alpha }+2D_{2}t)\lambda _{1,\alpha }^{n}\| \varphi \| _{\alpha }.$ 
In particular, for each $n\in \NN $, 
$\| \pi _{\tau }(\varphi )-M_{\tau }^{2nr_{L}}(\varphi )\| _{H_{j}}\leq 
(2D_{1,\alpha }+2D_{2}t)\lambda _{1,\alpha }^{n}\| \varphi \| _{\alpha }.$ 
Therefore,  for each $n\in \NN $, 
\begin{equation}
\label{eq:pitauhj}
\| \pi _{\tau }(\varphi )-M_{\tau }^{nr_{L}}(\varphi )\| _{H_{j}}\leq 
(2D_{1,\alpha }+2D_{2}t)\lambda _{1,\alpha }^{-1/2}(\lambda _{1,\alpha }^{1/2})^{n}\| M_{\tau }^{r_{L}}\| _{\alpha }\| \varphi \| _{\alpha }, 
\end{equation}  
where $\| M_{\tau }^{r_{L}}\| _{\alpha }$ denotes the operator norm of 
$M_{\tau }^{r_{L}}:C^{\alpha }(\CCI )\rightarrow C^{\alpha }(\CCI ).$ 
Let $U:=\bigcup _{L,j}H_{j}$ and let $r:=\prod _{L}r_{L}.$ 
From the above arguments, it follows that there exists a family $\{ D_{3,\alpha }\} _{\alpha \in (0,1)} $ 
of positive constants and a family $\{ \lambda _{3,\alpha }\} _{\alpha \in (0,1)} \subset (0,1)$ 
such that for each $\alpha \in (0,1)$, for each $\varphi \in C^{\alpha }(\CCI )$ and for each 
$n\in \NN $, 
\begin{equation}
\label{eq:UleqD3}
\| \pi _{\tau }(\varphi )-M_{\tau }^{rn}(\varphi )\| _{\overline{U}}\leq 
D_{3,\alpha }\lambda _{3,\alpha }^{n}\| \varphi \| _{\alpha }. 
\end{equation} 
By \cite[Theorem 3.15-5]{Splms10}, for each $z\in \CCI $, there exists a map $g_{z}\in G_{\tau }$ and a compact disk neighborhood 
$U_{z}$ of $z$ such that 
$g_{z}(U_{z})\subset U.$ 
Since $\CCI $ is compact, there  exists a finite family 
$\{ z_{j}\} _{j=1}^{s}\subset \CCI $ such that 
$\bigcup _{j=1}^{s}\mbox{int}(U_{z_{j}})=\CCI .$ 
Since $G_{\tau }(U)\subset U$, we may assume that 
there exists a $k$ such that 
for each $j=1,\ldots ,s$, there exists an element $\beta ^{j}=(\beta _{1}^{j},\ldots ,\beta _{k}^{j})\in \G _{\tau }^{k}$ 
with $g_{z_{j}}=\beta _{k}^{j}\circ \cdots \circ \beta _{1}^{j}.$ 
We may also assume that $r|k.$ 
For each $j=1,\ldots ,s$, let $V_{j}$ be a compact neighborhood of $\beta ^{j}$ in $\G _{\tau }^{k}$ such that 
for each $\zeta =(\zeta _{1},\ldots ,\zeta _{k})\in V_{j}$, 
$\zeta _{k}\circ \cdots \circ \zeta _{1}(U_{z_{j}})\subset U.$ Let 
$a:=\max \{ \tau ^{k}(\G _{\tau }^{k}\setminus V_{j})\mid j=1,\ldots ,s\} \in [0,1).$ 
Let $C_{1}:=2\max \{ \max \{ \| D(\zeta _{k}\circ \cdots \circ \zeta _{1})_{z}\| _{s}\mid 
(\zeta _{1},\ldots ,\zeta _{k})\in \G _{\tau }^{k},z\in \CCI \} ,1\} .$ 
Let $\alpha _{1}\in (0,1)$ be such that 
$aC_{1}^{\alpha _{1}}<1$ and $\mbox{LS}({\cal U}_{f,\tau }(\CCI ))\subset C^{\alpha _{1}}(\CCI ).$ 
 Let $C_{2}>0$ be a constant such that 
 for each $z\in \CCI $ there exists a $j\in \{ 1,\ldots ,s\} $ with  
%for each $j=1,\ldots ,s$ and for each $z\in \mbox{int}(U_{z_{j}})$, 
$B(z,C_{2})\subset U_{z_{j}}.$ 
Let $n\in \NN .$ 
Let $z_{0}\in \CCI $ be any point. 
Let $i_{0}\in \{ 1,\ldots ,s\} $ be such that $B(z_{0},C_{2})\in U_{z_{i_{0}}}.$ 
Let $A(0):=\{ \g \in \G _{\tau }^{\NN }\mid (\g _{1},\ldots ,\g _{k})\in V_{i_{0}}\} $ 
and let $B(0):= \{ \g\in \G _{\tau }^{\NN }\mid (\g _{1},\ldots ,\g _{k})\not\in V_{i_{0}}\} .$ 
Inductively, for each $j=1,\ldots ,n-1$, let 
$A(j):=\{ \g \in B(j-1)\mid \exists i \mbox{ s.t. }B(\g _{kj,1}(z_{0}),C_{2})\subset U_{z_{i}}, 
(\g _{kj+1},\ldots ,\g _{kj+k})\in V_{i}\} $ and let 
$B(j):= B(j-1)\setminus A(j).$ Then for each $j=1,\ldots ,n-1$, 
$\tilde{\tau }(B(j))\leq a\tilde{\tau }(B(j-1))\leq \cdots \leq a^{j+1}$ and 
$\tilde{\tau }(A(j))\leq \tilde{\tau }(B(j-1))\leq a^{j}.$  
%Therefore, 
%$\tilde{\tau }(B(n-1))\leq a^{n}.$ 
Moreover, we have 
$\G _{\tau }^{\NN }=\amalg _{j=0}^{n-1}A(j)\amalg B(n-1).$ 
Therefore, we obtain that 
\begin{align}
\label{eq:mtauknz0}
\    & |M_{\tau }^{kn}(\varphi )(z_{0})-\pi _{\tau }(\varphi )(z_{0})|=
       |M_{\tau }^{kn}(\varphi )(z_{0})-M_{\tau }^{kn}(\pi _{\tau }(\varphi ))(z_{0})| \notag \\ 
\leq & \left|\sum _{j=0}^{n-1}\int _{A(j)}(\varphi (\g _{kn,1}(z_{0}))
        -\pi _{\tau }(\varphi )(\g _{kn,1}(z_{0})))d\tilde{\tau }(\g )\right| \notag \\ 
\    & \ +\left|\int _{B(n-1)}(\varphi (\g _{kn,1}(z_{0}))
        -\pi _{\tau }(\varphi )(\g _{kn,1}(z_{0})))d\tilde{\tau }(\g )\right|. 
\end{align}
For each $j=0,\ldots ,n-1$, there exists a Borel subset $A'(j)$ of 
$\G _{\tau }^{(k+1)j}$ such that 
$A(j)=A'(j)\times \G _{\tau }\times \G _{\tau }\times \cdots .$ 
Hence, by (\ref{eq:UleqD3}), we obtain that for each $\alpha \in (0,1)$ 
and for each $\varphi \in C^{\alpha }(\CCI )$, 
\begin{align}
\label{eq:apjph}
\    & \left| \int _{A(j)}(\varphi (\g _{kn,1}(z_{0}))-\pi _{\tau }(\varphi )(\g _{kn,1}(z_{0}))) 
       d\tilde{\tau }(\g )\right| \notag \\ 
= & \left|  \int _{A'(j)}(M_{\tau }^{k(n-j-1)}(\varphi )(\g _{k(j+1)}\circ \cdots \circ \g _{1}(z_{0}))
              -\pi _{\tau }(\varphi )(\g _{k(j+1)}\circ \cdots \circ \g _{1}(z_{0}))) d\tau (\g _{k(j+1)})\cdots 
              d\tau (\g _{1})\right| \notag \\ 
\leq & D_{3,\alpha }\lambda _{3,\alpha }^{n-j-1}\| \varphi \| _{\alpha }\tilde{\tau }(A(j))
\leq D_{3,\alpha }\lambda _{3,\alpha }^{n-j-1}a^{j}\| \varphi \|_{\alpha }.    
\end{align}
By (\ref{eq:mtauknz0}) and (\ref{eq:apjph}), 
it follows that 
\begin{align*}
\    & |M_{\tau }^{kn}(\varphi )(z_{0})-\pi _{\tau }(\varphi )(z_{0})|
\leq  \sum _{j=0}^{n-1}D_{3,\alpha }\lambda _{3,\alpha }^{n-j-1}a^{j}\| \varphi \| _{\alpha }
        +a^{n}(\| \varphi \| _{\infty }+\| \pi _{\tau }(\varphi )\| _{\infty })\\ 
\leq & \left( D_{3,\alpha }n(\max \{ \lambda _{3,\alpha }, a\} )^{n-1} 
       +a^{n}(1+\| \pi _{\tau }\| _{\infty } )\right) \| \varphi \| _{\alpha },  
\end{align*} 
where $\| \pi _{\tau }\| _{\infty }$ denotes the operator norm of 
$\pi _{\tau }:(C(\CCI ),\| \cdot \| _{\infty })\rightarrow (C(\CCI ),\| \cdot \| _{\infty }).$ 
For each $\alpha \in (0,1)$, let 
$\zeta _{\alpha }:=\frac{1}{2}(1+\max \{ \lambda _{3,\alpha },a\} )<1.$ 
From these arguments, it follows that there exists a family $\{ C_{3,\alpha }\} _{\alpha \in (0,1)}$ 
of positive constants such that for each $\alpha \in (0,1)$,  
for each $\varphi \in C^{\alpha }(\CCI )$ and for each $n\in \NN $,  
\begin{equation}
\label{eq:mtauC3}
\| M_{\tau }^{kn}(\varphi )-\pi _{\tau }(\varphi )\| _{\infty }
\leq C_{3,\alpha }\zeta _{\alpha }^{n}\| \varphi \| _{\alpha }. 
\end{equation} 
For the rest of the proof, let 
$\alpha \in (0,\alpha _{1}).$ Let $\eta _{\alpha }:=
\max \{ \lambda _{1,\alpha },aC_{1}^{\alpha }\} \in (0,1).$ 
Let $z,z_{0}\in \CCI .$ 
If $d(z,z_{0})\geq C_{1}^{-1}C_{2}$, then 
\begin{equation}
\frac{| M_{\tau }^{kn}(\varphi )(z)-M_{\tau }^{kn}(\varphi )(z_{0})
-(\pi _{\tau }(\varphi )(z)-\pi _{\tau }(\varphi )(z_{0}))|}{d(z,z_{0})^{\alpha }}
\leq 2C_{3,\alpha }\zeta _{\alpha }^{n}\| \varphi \| _{\alpha } (C_{1}C_{2}^{-1})^{\alpha }.
\end{equation}
We now suppose that there exists an $m\in \NN $ such that 
$C_{1}^{-m-1}C_{2}\leq d(z,z_{0})<C_{1}^{-m}C_{2}.$ 
Then for each $\g \in \G _{\tau }^{\NN }$ and for 
each $j=1,\ldots ,m$, 
\begin{equation}
\label{eq:dgkjC2}
d(\g _{kj,1}(z),\g _{kj,1}(z_{0}))<C_{2}. 
\end{equation}  
Let $n\in \NN $. Let $\tilde{m}:=\min \{ n,m\} .$ 
Let $i_{0}\in  \{ 1,\ldots ,s\} $ be such that $B(z_{0},C_{2})\subset U_{z_{i_{0}}}$ and 
let $A(0),B(0),\ldots, A(\tilde{m}-1),B(\tilde{m}-1)$ be as before. 
Let $\varphi \in C^{\alpha }(\CCI )$ and let $n\in \NN $. Then 
we have 
\begin{align}
\label{eq:a0am}
\    & |M_{\tau }^{kn}(\varphi )(z)-M_{\tau }^{kn}(\varphi )(z_{0})
      -(\pi _{\tau }(\varphi )(z)-\pi _{\tau }(\varphi )(z_{0}))|\notag \\ 
\leq & \left| \sum _{j=0}^{\tilde{m}-1}\int _{A(j)}[\varphi (\g _{kn,1}(z))-\varphi (\g _{kn,1}(z_{0}))
        -(\pi _{\tau }(\varphi )(\g _{kn,1}(z))-\pi _{\tau }(\varphi )(\g _{kn,1}(z_{0})))]
        d\tilde{\tau }(\g )\right| \notag \\ 
\    & \ + \left| \int _{B(\tilde{m}-1)}[\varphi (\g _{kn,1}(z))-\varphi (\g _{kn,1}(z_{0}))
        -(\pi _{\tau }(\varphi )(\g _{kn,1}(z))-\pi _{\tau }(\varphi )(\g _{kn,1}(z_{0})))]
        d\tilde{\tau }(\g )\right| . 
\end{align} 
Let $A'(j)$ be as before. By (\ref{eq:d0d1}) and (\ref{eq:dgkjC2}), 
we obtain that for each $j=0,\ldots ,\tilde{m}-1$, 
\begin{align}
\label{eq:ajineq}
\ & \left| \int _{A(j)}[\varphi (\g _{kn,1}(z))-\varphi (\g _{kn,1}(z_{0}))
        -(\pi _{\tau }(\varphi )(\g _{kn,1}(z))-\pi _{\tau }(\varphi )(\g _{kn,1}(z_{0})))]
        d\tilde{\tau }(\g )\right|\notag \\
= & \left| \int _{A(j)}(\varphi (\g _{kn,1}(z))-\varphi (\g _{kn,1}(z_{0}))d\tilde{\tau }(\g )\right| 
    \notag \\ 
= & \left| \int _{A'(j)}[M_{\tau }^{k(n-j-1)}(\varphi )(\g _{k(j+1),1}(z))-
           M_{\tau }^{k(n-j-1)}(\varphi )(\g _{k(j+1),1}(z_{0}))] 
           d\tau (\g _{k(j+1)})\cdots d\tau (\g _{1})\right| \notag \\ 
\leq & \int _{A'(j)}D_{0,\alpha }d(\g _{k(j+1),1}(z),\g _{k(j+1),1}(z_{0}))^{\alpha }
       \lambda _{1,\alpha }^{n-j-1} \| \varphi \| _{\alpha } 
       d\tau (\g _{k(j+1)})\cdots d\tau (\g _{1}) \notag \\ 
\leq & D_{0,\alpha }C_{1}^{\alpha (j+1)}d(z,z_{0})^{\alpha }
       \lambda _{1,\alpha }^{n-j-1}a^{j}\| \varphi \| _{\alpha }\notag \\ 
\leq & D_{0,\alpha }C_{1}^{\alpha }\eta _{\alpha }^{n-1}\| \varphi \| _{\alpha }
      d(z,z_{0})^{\alpha }.   
\end{align}
Let $B'(\tilde{m}-1)$ be a Borel subset of $\G _{\tau }^{k\tilde{m}}$ such that 
$B(\tilde{m}-1)=B'(\tilde{m}-1)\times \G _{\tau }\times \G _{\tau }\times \cdots .$ 
We now consider the following two cases. Case (I): $\tilde{m}=m$. Case (II): 
$\tilde{m}=n.$ 

Suppose we have Case (I). 
Then by (\ref{eq:mtauC3}), we obtain that  
\begin{align}
\label{eq:Bm-1c1}
\    & \left| \int _{B(\tilde{m}-1)}[\varphi (\g _{kn,1}(z))-\varphi (\g _{kn,1}(z_{0}))
        -(\pi _{\tau }(\varphi )(\g _{kn,1}(z))-\pi _{\tau }(\varphi )(\g _{kn,1}(z_{0})))]
        d\tilde{\tau }(\g )\right| \notag \\ 
\leq & \int _{B'(m-1)}| M_{\tau }^{k(n-m)}(\g _{km}\circ \cdots \circ \g_{1}(z))
        -\pi _{\tau }(\varphi )(\g _{km}\circ \cdots \circ \g_{1}(z))| 
       d\tau (\g _{km})\cdots d\tau (\g _{1})\notag \\
\    & \ +\int _{B'(m-1)}  | M_{\tau }^{k(n-m)}(\g _{km}\circ \cdots \circ \g_{1}(z_{0}))
        -\pi _{\tau }(\varphi )(\g _{km}\circ \cdots \circ \g_{1}(z_{0}))| 
       d\tau (\g _{km})\cdots d\tau (\g _{1})\notag \\
\leq & 2C_{3,\alpha }\zeta _{\alpha }^{n-m}\| \varphi \| _{\alpha }a^{m} 
       \leq  2C_{3,\alpha }\zeta _{\alpha }^{n-m}\| \varphi \| _{\alpha }a^{m}
       \cdot (C_{1}^{m+1}C_{2}^{-1}d(z,z_{0}))^{\alpha }\notag \\ 
=    & 2C_{3,\alpha }\zeta _{\alpha }^{n-m}(aC_{1}^{\alpha })^{m}(C_{1}C_{2}^{-1})^{\alpha }
       \| \varphi \| _{\alpha } d(z,z_{0})^{\alpha }  
       \leq  2C_{3,\alpha }(C_{1}C_{2}^{-1})^{\alpha }\zeta _{\alpha }^{n-m}\eta _{\alpha }^{m}
       \| \varphi \| _{\alpha }d(z,z_{0})^{\alpha }.          
\end{align}
We now suppose we have Case (II). 
%Then by Theorem~\ref{t:utauca}, 
Since $\mbox{LS}({\cal U}_{f,\tau }(\CCI ))\subset C^{\alpha }(\CCI )$, 
we obtain  
\begin{align}
\label{eq:Bm-1c2}
\    &   \left| \int _{B(\tilde{m}-1)}[\varphi (\g _{kn,1}(z))-\varphi (\g _{kn,1}(z_{0}))
        -(\pi _{\tau }(\varphi )(\g _{kn,1}(z))-\pi _{\tau }(\varphi )(\g _{kn,1}(z_{0})))]
        d\tilde{\tau }(\g )\right| \notag \\ 
\leq & \int _{B(n-1)}| \varphi (\g _{kn,1}(z))-\varphi (\g _{kn,1}(z_{0}))| d\tilde{\tau }(\g ) 
        + \int _{B(n-1)}| \pi _{\tau }(\varphi )(\g _{kn,1}(z))
                      -\pi _{\tau }(\varphi )(\g _{kn,1}(z_{0}))| d\tilde{\tau }(\g )\notag \\ 
\leq & C_{1}^{\alpha n}d(z,z_{0})^{\alpha }a^{n}\| \varphi \| _{\alpha }
       +C_{1}^{\alpha n}d(z,z_{0})^{\alpha }a^{n}\| \pi _{\tau }(\varphi )\| _{\alpha }\notag \\ 
\leq & C_{1}^{\alpha n}a^{n}(1+E_{\alpha })\| \varphi \| _{\alpha }d(z,z_{0})^{\alpha },          
\end{align}
where $E_{\alpha }$ denotes the number in Theorem~\ref{t:utauca}. 
Let $\xi _{\alpha }:= \frac{1}{2}(\max \{ \zeta _{\alpha },\eta _{\alpha }\} +1) \in (0,1).$ 
Combining (\ref{eq:a0am}), (\ref{eq:ajineq}), (\ref{eq:Bm-1c1}) and (\ref{eq:Bm-1c2}), 
it follows that there exists a constant $C_{4,\alpha }>0$ such that 
for each $\varphi \in C^{\alpha }(\CCI )$, 
\begin{equation}
\label{eq:mtauval}
 |M_{\tau }^{kn}(\varphi )(z)-M_{\tau }^{kn}(\varphi )(z_{0})
-(\pi _{\tau }(\varphi )(z)-\pi _{\tau }(\varphi )(z_{0}))| 
\leq C_{4,\alpha }\xi _{\alpha }^{n}\| \varphi \| _{\alpha }d(z,z_{0})^{\alpha }.
\end{equation}  
Let $C_{5,\alpha }=C_{3,\alpha }+C_{4,\alpha }$. 
By (\ref{eq:mtauC3}) and (\ref{eq:mtauval}), we obtain that 
for each $\varphi \in C^{\alpha }(\CCI )$ and for each $n\in \NN $, 
\begin{equation}
\label{eq:mtauC5}
\| M_{\tau }^{kn}(\varphi )-\pi _{\tau }(\varphi )\| _{\alpha }
\leq C_{5,\alpha }\xi _{\alpha }^{n}\| \varphi \| _{\alpha }. 
\end{equation}  
From this, statement (3) of our theorem holds. 

Let $\psi \in C^{\alpha }(\CCI ).$ Setting $\varphi =\psi -\pi _{\tau }(\psi )$, 
by (\ref{eq:mtauC5}), we obtain that statement (2) of our theorem holds. 
Statement (4) of our theorem follows from Theorem~\ref{t:utauca}. 
Statement (1) follows from statements (2).  

Thus, we have proved Theorem~\ref{t:kjemfhf}. 
\qed 

We now prove Theorem~\ref{t:kjemfsp}.\\ 
{\bf Proof of Theorem~\ref{t:kjemfsp}:} 
Let $A:= \{ z\in \CC \mid | z |\leq \lambda \} \cup {\cal U}_{v,\tau }(\CCI ).$ 
Let $\zeta \in \CC \setminus A.$ 
Then by Theorem~\ref{t:kjemfhf}, 
 $\sum _{n=0}^{\infty }
\frac{M_{\tau }^{n}}{\zeta ^{n+1}}(I-\pi _{\tau })$ 
converges in the space of bounded linear operators on $C^{\alpha }(\CCI )$ endowed 
with the operator norm. 
Let $\Omega := (\zeta I-M_{\tau })|_{\LSfc}^{-1}\circ \pi _{\tau }+\sum _{n=0}^{\infty }
\frac{M_{\tau }^{n}}{\zeta ^{n+1}}(I-\pi _{\tau }).$ 
Let $U_{\tau }:= \mbox{LS}({\cal U}_{f,\tau }(\CCI )).$ 
Then we have 
\begin{align*}
(\zeta I-M_{\tau })\circ \Omega  
= &  \left( (\zeta I-M_{\tau })|_{U_{\tau }}\circ \pi _{\tau }
    +(\zeta I-M_{\tau })|_{{\cal B}_{0,\tau }}\circ (I-\pi _{\tau })\right) \\ 
\ & \ \circ \left( (\zeta I-M_{\tau })|_{U_{\tau }}^{-1}\circ \pi _{\tau }
     + \sum _{n=0}^{\infty }\frac{M_{\tau }^{n}}{\zeta ^{n+1}}|_{{\cal B}_{0,\tau }}
    (I-\pi _{\tau }) \right) \\ 
= & I|_{U_{\tau }}\circ \pi _{\tau }+(\zeta I-M_{\tau })\circ 
    (\sum _{n=0}^{\infty }\frac{M_{\tau }^{n}}{\zeta ^{n+1}})\circ (I-\pi _{\tau })\\ 
= & \pi _{\tau }+(\sum _{n=0}^{\infty }\frac{M_{\tau }^{n}}{\zeta ^{n}}-
    \sum _{n=0}^{\infty }\frac{M_{\tau }^{n+1}}{\zeta ^{n+1}})\circ (I-\pi _{\tau })=I.      
\end{align*}
Similarly, we have $\Omega \circ (\zeta I-M_{\tau })=I.$ 
Therefore, statements (1) and (2) of our theorem hold. 

Thus, we have proved Theorem~\ref{t:kjemfsp}.  
\qed 

We now prove Theorem~\ref{t:kjemfsppt}.\\ 
\noindent {\bf Proof of Theorem~\ref{t:kjemfsppt}:} 
By using the method in the proofs of \cite[Lemmas 5.1, 5.2]{SU1}, 
we obtain that for each $\alpha \in (0,1)$,  
%$(h_{1},\ldots ,h_{m})\in \Rat ^{m}$, 
the map  
$a\in {\cal W}_{m} \mapsto M_{\tau _{a}}\in 
L(C^{\alpha }(\CCI ))$ is real-analytic, where $L(C^{\alpha }(\CCI )) $ denotes the 
Banach space of bounded linear operators on $C^{\alpha }(\CCI )$ endowed with 
the operator norm. 
Moreover, by using the method in the proof of Theorem~\ref{t:utauca}, 
we can show that for each $b\in {\cal W}_{m}$, 
there exists an $\alpha \in (0,1)$ and an open neighborhood $V_{b}$ of $b$ in ${\cal W}_{m}$ 
such that for each $a\in V_{b}$, we have $\mbox{LS}({\cal U}_{f,\tau _{a}}(\CCI ))\subset C^{\alpha }(\CCI ).$ 
In particular, $\pi _{\tau _{a}}(C^{\alpha }(\CCI ))\subset C^{\alpha }(\CCI )$ for each $a\in V_{b}.$   
Statement (1) follows from the above arguments, \cite[Theorem 3.15-10]{Splms10}, 
Theorem~\ref{t:kjemfsp} and \cite[p368-369, p212]{K}. 
We now prove statement (2). 
%For each $L\in \Min(G,\CCI )$, 
%let 
%$W_{L}:=\bigcup _{A\in \mbox{Con}(F(G)),A\cap L\neq \emptyset }A.$ 
%By \cite[Theorem 3.15-1, 3.15-9]{Splms10}, 
%we have that $\overline{W_{L}}\cap \overline{W_{L'}}=\emptyset $ whenever 
%$L,L'\in \Min(G,\CCI )$ and $L\neq L'.$ 
For each $L\in \Min(G,\CCI )$, 
let $\varphi _{L}:\CCI \rightarrow [0,1]$ be a $C^{\infty }$ function on $\CCI $ such that  
$\varphi _{L}|_{L}\equiv 1$ and such that for each 
$L'\in \Min (G,\CCI )$ with $L'\neq L$, 
$\varphi _{L}|_{L'}\equiv 0.$   
Then, 
%by the proof of \cite[Lemma 5.18]{Splms10}, 
by \cite[Theorem 3.15-15]{Splms10}, 
we have that for each $z\in \CCI $, 
$T_{L,\tau _{a}}(z)=\lim _{n\rightarrow \infty }M_{\tau _{a}}^{n}(\varphi _{L})(z).$ 
Combining this with \cite[Theorem 3.14]{Splms10}, 
we obtain $T_{L,\tau _{a}}=\lim _{n\rightarrow \infty }M_{\tau _{a}}^{n}(\varphi )$ 
in $C(\CCI ).$ By \cite[Theorem 3.15-6,8,9]{Splms10}, 
for each $a\in {\cal W}_{m}$, 
there exists a number $r\in \NN $ such that 
for each $\psi \in \mbox{LS}({\cal U}_{f,\tau }(\CCI ))$, 
$M_{\tau _{a}}^{r}(\psi )=\psi .$ Therefore, 
by \cite[Theorem 3.15-1]{Splms10}, 
$T_{L,\tau _{a}}=\lim _{n\rightarrow \infty }M_{\tau _{a}}^{nr}(\varphi _{L})
=\lim _{n\rightarrow \infty }M_{\tau _{a}}^{nr}(\varphi _{L}-\pi _{\tau _{a}}(\varphi _{L})
+\pi _{\tau _{a}}(\varphi _{L}))=\pi _{\tau _{a}}(\varphi _{L}).$ 
%Therefore, $T_{L,\tau _{a}}=\pi _{\tau _{a}}(\varphi ).$ 
Combining this with statement (1) of our theorem and \cite[Theorem 3.15-1]{Splms10}, 
it is easy to see that statement (2) of our theorem holds. 

 We now prove statement (3). 
By taking the partial derivative of $M_{\tau _{a}}(T_{L,\tau _{a}}(z))=T_{L,\tau _{a}}(z)$ 
with respect to $a_{i}$, it is easy to see that 
$\psi _{i,b}$ satisfies the functional equation 
$(I-M_{\tau _{b}})(\psi _{i,b})=\zeta _{i,b}, \psi _{i,b}|_{S_{\tau _{b}}}=0.$ 
Let $\psi \in C(\CCI )$ be a solution of 
$(I-M_{\tau _{b}})(\psi )=\zeta _{i,b}, \psi |_{S_{\tau _{b}}}=0.$ 
Then for each $n\in \NN $, 
\begin{equation}
\label{eq:I-M}
(I-M_{\tau _{b}}^{n})(\psi )=\sum _{j=0}^{n-1}M_{\tau _{b}}^{j}(\zeta _{i,b}).
\end{equation}  
By the definition of $\zeta _{i,b}$,  
$\zeta _{i,b}|_{S_{\tau _{b}}}=0.$ Therefore, by \cite[Theorem 3.15-2]{Splms10}, 
$\pi _{\tau _{b}}(\zeta _{i,b})=0.$ 
Thus, denoting by $C$ and $\lambda $ the constants in Theorem~\ref{t:kjemfhf}, 
we obtain $\| M_{\tau _{b}}^{n}(\zeta _{i,b})\| _{\alpha }\leq C\lambda ^{n}\| \zeta _{i,b}\| _{\alpha }.$ 
Moreover, since $\psi |_{S_{\tau _{b}}}=0$, \cite[Theorem 3.15-2]{Splms10} implies 
$\pi _{\tau _{b}}(\psi )=0.$ Therefore, $M_{\tau _{b}}^{n}(\psi )\rightarrow 0$  
in $C(\CCI ) $ as $n\rightarrow \infty .$ Letting $n\rightarrow \infty $ in (\ref{eq:I-M}), 
we obtain that 
$\psi =\sum _{j=0}^{\infty }M_{\tau _{b}}^{j}(\zeta _{i,b}).$ 
Therefore, we have proved statement (3). 

  Thus, we have proved Theorem~\ref{t:kjemfsppt}. 
\qed  

We now prove Theorem~\ref{t:psinondiff}.\\
\noindent {\bf Proof of Theorem~\ref{t:psinondiff}:}
Statements~\ref{t:psinondiff1},\ref{t:psinondiff3},\ref{t:psinondiff4} follow from 
\cite[Theorem 3.82]{Splms10} and its proof.  
We now prove statement~\ref{t:psinondiff2}. 
By \cite[Theorem 3.82, Theorem 3.15-15]{Splms10}, 
there exists a Borel subset $A$ of $J(G)$ with $\lambda (A)=1$ such that 
for each $L\in \Min(G,\CCI )$ and for each $z\in A$,  
$\mbox{H\"{o}l}(T_{L,\tau _{p}},z)=u(h,p,\mu ).$ 
Let $z_{0}\in A$ be a point, let $L\in \Min(G,\CCI )$, and let $i\in \{ 1,\ldots ,m-1\} .$  
We consider the following three cases. 
Case 1: $\mbox{H\"{o}l}(\psi _{i,p,L},z_{0})<u(h,p,\mu ).$ 
Case 2: $\mbox{H\"{o}l}(\psi _{i,p,L},z_{0})=u(h,p,\mu ).$
Case 3: $\mbox{H\"{o}l}(\psi _{i,p,L},z_{0})>u(h,p,\mu ).$

Suppose we have Case 1. Let $z_{1}\in h_{i}^{-1}(\{ z_{0}\} ).$ 
By the functional equation 
$(I-M_{\tau _{p}})(\psi _{i,p,L})=T_{L,\tau _{p}}\circ h_{i}-T_{L,\tau _{p}}\circ h_{m}$ (see Theorem~\ref{t:kjemfsppt} (3)), 
\cite[Theorem 3.15-1,15]{Splms10}, and the assumption $h_{k}^{-1}(J(G))\cap h_{l}^{-1}(J(G))=\emptyset $ for each 
$(k,l)$ with $k\neq l$, 
there exists a neighborhood $U$ of $z_{1}$ in $\CCI $ such that 
for each $z\in U$, 
\begin{equation}
\label{eq:psindpf1}
\psi _{i,p,L}(z)-\psi _{i,p,L}(z_{1})-p_{i}(\psi _{i,p,L}(h_{i}(z))-\psi _{i,p,L}(z_{0}))=
T_{L,\tau _{p}}(h_{i}(z))-T_{L,\tau _{p}}(z_{0}).
\end{equation}
By equation (\ref{eq:psindpf1}) and the definition of the pointwise H\"{o}lder exponent, it is easy to see that 
$\mbox{H\"{o}l}(\psi _{i,p,L},z_{1})= \mbox{H\"{o}l}(\psi _{i,p,L},z_{0})<u(h,p,\mu ).$ 
We now let $z_{1}\in h_{m}^{-1}(\{ z_{0}\} ).$ Then by the similar method to the above, 
we obtain that $\mbox{H\"{o}l}(\psi _{i,p,L},z_{1})= \mbox{H\"{o}l}(\psi _{i,p,L},z_{0})<u(h,p,\mu ).$ 

 We now suppose we have Case 2. By the same method as that in Case 1, we obtain that 
 $\mbox{H\"{o}l}(\psi _{i,p,L},z_{0})= u(h,p,\mu )\leq \mbox{H\"{o}l}(\psi _{i,p,L},z_{1})$ for 
each $z_{1}\in  h_{i}^{-1}(\{ z_{0}\} )\cup h_{m}^{-1}(\{ z_{0}\} ).$

We now suppose we have Case 3. By the same method as that in Case 1 again, 
we obtain that 
$\mbox{H\"{o}l}(\psi _{i,p,L},z_{1})=u(h,p,\mu )<\mbox{H\"{o}l}(\psi _{i,p,L},z_{0})$ 
for 
each $z_{1}\in  h_{i}^{-1}(\{ z_{0}\} )\cup h_{m}^{-1}(\{ z_{0}\} ).$

Thus we have proved Theorem~\ref{t:psinondiff}. 
\qed 
\section{Examples}
\label{Examples} 
In this section, we give some examples.
\begin{ex}[Proposition 6.1 in \cite{Splms10}]
\label{ex:const}
Let $f_{1}\in {\cal P}.$ 
%be a polynomial 
%with $\deg (f_{1})\geq 2$. 
%such that $J(f_{1})$ is connected. 
%Let $K(f_{1})$ be the filled-in Julia set of $f_{1}$ and 
Suppose that int$(K(f_{1}))$ is not empty. 
Let $b\in \mbox{int}(K(f_{1}))$ be a point. 
Let $d$ be a positive integer such that 
$d\geq 2.$ Suppose that $(\deg (f_{1}),d)\neq (2,2).$ 
Then, there exists a number $c>0$ such that 
for each $\l \in \{ \l\in \Bbb{C}: 0<|\l |<c\} $, 
setting $f_{\l }=(f_{\l ,1},f_{\l ,2})=
(f_{1},\l (z-b)^{d}+b )$ and $G_{\l }:= \langle f_{1},f_{\l, 2}\rangle $, 
 we have all of the following.
\begin{itemize}
\item[{\em (a)}] 
$f_{\l }$ satisfies the open set condition with 
an open subset $U_{\l }$ of $\CCI $ (i.e., $f_{\l ,1}^{-1}(U_{\l })\cup f_{\l, 2}^{-1}(U_{\l })\subset U_{\l }$ and 
$f_{\l ,1}^{-1}(U_{\l })\cap f_{\l ,2}^{-1}(U_{\l })=\emptyset $), 
$f_{\l ,1}^{-1}(J(G_{\l }))\cap f_{\l, 2}^{-1}(J(G_{\l }))=\emptyset $, 
int$(J(G_{\l }))=\emptyset $, 
$J_{\ker }(G_{\l })=\emptyset $, 
$G_{\l }(K(f_{1}))\subset K(f_{1})\subset \mbox{int}(K(f_{\lambda ,2}))$  
and 
$\emptyset \neq K(f_{1})\subset \hat{K}(G_{\l }).$ 
\item[{\em (b)}]
If $K(f_{1})$ is connected, then  
$P^{\ast }(G_{\l })
$ is bounded in $\Bbb{C}$.
%, and $\hat{K}(G_{\lambda })\neq \emptyset $.
\item[{\em (c)}]
If $f_{1}$ is hyperbolic and $K(f_{1})$ is connected, 
then 
$G_{\l }$ is hyperbolic, 
$J(G_{\l } )$ is porous (for the definition of porosity, see \cite{S7}),  and 
$\dim _{H}(J(G_{\l }))<2$.  
\end{itemize} 

\end{ex} 
By Example~\ref{ex:const}, Remark~\ref{r:hypms} and \cite[Proposition 6.4]{Splms10}, 
we can obtain many examples of $\tau \in {\frak M}_{1,c}({\cal P})$ with $\sharp \G _{\tau }<\infty $ 
to which we can apply Theorems~\ref{t:msmtaust}, \ref{t:ABCD}, \ref{t:utauca}, \ref{t:kjemfhf}, 
\ref{t:kjemfsp}, \ref{t:kjemfsppt}, \ref{t:psinondiff}.    
\begin{ex}[Devil's coliseum (\cite{Splms10}) and complex analogue of the Takagi function] 
\label{ex:dc1}
Let $g_{1}(z):=z^{2}-1, g_{2}(z):=z^{2}/4, h_{1}:=g_{1}^{2},$ and  $h_{2}:=
g_{2}^{2}.$ Let $G=\langle h_{1},h_{2}\rangle $ and 
for each $a=(a_{1},a_{2})\in {\cal W}_{2}:=\{ (a_{1},a_{2})\in (0,1)^{2}\mid \sum _{j=1}^{2}a_{j}=1\} 
\cong (0,1)$, let 
$\tau _{a}:= \sum _{i=1}^{2}a_{i}\delta _{h_{i}}.$ 
Then by \cite[Example 6.2]{Splms10},  
setting $A:= K(h_{2})\setminus D(0,0.4)$, 
we have 
$\overline{D(0,0.4)}\subset \mbox{int}(K(h_{1}))$, $h_{2}(K(h_{1}))\subset \mbox{int}(K(h_{1}))$,  
$h_{1}^{-1}(A)\cup h_{2}^{-1}(A)\subset A$, and $h_{1}^{-1}(A)\cap h_{2}^{-1}(A)=\emptyset .$ 
Therefore $h_{1}^{-1}(J(G))\cap h_{2}^{-1}(J(G))=\emptyset $ and $\emptyset \neq K(h_{1})\subset \hat{K}(G).$ 
 Moreover, 
%using the argument in the proof of 
%Proposition~\ref{semihyposcexprop}, we obtain that $G$ is hyperbolic. 
%By Lemma~\ref{l:disjker}, $J_{\ker}(G)=\emptyset .$ 
%By Theorem~\ref{kerJthm2} and Lemma~\ref{l:lsncnonc}, 
%Proposition~\ref{p:jnonconst1}, 
$G$ is hyperbolic and mean stable, and for each $a\in {\cal W}_{2}$, 
we obtain that $T_{\infty ,\tau _{a}}$ is continuous on $\CCI $ and the set of varying points of 
$T_{\infty ,\tau _{a}}$ is equal to $J(G).$ Moreover, by \cite{Splms10}
%Theorem~\ref{t:hnondiff}, 
$\dim _{H}(J(G))<2$ and for each non-empty open subset $U$ of $J(G)$ there exists an uncountable dense 
subset $A_{U}$ of $U$ such that for each $z\in A_{U}$, 
$T_{\infty ,\tau _{a}}$ is not differentiable at $z.$ 
%For the graph of $T_{\infty \tau _{1/2}}$, 
See Figures~\ref{fig:dcjulia} and \ref{fig:dcgraphgrey2}.  
%and \ref{fig:dcgraphudgrey2}.  
The function $T_{\infty ,\tau _{a}}$ is called a devil's coliseum. 
It is a complex analogue of the devil's staircase. (Remark: 
as the author of this paper pointed out in \cite{Splms10}, 
the devil's staircase can be regarded as the function of probability 
of tending to $+\infty $ regarding the random dynamics on $\RR $ such that at every 
step we choose $h_{1}(x)=3x$ with probability $1/2$ and we choose $h_{2}(x)=3(x-1)+1$ with probability $1/2.$ For the detail, see \cite{Splms10}.)   
By Theorem~\ref{t:kjemfsppt}, 
for each $z\in \CCI $, $a_{1}\mapsto T_{\infty ,\tau _{a}}(z)$ is real-analytic in $(0,1)$, 
and for each $b\in {\cal W}_{2}$, $[\frac{\partial T_{\infty ,\tau _{a}}(z)}{\partial a_{1}}]|_{a=b}=\sum _{n=0}^{\infty }
M_{\tau _{b}}^{n}(\zeta _{1,b})$, where $\zeta _{1,b}(z):=T_{\infty ,\tau _{b}}(h_{1}(z))-T_{\infty ,\tau _{b}}(h_{2}(z)).$ 
Moreover, by Theorem~\ref{t:kjemfsppt}, the function 
$\psi (z):= [\frac{\partial T_{\infty ,\tau _{a}}(z)}{\partial a_{1}}]|_{a=b}$ defined on $\CCI $ 
is H\"{o}lder continuous on $\CCI $ and is locally constant on $F(G).$ 
As mentioned in Remark~\ref{r:takagi}, the function 
$\psi (z)$ defined on $\CCI $ 
can be regarded as a complex 
analogue of the Takagi function. 
By Theorem~\ref{t:psinondiff}, 
there exists an uncountable dense subset $A$ of $J(G)$ 
such that for each $z\in A$, 
either $\psi $ is not differentiable at $z$ or 
$\psi $ is not differentiable at each point $w\in h_{1}^{-1}(\{ z\})\cup h_{2}^{-1}(\{ z\} ).$ 
%at which 
%$z\mapsto [\frac{\partial T_{\infty ,\tau _{a}}(z)}{\partial a_{1}}]|_{a=b}$ is not differentiable. 
For the graph of $[\frac{\partial T_{\infty ,\tau _{a}}(z)}{\partial a_{1}}]|_{a_{1}=1/2}$, see 
Figure~\ref{fig:ctgraphgrey1}. 
\end{ex}

We now give an example of $\tau \in {\frak M}_{1,c}({\cal P})$ with $\sharp \G _{\tau }<\infty $ such that 
$J_{\ker }(G_{\tau })=\emptyset $, $J(G_{\tau })\neq \emptyset$, 
$S_{\tau }\subset F(G_{\tau })$ and $\tau $ is not mean stable.  
\begin{ex}
\label{ex:stfnms}
Let $h_{1}\in {\cal P}$ be such that 
$J(h_{1})$ is connected and 
$h_{1}$ has a Siegel disk $S$.  
Let $b\in S$ be a point. Let $d\in \NN $ be such that 
$(\deg (h_{1}),d)\neq (2,2).$  
Then by \cite[Proposition 6.1]{Splms10} (or \cite[Proposition 2.40]{SdpbpI}) and its proof,  
there exists a number $c>0$ such that for each 
$\lambda \in \CC $ with $0<|\lambda |<c$, 
setting $h_{2}(z):=\lambda (z-b)^{d}+b$ and 
$G:=\langle h_{1},h_{2}\rangle $, 
we have $J_{\ker }(G)=\emptyset $ and 
$h_{2}(K(h_{1}))\subset S\subset \mbox{int}(K(h_{1}))\subset \mbox{int}(K(h_{2}))$.  
%where $K(\cdot )$ denotes the filled-in Julia set. 
Then the set of minimal sets for $(G,\CCI )$ is $\{ \{ \infty \} ,L_{0}\} $, 
where $L_{0}$ is a compact subset of $S\ (\subset F(G)).$   
Let $(p_{1},p_{2})\in {\cal W}_{2}$ be any element and let 
$\tau :=\sum _{j=1}^{2}p_{j}\delta _{h_{j}}.$ 
 Then $J_{\ker }(G_{\tau })=\emptyset $, $J(G_{\tau })\neq \emptyset$, 
$S_{\tau }\subset F(G_{\tau })$ and $\tau $ is not mean stable. In fact, 
$L_{0}$ is sub-rotative. 
Even though $\tau $ is not mean stable, 
we can apply Theorems~\ref{t:utauca}, \ref{t:kjemfhf}, \ref{t:kjemfsp}, \ref{t:kjemfsppt}, 
\ref{t:psinondiff} to 
this $\tau .$  
\end{ex}
\begin{ex}
\label{ex:Jt}
By \cite[Example 6.7]{Splms10}, 
we have an example $\tau \in {\frak M}_{1,c}({\cal P})$ such that 
$J_{\ker }(G_{\tau })=\emptyset $ and such that there exists a $J$-touching minimal set for $(G_{\tau },\CCI ).$ 
This $\tau $ is not mean stable but we can apply Theorem~\ref{t:utauca} to this $\tau .$ 
\end{ex}

\end{document}